\numberwithin{equation}{section} 
\newcommand{\N}{\ensuremath{\mathbb{N}}}
\newcommand{\R}{\ensuremath{\mathbb{R}}}
\newcommand{\C}{\ensuremath{\mathbb{C}}}
\newcommand{\K}{\ensuremath{\mathbb{K}}}
\newcommand{\diverg}{\textup{div}}
\newcommand{\tr}{\textup{tr}}
\newcommand{\supp}{\textup{supp}}
\newcommand{\Hc}{\ensuremath{\mathcal{H}}}
\newcommand{\Lc}{\ensuremath{\mathcal{L}}}
\newcommand{\Nc}{\ensuremath{\mathcal{N}}}
\newcommand{\Oc}{\ensuremath{\mathcal{O}}}
\begin{document}
\thispagestyle{plain}

\topmargin -18pt\headheight 12pt\headsep 25pt

\ifx\cs\documentclass \footheight 12pt \fi \footskip 30pt

\textheight 625pt\textwidth 431pt\columnsep 10pt\columnseprule 0pt 
                                                                      
\clearpairofpagestyles
\automark*[section]{section}
\automark*[subsection]{}
\renewcommand\sectionmarkformat{\thesection.\enskip}
\ohead[]{\scshape\headmark}
\ofoot*{\pagemark}
\pagestyle{scrheadings}

 \renewcommand{\headfont}{\slshape}      
 \renewcommand{\pnumfont}{\upshape}      
\setcounter{secnumdepth}{5}             
\setcounter{tocdepth}{5}             

\newtheorem{Definition}{Definition}[section]
\newtheorem{Satz}[Definition]{Satz}
\newtheorem{Lemma}[Definition]{Lemma}
\newtheorem{Korollar}[Definition]{Korollar}
\newtheorem{Corollary}[Definition]{Corollary}
\newtheorem{Bemerkung}[Definition]{Bemerkung}
\newtheorem{Remark}[Definition]{Remark}
\newtheorem{Proposition}[Definition]{Proposition}
\newtheorem{Beispiel}[Definition]{Beispiel}
\newtheorem{Theorem}[Definition]{Theorem}

\pdfbookmark[1]{Titlepage}{title}
\begin{center}
	{\LARGE Convergence of the Allen-Cahn equation with a nonlinear\\ 
		Robin Boundary Condition to Mean Curvature Flow\\[0.5ex]
		with Contact Angle close to $90$°}\\[2ex]
	\textsc{Helmut Abels \quad \& \quad Maximilian Moser}\\[1ex]
	Fakultät für Mathematik, Universität Regensburg, Universitätsstraße 31,\\ D-93053 Regensburg, Germany\\
	helmut.abels@mathematik.uni-regensburg.de\\
	maximilian1.moser@mathematik.uni-regensburg.de\\[1ex]
\end{center}

\begin{abstract}
	\textbf{Abstract.} This paper is concerned with the sharp interface limit for the Allen-Cahn equation with a nonlinear Robin boundary condition in a bounded smooth domain $\Omega\subset\R^2$. We assume that a diffuse interface already has developed and that it is in contact with the boundary $\partial\Omega$. The boundary condition is designed in such a way that the limit problem is given by the mean curvature flow with constant $\alpha$-contact angle. For $\alpha$ close to $90$° we prove a local in time convergence result for well-prepared initial data for times when a smooth solution to the limit problem exists. Based on the latter we construct a suitable curvilinear coordinate system and carry out a rigorous asymptotic expansion for the Allen-Cahn equation with the nonlinear Robin boundary condition. Moreover, we show a spectral estimate for the corresponding linearized Allen-Cahn operator and with its aid we derive strong norm estimates for the difference of the exact and approximate solutions using a Gronwall-type argument.\\
	
	\noindent\textit{2020 Mathematics Subject Classification:} Primary 35K57; Secondary 35B25, 35B36, 35R37.\\
	\textit{Keywords:} Sharp interface limit; mean curvature flow; contact angle; Allen-Cahn equation; Robin boundary condition.
\end{abstract}

\section{Introduction}\label{sec_intro}\pagenumbering{arabic}\label{sec_ACalpha}
Let $\Omega\subset\R^2$ be a bounded, smooth domain with outer unit normal $N_{\partial\Omega}$. Moreover, let $\alpha\in(0,\pi)$ be fixed. The parameter $\alpha$ will correspond to a constant angle later. Furthermore, we consider $\varepsilon>0$ small. For $u_{\varepsilon,\alpha}:\overline{\Omega}\times[0,T]\rightarrow\R$ we are interested in the following Allen-Cahn equation with nonlinear Robin boundary condition which we will refer to via \hypertarget{ACalpha}{(AC$_\alpha$)}:
\begin{alignat}{2}\label{eq_ACalpha1}\tag{AC$_\alpha1$}
\partial_tu_{\varepsilon,\alpha}-\Delta u_{\varepsilon,\alpha}+\frac{1}{\varepsilon^2}f'(u_{\varepsilon,\alpha})&=0&\qquad&\text{ in }\Omega\times(0,T),\\\label{eq_ACalpha2}\tag{AC$_\alpha2$}
\partial_{N_{\partial\Omega}}u_{\varepsilon,\alpha}+\frac{1}{\varepsilon}\sigma_\alpha'(u_{\varepsilon,\alpha})&=0&\qquad&\text{ on }\partial\Omega\times(0,T),\\
u_{\varepsilon,\alpha}|_{t=0}&=u_{0,\varepsilon,\alpha}&\qquad&\text{ in }\Omega.\label{eq_ACalpha3}\tag{AC$_\alpha 3$}
\end{alignat} 

Here $f:\R\rightarrow\R$ is an appropriate smooth double well potential with wells of equal depth, for instance $f(u)=\frac{1}{2}(1-u^2)^2$, see Figure \ref{fig_double_well}. The concise conditions we impose are
\begin{align}\label{eq_AC_fvor1}
f\in C^\infty(\R),\quad f'(\pm1)=0,\quad f''(\pm1)>0,\quad 
f(-1)=f(1),\quad f>f(1)\text{ in }(-1,1)
\end{align}
and the following sign condition for $f'$ outside a large ball:
\begin{align}\label{eq_AC_fvor2}
uf'(u)\geq0\quad\text{ for all }|u|\geq R_0\text{ and an }R_0\geq 1.
\end{align} 
We use \eqref{eq_AC_fvor2} later to deduce uniform a priori bounds for classical solutions $u_{\varepsilon,\alpha}$, cf.~Section \ref{sec_DC_prelim_bdd_scal}.
\begin{figure}[H]
	\centering
	\def\svgwidth{0.4\linewidth}
	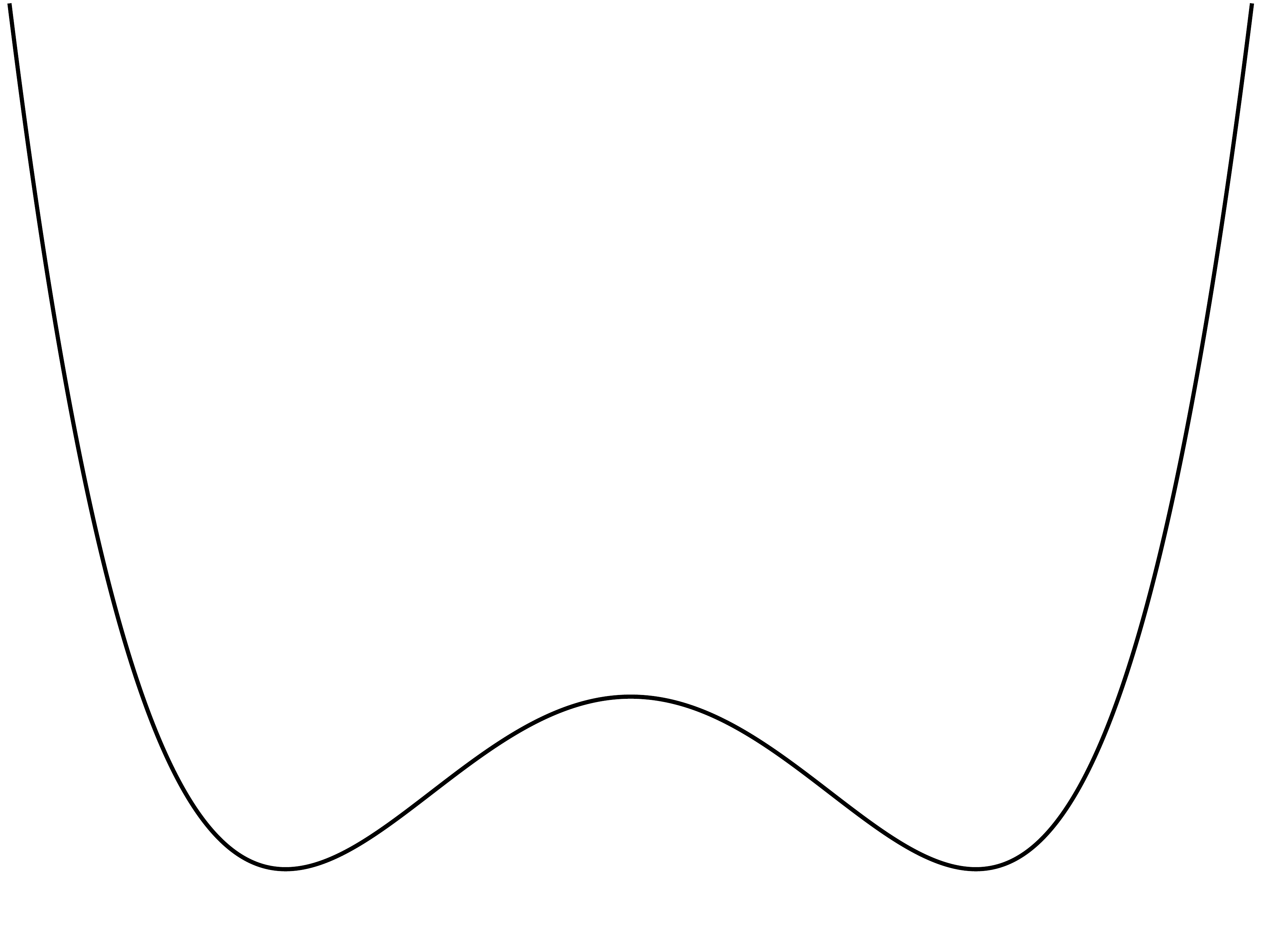
	\caption{Typical example for the double-well potential, $f(u)=\frac{1}{2}(1-u^2)^2$.}\label{fig_double_well}
\end{figure}
Furthermore, $\sigma_\alpha:\R\rightarrow\R$ is assumed to be smooth with $\supp\,\sigma_\alpha'\subset(-1,1)$ and
\begin{align}\label{eq_angle_comp1}
\cos \alpha=\frac{\sigma_\alpha(-1)-\sigma_\alpha(1)}{\int_{-1}^1\sqrt{2(f(r)-f(-1))}\,dr}.
\end{align}
For the typical shape of $\sigma_\alpha$ see Figure \ref{fig_sigma_alpha} below. In order to fulfil the compatibility condition \eqref{eq_angle_comp1} and to have smoothness of $\sigma_\alpha$ with respect to $\alpha$ we choose $\sigma_\alpha$ for simplicity as follows:
\begin{Definition}\label{th_ACalpha_sigma_def}\upshape
	Let $\hat{\sigma}:\R\rightarrow\R$ be smooth with $\supp\,\hat{\sigma}'\subset(-1,1)$ and such that 
	\[
	\hat{\sigma}(-1)-\hat{\sigma}(1)=\int_{-1}^1\sqrt{2(f(r)-f(-1))}\,dr.
	\] 
	Then we define $\sigma_\alpha:=\cos\alpha\,\hat{\sigma}$.
\end{Definition}

\begin{figure}[H]
	\centering
	\def\svgwidth{0.8\linewidth}
	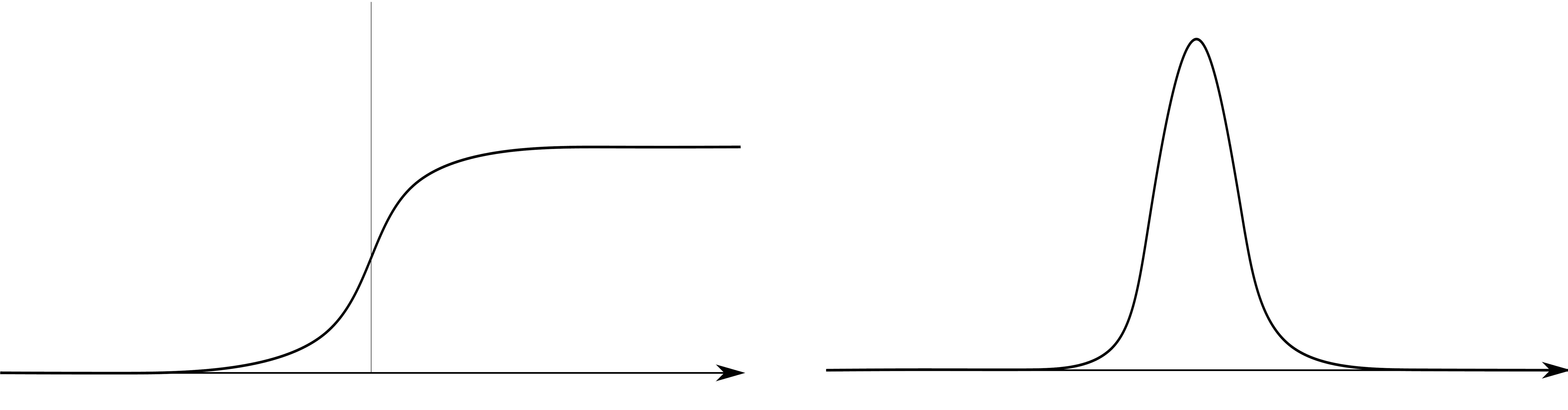
	\caption{Typical shape of $\sigma_\alpha$ and $\sigma_\alpha'$.}\label{fig_sigma_alpha}
\end{figure}

\begin{proof}[Motivation of \hyperlink{ACalpha}{(AC$_\alpha$)}]
By direct computation one can show that equation \eqref{eq_ACalpha1}-\eqref{eq_ACalpha3} is the $L^2$-gradient flow corresponding to the energy
\begin{align}\label{eq_ACalpha_energy}
E_{\varepsilon,\alpha}(u):=\int_\Omega\frac{1}{2}|\nabla u|^2+\frac{1}{\varepsilon^2}f(u)\,dx+\int_{\partial\Omega}\frac{1}{\varepsilon}\sigma_\alpha(u)\,d\Hc^{1}.
\end{align}
In the case $\alpha=\frac{\pi}{2}$ only the first part in the energy remains, which is (up to a scaling in $\varepsilon$) the standard scalar Ginzburg-Landau energy or Modica-Mortola energy, see~Modica \cite{Modica1}. The corresponding $L^2$-gradient flow in this case is the standard Allen-Cahn equation, see Allen, Cahn \cite{AC} and the introduction in Bronsard, Reitich \cite{BronsardReitich} for some motivations. The new term in the energy is a boundary contact energy. The idea is to adjust the gradient flow in such a way that distinct values of $u$ are penalized differently when attained at the boundary.\phantom{\qedhere}

With formal arguments (system of fast reaction/slow diffusion or an energy argument; see Moser \cite{MoserACvACND} for some details in the case $\alpha=\frac{\pi}{2}$) one can show that \eqref{eq_ACalpha1}-\eqref{eq_ACalpha3} is a diffuse interface model: the $u_{\varepsilon,\alpha}$ is an order parameter, where the values $\pm 1$ represent two distinct phases or components in applications. Typically after a short \enquote{generation} time $\Omega$ is divided into subdomains where the solution $u_{\varepsilon,\alpha}$ of \eqref{eq_ACalpha1}-\eqref{eq_ACalpha3} is near $\pm1$ and diffuse interfaces develop with thickness proportional to $\varepsilon$. Therefore in the limit $\varepsilon\rightarrow0$ we should get an evolving hypersurface $\Gamma=(\Gamma_t)_{t\in[0,T]}$, which moves according to some sharp interface model. This is the meaning behind the notion \enquote{sharp interface limit}. See~Figure \ref{fig_transition_zone_alpha} below for a sketch. A rigorous result concerning the \enquote{generation of interfaces} in the case $\alpha=\frac{\pi}{2}$ is provided by Chen \cite{ChenGenPropInt}.

In general it is important to connect sharp interface models and diffuse interface models via their sharp interface limits. Both model types can be used in various applications and there are motivations from the modelling and analysis perspective as well as from the numerics side. See \cite{MoserACvACND} for a summary of possible applications and motivations.
\end{proof}	

\begin{figure}[H]
	\centering
	\def\svgwidth{0.9\linewidth}
	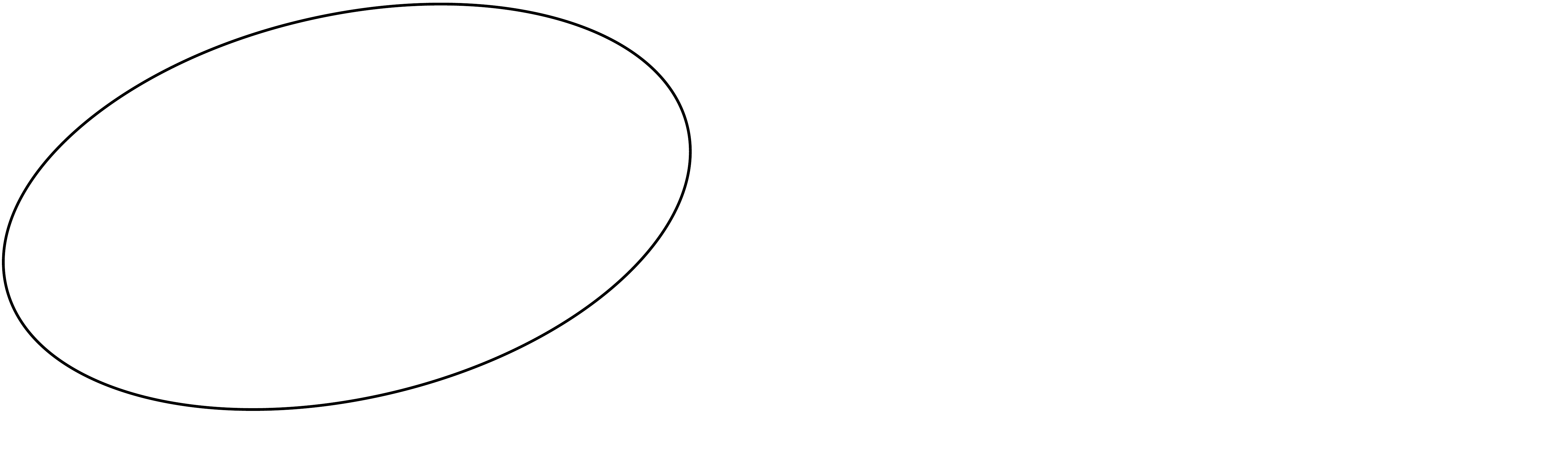
	\caption{Diffuse interface and sharp interface limit.}\label{fig_transition_zone_alpha}
\end{figure}

\begin{proof}[Formal Sharp Interface Limit for \hyperlink{ACalpha}{(AC$_\alpha$)}]
	In Owen, Sternberg \cite{OwenSternberg} formal asymptotic analysis is used to determine the sharp interface limit for some boundary contact energy densities. The calculations can be adapted for the $\sigma_\alpha$ in Definition \ref{th_ACalpha_sigma_def}. It will turn out that $f$ and $\sigma_\alpha$ are balanced suitably through \eqref{eq_angle_comp1}, see also Remark \ref{th_hp_angle_comp} below, such that in the sharp interface limit $\varepsilon\rightarrow 0$ we formally obtain the mean curvature flow
	\begin{align}\label{MCF}
	V_{\Gamma_t}=H_{\Gamma_t} \tag{MCF}
	\end{align}
	together with constant $\alpha$-contact angle. The boundary contact energy densities in \cite{OwenSternberg} are different to our $\sigma_\alpha$ and rather physically motivated. Our choice for $\sigma_\alpha$ is motivated by the goal to obtain \eqref{MCF} with the constant contact angle $\alpha$ in the sharp interface limit. Moreover, $\sigma_\alpha$ is chosen as simple as possible in order to shorten the proofs. Note that \eqref{eq_angle_comp1} is reminiscent of the well-known Young's Equation that can be used to compute the contact angle of three adjacent media through surface tension relations.\phantom{\qedhere}
	
	Finally, note that Modica \cite{Modica2} studied the $\Gamma$-convergence with respect to $\varepsilon\rightarrow 0$ for energies of the form \eqref{eq_ACalpha_energy} (up to a scaling in $\varepsilon$) with mass and nonnegativity constraint. The $\Gamma$-limits are perimeter functionals, where additionally the perimeter of some part of the boundary is added but weighted with a constant related to the potential and the boundary contact energy. This also motivates to study the dynamical problem \eqref{eq_ACalpha1}-\eqref{eq_ACalpha3} associated to \eqref{eq_ACalpha_energy} and the relation to \eqref{MCF} with contact angle distinct from $\frac{\pi}{2}$ in the limit $\varepsilon\rightarrow 0$.
\end{proof}

\begin{proof}[Rigorous Sharp Interface Limit Results for \hyperlink{ACalpha}{(AC$_\alpha$)}]	
	There are many results for the case $\alpha=\frac{\pi}{2}$, i.e.~the usual Allen-Cahn equation. Here it is well-known that the sharp interface limit is \eqref{MCF}, and in case of boundary contact, there is a $\frac{\pi}{2}$-contact angle. Let us summarize the results, see \cite{MoserACvACND} for a more detailed description. On the one hand there are local in time results, proving quite strong assertions (e.g.~norm estimates) for times before singularities appear and as long as the interface stays smooth. See \cite{ChenGenPropInt} (via the construction of sub- and supersolutions and the comparison principle) and for closed interfaces \cite{deMS} (by rigorous asymptotic expansions, see below for a description). The latter was refined in \cite{CHL}, \cite{ALiu} and the situation of boundary contact for the diffuse interfaces was considered in \cite{AbelsMoser} for 2D and \cite{MoserACvACND} for ND. Finally note \cite{FischerLauxSimon}, where a relative entropy method was used. On the other hand, there are global in time results that use a weak formulation for the limit system, cf.~\cite{ESS}, \cite{KKR} (viscosity solutions), \cite{Ilmanen},\cite{MizunoTonegawa}, \cite{Kagaya} (varifold solutions), \cite{LauxSimon} (conditional result in BV-setting).\phantom{\qedhere}
	
	To the best of our knowledge, for the case $\alpha\neq\frac{\pi}{2}$ there is no rigorous result so far in the literature. However, note that on the energy level there is a preparatory result in a varifold setting, see Kagaya, Tonegawa \cite{KagayaTonegawa}, in particular the remarks in \cite{KagayaTonegawa}, Section 5.3. We prove a rigorous sharp interface limit result local in time for $\alpha$ close to $\frac{\pi}{2}$ with the method of de Mottoni and Schatzman \cite{deMS}, see Theorem \ref{th_ACalpha_conv} below. It extends the result in Abels, Moser \cite{AbelsMoser}, where the case $\alpha=\frac{\pi}{2}$ is considered. The result is part of the PhD thesis of the second author, cf.~Moser \cite{MoserDiss}. Note that also \cite{MoserACvACND} is part of \cite{MoserDiss}.
\end{proof}

\begin{proof}[The Method of de Mottoni and Schatzman] 
	The method relies on the local in time existence of a smooth solution to the sharp interface problem. The latter can usually be proven. Then 
	\begin{enumerate} 
		\item Based on the evolving hypersurface that is (part of) the solution to the sharp interface problem, one carries out a rigorous asymptotic expansion of the diffuse interface model in order to obtain a suitable approximate solution.\phantom{\qedhere}
		\item Then one uses a Gronwall argument in order to control the difference of the exact and approximate solutions. This then entails the need for a spectral estimate of an operator obtained from the diffuse interface model by some linearization at the approximate solution.  
	\end{enumerate}
	With this method the typical profile of the solution is also obtained and comparison principles are not required in an essential manner. Therefore the approach has been applied to many other diffuse interface models as well, see \cite{MoserACvACND} for a detailed list of references. For an overview of the rigorous asymptotic expansions and the spectral estimates in the applications we refer to the introductions of Sections 5 and 6 in \cite{MoserDiss}. The novelty in the contributions \cite{AbelsMoser} (and \cite{MoserACvACND}) is the consideration of boundary contact for the diffuse interfaces within the method. 
\end{proof}

\begin{proof}[Mean Curvature Flow \eqref{MCF} with Contact Angle]
	In our 2D case mean curvature is simply the curvature of the curve. For our convergence result we will make the assumption that \eqref{MCF} with constant contact angle $\alpha$ has a smooth solution on some time interval $[0,T_0]$. This is a precondition for the method of de Mottoni and Schatzman \cite{deMS}.\phantom{\qedhere}
	
	The local in time well-posedness starting from suitable initial curves is basically well-known. See for example Katsoulakis, Kossioris, Reitich \cite{KKR}, Section 2, for a parametric approach. Another idea is to reduce the evolution to a parabolic PDE by writing it over a reference curve via suitable coordinates. The typical procedure in the case of a closed hypersurface is described e.g.~in Prüss, Simonett \cite{PruessSimonett}. For curvilinear coordinates in the situation of boundary contact see Vogel \cite{Vogel} and Section \ref{sec_coord} below. 
	
	We need some notation concerning the coordinates for the formulation of the main theorem.\phantom{\qedhere}
	
	\begin{Remark}[\textbf{Domain, Sharp Interface and Coordinates}]\label{th_intro_coord}\upshape
		The details are given in Section \ref{sec_coord} below. For a sketch of the situation see Figure \ref{fig_TubularNeighbNotation}.
		\begin{enumerate}
			\item \textit{Domain.} Let $\Omega\subset\R^2$ be a bounded, smooth domain with outer unit normal $N_{\partial\Omega}$. For $T>0$ let $Q_T:=\Omega\times(0,T)$ and $\partial Q_T:=\partial\Omega\times[0,T]$. 
			\item \textit{Sharp Interface.} Let $T_0>0$ and $\Gamma=(\Gamma_t)_{t\in[0,T_0]}$ be an evolving curve (with boundary, smooth, compact, connected) parametrized appropriately over a reference interval $I$ and with boundary contact of $\partial\Gamma$ at $\partial\Omega$ with constant contact angle $\alpha\in(0,\pi)$. For $\Gamma$ let $V_{\Gamma_t}$ be the normal velocity and $H_{\Gamma_t}$ be the curvature at time $t\in[0,T_0]$ with respect to a unit normal $\vec{n}$ of $\Gamma$. Cf.~Section \ref{sec_coord_surface_requ} below for the exact prerequisites.
			\item \textit{Coordinates.} We construct suitable curvilinear coordinates $(r,s)$ taking values in a trapeze with angle $\alpha$ based on the rectangle $[-2\delta,2\delta]\times I$ for $\delta>0$ small (cf.~Figure \ref{fig_trapez} below for a sketch) parametrizing a neighbourhood of $\Gamma$ in $\overline{\Omega}\times[0,T_0]$. For the precise statements see Theorem \ref{th_coord2D}. Here $r$ works as a signed distance function and $s$ has the role of a tangential component. The set $\overline{Q_{T_0}}=\overline{\Omega}\times[0,T_0]$ is divided by $\Gamma$ into two connected sets $Q_{T_0}^\pm$ corresponding to the sign of $r$, i.e.~the following disjoint union holds:
			\[
			\overline{Q_{T_0}}=\Gamma\cup Q_{T_0}^- \cup Q_{T_0}^+.
			\]
			Finally, we define tubular neighbourhoods $\Gamma(\eta):=r^{-1}((-\eta,\eta))$ for $\eta\in(0,2\delta]$ and introduce a normal derivative $\partial_n$ and some tangential gradient $\nabla_\tau$ on $\Gamma(\eta)$, cf.~Remark \ref{th_coord2D_rem2}.	
		\end{enumerate} 
\end{Remark}\end{proof}

\begin{figure}[H]\centering
	\def\svgwidth{0.8\linewidth}
	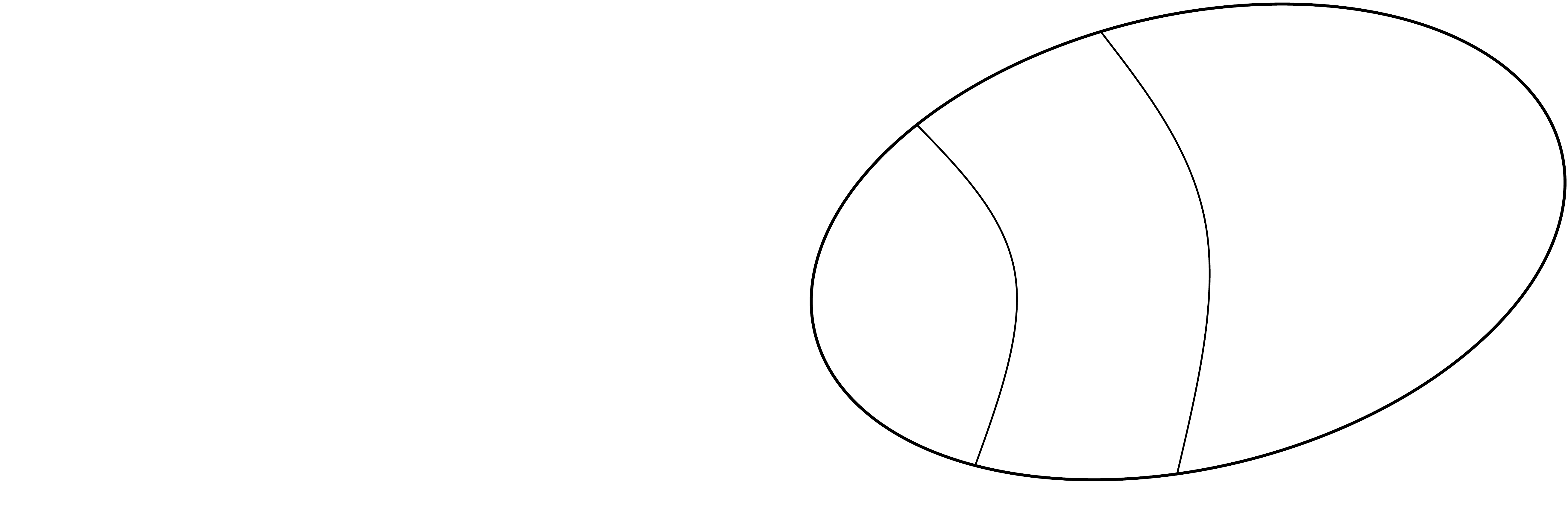
	\caption{Sharp interface with $\alpha$-contact angle and curvilinear coordinates.}\label{fig_TubularNeighbNotation}
\end{figure}

\begin{proof}[The Main Theorem] We obtain the following rigorous sharp interface limit result for \hyperlink{ACalpha}{(AC$_\alpha$)}:\phantom{\qedhere}
\begin{Theorem}[\textbf{Convergence of (AC$_\alpha$) to (MCF) with $\alpha$-Contact Angle}]\label{th_ACalpha_conv}
	There is an $\overline{\alpha}_0>0$ small such that the following holds.
	Let $\Omega$, $N_{\partial\Omega}$, $Q_T$ and $\partial Q_T$ for $T>0$ be as in Remark \ref{th_intro_coord},~1. Furthermore, let $\Gamma=(\Gamma_t)_{t\in[0,T_0]}$ for some $T_0>0$ be a smooth evolving curve with $\alpha$-contact angle as in Remark \ref{th_intro_coord},~2.~for fixed $\alpha\in\frac{\pi}{2}+[-\overline{\alpha}_0,\overline{\alpha}_0]$ and let $\Gamma$ solve \eqref{MCF}. Moreover, let $\delta>0$ be small and $Q_{T_0}^\pm$, $\Gamma(\delta)$, $\nabla_\tau$, $\partial_n$ be defined as in Remark \ref{th_intro_coord},~3. Finally, let $f$ fulfill \eqref{eq_AC_fvor1}-\eqref{eq_AC_fvor2} and let $\sigma_\alpha$ be as in Definition \ref{th_ACalpha_sigma_def}. Let $M\in\N$ with $M\geq 3$.

	Then there are $\delta_0\in(0,\delta]$, $\varepsilon_0>0$ and $u^A_{\varepsilon,\alpha}:\overline{\Omega}\times[0,T_0]\rightarrow\R$ smooth for $\varepsilon\in(0,\varepsilon_0]$ (depending on $M$) such that $\lim_{\varepsilon\rightarrow 0}u^A_{\varepsilon,\alpha}=\pm 1$ uniformly on compact subsets of $Q_{T_0}^\pm$ and:
	\begin{enumerate}
		\item If $M\geq 4$, then consider $u_{0,\varepsilon,\alpha}\in C^2(\overline{\Omega})$ with $\partial_{N_{\partial\Omega}} u_{0,\varepsilon,\alpha}+\frac{1}{\varepsilon}\sigma_\alpha'(u_{0,\varepsilon,\alpha})=0$ on $\partial\Omega$ for $\varepsilon\in(0,\varepsilon_0]$ and such that for some $R>0$ and all $\varepsilon\in(0,\varepsilon_0]$ it holds
		\begin{align}\label{eq_ACalpha_conv1}
		\sup_{\varepsilon\in(0,\varepsilon_0]}\|u_{0,\varepsilon,\alpha}\|_{L^\infty(\Omega)}<\infty\quad\text{ and }\quad\|u_{0,\varepsilon,\alpha}-u^A_{\varepsilon,\alpha}|_{t=0}\|_{L^2(\Omega)}\leq R\varepsilon^M.
		\end{align}
		Then for solutions $u_{\varepsilon,\alpha}\in C^2(\overline{Q_{T_0}})$ of \eqref{eq_ACalpha1}-\eqref{eq_ACalpha3} for $\varepsilon\in(0,\varepsilon_0]$ starting from the initial data $u_{0,\varepsilon,\alpha}$ there are $\varepsilon_1\in(0,\varepsilon_0]$, $C>0$ such that
		\begin{align}\begin{split}\label{eq_ACalpha_conv2}
		\sup_{t\in[0,T]}\|(u_{\varepsilon,\alpha}-u^A_{\varepsilon,\alpha})(t)\|_{L^2(\Omega)}+\|\nabla(u_{\varepsilon,\alpha}-u^A_{\varepsilon,\alpha})\|_{L^2(Q_T\setminus\Gamma(\delta_0))}&\leq C\varepsilon^M,\\
		\sqrt{\varepsilon}\|\nabla_\tau(u_{\varepsilon,\alpha}-u^A_{\varepsilon,\alpha})\|_{L^2(Q_T\cap\Gamma(\delta_0))}+\varepsilon\|\partial_n(u_{\varepsilon,\alpha}-u^A_{\varepsilon,\alpha})\|_{L^2(Q_T\cap\Gamma(\delta_0))}&\leq C\varepsilon^M\end{split}
		\end{align}
		for all $\varepsilon\in(0,\varepsilon_1]$ and $T\in(0,T_0]$.
		\item If $M\geq 4$, then there is a $\tilde{R}>0$ small such that the assertion in 1.~is valid, provided that $R,M$ in \eqref{eq_ACalpha_conv1}-\eqref{eq_ACalpha_conv2} are displaced by $\tilde{R},3$.
		\item If $M=3$, then there is $T_1\in(0,T_0]$ such that the analogous result in 1.~is true, but \eqref{eq_ACalpha_conv2} is only obtained for all $\varepsilon\in(0,\varepsilon_1]$ and $T\in(0,T_1]$. 
	\end{enumerate}
\end{Theorem}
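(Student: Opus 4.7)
The strategy is precisely the de Mottoni--Schatzman programme sketched in the introduction, carried out in three stages: first the construction of a high-order approximate solution $u^A_{\varepsilon,\alpha}$ by matched asymptotic expansion, then a spectral estimate for the linearization at $u^A_{\varepsilon,\alpha}$, and finally a Gronwall-type closure of the nonlinear remainder estimate. Throughout I would work in the curvilinear coordinates $(r,s)$ of Remark \ref{th_intro_coord} and treat separately the \emph{outer region} (bulk, away from $\Gamma$), the \emph{inner region} (tubular neighbourhood $\Gamma(2\delta)$), and a \emph{contact-point region} near $\partial\Gamma \cap \partial\Omega$.

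For the construction, I would make the ansatz $u^A_{\varepsilon,\alpha} = \sum_{i=0}^{M} \varepsilon^i u_i$ in each region, with the inner expansion using the stretched variable $\rho = r/\varepsilon$. The leading-order inner profile $\theta_0(\rho)$ is the standard heteroclinic of $-\theta_0'' + f'(\theta_0)=0$ with $\theta_0(\pm\infty)=\pm 1$; the condition \eqref{eq_angle_comp1} is exactly what forces $\theta_0$ to meet $\partial\Omega$ in angle $\alpha$ at leading order. The higher profiles $u_i$ are obtained recursively by solving linear inhomogeneous ODEs in $\rho$ whose solvability conditions (orthogonality to $\theta_0'$) fix the $s$-dependence and, at order $\varepsilon$, encode the motion law $V_{\Gamma_t}=H_{\Gamma_t}$. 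Near the contact points additional boundary-layer corrections are needed to match the nonlinear Robin condition $\partial_{N_{\partial\Omega}} u + \varepsilon^{-1}\sigma_\alpha'(u) = 0$; since $\sigma_\alpha = \cos\alpha\,\hat\sigma$ these boundary problems are small perturbations of the orthogonal case, which is a major reason for requiring $\alpha$ close to $\pi/2$. After gluing the three expansions by cutoffs in $(r,s)$ I obtain a smooth $u^A_{\varepsilon,\alpha}$ satisfying \eqref{eq_ACalpha2} exactly and solving \eqref{eq_ACalpha1} with interior residual and boundary residual of sizes $\varepsilon^{M}$ and $\varepsilon^{M+1/2}$ respectively in norms adapted to the $\varepsilon$-stretched inner coordinate.

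The core step, and I expect the main obstacle, is a uniform spectral estimate for the linearized operator $L_\varepsilon v = -\Delta v + \varepsilon^{-2} f''(u^A_{\varepsilon,\alpha})\,v$ with boundary condition $\partial_{N_{\partial\Omega}} v + \varepsilon^{-1} \sigma_\alpha''(u^A_{\varepsilon,\alpha})\,v = 0$, namely
\[
\int_\Omega |\nabla v|^2 + \frac{1}{\varepsilon^2}f''(u^A_{\varepsilon,\alpha})\,v^2\,dx + \frac{1}{\varepsilon}\int_{\partial\Omega}\sigma_\alpha''(u^A_{\varepsilon,\alpha})\,v^2\,d\Hc^{1} \;\geq\; -C\|v\|_{L^2(\Omega)}^2,
\]
with $C$ independent of $\varepsilon$. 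Away from $\Gamma$ the term $\varepsilon^{-2} f''(u^A)$ is strongly coercive; near $\Gamma$ a localization and rescaling by $\rho=r/\varepsilon$ reduces matters to a one-dimensional Schrödinger operator $-\partial_\rho^2 + f''(\theta_0(\rho))$ on a $(\rho,s)$-trapezoid, with a $\rho$-Robin condition built from $\sigma_\alpha''$ at the contact point. For $\alpha=\pi/2$ this is exactly the setting of \cite{AbelsMoser}, and since the boundary weight $\sigma_\alpha''$ scales with $\cos\alpha$, a perturbation argument with $|\alpha-\pi/2| \leq \overline{\alpha}_0$ sufficiently small should preserve the one-dimensional estimate; carefully tracking the dependence on $\cos\alpha$ and the contact-point cutoff will determine the explicit threshold $\overline{\alpha}_0$.

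The closure is then standard in spirit but must be organized with care. Setting $v_{\varepsilon,\alpha} := u_{\varepsilon,\alpha} - u^A_{\varepsilon,\alpha}$, subtracting the two problems and testing with $v_{\varepsilon,\alpha}$ yields
\[
\frac{1}{2}\frac{d}{dt}\|v_{\varepsilon,\alpha}\|_{L^2(\Omega)}^2 + \langle L_\varepsilon v_{\varepsilon,\alpha},v_{\varepsilon,\alpha}\rangle = \langle R^{\text{int}}_{\varepsilon,\alpha},v_{\varepsilon,\alpha}\rangle + \langle R^{\partial}_{\varepsilon,\alpha},v_{\varepsilon,\alpha}\rangle_{\partial\Omega} - \frac{1}{\varepsilon^2}\langle N(u^A_{\varepsilon,\alpha};v_{\varepsilon,\alpha}),v_{\varepsilon,\alpha}\rangle - \frac{1}{\varepsilon}\langle N_\partial(u^A_{\varepsilon,\alpha};v_{\varepsilon,\alpha}),v_{\varepsilon,\alpha}\rangle_{\partial\Omega},
\]
where $N,N_\partial$ collect the quadratic/cubic nonlinearities of $f'$ and $\sigma_\alpha'$. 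Applying the spectral estimate produces the norms on the left-hand side of \eqref{eq_ACalpha_conv2}; the residuals contribute $O(\varepsilon^{2M})$; and a bootstrap on $\|v_{\varepsilon,\alpha}\|_{L^\infty}$ (obtained via Gagliardo--Nirenberg from the tangential/normal gradient terms and a maximum-principle-type a priori $L^\infty$ bound from \eqref{eq_AC_fvor2}) controls the nonlinearities provided $M\geq 4$. Integration via Gronwall then yields the estimates on $[0,T_0]$. Part 2 follows by the same scheme, using the smallness of $\tilde R$ to start the bootstrap at $M=3$; part 3 uses $M=3$ only on a shortened interval $[0,T_1]$ where the cubic error is still absorbed.
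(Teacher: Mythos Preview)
Your outline follows the paper's three-stage strategy, but several specifics are off in ways that would prevent the argument from closing. First, $u^A_{\varepsilon,\alpha}$ does \emph{not} satisfy \eqref{eq_ACalpha2} exactly: Lemma~\ref{th_asym_ACalpha_uA} leaves a boundary remainder $s^A_{\varepsilon,\alpha}=\Oc(\varepsilon^M e^{-c|\rho_{\varepsilon,\alpha}|})$ which must be carried through the difference estimate. Second, the contact-point layer is not merely a perturbation of $\theta_0$: the leading profile there is the genuinely two-dimensional half-plane solution $v_\alpha$ of \eqref{eq_hp_alpha_modelN1}--\eqref{eq_hp_alpha_modelN3}, and the inner and contact expansions are glued by an $\varepsilon$-\emph{scaled} cutoff $\chi_\alpha$ (see \eqref{eq_asym_ACalpha_cp_cutoff} and Remark~\ref{th_asym_ACalpha_cp_cutoff}) designed so that the right-hand sides of the linearized half-plane problems decay exponentially in both $\rho$ and $Z$; the compatibility condition \eqref{eq_hp_alpha_lin_comp} for these problems is what produces the boundary conditions for the height functions $h_{j,\alpha}$.

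The most consequential omission is in the spectral estimate. The paper proves (Theorem~\ref{th_SE_ACalpha}) not just the lower bound $-C\|\psi\|_{L^2}^2$ but the stronger form with $+\|\nabla\psi\|_{L^2(\Omega\setminus\Gamma_t(\delta_0))}^2+c_0\varepsilon\|\nabla_\tau\psi\|_{L^2(\Gamma_t(\delta_0))}^2$ on the right. The factor $\varepsilon$ in front of the tangential gradient---absent in the $\alpha=\tfrac{\pi}{2}$ case of \cite{AbelsMoser}---is the essential weakening (cf.~Remark~\ref{th_SE_ACalpha_rem}); it is precisely this that raises the critical threshold from $M=2$ to $M=3$ and produces the $\sqrt{\varepsilon}$ and $\varepsilon$ weights in \eqref{eq_ACalpha_conv2}. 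The proof near the contact points (Section~\ref{sec_SE_ACalpha}) is not a 1D reduction but uses an approximate first eigenfunction built from $\partial_\rho v_\alpha$, an $L^2$-orthogonal splitting of $\tilde H^1(\Omega_t^{C\pm})$, and separate estimates of the bilinear form on each piece. Finally, the Gronwall closure (Theorem~\ref{th_DC_ACalpha}) does not bootstrap $\|v_{\varepsilon,\alpha}\|_{L^\infty}$: the a priori bound from Lemma~\ref{th_DC_bdd_scal} controls $\|u_{\varepsilon,\alpha}\|_{L^\infty}$ directly, after which Taylor's theorem reduces the nonlinear terms to $\varepsilon^{-2}\|\overline{u}_{\varepsilon,\alpha}\|_{L^3(\Omega)}^3+\varepsilon^{-1}\|\tr\,\overline{u}_{\varepsilon,\alpha}\|_{L^3(\partial\Omega)}^3$, estimated via Gagliardo--Nirenberg on $\Omega$; the bootstrap is instead on the maximal time $T_{\varepsilon,\beta,R}$ for which \eqref{eq_DC_ACalpha_DE} holds.
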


\begin{Remark}\phantomsection{\label{th_ACalpha_conv_rem}}\upshape\begin{enumerate}
		\item \textit{Interpretation of Theorem \ref{th_ACalpha_conv}.} The $u^A_{\varepsilon,\alpha}$ in the theorem can be viewed as the representation of a diffuse interface moving with $\Gamma$ because $u^A_{\varepsilon,\alpha}$ is smooth but converges for $\varepsilon\rightarrow 0$ to a step function with jump set equal the solution $\Gamma$ to \eqref{MCF} with $\alpha$-contact angle starting from $\Gamma_0$. The initial data $u_{0,\varepsilon,\alpha}$ in Theorem \ref{th_ACalpha_conv} are \enquote{well-prepared}, i.e.~the generation of diffuse interfaces in the evolution is skipped and it is assumed that a diffuse interface is located at the initial sharp interface $\Gamma_0$ at time $t=0$. Therefore Theorem \ref{th_ACalpha_conv} basically proves that the qualitative behaviour of diffuse interfaces with boundary contact, generated by \hyperlink{ACalpha}{(AC$_\alpha$)}, is that of \eqref{MCF} with $\alpha$-contact angle, at least as long as the evolution of the latter remains smooth. Moreover, Theorem \ref{th_ACalpha_conv} yields the typical profile of solutions across diffuse interfaces.
		\item \textit{Layout of the Proof.} 
		The new model problems, a nonlinear elliptic problem on $\R\times(0,\infty)$ and the linearized problem are considered in Section \ref{sec_hp_alpha} below. Note that interestingly, condition \eqref{eq_angle_comp1} turns out to be a necessary (and at least for $\alpha$ close to $\frac{\pi}{2}$ sufficient) condition for the solvability of the nonlinear equation, see Remark \ref{th_hp_angle_comp}. The asymptotic expansions can be found in Section \ref{sec_asym_ACalpha} and the approximate solution $u^A_{\varepsilon,\alpha}$ in Section \ref{sec_asym_ACalpha_uA}. Note that $M$ is related to the number of terms in the expansion. The spectral estimate is done in Section \ref{sec_SE_ACalpha} and the difference estimate is shown in Section \ref{sec_DC_ACalpha_DE}. Finally, Theorem \ref{th_ACalpha_conv} is proven in Section \ref{sec_DC_ACalpha_conv}. 
		\item \textit{Origin of $\overline{\alpha}_0$.} Theorem \ref{th_ACalpha_conv} is only shown for a small but uniform $\overline{\alpha}_0>0$. Let us comment at this point, where this restriction comes from. First, note that there is no restraint arising from the construction of the curvilinear coordinates in Section \ref{sec_coord2D}. The first restriction enters when we use the elliptic problem on $\R\times(0,\infty)$ from the $\frac{\pi}{2}$-case and the Implicit Function Theorem with respect to $\alpha$ to solve the model problems in Section \ref{sec_hp_alpha}. The second restriction origins from the proof of the spectral estimate in Section \ref{sec_SE_ACalpha}. The reason is that we adapt the proof from the $\frac{\pi}{2}$-case in \cite{AbelsMoser} and choose $\overline{\alpha}_0>0$ small such that similar arguments work, see also Remark \ref{th_SE_ACalpha_rem},~1. The precise restriction on $\overline{\alpha}_0$ is manifested in Remark \ref{th_asym_ACalpha_decay_param} and Theorem \ref{th_SE_ACalpha}.
		\item \textit{Well-Posedness of \hyperlink{ACalpha}{(AC$_\alpha$)}.} In Theorem \ref{th_ACalpha_conv} existence of solutions $u_{\varepsilon,\alpha}\in C^2(\overline{Q_{T_0}})$ of \hyperlink{ACalpha}{(AC$_\alpha$)} is assumed, but this is in principle well-known. The nonlinear boundary condition makes the analysis more difficult compared to the case $\alpha=\frac{\pi}{2}$. Nevertheless, $\sigma_\alpha'$ is zero outside $(-1,1)$ and one can still obtain a priori boundedness of classical solutions, see Section \ref{sec_DC_prelim_bdd_scal} below. One possibility is to construct weak solutions via time-discretization. Moreover, equation \eqref{eq_ACalpha1}-\eqref{eq_ACalpha3} fits for example in the abstract semigroup setting of Lunardi \cite{LunardiOptReg}, Section 8.5.3. There local well-posedness in a Hölder-setting is obtained by linearization at the initial value. Higher regularity and smoothness can be obtained with linear theory, cf.~Lunardi, Sinestrari, von Wahl \cite{LunardiSvW}. In order to show global existence of solutions, one has to prove boundedness of the solution in the Hölder space $C^{2,\beta}(\overline{\Omega})$ with respect to time for some $\beta>0$ small. Then the existence interval can be taken uniformly. It should be possible to prove this with the a priori uniform boundedness and a bootstrap argument.
		\item The level sets $\{u^A_{\varepsilon,\alpha}=0\}$, $\{u_{\varepsilon,\alpha}=0\}$ can be seen as approximations for $\Gamma$. In the explicit construction of $u^A_{\varepsilon,\alpha}$ in Section \ref{sec_asym_ACalpha} below the error from $\{u^A_{\varepsilon,\alpha}=0\}$ to $\Gamma$ is of order $\varepsilon$. This is of interest for numerical computations, cf.~also Caginalp, Chen, Eck \cite{CaginalpChenEck}. Note that for the case $\alpha=\frac{\pi}{2}$ this approximation order is $\varepsilon^2$, if $f$ is even, cf.~\cite{AbelsMoser} and \cite{MoserACvACND}.
		\item Basically also estimates of stronger norms can be obtained in the situation of Theorem \ref{th_ACalpha_conv}, but the initial values have to satisfy better estimates. The idea is to use interpolation estimates of the already estimated norms with stronger norms that can be controlled for exact solutions by some negative $\varepsilon$-orders, cf.~Alikakos, Bates, Chen \cite{ABC}, Theorem 2.3 in the case of the Cahn-Hilliard equation. Nevertheless, this does not improve the approximation of $\Gamma$ in the sense of 5.
		\end{enumerate}
\end{Remark}
\end{proof}

\section{Notation and Function Spaces}\label{sec_fct}
We denote with $\N$ the natural numbers and $\N_0:=\N\cup\{0\}$. Moreover, $\K=\R$ or $\C$ is fixed depending on the context. Furthermore, for simplicity we write $|.|$ for the Euclidean norm in $\R^m$, $m\in\N$ and the Frobenius norm in $\R^{m\times n}$, $m,n\in\N$. We say that $\Omega\subseteq\R^n$, $n\in\N$ is a domain, if $\Omega$ is open, nonempty and connected. The symbol \enquote{$|_.$} denotes restrictions or evaluations of functions. Moreover, we use the convention that the differential operators $\nabla$, $\diverg$ and $D^2$ just on spatial variables. For some set $X$ and a normed space $Y$ we define
$B(X,Y):=\{ f:X\rightarrow Y\text{ bounded}\}$. If $X,Y$ are normed spaces over $\K$, then $\Lc(X,Y)$ is the set of bounded linear operators $T:X\rightarrow Y$. Finally, we employ the constant convention.

Let $n,k\in\N$ and let $\Omega\subseteq\R^n$ be open and nonempty. We consider a Banach space $B$ over $\K$. We use the usual notation $C^k(\Omega,B), C^k(\overline{\Omega},B), C_b^k(\Omega,B), C_b^k(\overline{\Omega},B), C^{k,\gamma}(\overline{\Omega},B)$ for continuous, $k$-times continuously differentiable and Hölder-continuous functions with values in $B$ (and variants with bounded and/or continuously extendible functions and derivatives), see \cite{MoserACvACND}, Definition 2.1 for details. If $B=\K$ and it is obvious from the context if $\K=\R$ or $\C$, then we omit $B$ in the notation. Moreover, $C_0^\infty(\Omega)$ is the set of $f\in C^\infty(\Omega,\R)$ with compact support $\text{supp}\,f\subseteq\Omega$. Furthermore, $C_0^\infty(\overline{\Omega})$ denotes $\{f|_{\overline{\Omega}}:f\in C_0^\infty(\R^n)\}$.

Let $(M,\mathcal{A},\mu)$ be a $\sigma$-finite, complete measure space and $B$ be a Banach space over $\K=\R$ or $\C$. We refer to Amann, Escher \cite{AmannEscherIII}, Chapter X, for the definitions and properties of ($\mu$- or strongly-) measurable and (Bochner-)integrable functions $f:M\rightarrow B$, the \textit{Bochner(-Lebesgue)-Integral} and the \textit{Lebesgue-spaces} $L^p(M,B)$ for $1\leq p\leq\infty$. 

\begin{Definition}\upshape
	\begin{enumerate}
	\item Let $\Omega\subseteq\R^n$, $n\in\N$ be open and nonempty. Moreover, let $k\in\N_0$, $1\leq p\leq\infty$ and $B$ be a Banach space. Then we denote with $W^{k,p}(\Omega,B)$ the standard \textit{Sobolev-spaces}, where $W^{0,p}(\Omega,B):=L^p(\Omega,B)$. Finally, let $H^k(\Omega,B):=W^{k,2}(\Omega,B)$.
	\item Let $n\in\N$. Then $H^\beta(\R^n)$ for $\beta>0$ are the usual $L^2$-\textit{Bessel-Potential spaces} and $W^{k+\mu,p}(\R^n)$ for $k\in\N_0$, $\mu\in(0,1)$ and $1\leq p<\infty$ the \textit{Sobolev-Slobodeckij spaces}.
    \end{enumerate}
\end{Definition}

If $B=\K$ and it is clear from the context if $\K=\R$ or $\C$, then we leave out $B$ in the notation. Concerning definitions and properties of these scalar-valued function spaces, for example embeddings, interpolation theorems and trace results we refer to Adams, Fournier \cite{AdamsFournier}, Alt \cite{AltFA}, Leoni \cite{Leoni} and Triebel \cite{Triebel_Interpol_Theory}, \cite{Triebel_Fct_SpacesI}. For transformation theorems and the behaviour on product sets consider also \cite{MoserACvACND}, Section 2.

Finally, let us introduce the needed spaces with exponential weight.
\begin{Definition}\label{th_exp_def1}\upshape
	Let $1\leq p\leq\infty, k\in\N_0, \mu\in(0,1)$ and $\beta, \beta_1,\beta_2\geq 0$.
	\begin{enumerate}
		\item We define and equip with canonical norms
		\begin{align*}
		L^p_{(\beta_1,\beta_2)}(\R^2_+)&:=\{u\in L^1_\text{loc}(\R^2_+):e^{\beta_1|R|+\beta_2 H}u\in L^p(\R^2_+)\},\\
		W^{k,p}_{(\beta_1,\beta_2)}(\R^2_+)&:=\{u\in L^1_\text{loc}(\R^2_+):D^\gamma u\in L^p_{(\beta_1,\beta_2)}(\R^2_+)\text{ for all }|\gamma|\leq k\}.
		\end{align*}
		Occasionally, we replace $W^{k,2}$ with $H^k$. Let $C^k_{(\beta_1,\beta_2)}(\overline{\R^2_+}):=C_b^k(\overline{\R^2_+})\cap W^{k,\infty}_{(\beta_1,\beta_2)}(\R^2_+)$.
		\item Similarly we define $L^p_{(\beta)}(\R)$, $L^p_{(\beta)}(\R_+)$, $W^{k,p}_{(\beta)}(\R)$, $W^{k,p}_{(\beta)}(\R_+)$, $C^k_{(\beta)}(\R)$, $C^k_{(\beta)}(\overline{\R_+})$. 
		\item
		We fix a smooth $\eta:\R\rightarrow\R$ such that $\eta(R)=|R|$ for all $|R|\geq\overline{R}$ and an $\overline{R}>0$. Then let
		\[
		W^{k+\mu,p}_{(\beta)}(\R):=\{u\in L^1_\text{loc}(\R): e^{\beta\eta(R)}u\in W^{k+\mu,p}(\R)\}
		\]
		for $1\leq p<\infty$ with natural norm.
	\end{enumerate}
\end{Definition}
Many properties of these spaces are similar as for (and can be shown with) the features of the unweighted spaces. See \cite{MoserACvACND}, Lemma 2.22, for some properties, in particular the Banach space property, equivalent norms, density assertions, embeddings and traces as well as an $L^2$-Poincaré inequality and a kind of reverse fundamental theorem on $\R_+$.

\section{Curvilinear Coordinates}\label{sec_coord}
We consider a bounded, smooth domain $\Omega\subseteq\R^2$ with outer unit normal $N_{\partial\Omega}$. In this section we show the existence of a curvilinear coordinate system parametrizing a neighbourhood of an appropriate evolving curve\footnote{~For the definition of an evolving hypersurface see Depner \cite{Depner}, Definition 2.31.} in $\overline{\Omega}$ with boundary contact at $\partial\Omega$ with a constant contact angle $\alpha\in(0,\pi)$. For a sketch see Figure \ref{fig_TubularNeighbNotation} above.

\subsection{Requirements for the Evolving Curve}\label{sec_coord_surface_requ}
As parametrization domain for the evolving curve we choose $I:=[-1,1]$ for simplicity. We require that there is some $X_0:I\times[0,T]\rightarrow\overline{\Omega}$ smooth such that $X_0(.,t)$ is an injective immersion for all $t\in[0,T]$. Due to technicalities, we also consider a slightly larger open interval $I_0\supset I$ and a smooth extension of $X_0$ to $\tilde{X}_0:I_0\times(-\tau_0,T+\tau_0)\rightarrow\R^2$ for some $\tau_0>0$ such that $\tilde{X}_0(.,t)$ is an injective immersion for all $t\in(-\tau_0,T+\tau_0)$. Finally, let $\tilde{I}$ be a compact interval such that $I\subsetneq\tilde{I}^\circ$ and $\tilde{I}\subsetneq I_0$. The existence of such $I_0,\tau_0,\tilde{X}_0$ follows via compactness arguments.
 
Because continuous bijections of compact into Hausdorff topological spaces are homeomorphisms, we obtain that $X_0(.,t)$ is an embedding and $\Gamma_t:=X_0(I,t)\subset\R^2$ is a smooth, compact and connected curve with boundary for all $t\in[0,T]$. Furthermore,
\[
\Gamma:=\bigcup_{t\in[0,T]}\Gamma_t\times\{t\}
\]
is a smooth evolving curve and 
\[
\overline{X}_0:=(X_0,\textup{pr}_t):I\times[0,T]\rightarrow\Gamma:(s,t)\mapsto(X_0(s,t),t)
\] 
is a homeomorphism. Let $\vec{n}:I\times[0,T]\rightarrow\R^2$ be a smooth normal field, i.e.~$\vec{n}$ is smooth and $\vec{n}(.,t)$ represents a normal field on $\Gamma_t$. Because of \cite{Depner}, Lemma 2.40 the related normal velocity is 
\[
V(s,t):=V_{\Gamma_t}(s):=\vec{n}(s,t)\cdot\partial_tX_0(s,t)\quad\text{ for }\quad(s,t)\in I\times[0,T].
\]
Furthermore, let $H(s,t):=H_{\Gamma_t}(s)$ for $(s,t)\in I\times[0,T]$ be the (mean) curvature. Using the previous definitions for $\tilde{X}_0$ on $\tilde{I}\times[-\frac{\tau_0}{2},T+\frac{\tau_0}{2}]$ we obtain extensions of $\Gamma_t, \Gamma, \vec{n}, V$ and $H$. For the normal we employ the same symbol $\vec{n}$.

Finally, we assume $(\Gamma_t)^\circ\subseteq\Omega$ and $\partial\Gamma_t\subseteq\partial\Omega$. Then we define the \textit{contact angle} of $\Gamma_t$ with $\partial\Omega$ in any boundary point $X_0(s,t), (s,t)\in\partial I\times[0,T]$ with respect to $\vec{n}(s,t)$ via
\[
|\measuredangle(N_{\partial\Omega}|_{X_0(s,t)},\vec{n}(s,t))|\in(0,\pi),
\]
where $\measuredangle(N_{\partial\Omega}|_{X_0(s,t)},\vec{n}(s,t))$ is taken in $(-\pi,\pi)$.

\subsection{Existence of Curvilinear Coordinates}\label{sec_coord2D}
Let the assumptions in the last Section \ref{sec_coord_surface_requ} hold with a constant contact angle $\alpha\in(0,\pi)$ for all times $t\in[0,T]$. We define smooth tangent and normal fields on the evolving curve $\Gamma$ by
\[
\vec{\tau}(s,t):=\frac{\partial_sX_0(s,t)}{|\partial_sX_0(s,t)|}\quad\text{ and }\quad\vec{n}(s,t):=
\begin{pmatrix}
0 & 1\\
-1 & 0
\end{pmatrix}\vec{\tau}(s,t)\quad\text{ for all }(s,t)\in I\times[0,T].
\]
The natural extensions to $\tilde{I}\times(-\tau_0,T+\tau_0)$ are denoted with the same symbols.
Moreover, the contact points are $p^\pm(t):=X_0(\pm 1,t)$ and we set $\overline{p}^\pm(t):=(p^\pm(t),t)$ for $t\in[0,T]$.

\begin{Remark}\upshape\label{th_coord2D_rem1}
We assume $|\partial_s X_0(s,t)|=1$ for all $s\in I\setminus[-\frac{1}{2},\frac{1}{2}]$ and $t\in[0,T]$. This can be achieved by reparametrization. More precisely, consider
\[
B:I\times[0,T]\rightarrow I:(s,t)\mapsto\frac{2}{L(t)}\int_{-1}^s|\partial_sX_0(\sigma,t)|\,d\sigma-1,\quad L(t):=\int_{-1}^1|\partial_sX_0(\sigma,t)|\,d\sigma.
\]
Then $B$ is smooth and $\partial_sB>0$. Hence $B(.,t)$ is invertible for all $t\in[0,T]$ and the Inverse Mapping Theorem applied to a smooth extension of $(B,\textup{pr}_t)$ on $I\times[0,T]$ yields the smoothness of the inverse in $(s,t)$. Hence $\tilde{X}_0(s,t):=X_0(B(.,t)^{-1}|_s,t)$ is a parametrization with $|\partial_s\tilde{X}_0(s,t)|\equiv L(t)/2$. Then another simple transformation yields the desired reparametrization.

The above condition on $\partial_sX_0$ is only needed for the case $\alpha\neq\frac{\pi}{2}$. More precisely, we use $|\partial_sX_0(\pm 1,t)|=1$ in this Section \ref{sec_coord2D} and for the asymptotic expansion of (AC$_\alpha$) at the contact points, see Section \ref{sec_asym_ACalpha_cp_bulk_m2}. Finally, the above condition on $\partial_sX_0$ is used for the proof of the spectral estimate for (AC$_\alpha$), see Section \ref{sec_SE_ACalpha}.
\end{Remark}

For the coordinates we choose a domain of definition that takes into account the contact angle structure. More precisely, for $\delta>0$ consider the trapeze
\begin{align}\label{eq_coord2D_trapeze}
S_{\delta,\alpha}:=\left\{
	(r,s)\in\R^2: r\in(-\delta,\delta),s\in[-1+\frac{\cos\alpha}{\sin\alpha}r,1-\frac{\cos\alpha}{\sin\alpha}r]
	\right\}
\end{align}
with upper and lower boundary 
\begin{align}\label{eq_coord2D_trapeze_bdry}
S_{\delta,\alpha}^\pm:=\left\{(r,s^\pm(r)):r\in(-\delta,\delta)\right\},\quad\text{ where }\quad s^\pm(r):=\pm1\mp\frac{\cos\alpha}{\sin\alpha}r.
\end{align}
For $\alpha=\frac{\pi}{2}$ we have $S_{\delta,\frac{\pi}{2}}=(-\delta,\delta)\times I$ and $S_{\delta,\frac{\pi}{2}}^\pm=(-\delta,\delta)\times\{\pm1\}$. 
For a sketch see Figure \ref{fig_trapez}.

\begin{figure}[H]
	\centering
	\def\svgwidth{0.4\linewidth}
	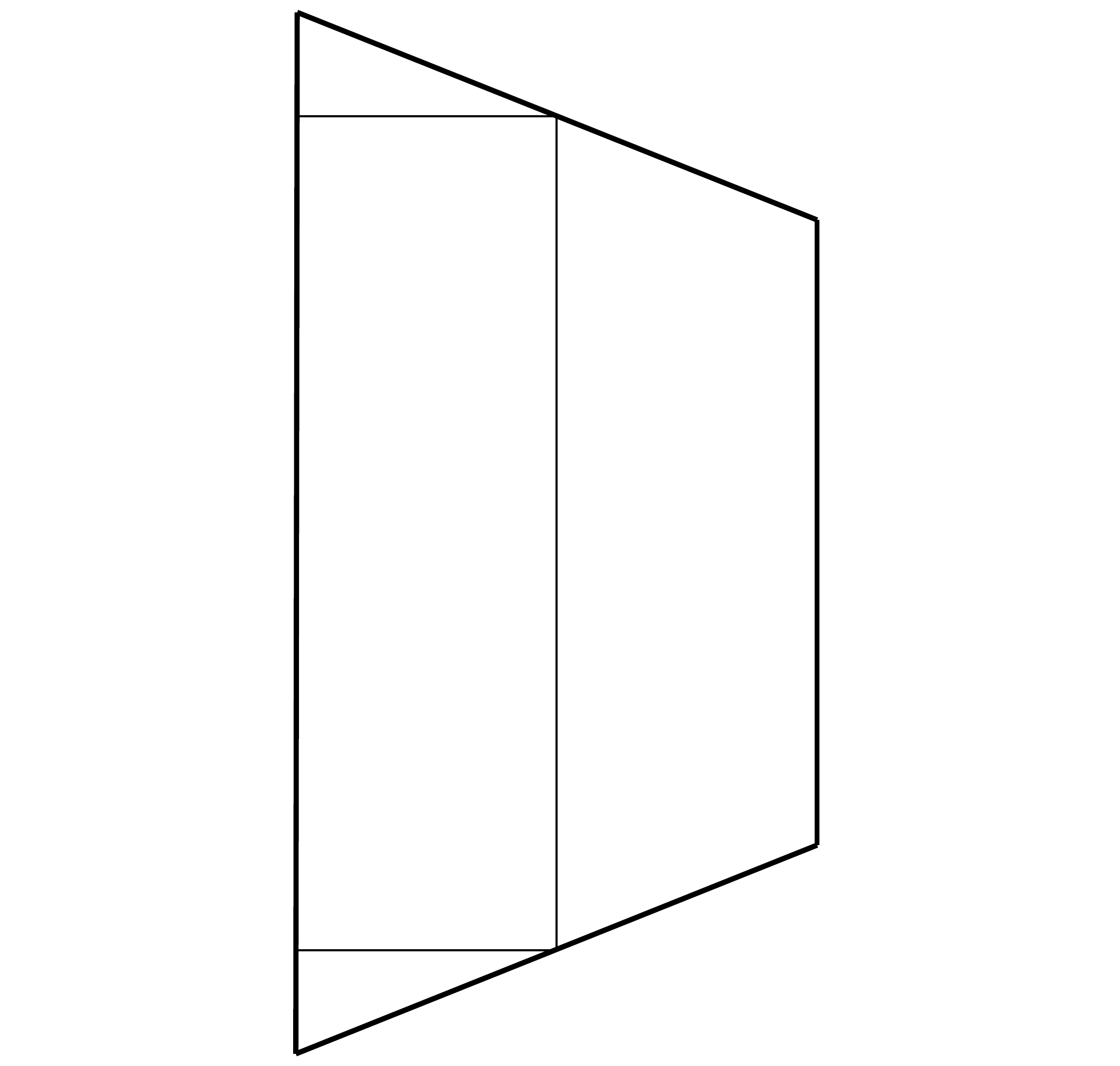
	\caption{$S_{\delta,\alpha}$ and $S_{\delta,\alpha}^\pm$.}
	\label{fig_trapez}
\end{figure}

\begin{Theorem}[\textbf{Curvilinear Coordinates}]\label{th_coord2D}
	Let the above requirements hold. Then there is a $\delta>0$ and a smooth map $\overline{S_{\delta,\alpha}}\times[0,T]\ni(r,s,t)\mapsto X(r,s,t)\in\overline{\Omega}$
	with the following features:
	\begin{enumerate} 
	\item The map $\overline{X}:=(X,\textup{pr}_t)$ is a homeomorphism onto a neighbourhood of $\Gamma$ in $\overline{\Omega}\times[0,T]$. Additionally, one can extend $\overline{X}$ to a smooth diffeomorphism mapping from an open neighbourhood of $\overline{S_{\delta,\alpha}}\times[0,T]$ in $\R^3$ onto an open set in $\R^3$. Moreover, 
	\[
	\Gamma(\tilde{\delta}):=\overline{X}(S_{\tilde{\delta},\alpha}\times[0,T])
	\]
	is an open neighbourhood of $\Gamma$ in $\overline{\Omega}\times[0,T]$ for $\tilde{\delta}\in(0,\delta]$.
	\item It holds $X|_{r=0}=X_0$ and $X$ agrees with the usual tubular neighbourhood coordinate system for $s\in[-1+\mu_0,1-\mu_0]$ for some $\mu_0\in(0,\frac{1}{2})$ small. Moreover, for $(r,s,t)\in\overline{S_{\delta,\alpha}}\times[0,T]$ the property $X(r,s,t)\in\partial\Omega$ is equivalent to $(r,s)\in\overline{S_{\delta,\alpha}^+\cup S_{\delta,\alpha}^-}$. 
	\item Denote the inverse of $\overline{X}$ with $(r,s,\textup{pr}_t)$. Then 
	\[
	|\nabla r|^2|_\Gamma=1,\quad \partial_r(|\nabla r|^2\circ\overline{X})|_{r=0}=0\quad\text{ and }\quad\nabla r\cdot\nabla s|_\Gamma=0.
	\] 
	Finally, we can achieve $\nabla s\circ\overline{X}_0=\partial_sX_0/|\partial_sX_0|^2$ and $\nabla r\circ\overline{X}_0=\vec{n}$. In this case we have $V=-\partial_tr\circ\overline{X}_0$ and $H=-\Delta r\circ\overline{X}_0$.
	\end{enumerate}
\end{Theorem}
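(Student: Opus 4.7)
The plan is to build $X$ by first setting up the standard tubular-neighbourhood map
\[
\tilde{X}(r,s,t):=\tilde{X}_0(s,t)+r\,\vec{n}(s,t)\qquad\text{on }(-\delta_0,\delta_0)\times\tilde{I}\times[-\tau_0/2,T+\tau_0/2]
\]
and then reparametrising the $s$-variable near each contact point so that the trapeze edges $S_{\delta,\alpha}^\pm$ are sent exactly to $\partial\Omega$. For $\delta_0>0$ small, $\tilde{X}(\cdot,\cdot,t)$ is a smooth family of diffeomorphisms in $(r,s)$ because its Jacobian $(\vec{n},(1-rH)\partial_sX_0)$ is nondegenerate. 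Applying the Implicit Function Theorem to $\phi_{\partial\Omega}\circ\tilde{X}=0$ near $(0,\pm 1,t)$, where $\phi_{\partial\Omega}$ is a local defining function for $\partial\Omega$, yields smooth $\phi^\pm(r,t)$ with $\phi^\pm(0,t)=\pm 1$ and $\tilde{X}(r,\phi^\pm(r,t),t)\in\partial\Omega$; the derivative $\partial_s(\phi_{\partial\Omega}\circ\tilde X)|_{(0,\pm 1,t)}=\nabla\phi_{\partial\Omega}\cdot\vec{\tau}$ is nonzero since $\alpha\in(0,\pi)$. Implicit differentiation combined with $N_{\partial\Omega}\cdot\vec{n}=\pm\cos\alpha$ and the normalisation $|\partial_sX_0(\pm 1,t)|=1$ from Remark \ref{th_coord2D_rem1} gives $\partial_r\phi^\pm(0,t)=\mp\cos\alpha/\sin\alpha=\partial_r s^\pm(0)$, so that $\phi^\pm(r,t)-s^\pm(r)=O(r^2)$ uniformly in $t\in[0,T]$.

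Next, pick a smooth cutoff $\chi:\R\to[0,1]$ with $\chi(0)=1$ and $\supp\chi\subset(-\mu_0,\mu_0)$ for some small $\mu_0\in(0,\tfrac12)$, set
\[
\Psi(r,s,t):=s+\chi(s-s^+(r))(\phi^+(r,t)-s^+(r))+\chi(s-s^-(r))(\phi^-(r,t)-s^-(r)),
\]
and finally $X(r,s,t):=\tilde{X}(r,\Psi(r,s,t),t)$. Then $\Psi(r,s^\pm(r),t)=\phi^\pm(r,t)$, $\Psi(r,s,t)=s$ for $s\in[-1+\mu_0,1-\mu_0]$, $\Psi(0,s,t)=s$ and $\partial_s\Psi=1+O(r^2)$, hence $\Psi(\cdot,\cdot,t)$ is a diffeomorphism for $\delta>0$ small. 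It follows that $X|_{r=0}=X_0$, that $X(r,s,t)\in\partial\Omega$ exactly when $(r,s)\in\overline{S_{\delta,\alpha}^+\cup S_{\delta,\alpha}^-}$, and that $X$ coincides with $\tilde X$ for $s\in[-1+\mu_0,1-\mu_0]$. The Jacobian of $X(\cdot,\cdot,t)$ at $(0,\pm 1)$ equals the orthogonal matrix $(\vec{n}(\pm 1,t),\vec{\tau}(\pm 1,t))$, so $X(\cdot,\cdot,t)$ is a local diffeomorphism on $\overline{S_{\delta,\alpha}}$; global injectivity and the extension to an open neighbourhood in $\R^3$ then follow for $\delta$ small from compactness of $[0,T]$ and smooth dependence of all ingredients on $t$.

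For item 3 I use the metric tensor $g_{ij}=\partial_iX\cdot\partial_jX$. Writing $\partial_rX=(1-rH)\partial_sX_0\,\partial_r\Psi+\vec n$ and $\partial_sX=(1-rH)\partial_sX_0\,\partial_s\Psi$ (with $H,\vec n,\partial_sX_0$ evaluated at $\Psi$) and using $\vec n\perp\partial_sX_0$ one obtains $\det g=(1-rH)^2(\partial_s\Psi)^2|\partial_sX_0|^2$, whence $|\nabla r|^2=g^{rr}=g_{ss}/\det g\equiv 1$ throughout $\Gamma(\delta)$, which immediately delivers the first two identities in item~3. At $r=0$ we have $\partial_r\Psi|_{r=0}=0$ and $\partial_s\Psi|_{r=0}=1$ (using $\phi^\pm(0,t)=s^\pm(0)$ and $\partial_r\phi^\pm(0,t)=\partial_r s^\pm(0)$), so $g_{rs}|_{r=0}=0$ gives $\nabla r\cdot\nabla s|_\Gamma=0$, together with $\nabla r\circ\overline{X}_0=\vec n$ and $\nabla s\circ\overline{X}_0=\partial_sX_0/|\partial_sX_0|^2$. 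The identities $V=-\partial_tr\circ\overline{X}_0$ and $H=-\Delta r\circ\overline{X}_0$ follow from the level-set description $\Gamma_t=\{r(\cdot,t)=0\}$ together with $|\nabla r|\equiv 1$. The main technical obstacle is engineering $\Psi$ so as to simultaneously (i) send the trapeze sides exactly onto $\partial\Omega$, (ii) coincide with the identity in the interior so that $X$ equals the usual tubular map for $|s|\leq 1-\mu_0$, and (iii) preserve global injectivity; all three hinge on the quadratic vanishing $\phi^\pm-s^\pm=O(r^2)$, which in turn reflects that the slopes $\mp\cot\alpha$ defining the trapeze have been tuned precisely to match the contact angle.
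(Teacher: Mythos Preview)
Your construction is correct and follows the same overall strategy as the paper's (which only sketches the argument and defers to \cite{MoserDiss}): start from the tubular map, modify it near the contact points so that the slanted edges $S_{\delta,\alpha}^\pm$ land on $\partial\Omega$, and leave it untouched for $|s|\le 1-\mu_0$. The paper realizes the boundary step by first writing $\partial\Omega$ as a graph over its tangent line at each contact point and then building a reparametrisation over $S_{\delta,\alpha}^\pm$; you instead apply the Implicit Function Theorem to $\phi_{\partial\Omega}\circ\tilde X$ to get $\phi^\pm$ and reparametrise only the $s$-variable through $\Psi$, which is a clean equivalent. A pleasant by-product of your route is that $|\nabla r|^2=g^{rr}\equiv 1$ holds on all of $\Gamma(\delta)$ (since $r$ remains the tubular distance), which is stronger than the first-order statement in item~3. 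Two small points to tidy: the sign bookkeeping should read $N_{\partial\Omega}\cdot\vec n=\cos\alpha$ at both contact points with the alternating sign carried by $N_{\partial\Omega}\cdot\vec\tau=\pm\sin\alpha$ (cf.\ Section~\ref{sec_asym_ACalpha_cp_robin_m1}), and the support of $\chi$ should be taken slightly smaller than $(-\mu_0,\mu_0)$ (or $\mu_0$ enlarged accordingly) so that $\chi(s-s^\pm(r))$ genuinely vanishes for $s\in[-1+\mu_0,1-\mu_0]$ once $r\neq 0$.
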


\begin{proof}
	The proof is similar to the case $\alpha=\frac{\pi}{2}$ which was done in \cite{AbelsMoser}, Theorem 2.1. See \cite{MoserDiss}, Section 3.2, for the details. The idea is as follows. First one shows that there are graph descriptions of $\partial\Omega$ viewed from the tangential lines to $\partial\Omega$ at the contact points $p^\pm(t)$ for $t\in[0,T]$ in uniform neighbourhoods, cf.~\cite{MoserDiss}, Lemma 3.5. To use this for the definition of the curvilinear coordinate system, one has to introduce a suitable reparametrization over the upper and lower boundary of the trapeze, cf.~\cite{MoserDiss}, Lemma 3.6. For $\alpha=\frac{\pi}{2}$ this is trivial. Then the idea for the definition of $X$ is to extend the obtained mapping on the upper and lower boundary of the trapeze in such a way that it coincides with the usual tubular neighbourhood coordinate system outside a neighbourhood of the boundary and such that the claimed properties are satisfied.
\end{proof}

\begin{Remark}\phantomsection{\label{th_coord2D_rem2}}\upshape
	\begin{enumerate}
		\item Let $Q_T:=\Omega\times(0,T)$. There are unique connected $Q_T^\pm\subseteq\overline{Q_T}=\overline{\Omega}\times[0,T]$ such that $\overline{Q_T}=Q_T^-\cup Q_T^+\cup\Gamma$ (disjoint) and $\textup{sign}\,r=\pm 1$ on $Q_T^\pm\cap\Gamma(\delta)$. Additionally, let
		\begin{align*}
		\Gamma^\pm(\tilde{\delta},\mu)&:=\overline{X}((S_{\tilde{\delta},\alpha}^\circ\cap\{\pm s> 1-\mu\})\times[0,T]),\\
		\Gamma(\tilde{\delta},\mu)&:=\Gamma(\delta)\setminus[\overline{\Gamma^-(\tilde{\delta},\mu)\cup\Gamma^+(\tilde{\delta},\mu)}]=\overline{X}((-\tilde{\delta},\tilde{\delta})\times(-1+\mu,1-\mu)\times[0,T]) 
		\end{align*}
		for $\tilde{\delta}\in(0,\delta]$ and $\mu\in(0,1]$. For $t\in[0,T]$ fixed we define $\Gamma_t(\tilde{\delta}), \Gamma^\pm_t(\tilde{\delta},\mu)$ and $\Gamma_t(\tilde{\delta},\mu)$ to be the respective sets obtained by intersection with $\R^2\times\{t\}$ and then projection to $\R^2$. Here $\Gamma(\tilde{\delta})$ is as in Theorem \ref{th_coord2D}.
		
		\item Let $\tilde{\delta}\in(0,\delta]$. For a sufficiently smooth $\psi:\Gamma(\tilde{\delta})\rightarrow\R$ we introduce the \emph{tangential} and \emph{normal derivative} via 
		\[
		\nabla_\tau\psi:=\nabla s[\partial_s(\psi\circ\overline{X})\circ\overline{X}^{-1}]\quad\text{ and }\quad\partial_n\psi:=\partial_r(\psi\circ\overline{X})\circ\overline{X}^{-1},
		\] 
		respectively. In the part of $\Gamma(\delta)$ where the coordinate system is the orthogonal one, these definitions coincide with the ones in Abels, Liu \cite{ALiu}:
		\[
		\nabla_\tau\psi=\frac{\nabla s}{|\nabla s|}\frac{\nabla s}{|\nabla s|}\cdot\nabla\psi\quad\text{ and }\quad\partial_n\psi=\nabla r\cdot\nabla\psi\quad\text{ on }\Gamma(\tilde{\delta},\mu_0).
		\]
		This follows from $\nabla\psi|_{\overline{X}}=\nabla r|_{\overline{X}}\partial_r(\psi\circ\overline{X})+\nabla s|_{\overline{X}}\partial_s(\psi\circ\overline{X})$. For $t\in[0,T]$ fixed and $\psi:\Gamma_t(\tilde{\delta})\rightarrow\R$ smooth enough, we define $\nabla_\tau\psi$ and $\partial_n\psi$ analogously. In the orthogonal region similar identities as above hold. The same notation is used when $\psi$ is only defined on open subsets of $\Gamma(\tilde{\delta})$ or $\Gamma_t(\tilde{\delta})$, $t\in[0,T]$. The same properties as before are valid in the orthogonal parts of the coordinate system.
		\item For transformation arguments later we set 
		\[
		J(r,s,t):=J_t(r,s):=|\det D_{(r,s)}X(r,s,t)|\quad\text{ for } \quad(r,s,t)\in\overline{S_{\delta,\alpha}}\times[0,T].
		\] 
		$J$ is smooth and $c\leq J\leq C$ for some $c,C>0$. Moreover, from the proof of Theorem \ref{th_coord2D} it follows that
		\[
		J_t(r,s)^{-2}=\left[|\nabla r|^2|\nabla s|^2-(\nabla r\cdot\nabla s)^2\right]|_{\overline{X}(r,s,t)},
		\] 
		in particular $J_t(0,s)=|\partial_sX_0(s,t)|$ for all $(s,t)\in I\times[0,T]$. 
	\end{enumerate}
\end{Remark}

\section{Model Problems}\label{sec_model_problems}

\subsection{Some Scalar-valued ODE Problems on $\R$}\label{sec_ODE_scalar}
In this section we summarize existence and regularity results required for ODEs appearing in the inner asymptotic expansion for (AC$_\alpha$), $\alpha\in(0,\pi)$. Here we will only need \eqref{eq_AC_fvor1} for the potential $f$.
 
\subsubsection{The ODE for the Optimal Profile}\label{sec_ODE_scalar_nonlin}
We consider the nonlinear ODE
\begin{align}\label{eq_theta_0_ODE}
-w''+f'(w)=0,\quad w(0)=0,\quad \lim_{z\rightarrow\pm\infty}w(z)=\pm1.
\end{align}
\begin{Theorem}\label{th_theta_0}
	Let $f:\R\rightarrow\R$ satisfy \eqref{eq_AC_fvor1}. Then there is a unique solution $\theta_0\in C^2(\R)$ of \eqref{eq_theta_0_ODE}. Additionally, $\theta_0$ is smooth, $\theta_0'=\sqrt{2(f(\theta_0)-f(-1))}>0$ and
	\[
	D_z^k(\theta_0\mp 1)(z)=\Oc(e^{-\beta|z|})\quad\text{ for }z\rightarrow\pm\infty\text{ and all }k\in\N_0, \beta\in\left(0,\sqrt{\min\{f''(\pm1)\}}\right).
	\]
\end{Theorem}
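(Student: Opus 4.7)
The approach is the classical one for heteroclinic connections of a conservative second-order autonomous ODE: reduce to a first-order equation via a conserved energy, solve by separation of variables, then linearize at the asymptotic endpoints to extract the exponential decay.

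First, I would multiply $-w'' + f'(w) = 0$ by $w'$ and integrate, giving the conservation law $\tfrac{1}{2}(w')^2 - f(w) = E$. Any $C^2$ solution with $w(\pm\infty) = \pm 1$ must satisfy $w'(z)\to 0$ as $|z|\to\infty$ (this follows a posteriori, but can also be enforced as a necessary condition for a bounded smooth heteroclinic), which together with \eqref{eq_AC_fvor1} and $f(-1) = f(1)$ fixes $E = -f(-1)$. By \eqref{eq_AC_fvor1} the quantity $f(w) - f(-1)$ is strictly positive on $(-1, 1)$, so $w'$ does not vanish there; monotonicity of the connection forces the positive sign, yielding
\[
w'(z) = \sqrt{2(f(w(z)) - f(-1))}, \qquad w(0) = 0.
\]
Conversely, any $C^1$ solution of this first-order separable problem is automatically $C^2$ and solves \eqref{eq_theta_0_ODE}. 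This reduces the original problem to solving the implicit equation
\[
z \;=\; \int_{0}^{w(z)} \frac{du}{\sqrt{2(f(u) - f(-1))}}.
\]

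Second, I would establish existence and uniqueness of $\theta_0$ by showing that the right-hand side, as a function of the upper limit $W \in (-1, 1)$, is a smooth strictly increasing bijection onto $\R$. Monotonicity is immediate from positivity of the integrand. For the divergence at $W \to \pm 1$, Taylor's theorem together with \eqref{eq_AC_fvor1} gives
\[
f(u) - f(-1) \;=\; \tfrac{f''(\pm 1)}{2}(u \mp 1)^2 + \Oc\bigl(|u \mp 1|^3\bigr) \quad \text{as } u \to \pm 1,
\]
so $\sqrt{2(f(u) - f(-1))} \sim \sqrt{f''(\pm 1)}\,|u \mp 1|$ and the integral diverges logarithmically at each endpoint. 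Inverting produces $\theta_0 \in C^1(\R)$ with $\theta_0' > 0$; smoothness of $\theta_0$ follows by bootstrapping, either via the inverse function theorem applied to the primitive or directly from the ODE $\theta_0'' = f'(\theta_0) \in C^\infty$.

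Third, for the exponential decay at $+\infty$ set $v(z) := 1 - \theta_0(z) > 0$ and observe that the first integral and the Taylor expansion of $f$ at $1$ give
\[
v'(z) \;=\; -\sqrt{f''(1)}\, v(z)\,\sqrt{1 + g(v(z))}, \qquad g \in C^\infty, \; g(0) = 0.
\]
For any $\beta \in (0, \sqrt{f''(1)})$ a standard comparison argument (choose $z_\beta$ so that $\sqrt{f''(1)}\sqrt{1+g(v(z))} \geq \beta$ for $z \geq z_\beta$) yields $v(z) \leq v(z_\beta) e^{-\beta(z-z_\beta)}$, and similarly at $-\infty$ with $f''(-1)$. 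To pass to derivatives $D_z^k(\theta_0 \mp 1)$, I would differentiate the ODE $\theta_0'' = f'(\theta_0)$ repeatedly: each $D_z^k \theta_0$ can be written as a polynomial in lower-order derivatives with coefficients $f^{(j)}(\theta_0)$, and since $f'(\pm 1) = 0$ and $\theta_0 \mp 1$ is exponentially small, an induction on $k$ transfers the decay from $\theta_0 \mp 1$ to every $D_z^k(\theta_0 \mp 1)$ with the same rate $\beta$.

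The only point requiring genuine care is turning the asymptotic relation $v'/v \to -\sqrt{f''(1)}$ into a bound that is sharp for every admissible rate $\beta < \sqrt{f''(1)}$; this is precisely where the strict inequality in the stated decay constant enters, and it is the only nontrivial step. Everything else is routine manipulation of the first integral and linearization.
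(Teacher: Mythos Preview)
Your argument is correct and follows the classical reduction to a first-order separable equation via the conserved energy, which is exactly the standard route for this result. The paper itself does not give a proof but simply refers to Schaubeck \cite{Schaubeck}, Lemma~2.6.1; your sketch is precisely the kind of argument one finds there, so there is nothing to compare beyond noting that you have supplied the details the paper omits.
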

\begin{proof}
	This follows from the proof of Schaubeck \cite{Schaubeck}, Lemma 2.6.1.
\end{proof}

$\theta_0$ is known as the \emph{optimal profile}. A rescaled variation will represent the typical profile of the solutions for \eqref{eq_ACalpha1}-\eqref{eq_ACalpha3} from Section \ref{sec_ACalpha} across the diffuse interface away from the contact points, see Section \ref{sec_asym_ACalpha_in} below. If $f$ is even, then $\theta_0$ is even, $\theta_0'$ is odd and $\theta_0''$ even and so on. In the case of the standard double-well potential $f(u)=\frac{1}{2}(1-u^2)^2$ depicted in Figure \ref{fig_double_well}, the optimal profile is given by $\theta_0=\tanh$, cf.~Figure \ref{fig_opt_profile}.

\begin{figure}[H]
	\centering
	\def\svgwidth{\linewidth}
	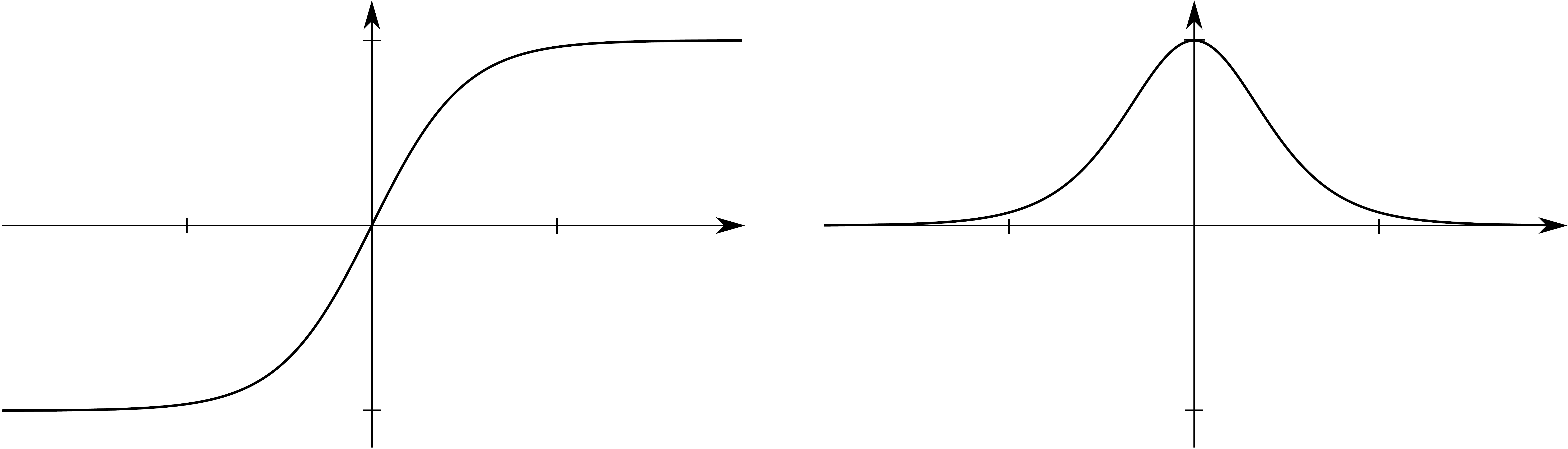
	\caption{Typical optimal profile $\theta_0=\tanh$ and the derivative $\theta_0'$.}\label{fig_opt_profile}
\end{figure}

\subsubsection{The Linearized ODE}\label{sec_ODE_scalar_lin}
Now we consider the linear ODE obtained via linearization of \eqref{eq_theta_0_ODE} at $\theta_0'$, i.e.
\begin{align}\label{eq_ODE_lin}
-w''+f''(\theta_0)w=A\quad\text{ in }\R,\quad w(0)=0.
\end{align}
We have the following solvability theorem.
\begin{Theorem}\phantomsection{\label{th_ODE_lin}}
	\begin{enumerate}
	\item Let $A\in C_b^0(\R)$. Then \eqref{eq_ODE_lin} has a solution $w\in C^2(\R)\cap C_b^0(\R)$ if and only if $\int_{\R}A\theta_0'\,dz=0$. In this  case $w$ is unique. Additionally, let $A(z)-A^\pm=\Oc(e^{-\beta|z|})$ for $z\rightarrow\pm\infty$ hold for a $\beta\in(0,\sqrt{\min\{f''(\pm1)\}})$, then 
	\[
	D_z^{l}\left[w-\frac{A^\pm}{f''(\pm 1)}\right]=\Oc(e^{-\beta|z|})\quad\text{ for }z\rightarrow\pm\infty,\quad l=0,1,2.
	\]
	\item Let $U\subseteq\R^d$ be arbitrary (e.g.~a point) and $A:\R\times U\rightarrow\R$, $A^\pm:U\rightarrow\R$ be smooth (i.e.~locally smooth extendible) and satisfy uniformly in $U$:
	\[
	D_x^k D_z^l\left[A(z,.)-A^\pm\right]=\Oc(e^{-\beta|z|})\quad\text{ for }z\rightarrow\pm\infty,\quad k=0,...,K, l=0,...,L,
	\] 
	for a $\beta\in(0,\sqrt{\min\{f''(\pm1)\}})$ and $K,L\in\N_0$. Then $w:\R\times U\rightarrow\R$, where $w(.,x)$ is the solution of \eqref{eq_ODE_lin} for $A(.,x)$ for all $x\in U$, is also smooth and fulfills uniformly in $U$
	\[
	D_x^k D_z^l\left[w(z,.)-\frac{A^\pm}{f''(\pm1)}\right]=\Oc(e^{-\beta|z|})\quad\text{ for }z\rightarrow\pm\infty, m=0,...,K, l=0,...,L+2.
	\] 
\end{enumerate}
\end{Theorem}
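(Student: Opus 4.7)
The plan is to exploit the fact that $\theta_0'$ lies in the kernel of the linearised operator $L:=-D_z^2+f''(\theta_0)$: differentiating \eqref{eq_theta_0_ODE} gives $L\theta_0'=0$, and by Theorem \ref{th_theta_0} the function $\theta_0'$ decays exponentially at $\pm\infty$ at rates arbitrarily close to $\sqrt{f''(\pm 1)}$. First I would construct a second linearly independent homogeneous solution by reduction of order,
\[
\varphi_2(z):=\theta_0'(z)\int_{0}^{z}\theta_0'(s)^{-2}\,ds,
\]
normalised so that the Wronskian $\theta_0'\varphi_2'-\theta_0''\varphi_2$ equals one. Using the matching exponential lower bound on $\theta_0'$ that follows from $\theta_0'=\sqrt{2(f(\theta_0)-f(-1))}$, a direct asymptotic computation shows that $\varphi_2$ grows exponentially at rate $\sqrt{f''(\pm 1)}$ at $\pm\infty$; consequently the only bounded elements of $\ker L$ are multiples of $\theta_0'$.

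Uniqueness then follows because two bounded solutions differ by some $c\,\theta_0'$, and since $\theta_0'(0)=\sqrt{2(f(0)-f(-1))}>0$ the condition $w(0)=0$ forces $c=0$. Necessity of $\int_{\R}A\theta_0'\,dz=0$ is obtained by multiplying the equation with $\theta_0'$ and integrating by parts twice on $\R$: the boundary terms vanish because $w''=f''(\theta_0)w-A$ is bounded, hence so is $w'$, while $\theta_0',\theta_0''$ tend to zero exponentially.

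For sufficiency under the orthogonality condition, variation of parameters with Wronskian one yields the representation
\[
w(z)=\theta_0'(z)\int_{0}^{z}\varphi_2(s)A(s)\,ds+\varphi_2(z)\int_{z}^{+\infty}\theta_0'(s)A(s)\,ds+c\,\theta_0'(z),
\]
with $c$ chosen so that $w(0)=0$. The second integral converges absolutely because $\theta_0'A$ decays exponentially, and at $z\to+\infty$ it is $O(e^{-\sqrt{f''(1)}\,z})$, which exactly cancels the exponential growth of $\varphi_2$. The key point is at $z\to-\infty$: here the orthogonality condition lets me rewrite $\int_z^{+\infty}\theta_0'A=-\int_{-\infty}^z\theta_0'A=O(e^{\sqrt{f''(-1)}\,z})$, again cancelling the growth of $\varphi_2$ on that side; the term $\theta_0'\int_0^z\varphi_2 A$ is handled symmetrically at both ends. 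This cancellation — where the orthogonality condition is precisely what compensates for the growth of $\varphi_2$ on one of the two half-lines — is the main bookkeeping subtlety of the argument.

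For the refined decay statement in part 1, setting $\tilde w:=w-A^\pm/f''(\pm 1)$ on the corresponding half-line yields
\[
-\tilde w''+f''(\theta_0)\tilde w=(A-A^\pm)+\tfrac{A^\pm}{f''(\pm 1)}\bigl(f''(\pm 1)-f''(\theta_0)\bigr),
\]
where both right-hand side terms are $O(e^{-\beta|z|})$, the second one because $\theta_0\mp 1=O(e^{-\sqrt{f''(\pm 1)}\,|z|})$ by Theorem \ref{th_theta_0} and $\beta<\sqrt{\min\{f''(\pm 1)\}}$. For $|z|\geq R$ with $R$ large one has $f''(\theta_0)-\beta^2\geq\tfrac{1}{2}(f''(\pm 1)-\beta^2)>0$, so $Ce^{-\beta|z|}$ is a supersolution of the equation for $\tilde w$ and a standard comparison argument on compact intervals (with $C$ chosen to dominate the already bounded $\tilde w$ at the endpoints) gives $|\tilde w(z)|\leq Ce^{-\beta|z|}$; decay of $\tilde w'$ and $\tilde w''$ then follows from the ODE itself. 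For part 2, since $A$ enters the explicit representation for $w$ only through absolutely convergent integrals, smoothness in $x\in U$ is obtained by differentiating under the integral sign; each $\partial_x^k w$ then solves \eqref{eq_ODE_lin} with right-hand side $\partial_x^k A$, which again satisfies the uniform exponential decay hypothesis, so the estimate from part 1 applies uniformly in $x$ and gives the full claim.
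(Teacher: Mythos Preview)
The paper does not actually prove this theorem: its proof consists entirely of the citation ``See \cite{MoserACvACND}, Theorem 4.2 and \cite{Schaubeck}, Lemma 2.6.2.'' Your proposal, by contrast, supplies a complete self-contained argument via reduction of order and variation of parameters, which is essentially the classical route and very likely what the cited references do. The identification of $\theta_0'$ as the decaying kernel element, the construction of the growing second solution $\varphi_2$, the necessity via integration by parts, the explicit particular solution, and the crucial use of the orthogonality condition to swap $\int_z^{+\infty}\theta_0'A=-\int_{-\infty}^z\theta_0'A$ at $z\to-\infty$ are all correct and well explained.

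There is one genuine (though easily repaired) gap in your decay argument for $\tilde w$. You write that a comparison with the supersolution $Ce^{-\beta|z|}$ on compact intervals $[R,R']$, ``with $C$ chosen to dominate the already bounded $\tilde w$ at the endpoints,'' yields $|\tilde w|\le Ce^{-\beta|z|}$. But at the far endpoint this requires $|\tilde w(R')|\le Ce^{-\beta R'}$, and since you only know $\tilde w$ is bounded, the admissible $C$ blows up as $R'\to\infty$. The standard fix is a Phragm\'en--Lindel\"of modification: compare with $Ce^{-\beta z}+\epsilon e^{\gamma z}$ for small $0<\gamma<\sqrt{f''(\pm1)}$, let $R'\to\infty$ (now the barrier dominates automatically), then send $\epsilon\to0$. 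Alternatively, and more in the spirit of your existence proof, simply reuse the variation-of-parameters formula on each half-line with the exponentially decaying right-hand side $\tilde g$: the two integral terms are then directly seen to be $O(e^{-\beta|z|})$ because $\beta<\sqrt{f''(\pm1)}$, and the remaining freedom is a multiple of $\theta_0'$, which also has the required decay. Either route closes the gap; the rest of your argument, including part~2 via differentiation under the integral sign, is fine.
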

In our case $A^\pm =0$ will be sufficient. 

\begin{proof}
	See \cite{MoserACvACND}, Theorem 4.2 and \cite{Schaubeck}, Lemma 2.6.2.
\end{proof}

\subsection{Nonlinear Elliptic Problem on $\R^2_+$ and the Linearized Problem}\label{sec_hp_alpha}
Let $f:\R\rightarrow\R$ be as \eqref{eq_AC_fvor1}, $\alpha\in(0,\pi)$, $\hat{\sigma}:\R\rightarrow\R$ and $\sigma_\alpha=\cos\alpha\,\hat{\sigma}$ be as in Definition \ref{th_ACalpha_sigma_def}. In the contact point expansion for (AC$_\alpha$) we have to solve the following model problems:

\subsubsection{The Nonlinear Elliptic Problem on $\R^2_+$}\label{sec_hp_alpha_nonlin}
Find a smooth $v_\alpha:\overline{\R^2_+}\rightarrow\R$ such that with {\small$A_\alpha:=\begin{pmatrix}
1 & -\cos\alpha\\
-\cos\alpha & 1
\end{pmatrix}$} and $\theta_0$ as in Theorem \ref{th_theta_0} it holds
\begin{alignat}{2}\label{eq_hp_alpha_modelN1}
	-\diverg A_\alpha\nabla v_\alpha + f'(v_\alpha) &=0&\quad&\text{ for }(R,H)\in\R^2_+,\\\label{eq_hp_alpha_modelN2}
	N_{\partial\R^2_+}\cdot A_\alpha\nabla v_\alpha|_{H=0} + \sigma_\alpha'(v_\alpha)|_{H=0} &=0&\quad&\text{ for }R\in\R,\\\label{eq_hp_alpha_modelN3}
	\partial_R^{k}\partial_H^l[v_\alpha(R,H)-\theta_0(R)]&=\Oc(e^{-c_{k,l}(|R|+H)})&\quad&\text{ for all }k,l\in\N_0.
\end{alignat}
Here $N_{\partial\R^2_+}=(0,-1)^\top$. We choose $v_\frac{\pi}{2}(R,H)=\theta_0(R)$ for all $(R,H)\in\overline{\R^2_+}$.

\begin{Remark}[\textbf{Compatibility Condition for $\sigma_\alpha$}]\upshape\label{th_hp_angle_comp}
The condition \eqref{eq_angle_comp1} on $\alpha,\sigma_\alpha,f$ can be derived as a necessary condition for the existence of a smooth solution $v_\alpha$ of \eqref{eq_hp_alpha_modelN1}-\eqref{eq_hp_alpha_modelN3}. 

This can be seen as follows: Let $\sigma_\alpha:\R\rightarrow\R$ be smooth with\footnote{\label{foot_form}~In this remark the special form in Definition \ref{th_ACalpha_sigma_def} is not needed.} $\supp\,\sigma_\alpha'\subset(-1,1)$ and $v_\alpha$ sufficiently smooth solve \eqref{eq_hp_alpha_modelN1}-\eqref{eq_hp_alpha_modelN3}, where $v_{\frac{\pi}{2}}:=\theta_0$. We multiply \eqref{eq_hp_alpha_modelN1} with $\partial_R v_\alpha$ and get
\[
\frac{d}{dR}\left[\frac{1}{2}(\partial_Rv_\alpha)^2-\frac{1}{2}(\partial_Hv_\alpha)^2-f(v_\alpha)\right]+\frac{d}{dH}\left[\partial_Rv_\alpha\partial_Hv_\alpha-\cos\alpha(\partial_Rv_\alpha)^2\right]=0.
\]
Integrating with respect to $R$ over $\R$ as well as $H$ over $(0,H_0)$ for an arbitrary $H_0>0$ implies 
\[
0=\int_\R[\partial_Rv_\alpha(\partial_Hv_\alpha-\cos\alpha\partial_Rv_\alpha)]_{H=0}^{H_0}\,dR.
\]
By the boundary condition \eqref{eq_hp_alpha_modelN2} it holds $\partial_Hv_\alpha-\cos\alpha\partial_Rv_\alpha|_{H=0}=\sigma_\alpha'(v_\alpha)$. Therefore
\[
0=\int_\R[\partial_Rv_\alpha(\partial_Hv_\alpha-\cos\alpha\partial_Rv_\alpha)]|_{H_0}\,dR- \int_\R\frac{d}{dR}[\sigma_\alpha(v_\alpha|_{H=0})]\,dR.
\]
Using the asymptotics in \eqref{eq_hp_alpha_modelN3} we obtain $0=-\cos\alpha\int_{\R}(\theta_0')^2\,dR-[\sigma_\alpha(1)-\sigma_\alpha(-1)]$ by sending $H_0\rightarrow\infty$. Hence $\sigma_\alpha$ has to fulfil
\begin{align}\label{eq_angle_comp2}
	\cos\alpha=\frac{\sigma_\alpha(-1)-\sigma_\alpha(1)}{\int_\R(\theta_0')^2\,dR}.
\end{align}
Because of Theorem \ref{th_theta_0} it holds 
\[
\int_\R(\theta_0')^2=\int_\R \theta_0'\sqrt{2(f(\theta_0)-f(-1))}=\int_{-1}^1\sqrt{2(f(r)-f(-1))}\,dr
\]
and therefore \eqref{eq_angle_comp2} is equivalent to \eqref{eq_angle_comp1}.
\end{Remark}

\subsubsection{The Linearized Elliptic Problem on $\R^2_+$}\label{sec_hp_alpha_lin} Let $A_\alpha$ be as in the last Section \ref{sec_hp_alpha_nonlin} and $v_\alpha$ be a sufficiently smooth solution to \eqref{eq_hp_alpha_modelN1}-\eqref{eq_hp_alpha_modelN3}, where $v_{\frac{\pi}{2}}=\theta_0$. The linearized problem reads as follows: For $G:\overline{\R^2_+}\rightarrow\R$ and $g:\R\rightarrow\R$ with suitable regularity and exponential decay find a solution $u:\overline{\R^2_+}\rightarrow\R$ with similar decay to
\begin{alignat}{2}\label{eq_hp_alpha_modelL1}
-\diverg A_\alpha\nabla u + f''(v_\alpha)u &=G & \quad &\text{ for }(R,H)\in\R^2_+,\\\label{eq_hp_alpha_modelL2}
N_{\partial\R^2_+}\cdot A_\alpha\nabla u|_{H=0} + \sigma_\alpha''(v_\alpha)u|_{H=0} &=g& &\text{ for }R\in\R.
\end{alignat}
\begin{Remark}[\textbf{Compatibility Condition for the Data}]\label{th_hp_alpha_lin_comp}\upshape 
Let $\sigma_\alpha:\R\rightarrow\R$ be smooth with$^\text{\ref{foot_form}}$ $\supp\,\sigma_\alpha'\subset(-1,1)$ and $v_\alpha$ sufficiently smooth solve \eqref{eq_hp_alpha_modelN1}-\eqref{eq_hp_alpha_modelN3}, where $v_{\frac{\pi}{2}}:=\theta_0$. There is a necessary condition on the data $G,g$ for a solution $u$ of \eqref{eq_hp_alpha_modelL1}-\eqref{eq_hp_alpha_modelL2} to exist:
\begin{align}\label{eq_hp_alpha_lin_comp}
\int_{\R^2_+}G\partial_Rv_\alpha + \int_\R g\partial_Rv_\alpha|_{H=0}=0.
\end{align}
The condition can be derived by testing \eqref{eq_hp_alpha_modelL1} with $\partial_Rv_\alpha$, integration by parts and using \eqref{eq_hp_alpha_modelN1}-\eqref{eq_hp_alpha_modelN2} for $v_\alpha$, see \cite{MoserDiss}, Remark 4.14 for details.
\end{Remark}

\subsubsection{Solution of the Problems for $\alpha$ close to $\frac{\pi}{2}$}\label{sec_hp_alpha_sol}

An energy approach for the nonlinear problem \eqref{eq_hp_alpha_modelN1}-\eqref{eq_hp_alpha_modelN3} is difficult because the domain $\R^2_+$ is unbounded, the solution has non-trivial asymptotics and the energy of the expected solution is infinite. Therefore we take the angle $\alpha$ close to $\frac{\pi}{2}$ and choose the following strategy: we treat $\alpha$ as a parameter in the equations and use the functional analytic setting with exponentially weighted Sobolev spaces in Definition \ref{th_exp_def1}. This framework allows for isomorphisms between the solution and the data for the linear problem in the case $\alpha=\frac{\pi}{2}$, cf.~\cite{MoserACvACND}, Section 4.2.2, in particular \cite{MoserACvACND}, Theorem 4.8. This is because for data $G\in L^2(\R^2_+)=L^2(\R_+,L^2(\R))$ and $g\in L^2(\R)$ orthogonal with respect to $\theta_0'$ in $L^2(\R)$ (for a.e.~$H>0$) one obtains solution operators via weak solutions and the Lax-Milgram-Theorem (and with perturbation arguments also for exponentially weighted Sobolev spaces with small decay parameter). Here the spectral properties of the 1D-operator $L_0:H^2(\R)\subseteq L^2(\R)\rightarrow L^2(\R):u\mapsto[-\frac{d^2}{dR^2}+f''(\theta_0)]u$ are very important. See \cite{MoserDiss}, Lemma 4.2 or \cite{AbelsMoser}, Lemma 2.5 for the latter. For the remaining \enquote{parallel parts} of the data there is the explicit solution formula \cite{MoserACvACND}, (4.7), provided that the data satisfy the compatibility condition \cite{MoserACvACND}, (4.5). The setting with exponentially weighted Sobolev spaces is tailored for this explicit solution formula since $\partial_H: H^1_{(\beta)}(\R_+)\rightarrow L^2_{(\beta)}(\R_+)$ is an isomorphism for $\beta>0$ with inverse $v\mapsto -\int_.^\infty v\,ds$, cf.~\cite{MoserACvACND}, Lemma 2.22, 6.-7. 

With \cite{MoserACvACND}, Theorem 4.8 one can solve the nonlinear problem \eqref{eq_hp_alpha_modelN1}-\eqref{eq_hp_alpha_modelN3} with the Implicit Function Theorem and the linear problem \eqref{eq_hp_alpha_modelL1}-\eqref{eq_hp_alpha_modelL2} with a Neumann series argument, both for $\alpha$ close to $\frac{\pi}{2}$. See \cite{MoserDiss}, Section 4.2.2.4, for the details and Sections \ref{sec_hp_alpha_solN}-\ref{sec_hp_alpha_solL} below for the results. Here a problem to overcome is the compatibility condition \cite{MoserACvACND}, (4.5). In \cite{MoserDiss} this is dealt with in a similar way as in the proof of Theorem 4.7 in \cite{MoserACvACND}, i.e.~first one subtracts suitable terms in the boundary parts such that \cite{MoserACvACND}, (4.5), is fulfilled, but in the end one shows that those terms have to be zero for the solutions. The latter involves similar computations as in the derivations of the compatibility conditions in Remarks \ref{th_hp_angle_comp}-\ref{th_hp_alpha_lin_comp}. Furthermore, one also has to spend some thoughts on the regularity parameters in the weighted Sobolev spaces $m\in\N_0$ that one uses for the isomorphisms in the case $\alpha=\frac{\pi}{2}$ in \cite{MoserACvACND}, Theorem 4.8. More precisely, one can only apply the Implicit Function Theorem and the Neumann series argument in such a setting for finitely many $m$ since otherwise the possible angles $\alpha$ depend on $m$. Moreover, $m$ should be taken as low as possible to reduce the computations. In \cite{MoserDiss} this is done for the linear problem for $m=0$ and $m=1$ in order to have a \enquote{regularity theory} in exponentially weighted spaces due to uniqueness. It turns out that $m=1$ for the nonlinear problem is enough to subsequently use this \enquote{regularity theory} for derivatives of $v_\alpha$ and to rigorously carry out computations as in Remarks \ref{th_hp_angle_comp}-\ref{th_hp_alpha_lin_comp}. Then one uses induction arguments.

\paragraph{The Nonlinear Problem}\label{sec_hp_alpha_solN}
Let $\hat{v}_\alpha:=v_\alpha-\theta_0$. Then the nonlinear equations \eqref{eq_hp_alpha_modelN1}-\eqref{eq_hp_alpha_modelN2} for $v_\alpha$ are equivalent to 
\begin{align}\label{eq_hp_alpha_solN1}
L_\frac{\pi}{2}\hat{v}_\alpha&=(G_\alpha,g_\alpha)(\hat{v}_\alpha),
\end{align} 
where $L_\frac{\pi}{2}:=(-\Delta+f''(\theta_0(R)),-\partial_H|_{H=0})$ and 
\begin{align*}
G_\alpha(v)&:=-2\cos\alpha \partial_R\partial_H v - [f'(\theta_0+v)-f'(\theta_0)-f''(\theta_0)v],\\
g_\alpha(v)&:=-\cos\alpha [\partial_Rv|_{H=0}+\theta_0'] - \sigma_\alpha'(\theta_0 + v)|_{H=0}.
\end{align*}
The first solution theorem for the nonlinear equations \eqref{eq_hp_alpha_modelN1}-\eqref{eq_hp_alpha_modelN2} for $\alpha$ close to $\frac{\pi}{2}$ is as follows.
\begin{Theorem}\label{th_hp_alpha_solN3}
	There are $\overline{\gamma}>0$ and $\overline{\beta}:(0,\overline{\gamma}]\rightarrow(0,\infty)$ non-decreasing such that the following holds. Let $\gamma\in(0,\overline{\gamma}]$, $\beta\in(0,\min\{\overline{\beta}(\gamma),\min\{\sqrt{f''(\pm1)}\}\})$. Then there is an $\overline{\alpha}=\overline{\alpha}(\beta,\gamma)>0$ such that \eqref{eq_hp_alpha_solN1} has a solution $\hat{v}_\alpha \in H^3_{(\beta,\gamma)}(\R^2_+)$ for $\alpha\in\frac{\pi}{2}+[-\overline{\alpha},\overline{\alpha}]$ which is $C^1$ in $\alpha$, $\hat{v}_\frac{\pi}{2}=0$ and $v_\alpha:=\theta_0+\hat{v}_\alpha$ solves \eqref{eq_hp_alpha_modelN1}-\eqref{eq_hp_alpha_modelN2} and it holds $\int_\R \theta_0' \partial_Rv_\alpha|_{H=0}\,dR> 0$.
\end{Theorem}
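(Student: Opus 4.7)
The plan is to apply the Implicit Function Theorem to the map
$$F(\alpha,\hat v):=L_\frac{\pi}{2}\hat v-\bigl(G_\alpha(\hat v),g_\alpha(\hat v)\bigr)$$
acting between suitable exponentially weighted Sobolev spaces, around the trivial solution $(\alpha,\hat v)=(\tfrac{\pi}{2},0)$. At $\alpha=\frac{\pi}{2}$ we have $\cos\alpha=0$, $\sigma_\alpha\equiv 0$, hence $G_\frac{\pi}{2}(\hat v)=-[f'(\theta_0+\hat v)-f'(\theta_0)-f''(\theta_0)\hat v]$ is quadratic in $\hat v$ and $g_\frac{\pi}{2}(\hat v)\equiv 0$, so $F(\tfrac{\pi}{2},0)=0$ and $D_{\hat v}F(\tfrac{\pi}{2},0)=L_\frac{\pi}{2}$. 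By the isomorphism theory of \cite{MoserACvACND}, Theorem~4.8, $L_\frac{\pi}{2}$ is an isomorphism between the spaces $H^3_{(\beta,\gamma)}(\R^2_+)$ and the appropriate product target space, modulo the compatibility condition (4.5) in \cite{MoserACvACND}, provided $\beta<\min\sqrt{f''(\pm1)}$ and $\gamma>0$ is taken below some threshold $\overline\beta(\gamma)$ dictated by the perturbation arguments in \cite{MoserACvACND}.

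\textbf{Handling the Compatibility Condition.} Since $L_\frac{\pi}{2}$ fails to be surjective onto the full target (its range is characterized by orthogonality of the data to $\theta_0'$), I would mimic the device used in \cite{MoserACvACND}, Theorem~4.7: introduce an auxiliary scalar parameter $\lambda\in\R$ and consider the augmented equation
\begin{equation*}
\widetilde F(\alpha,\hat v,\lambda):=L_\frac{\pi}{2}\hat v-\bigl(G_\alpha(\hat v),\,g_\alpha(\hat v)-\lambda\theta_0'\bigr)=0.
\end{equation*}
Here the $\lambda\theta_0'$ correction on the boundary side is tuned so that the right-hand side automatically satisfies the compatibility (4.5). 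The partial derivative of $\widetilde F$ with respect to $(\hat v,\lambda)$ at $(\tfrac{\pi}{2},0,0)$ is a bijection by construction, so the IFT produces a $C^1$-branch $\alpha\mapsto(\hat v_\alpha,\lambda_\alpha)$ for $\alpha\in\frac{\pi}{2}+[-\overline\alpha,\overline\alpha]$ with $(\hat v_\frac{\pi}{2},\lambda_\frac{\pi}{2})=(0,0)$, $\hat v_\alpha\in H^3_{(\beta,\gamma)}(\R^2_+)$.

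\textbf{Vanishing of $\lambda_\alpha$.} The nontrivial step is to show $\lambda_\alpha=0$, which is what turns the solution of the augmented system into a genuine solution of \eqref{eq_hp_alpha_modelN1}--\eqref{eq_hp_alpha_modelN2}. I would carry out exactly the Pohozaev-type computation from Remark~\ref{th_hp_angle_comp}: multiply the $v_\alpha$-equation by $\partial_R v_\alpha$, integrate over $\R\times(0,H_0)$, pass $H_0\to\infty$ using the exponential decay encoded in $H^3_{(\beta,\gamma)}$ (so all boundary terms at $H=H_0$ and $R=\pm\infty$ vanish), integrate the boundary term, and invoke Theorem~\ref{th_theta_0} to identify $\int_\R(\theta_0')^2=\int_{-1}^1\sqrt{2(f-f(-1))}\,dr$. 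The perturbation by $\lambda_\alpha\theta_0'$ in $g_\alpha$ produces a single extra term $\lambda_\alpha\int_\R\theta_0'\partial_R v_\alpha|_{H=0}\,dR$. The compatibility \eqref{eq_angle_comp1} built into $\sigma_\alpha$ via Definition~\ref{th_ACalpha_sigma_def} cancels all remaining terms, leaving
\begin{equation*}
\lambda_\alpha\int_\R\theta_0'\,\partial_Rv_\alpha|_{H=0}\,dR=0.
\end{equation*}
Since $v_\frac{\pi}{2}=\theta_0$, by continuity of $\alpha\mapsto\hat v_\alpha$ in $H^3_{(\beta,\gamma)}$ (and a trace embedding) the integral is close to $\int_\R(\theta_0')^2>0$ for $\alpha$ near $\frac{\pi}{2}$; this yields both the asserted positivity of the integral and, therefore, $\lambda_\alpha=0$.

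\textbf{Main Obstacle.} The delicate technical point is verifying that $(G_\alpha,g_\alpha)$ is a well-defined, $C^1$, locally contractive nonlinear map into the correct exponentially weighted target space, uniformly in $\alpha$ near $\frac{\pi}{2}$. The potential nonlinearity $f'(\theta_0+\hat v)-f'(\theta_0)-f''(\theta_0)\hat v$ is quadratic in $\hat v$, so one needs Sobolev multiplication and composition estimates in $H^k_{(\beta,\gamma)}(\R^2_+)$ preserving the joint exponential weights, combined with the fact that $\theta_0(R)$ itself has only $\pm 1$ asymptotics (not $0$), which requires a careful linearization argument at $\pm 1$ using \eqref{eq_AC_fvor1}. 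Likewise the boundary term $\sigma_\alpha'(\theta_0+\hat v)|_{H=0}$, whose support lies in $(-1,1)$, must be analyzed via the trace $H^3_{(\beta,\gamma)}(\R^2_+)\hookrightarrow H^{5/2}_{(\beta)}(\R)$. Choosing the thresholds $\overline\gamma$, $\overline\beta(\gamma)$, and finally $\overline\alpha(\beta,\gamma)$ small enough so that the IFT and these nonlinear estimates close simultaneously is the core work, and is what forces the result to be local in $\alpha$.
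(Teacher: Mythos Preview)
Your proposal is correct and follows essentially the same approach as the paper: the Implicit Function Theorem applied to $L_{\frac{\pi}{2}}$ in the exponentially weighted setting of \cite{MoserACvACND}, Theorem~4.8, with an auxiliary boundary correction to enforce the compatibility condition (4.5), followed by the Pohozaev-type computation of Remark~\ref{th_hp_angle_comp} (using the built-in relation \eqref{eq_angle_comp1}) to show the correction vanishes. The positivity of $\int_\R\theta_0'\partial_Rv_\alpha|_{H=0}\,dR$ via continuity from $\|\theta_0'\|_{L^2(\R)}^2$ is also exactly how the paper obtains it, and your identification of the weighted Sobolev mapping properties of $(G_\alpha,g_\alpha)$ as the main technical work matches the emphasis in the source \cite{MoserDiss}.
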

\begin{proof}
	See \cite{MoserDiss}, Theorem 4.18. Here $\overline{\gamma}$ and $\overline{\beta}(.)$ are from the case $\alpha=\frac{\pi}{2}$, cf.~\cite{MoserACvACND}, Theorem 4.8 (same as \cite{MoserDiss}, Theorem 4.11).
\end{proof}

\begin{Remark}\upshape \label{th_hp_alpha_solN3_rem}
	From now on we fix  $\gamma_0\in(0,\overline{\gamma}]$ and $\beta_0\in(0,\min\{\overline{\beta}(\frac{\gamma_0}{2}),\min\{\sqrt{f''(\pm1)}\}\})$. 
	Moreover, we denote by $\overline{\alpha}$ and $\hat{v}_.$ the constant and the solution, respectively, obtained in Theorem \ref{th_hp_alpha_solN3} for $\beta_0,\gamma_0$. Due to the Lipschitz-continuity of $\hat{v}_.:\frac{\pi}{2}+[-\overline{\alpha},\overline{\alpha}]\rightarrow H^3_{(\beta_0,\gamma_0)}(\R^2_+)$, the identity $v_\alpha:=\theta_0+\hat{v}_\alpha$ and the embedding $H^3_{(\beta_0,\gamma_0)}(\R^2_+)\hookrightarrow C^1_{(\beta_0,\gamma_0)}(\overline{\R^2_+})$, by possibly shrinking $\overline{\alpha}$ we can assume that
	\[
	\int_\R(\partial_\rho v_\alpha)^2|_{Z}\,d\rho\in[\tfrac{3}{4},\tfrac{5}{4}]\|\theta_0'\|_{L^2(\R)}^2\text{ for }Z\geq 0
	\!\!\quad\text{ and }\quad\!\!
	\int_{\R^2_+}\partial_\rho\partial_Zv_\alpha \partial_\rho v_\alpha\,d(\rho,Z)
	\leq \frac{1}{4}\|\theta_0'\|_{L^2(\R)}^2.
	\]
	The latter estimates are not needed in this section, but they will be important for asymptotic expansions and spectral estimates later, see the end of Section \ref{sec_asym_ACalpha_cp_robin_0} and Section \ref{sec_SE_ACalpha}.
\end{Remark}

	Concerning higher regularity we have the following theorem:

\begin{Theorem}\label{th_hp_alpha_sol_reg1}
	Let $\beta_0, \gamma_0, \hat{v}_.$ be as in Remark \ref{th_hp_alpha_solN3_rem}. Then there is an $\alpha_0>0$ small such that $\hat{v}_. :\frac{\pi}{2}+[-\alpha_0,\alpha_0] \rightarrow H^k_{(\beta_0,\gamma_0)}(\R^2_+)$ is well-defined and Lipschitz-continuous for all $k\in\N_0$.
\end{Theorem}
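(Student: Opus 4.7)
The plan is to prove the assertion by induction on $k \in \N_0$. The base case $k \leq 3$ follows directly from Theorem \ref{th_hp_alpha_solN3} together with the continuous embedding $H^3_{(\beta_0,\gamma_0)}(\R^2_+)\hookrightarrow H^k_{(\beta_0,\gamma_0)}(\R^2_+)$, where I take $\alpha_0 := \overline{\alpha}$ from Remark \ref{th_hp_alpha_solN3_rem}. For the inductive step I exploit the reformulation $L_{\pi/2}\hat{v}_\alpha = (G_\alpha,g_\alpha)(\hat{v}_\alpha)$ from \eqref{eq_hp_alpha_solN1} and the linear isomorphism theory for $L_{\pi/2}$ on exponentially weighted Sobolev spaces referenced in the excerpt (\cite{MoserACvACND}, Theorem 4.8), which is available at the small regularity indices $m = 0, 1$.

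Assume $\hat{v}_.\colon \frac{\pi}{2}+[-\alpha_0,\alpha_0]\to H^k_{(\beta_0,\gamma_0)}(\R^2_+)$ is well-defined and Lipschitz-continuous for some $k \geq 3$. Since $H^k_{(\beta_0,\gamma_0)}(\R^2_+)$ is an algebra (using $k \geq 2 > n/2 = 1$ and the algebra/multiplier properties of the weighted spaces summarized in \cite{MoserACvACND}, Lemma 2.22) and $f', \sigma_\alpha', \theta_0$ are smooth with $\theta_0 \mp 1$ and all derivatives of $\theta_0$ decaying exponentially, a Taylor expansion around $\theta_0$ shows that the nonlinear pieces $f'(\theta_0+\hat{v}_\alpha)-f'(\theta_0)-f''(\theta_0)\hat{v}_\alpha$ and $\sigma_\alpha'(\theta_0+\hat{v}_\alpha)|_{H=0}$ entering $G_\alpha(\hat{v}_\alpha)$ and $g_\alpha(\hat{v}_\alpha)$ inherit the $H^k$-regularity of $\hat{v}_\alpha$ in the weighted sense. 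To gain one more derivative I differentiate \eqref{eq_hp_alpha_solN1} tangentially in $R$ — this preserves the Neumann-type structure of $L_{\pi/2}$ — and solve the resulting equation for $\partial_R \hat{v}_\alpha$ via the $L_{\pi/2}$-isomorphism. The compatibility condition $\int_\R (\cdot)\theta_0'\,dR = 0$ needed to apply the isomorphism is enforced by the same device as in Theorem \ref{th_hp_alpha_solN3}: one subtracts a correction term that restores the orthogonality to $\theta_0'$, and then shows by an energy-type argument mimicking Remark \ref{th_hp_angle_comp} (testing the equation against $\partial_R v_\alpha$) that the correction must vanish. Normal regularity is then recovered algebraically by reading $\partial_H^2 \hat{v}_\alpha$ off the interior PDE in terms of already-controlled tangential and lower-order quantities. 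Iterating this procedure yields $\hat{v}_\alpha \in H^{k+1}_{(\beta_0,\gamma_0)}(\R^2_+)$.

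Lipschitz continuity in $\alpha$ is propagated along the induction by differencing \eqref{eq_hp_alpha_solN1} at $\alpha$ and $\tilde\alpha$, noting that $G_\alpha, g_\alpha$ depend smoothly on $\alpha$ through $\cos\alpha$ and $\sigma_\alpha = \cos\alpha\,\hat\sigma$, and applying the same linear isomorphism theory to the resulting equation for $\hat{v}_\alpha - \hat{v}_{\tilde\alpha}$ whose right-hand side is of order $|\alpha - \tilde\alpha|$ in the relevant weighted norm. The principal obstacle I anticipate is verifying that a single $\alpha_0$ works uniformly in $k$, as the statement claims: a naive reading of the linear theory being available only at $m \in \{0,1\}$ might suggest shrinking $\alpha_0$ with each induction step. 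The resolving observation is that the nonlinear fixed point $\hat{v}_\alpha$ was already produced in $H^3_{(\beta_0,\gamma_0)}$ on $\frac{\pi}{2}+[-\overline{\alpha},\overline{\alpha}]$, and every further regularity or continuity step is a purely linear argument involving $L_{\pi/2}$, whose isomorphism property does not depend on $\alpha$ at all; only the source terms depend on $\alpha$. Consequently the $\overline{\alpha}$ from the base case — possibly shrunk once to secure the quantitative bounds of Remark \ref{th_hp_alpha_solN3_rem} — serves for all $k$, and the induction closes with a uniform $\alpha_0$.
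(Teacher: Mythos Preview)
Your overall plan (induction on $k$, tangential differentiation, linear isomorphism, then read off normal regularity from the equation) is the right shape and matches what the paper sketches in the discussion preceding Theorem~\ref{th_hp_alpha_sol_reg1}. There is, however, a genuine gap in your inductive step as written.

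You propose to differentiate the identity $L_{\pi/2}\hat v_\alpha=(G_\alpha,g_\alpha)(\hat v_\alpha)$ in $R$ and then invert $L_{\pi/2}$. But $G_\alpha(\hat v_\alpha)$ contains the second-order term $-2\cos\alpha\,\partial_R\partial_H\hat v_\alpha$. If $\hat v_\alpha\in H^k_{(\beta_0,\gamma_0)}$ is all you know, then $G_\alpha(\hat v_\alpha)\in H^{k-2}_{(\beta_0,\gamma_0)}$, and after differentiating in $R$ the right-hand side drops to $H^{k-3}_{(\beta_0,\gamma_0)}$; applying the $L_{\pi/2}$-isomorphism then only returns $\partial_R\hat v_\alpha\in H^{k-1}_{(\beta_0,\gamma_0)}$, which you already had. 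No regularity is gained, and the induction stalls. The same obstruction blocks a direct ``data improves $\Rightarrow$ solution improves'' argument via $L_{\pi/2}$.

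The remedy is exactly what the paper's surrounding text indicates: absorb the cross-derivative into the principal part, i.e.\ work with the genuinely elliptic operator $-\diverg A_\alpha\nabla+f''(v_\alpha)$ (this is $L_\alpha$, cf.\ \eqref{eq_hp_alpha_solL1}) rather than $L_{\pi/2}$. In that formulation the interior right-hand side for $\hat v_\alpha$ is the purely zeroth-order expression $f'(\theta_0)-f'(\theta_0+\hat v_\alpha)+f''(v_\alpha)\hat v_\alpha$, which does lie in $H^k_{(\beta_0,\gamma_0)}$ whenever $\hat v_\alpha$ does; likewise for the boundary term. This is why the paper first establishes the $L_\alpha$-isomorphism at the low indices $m=0,1$ (giving a uniqueness-based regularity theory) and only then bootstraps the nonlinear solution by applying this theory to tangential derivatives of $v_\alpha$. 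Your observation that the $\alpha$-dependence enters only through the source terms, so a single $\alpha_0$ suffices for all $k$, is correct once the argument is run through $L_\alpha$ rather than $L_{\pi/2}$.
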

\begin{proof}
	See \cite{MoserDiss}, Theorem 4.23.
\end{proof}

\paragraph{The Linear Problem}\label{sec_hp_alpha_solL} 
We write the linear equations \eqref{eq_hp_alpha_modelL1}-\eqref{eq_hp_alpha_modelL2} as
\begin{align}\label{eq_hp_alpha_solL1}
L_\alpha u &= (G,g), 
\quad\text{ where }L_\alpha :=L_\frac{\pi}{2} + M_\alpha,\\
M_\alpha u &:=(2\cos\alpha\partial_R\partial_H u + [f''(v_\alpha)-f''(\theta_0)]u , [\cos\alpha\partial_Ru + \sigma_\alpha''(v_\alpha) u]|_{H=0} ). \notag
\end{align}

\begin{Theorem}\label{th_hp_alpha_sol_reg3}
	Let $\beta_0, \gamma_0, \hat{v}_.$ be as in Remark \ref{th_hp_alpha_solN3_rem}. Then there is an $\alpha_0>0$ small such that for all $\alpha\in\frac{\pi}{2}+[-\alpha_0,\alpha_0]$, $\beta\in[0,\beta_0]$,  $\gamma\in[\frac{\gamma_0}{2}, \gamma_0]$ and $k\in\N_0$ it holds that 
	\[
	L_\alpha: H^{k+2}_{(\beta,\gamma)}(\R^2_+) \rightarrow 
	\left\{ (G,g)\in H^k_{(\beta,\gamma)}(\R^2_+)\times H^{k+\frac{1}{2}}_{(\beta)}(\R):
	\int_{\R^2_+} G \partial_Rv_\alpha + \int_\R g \partial_Rv_\alpha|_{H=0} =0\right\}
	\]
	is an isomorphism and the norm of the inverse is bounded independent of $\alpha,\beta,\gamma$ for fixed $k$. 
\end{Theorem}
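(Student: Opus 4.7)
The plan is to treat $L_\alpha = L_{\pi/2} + M_\alpha$ as a perturbation of the $\alpha = \pi/2$ operator, which is an isomorphism onto its compatibility subspace by \cite{MoserACvACND}, Theorem 4.8. The subtle point is that the compatibility functional
\[
P_\alpha(G,g) := \int_{\R^2_+} G\,\partial_R v_\alpha\,d(R,H) + \int_\R g\,\partial_R v_\alpha|_{H=0}\,dR
\]
depends on $\alpha$ through $v_\alpha$, so the target subspaces $\ker P_\alpha$ shift with $\alpha$ and a naive Neumann series on fixed target spaces is not directly available.

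To decouple the Neumann step from the varying compatibility, I would fix a bump function $\phi \in C_c^\infty(\R^2_+)$ with $\int \phi\,\theta_0'\,d(R,H) \neq 0$ and introduce the enlarged operator
\[
\hat L_\alpha : H^{k+2}_{(\beta,\gamma)}(\R^2_+) \oplus \R \to H^k_{(\beta,\gamma)}(\R^2_+) \times H^{k+1/2}_{(\beta)}(\R),\qquad (u,\lambda) \mapsto L_\alpha u + \lambda(\phi,0).
\]
For $\alpha = \pi/2$ this is an isomorphism onto the \emph{full} target: given $(G,g)$, choose $\lambda$ so that $(G,g) - \lambda(\phi,0) \in \ker P_{\pi/2}$ and apply the $\pi/2$-isomorphism to the remainder. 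By Theorem \ref{th_hp_alpha_sol_reg1}, $\hat v_\alpha = v_\alpha - \theta_0$ is Lipschitz in $\alpha$ into $H^m_{(\beta_0,\gamma_0)}(\R^2_+)$ for every $m$; together with weighted Sobolev embeddings (cf.~\cite{MoserACvACND}, Lemma 2.22) this makes $f''(v_\alpha) - f''(\theta_0)$ and $\sigma_\alpha''(v_\alpha)$ multipliers on the relevant weighted spaces whose operator norm vanishes as $\alpha \to \pi/2$; combined with the prefactor $\cos\alpha$ in front of the mixed-derivative term, $M_\alpha$ itself has operator norm tending to zero. A Neumann series then gives that $\hat L_\alpha$ is an isomorphism with uniformly bounded inverse for $\alpha$ close to $\pi/2$, uniformly in $\beta \in [0,\beta_0]$ and $\gamma \in [\gamma_0/2,\gamma_0]$. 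This step is carried out for the low indices $k=0$ and $k=1$, the regularity range in which \cite{MoserACvACND}, Theorem 4.8 is invoked.

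To recover the isomorphism statement for $L_\alpha$ itself, I would use that integration by parts as in Remark \ref{th_hp_alpha_lin_comp} forces the range of $L_\alpha$ into $\ker P_\alpha$. Hence, for any $(G,g) \in \ker P_\alpha$, setting $(u,\lambda) := \hat L_\alpha^{-1}(G,g)$ and applying $P_\alpha$ to $L_\alpha u + \lambda(\phi,0) = (G,g)$ yields $\lambda\,P_\alpha(\phi,0) = 0$; since $P_\alpha(\phi,0) \to \int \phi\,\theta_0' \neq 0$ as $\alpha \to \pi/2$, a further shrinking of $\alpha_0$ forces $\lambda = 0$ and hence $L_\alpha u = (G,g)$. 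Injectivity and the uniform norm bound on $L_\alpha^{-1}$ follow immediately from those of $\hat L_\alpha^{-1}$. For arbitrary $k \geq 2$ I would then bootstrap via elliptic regularity: the operator $-\diverg A_\alpha \nabla$ together with its Robin-type boundary condition is uniformly elliptic and satisfies the Lopatinskii complementing condition for $\alpha$ close to $\pi/2$ (it reduces to pure Neumann at $\alpha=\pi/2$), while $f''(v_\alpha)$ and $\sigma_\alpha''(v_\alpha)$ are smooth multipliers on the weighted spaces. Writing
\[
-\diverg A_\alpha\nabla u = G - f''(v_\alpha)u,\qquad N_{\partial\R^2_+}\!\cdot A_\alpha\nabla u|_{H=0} = g - \sigma_\alpha''(v_\alpha)u|_{H=0},
\]
and inducting on the regularity index, one successively lifts the $k=1$ solution to $H^{k+2}_{(\beta,\gamma)}$; since $\alpha_0$ is fixed at the Neumann step, the uniform bound in $k$ is preserved.

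The main obstacle is the uniform smallness of $M_\alpha$ in the Neumann step: controlling $f''(v_\alpha) - f''(\theta_0)$ and $\sigma_\alpha''(v_\alpha)$ as multipliers between $H^{k+2}_{(\beta,\gamma)}$ and $H^k_{(\beta,\gamma)}$ (respectively its boundary trace space) uniformly for $\beta\in[0,\beta_0]$ and $\gamma\in[\gamma_0/2,\gamma_0]$ requires the weighted Sobolev multiplier theory combined with the high-regularity Lipschitz dependence of $v_\alpha$ on $\alpha$ from Theorem \ref{th_hp_alpha_sol_reg1}. A secondary but conceptually essential point is the compatibility mismatch between $\ker P_\alpha$ and $\ker P_{\pi/2}$, which the extended operator $\hat L_\alpha$ is designed precisely to absorb.
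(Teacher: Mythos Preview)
Your proposal is correct and follows essentially the same strategy as the paper (which defers to \cite{MoserDiss}, Theorem 4.25, but sketches the method in the paragraph preceding Theorem \ref{th_hp_alpha_solN3}): a Neumann series for $L_\alpha = L_{\pi/2} + M_\alpha$, an auxiliary one-dimensional correction to absorb the mismatch between the $\alpha$- and $\pi/2$-compatibility conditions which is then shown a posteriori to vanish via the computation of Remark \ref{th_hp_alpha_lin_comp}, and a low-index ($k=0,1$) invertibility result bootstrapped to general $k$. The differences are only in implementation: the paper places the auxiliary correction in the boundary datum $g$ rather than via a bulk bump $(\phi,0)$, and it phrases the bootstrap as a uniqueness-based regularity transfer within the weighted scale (solve for derivatives of $u$, which satisfy equations of the same type, and identify them by uniqueness) rather than invoking elliptic regularity and the Lopatinskii condition directly. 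Your route via standard elliptic theory is fine in spirit but needs an extra word on why regularity propagates in the \emph{weighted} spaces $H^{k+2}_{(\beta,\gamma)}$; the paper's uniqueness argument sidesteps this by staying entirely within the weighted isomorphism framework already set up for $k=0,1$.
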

\begin{proof}
	See \cite{MoserDiss}, Theorem 4.25.
\end{proof}

\section{Asymptotic Expansion of (AC$_\alpha$) in 2D}\label{sec_asym_ACalpha}\noindent
In this section we implement the rigorous asymptotic expansion for \hyperlink{ACalpha}{(AC$_\alpha$)} in the setting of the introduction. We build the expansion upon the curvilinear coordinates from Section \ref{sec_coord} and the model problems solved in Section \ref{sec_model_problems} appear. Therefore we will have to restrict $\alpha$ to a small interval around $\frac{\pi}{2}$ in order to use the results in Section \ref{sec_hp_alpha}.

Let $\Omega\subseteq\R^2$ be as in Remark \ref{th_intro_coord},~1.~and $\Gamma:=(\Gamma_t)_{t\in[0,T]}$ be as in Section \ref{sec_coord_surface_requ} with contact angle $\alpha\in(0,\pi)$. We employ the notation from Sections \ref{sec_coord_surface_requ}-\ref{sec_coord2D}. Moreover, let $\delta>0$ be such that the assertions of Theorem \ref{th_coord2D} hold for $\delta$ replaced by $2\delta$. In particular $(r,s):\overline{\Gamma(2\delta)}\rightarrow\overline{S_{\delta,\alpha}}$ are curvilinear coordinates that describe a neighbourhood $\overline{\Gamma(2\delta)}$ of $\Gamma$ in $\overline{\Omega}\times[0,T]$. Here $\overline{S_{\delta,\alpha}}$ is the trapeze with width $\delta$ and angle $\alpha$ defined in \eqref{eq_coord2D_trapeze}. Recall that $r$ can be viewed as a signed distance function and $s$ has the role of a tangential component, both with respect to an extension of $\Gamma$. See also Figure \ref{fig_TubularNeighbNotation} and Figure \ref{fig_trapez}. Finally, let $\Gamma$ evolve by \eqref{MCF}. Additionally, let $f$ fulfill \eqref{eq_AC_fvor1} and $\sigma_\alpha$ be as in Definition \ref{th_ACalpha_sigma_def}. We construct a smooth approximate solution $u^A_{\varepsilon,\alpha}$ to \eqref{eq_ACalpha1}-\eqref{eq_ACalpha3} for $\alpha$ close to $\frac{\pi}{2}$ with $u^A_{\varepsilon,\alpha}=\pm 1$ in $Q_T^\pm\setminus\Gamma(2\delta)$, increasingly steep \enquote{transition} from $-1$ to $1$ for $\varepsilon\rightarrow 0$ and such that $\{ u^A_{\varepsilon,\alpha}=0 \}$ converges to $\Gamma$ for $\varepsilon\rightarrow 0$. 

Roughly the idea is as follows. For the inner expansion in Section \ref{sec_asym_ACalpha_in} we can use the calculations from the case $\alpha=\frac{\pi}{2}$ in \cite{AbelsMoser}, Section 3.1. See also \cite{MoserACvACND}, Section 5.1 for the iteration of the latter expansion (also for $N\geq 2$). This formally yields a suitable approximate solution of \eqref{eq_ACalpha1} on $\{(x,t)\in\overline{\Gamma(2\delta)}:s(x,t)\in I\}$, where $I=[-1,1]$. It would not make sense to use the construction for $s\in I_\mu:=[-1-\mu,1+\mu]$ for some $\mu>0$ since then the height functions would have to satisfy a parabolic equation on $I_\mu$, but we want to impose boundary conditions at $s=\pm 1$ later. However, we can use smooth extensions from $I$ to $I_\mu$ (or $\R$) of the inner expansion terms and the height functions (cf.~\eqref{eq_asym_ACalpha_h_fct} below) obtained on $I$ for some large $\mu>0$. Then also the rescaled variable (cf.~\eqref{eq_asym_ACalpha_rho} below) from the inner expansion is well-defined close to the contact points. But we can only use the estimate on the approximation error for the inner expansion for $s\in I$. Therefore we have to cut off in an appropriate way. If the latter is done $\varepsilon$-independent, then it is difficult to set up a straight-forward ansatz at the boundary points: For the contact point expansion it is natural to addionally rescale $z^\pm_\alpha:=-r\cos\alpha + (1\mp s)\sin\alpha$ which runs in $\R_+$. Since one has to match the inner and the contact point expansion in every $\varepsilon$-order, this would lead to ansatz functions in $(\rho,Z,t)\in\overline{\R^2_+}\times[0,T]$ having non-trivial asymptotic properties for $Z\rightarrow\infty$. However, when using Taylor expansions for \eqref{eq_ACalpha1} this behaviour is a problem, since some of the appearing polynomials will not be multiplied with suitable decaying terms. Therefore the idea is to cut-off the inner expansion with appropriate functions depending on the $\varepsilon$-scaled variables. The contact point expansion is done in Section \ref{sec_asym_ACalpha_cp} and leads to the model problems on $\R^2_+$ we considered in Section \ref{sec_hp_alpha}. In order to use these results we will have to restrict to $\alpha\in\frac{\pi}{2}+[-\alpha_0,\alpha_0]$, where $\alpha_0>0$ is determined in Remark \ref{th_asym_ACalpha_decay_param} below. The compatibility condition \eqref{eq_hp_alpha_lin_comp} will yield the boundary conditions for the height functions at $s=\pm 1$. Altogether for $\alpha\in\frac{\pi}{2}+[-\alpha_0,\alpha_0]$ we obtain a suitable approximate solution $u^A_{\varepsilon,\alpha}$ to \eqref{eq_ACalpha1}-\eqref{eq_ACalpha3}, see Section \ref{sec_asym_ACalpha_uA} below. 

Let $M\in\N$ with $M\geq 2$. For $j=1,...,M$ we introduce height functions
\begin{align}\label{eq_asym_ACalpha_h_fct}
h_{j,\alpha}:I_\mu\times[0,T]\rightarrow\R
\quad\text{ and }\quad h_{\varepsilon,\alpha}:=\sum_{j=1}^M \varepsilon^{j-1}h_{j,\alpha}
\end{align}
for some $\mu>0$, where $I_\mu:=[-1-\mu,1+\mu]$. Furthermore, we set $h_{M+1,\alpha}:=h_{M+2,\alpha}:=0$ and analogously to the case $\alpha=\frac{\pi}{2}$ in \cite{AbelsMoser}
\begin{align}\label{eq_asym_ACalpha_rho}
\rho_{\varepsilon,\alpha}(x,t):=\frac{r(x,t)}{\varepsilon}-h_{\varepsilon,\alpha}(s(x,t),t)\quad\text{ for }(x,t)\in\overline{\Gamma(2\delta)}.
\end{align} 
If $\mu>0$ is large enough, the latter is well-defined.

\subsection{Inner Expansion of (AC$_\alpha$) in 2D}\label{sec_asym_ACalpha_in}
For the inner expansion we consider the same ansatz as in \cite{AbelsMoser}, Section 3.1, and \cite{MoserACvACND}, Section 5.1 for $N=2$. Let $\varepsilon>0$ be small and
\[
u^I_{\varepsilon,\alpha}:=\sum_{j=0}^{M+1}\varepsilon^ju_{j,\alpha}^I,\quad u_{j,\alpha}^I|_{(x,t)}:=\hat{u}_{j,\alpha}^I(\rho_{\varepsilon,\alpha}|_{(x,t)},s|_{(x,t)},t)\quad\text{ in }\{(x,t)\in\overline{\Gamma(2\delta)}:s|_{(x,t)}\in I\},
\]
where $\hat{u}_{j,\alpha}^I:\R\times I\times[0,T]\rightarrow\R:(\rho,s,t)\mapsto \hat{u}_{j,\alpha}^I(\rho,s,t)$ for $j=0,...,M+1$. Moreover, we set $u^I_{M+2,\alpha}:=0$ and $\hat{u}^I_{\varepsilon,\alpha}:=\sum_{j=0}^{M+1}\varepsilon^j\hat{u}_{j,\alpha}^I$. We use the following notation:
\begin{Definition}[\textbf{Notation for Inner Expansion of (AC$_\alpha$) in 2D}]\upshape\phantomsection{\label{th_asym_ACalpha_in_def}}
	\begin{enumerate}
		\item We name $(\theta_0,u^I_{1,\alpha})$ the \textit{zero-th inner order} and $(h_{j,\alpha},u^I_{j+1,\alpha})$ the \textit{$j$-th inner order} for $j=1,...,M$. 
		\item Let $k\in\{-1,...,M+2\}$ and $\beta>0$. Then let $R^I_{k,(\beta),\alpha}$ be the set of smooth functions $R:\R\times I\times[0,T]\rightarrow\R$ that depend just on the $j$-th inner orders for $0\leq j\leq \min\{k,M\}$ and fulfill uniformly in $(\rho,s,t)$:
		\[
		|\partial_\rho^i\partial_s^l\partial_t^n R(\rho,s,t)|=\Oc(e^{-\beta|\rho|})\quad\text{ for all }i,l,n\in\N_0.
		\]
		Finally, we analogously define $\hat{R}^I_{k,(\beta),\alpha}$ with functions $R:\R\times[0,T]\rightarrow\R$.\end{enumerate}
\end{Definition}

We expand \eqref{eq_ACalpha1} for $u_{\varepsilon,\alpha}=u^I_{\varepsilon,\alpha}$ in the same way as in \cite{MoserACvACND}, Section 5.1.1 (for $N=2$ there). This leads to
\begin{align}\label{eq_asym_ACalpha_u01}
\hat{u}^I_{0,\alpha}(\rho,s,t)=\theta_0(\rho)\quad\text{ and }\quad \hat{u}^I_{1,\alpha}(\rho,s,t)=0,
\end{align}
cf.~\cite{MoserACvACND}, Section 5.1.1.1-5.1.1.2. Moreover, from \cite{MoserACvACND}, Sections 5.1.1.3-5.1.1.4 we obtain: Inductively, if for $k=0,...,M-1$ the $j$-th inner order for $j=0,...,k$ is known, smooth and $\hat{u}_{j+1,\alpha}^I\in R^I_{j,(\beta),\alpha}$ for every $\beta\in(0,\min\{\sqrt{f''(\pm1)}\})$, then there is an equation for $h_{k+1,\alpha}$:
\begin{align}\label{eq_asym_ACalpha_in_hk}
\partial_th_{k+1,\alpha}-|\nabla s|^2|_{\overline{X}_0(s,t)}\partial_s^2h_{k+1,\alpha}+a_1\partial_sh_{k+1,\alpha} + a_0 h_{k+1,\alpha} = f_{k,\alpha}\quad\text{ in }I\times[0,T],
\end{align}
where $f_{k,\alpha}:I\times[0,T]\rightarrow\R$ is a smooth function that can be explicitly computed from the $j$-th inner orders for $0\leq j\leq k$ and $a_0, a_1$ are smooth and can be computed from the coordinates. If $h_{k+1,\alpha}$ is smooth and solves \eqref{eq_asym_ACalpha_in_hk}, then we obtain $\hat{u}^I_{k+2,\alpha}$ as the solution of
\begin{align}\label{eq_asym_ACalpha_in_uk}
-\Lc_0\hat{u}_{k+2,\alpha}^I(\rho,s,t)=R_{k+1,\alpha}(\rho,s,t)\quad\text{ for }(\rho,s,t)\in\R\times I\times[0,T],
\end{align}
where $\Lc_0:=-\partial_\rho^2+f''(\theta_0)$ and $R_{k+1,\alpha}\in R^I_{k+1,(\beta),\alpha}$ can be explicitly computed from $h_{k+1,\alpha}$ and the $j$-th inner orders for $j=0,...,k$. Here \eqref{eq_asym_ACalpha_in_hk} is equivalent to the compatibility condition for \eqref{eq_asym_ACalpha_in_uk} and Theorem \ref{th_ODE_lin} yields the solution $\hat{u}^I_{k+2,\alpha}\in R^I_{k+1,(\beta),\alpha}$ for all $\beta\in(0,\min\{\sqrt{f''(\pm1)}\})$.
\begin{Remark}\label{th_asym_ACalpha_feven_rem1}\upshape
	If $f$ is even, then it holds $f_{0,\alpha}=0$. This follows from \cite{MoserACvACND}, Remark 5.5.
\end{Remark}
\begin{Lemma}\label{th_asym_ACalpha_in}
	If for $k=0,...,M$ the $k$-th inner orders are known, smooth, $\hat{u}^I_{k+1}\in R^I_{k,(\beta),\alpha}$ for some $\beta>0$ and the equations \eqref{eq_asym_ACalpha_u01}-\eqref{eq_asym_ACalpha_in_uk} hold, then for some $c,C>0$ we have 
	\[
	\left|\partial_tu^I_{\varepsilon,\alpha}-\Delta u^I_{\varepsilon,\alpha}+\frac{1}{\varepsilon^2}f'(u^I_{\varepsilon,\alpha})\right|\leq C(\varepsilon^M e^{-c|\rho_{\varepsilon,\alpha}|}+\varepsilon^{M+1}) 
	\quad\text{ in }\{(x,t)\in\overline{\Gamma(2\delta)}:s|_{(x,t)}\in I\}.
	\]
\end{Lemma}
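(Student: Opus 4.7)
The plan is direct substitution and bookkeeping. Since the inner ansatz $u^I_{\varepsilon,\alpha}$ depends on $\alpha$ only implicitly, via the curvilinear coordinates $(r,s)$ and the profiles $\hat u^I_{j,\alpha}$, but not through its functional form, the computation is formally identical to the $\alpha=\frac{\pi}{2}$ case carried out in \cite{AbelsMoser}, Section~3.1, and \cite{MoserACvACND}, Section~5.1.1. I would follow those references and verify line by line that the arguments remain valid for general $\alpha$.

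Concretely, I would first apply the chain rule to $u^I_{j,\alpha}|_{(x,t)}=\hat u^I_{j,\alpha}(\rho_{\varepsilon,\alpha},s,t)$ using
\[
\nabla\rho_{\varepsilon,\alpha}=\tfrac{1}{\varepsilon}\nabla r-(\partial_sh_{\varepsilon,\alpha})\nabla s,\qquad \partial_t\rho_{\varepsilon,\alpha}=\tfrac{1}{\varepsilon}\partial_tr-(\partial_sh_{\varepsilon,\alpha})\partial_ts-\partial_th_{\varepsilon,\alpha},
\]
to turn $\partial_tu^I_{\varepsilon,\alpha}-\Delta u^I_{\varepsilon,\alpha}+\varepsilon^{-2}f'(u^I_{\varepsilon,\alpha})$ into a Laurent-type series in $\varepsilon$ starting at order $\varepsilon^{-2}$. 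Next I would Taylor expand the geometric coefficients $|\nabla r|^2,|\nabla s|^2,\nabla r\cdot\nabla s,\Delta r,\Delta s,\partial_tr,\partial_ts$ about $r=0$ using $r=\varepsilon(\rho_{\varepsilon,\alpha}+h_{\varepsilon,\alpha})$, and invoke the identities of Theorem~\ref{th_coord2D} ($|\nabla r|^2|_{r=0}=1$, $\partial_r|\nabla r|^2|_{r=0}=0$, $\nabla r\cdot\nabla s|_{r=0}=0$, $V=-\partial_tr|_\Gamma$, $H=-\Delta r|_\Gamma$) to simplify the leading terms. In parallel I would Taylor expand $f'(u^I_{\varepsilon,\alpha})$ about $\theta_0$, truncated at sufficiently high order, using $u^I_{\varepsilon,\alpha}-\theta_0=\Oc(\varepsilon^2)$ (since $\hat u^I_{1,\alpha}=0$) so that the Taylor remainder contributes at most $\Oc(\varepsilon^{M+1})$ after division by $\varepsilon^2$.

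Collecting by powers of $\varepsilon$, the coefficient at $\varepsilon^{-2}$ equals $-\theta_0''+f'(\theta_0)$ and vanishes by \eqref{eq_asym_ACalpha_u01}. For $k=-1,0,\ldots,M-1$ the coefficient at $\varepsilon^k$ takes the form $-\Lc_0\hat u^I_{k+2,\alpha}(\rho_{\varepsilon,\alpha},s,t)-R_{k+1,\alpha}(\rho_{\varepsilon,\alpha},s,t)$ together with a multiple of $\theta_0'(\rho_{\varepsilon,\alpha})$ whose scalar coefficient equals the left-hand side of \eqref{eq_asym_ACalpha_in_hk} minus $f_{k,\alpha}$; both vanish by \eqref{eq_asym_ACalpha_in_hk}--\eqref{eq_asym_ACalpha_in_uk}. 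At order $\varepsilon^M$ the leftover coefficient is an element of $R^I_{M,(\beta),\alpha}$ (formed from the determined lower orders together with $h_{M+1,\alpha}\equiv 0$ and $\hat u^I_{M+2,\alpha}\equiv 0$) and therefore contributes at most $C\varepsilon^Me^{-c|\rho_{\varepsilon,\alpha}|}$. The terms at orders $\varepsilon^{M+1}$ and higher, coming from higher-order Taylor remainders of the geometric coefficients and of $f'$ together with products of positive powers of $\varepsilon$, are bounded uniformly on the inner region by $C\varepsilon^{M+1}$.

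The main obstacle is purely combinatorial bookkeeping: one must verify that every object produced by the chain rule, by polynomial Taylor coefficients in $r$, and by the nonlinear products stays in a class $R^I_{k,(\beta),\alpha}$, so that the exponential factor $e^{-\beta|\rho|}$ is preserved through to order $\varepsilon^M$. These closure properties are precisely encoded in Definition~\ref{th_asym_ACalpha_in_def} and are the content of the corresponding verifications in the $\alpha=\frac{\pi}{2}$ references cited above, which carry over verbatim to general $\alpha$.
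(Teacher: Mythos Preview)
Your proposal is correct and follows exactly the approach of the paper: the paper's proof is the single sentence ``This follows from the expansions and remainder estimates in \cite{MoserACvACND}, Sections 5.1.1.1--5.1.1.4,'' and your outline is precisely a sketch of what those computations contain. One small remark: your uniform template for the $\varepsilon^k$-coefficient does not quite fit at $k=-1$, since there is no $h_0$-equation there; the vanishing at order $\varepsilon^{-1}$ instead uses $(\partial_tr-\Delta r)|_{r=0}=0$, i.e.\ the \eqref{MCF} assumption, together with $\hat u^I_{1,\alpha}=0$ from \eqref{eq_asym_ACalpha_u01}---but this is only a presentational point and does not affect the argument.
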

\begin{proof}
	This follows from the expansions and remainder estimates in \cite{MoserACvACND}, Sections 5.1.1.1-5.1.1.4.
\end{proof}

\subsection{Contact Point Expansion of (AC$_\alpha$) in 2D}\label{sec_asym_ACalpha_cp}
For the contact point expansion we define $s^\pm:=1\mp s$,
\begin{align}\label{eq_asym_ACalpha_cp_zalpha}
z^\pm_\alpha:=-r\cos\alpha + s^\pm\sin\alpha\quad\text{ and }\quad Z_{\varepsilon,\alpha}^\pm:=\frac{z^\pm_\alpha}{\varepsilon}\quad\text{ on }\overline{\Gamma(2\delta)}.
\end{align}
See also Figure \ref{fig_z_alpha_xi} below. Note that 
\begin{align}\label{eq_asym_ACalpha_cp_s1}
s=\pm 1 \mp s^\pm\quad\text{ and }\quad
s^\pm= \varepsilon\frac{1}{\sin\alpha}\left[Z_{\varepsilon,\alpha}^\pm+(\rho_{\varepsilon,\alpha}+h_{\varepsilon,\alpha})\cos\alpha\right].
\end{align}
This identity will be used later to expand $s$-dependent terms and it motivates us (see Remark \ref{th_asym_ACalpha_cp_cutoff} below) to define the following cut-off function for $u^I_{\varepsilon,\alpha}$: Let $\hat{\chi}:\R\rightarrow[0,1]$ be smooth with $\hat{\chi}(y)=0$ for $y\leq 1$ and $\hat{\chi}(y)=1$ for $y\geq 2$. Then we set for some constant $H_0\geq0$ 
\begin{alignat}{2}\label{eq_asym_ACalpha_cp_cutoff}
\hat{\chi}_\alpha(\rho,Z)&:=\hat{\chi}(Z)\,\hat{\chi}\left(\frac{1}{\sin\alpha}[Z+\rho\cos\alpha-H_0]\right)&\quad&\text{ for all }(\rho,Z)\in\overline{\R^2_+},\\ \chi_\alpha(x,t)&:=\hat{\chi}_\alpha(\rho_{\varepsilon,\alpha}(x,t),Z_{\varepsilon,\alpha}^\pm(x,t))&\quad&\text{ for all } (x,t)\in\overline{\Gamma(2\delta,1)}.\label{eq_asym_ACalpha_cp_cutoff2}
\end{alignat}
See Figure \ref{fig_z_alpha_xi} below for a sketch.

\begin{figure}[H]
	\centering
	\def\svgwidth{\linewidth}
	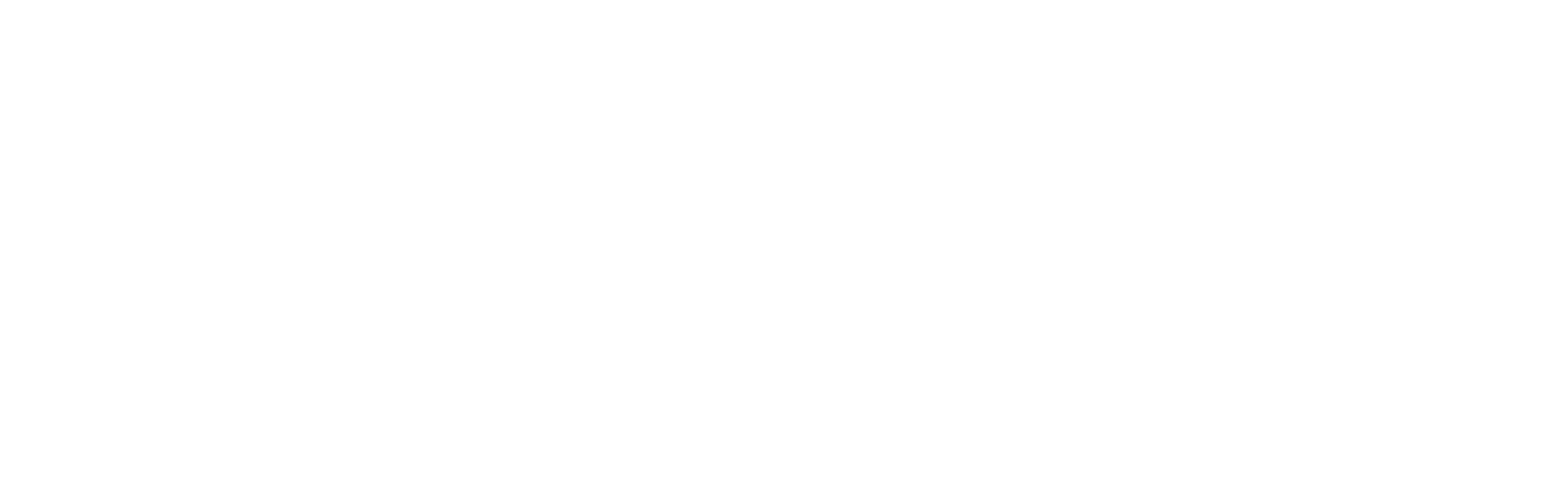
	\caption{Coordinates and cut-off $\hat{\chi}_\alpha$.}\label{fig_z_alpha_xi}
\end{figure}
For the contact point expansion we make the ansatz 
\[
u_{\varepsilon,\alpha}=\chi_\alpha u^I_{\varepsilon,\alpha}+u^{C\pm}_{\varepsilon,\alpha}\quad\text{ in }\overline{\Gamma(2\delta)}
\] 
close to the contact points $p^\pm(t), t\in[0,T]$. Here we define $u^{C\pm}_{\varepsilon,\alpha}:=\sum_{j=1}^M\varepsilon^j u_{j,\alpha}^{C\pm}$ and
\[
u_{j,\alpha}^{C\pm}(x,t):=\hat{u}_{j,\alpha}^{C\pm}(\rho_{\varepsilon,\alpha}(x,t),Z_{\varepsilon,\alpha}^\pm(x,t),t)
\]
for $(x,t)\in\overline{\Gamma(2\delta)}$, where 
\[
\hat{u}_{j,\alpha}^{C\pm}:\overline{\R^2_+}\times[0,T]\rightarrow\R:
(\rho,Z,t)\mapsto \hat{u}_{j,\alpha}^{C\pm}(\rho,Z,t)\quad\text{ for }j=1,...,M.
\] 
Moreover, we set $\hat{u}^{C\pm}_{\varepsilon,\alpha}:=\sum_{j=1}^M\varepsilon^j \hat{u}_{j,\alpha}^{C\pm}$ and $u^{C\pm}_{M+1,\alpha}:=0$.
\begin{Remark}\phantomsection{\label{th_asym_ACalpha_cp_cutoff}}\upshape\begin{enumerate}
	\item Note that $\hat{\chi}(Z_{\varepsilon,\alpha}^\pm)$ is zero on $\overline{\Gamma(2\delta)}$ close to $\partial Q_T$. Therefore there will be no contribution of $u^I_{\varepsilon,\alpha}$ in the expansion of the boundary condition \eqref{eq_ACalpha2}. Nevertheless, this is just for aesthetic reasons. However, the second factor of $\hat{\chi}_\alpha$ in \eqref{eq_asym_ACalpha_cp_cutoff} is crucial. Namely, if $h_{1,\alpha}$ is known independently of $\chi_\alpha$, then we can take $H_0:=2\|h_{1,\alpha}\|_\infty$. Then due to \eqref{eq_asym_ACalpha_cp_s1} it holds:
	\[
	\frac{1}{\sin\alpha}[Z_{\varepsilon,\alpha}^\pm+\rho_{\varepsilon,\alpha}\cos\alpha-H_0]\geq 1 \quad\!\Rightarrow\! \quad \frac{s^\pm}{\varepsilon}\geq 1+\frac{1}{\sin\alpha}[\cos\alpha\, h_{\varepsilon,\alpha}+H_0]\geq 0\!\quad\!\text{in }\overline{\Gamma(2\delta)}
	\] 
	if $\varepsilon>0$ is small depending on $\|h_{2,\alpha}\|_\infty,...,\|h_{M,\alpha}\|_\infty$ and $\alpha$. This is important since then values of $u^I_{\varepsilon,\alpha}$ are only used in the set $\{(x,t)\in\overline{\Gamma(2\delta)}:s|_{(x,t)}\in I\}$ on which we know that $u^I_{\varepsilon,\alpha}$ has appropriate decay and is (at the moment formally) a suitable approximate solution of \eqref{eq_ACalpha1}, cf.~Lemma \ref{th_asym_ACalpha_in}. 
	\item The $\varepsilon$-scaled cut-off function $\hat{\chi}(Z)\hat{\chi}(s^\pm/\varepsilon)$ should also work, but there are even more terms that have to be expanded. 
\end{enumerate}
\end{Remark}

To get an idea for the expansion of \eqref{eq_ACalpha1} for $u_{\varepsilon,\alpha}=\chi_\alpha u^I_{\varepsilon,\alpha}+u^{C\pm}_{\varepsilon,\alpha}$ in $\overline{\Gamma(2\delta)}$, we rewrite
\begin{align}\label{eq_asym_ACalpha_cp1}
0&=(\partial_t-\Delta)\left[\chi_\alpha u^I_{\varepsilon,\alpha}+u^{C\pm}_{\varepsilon,\alpha}\right]+\frac{1}{\varepsilon^2}f'(\chi_\alpha u^I_{\varepsilon,\alpha}+u^{C\pm}_{\varepsilon,\alpha})\\
&=\label{eq_asym_ACalpha_cp2}
u^I_{\varepsilon,\alpha}(\partial_t-\Delta)\chi_\alpha
+2\nabla(u^I_{\varepsilon,\alpha})\cdot\nabla(\chi_\alpha)
+\chi_\alpha\left[(\partial_t-\Delta)u^I_{\varepsilon,\alpha}+\frac{1}{\varepsilon^2}f'(u^I_{\varepsilon,\alpha})\right]\\\label{eq_asym_ACalpha_cp3}
&+(\partial_t-\Delta)u^{C\pm}_{\varepsilon,\alpha}
+\frac{1}{\varepsilon^2}\left[f'(\chi_\alpha u^I_{\varepsilon,\alpha}+u^{C\pm}_{\varepsilon,\alpha})-\chi_\alpha f'(u^I_{\varepsilon,\alpha})\right].
\end{align}
Due to Remark \ref{th_asym_ACalpha_cp_cutoff}, 1.~and Lemma \ref{th_asym_ACalpha_in} it will be possible to control the last term in \eqref{eq_asym_ACalpha_cp2} rigorously in the end. Hence this term can be left out in the expansion of \eqref{eq_asym_ACalpha_cp1}.  
Moreover, the lowest order will be important. Therefore we set
\[
w^{C\pm}_{\alpha}:=\chi_\alpha u^I_{0,\alpha}+u^{C\pm}_{0,\alpha}\quad\text{ and }\quad 
\hat{w}^{C\pm}_\alpha(\rho,Z,t):=\hat{\chi}_\alpha(\rho,Z)\theta_0(\rho)+\hat{u}^{C\pm}_{0,\alpha}(\rho,Z,t),
\] 
$\tilde{u}^I_{\varepsilon,\alpha}:=u^I_{\varepsilon,\alpha}-u^I_{0,\alpha}$, $\tilde{u}^{C\pm}_{\varepsilon,\alpha}:=u^{C\pm}_{\varepsilon,\alpha}-u^{C\pm}_{0,\alpha}$ as well as $\hat{\tilde{u}}^{C\pm}_{\varepsilon,\alpha}:=\hat{u}^{C\pm}_{\varepsilon,\alpha}-\hat{u}^{C\pm}_{0,\alpha}$.
Then we rewrite \eqref{eq_asym_ACalpha_cp1}-\eqref{eq_asym_ACalpha_cp3} without the last term in \eqref{eq_asym_ACalpha_cp2} as follows:
\begin{align}\begin{split}\label{eq_asym_ACalpha_cp4}
0&=(\partial_t-\Delta)[w^{C\pm}_\alpha+\tilde{u}^{C\pm}_{\varepsilon,\alpha}] 
-\chi_\alpha(\partial_t-\Delta)u^I_{0,\alpha}
+\tilde{u}^I_{\varepsilon,\alpha}(\partial_t-\Delta)\chi_\alpha\\
&+2\nabla(\tilde{u}^I_{\varepsilon,\alpha})\cdot\nabla(\chi_\alpha)
+\frac{1}{\varepsilon^2}\left[f'(w^{C\pm}_\alpha+\chi_\alpha\tilde{u}^I_{\varepsilon,\alpha}+\tilde{u}^{C\pm}_{\varepsilon,\alpha})-\chi_\alpha f'(u^I_{\varepsilon,\alpha})\right].\end{split}
\end{align}
We will expand the \enquote{bulk equation} \eqref{eq_asym_ACalpha_cp4} in $\overline{\Gamma(2\delta)}$ into $\varepsilon$-series with coefficients in $(\rho_{\varepsilon,\alpha},Z_{\varepsilon,\alpha}^\pm,t)$  up to $\Oc(\varepsilon^{M-2})$ and the nonlinear Robin boundary condition \eqref{eq_ACalpha2} for $u_{\varepsilon,\alpha}=\chi_\alpha u^I_{\varepsilon,\alpha}+u^{C\pm}_{\varepsilon,\alpha}$ on $\partial Q_T\cap\overline{\Gamma(2\delta)}$ into $\varepsilon$-series with coefficients in $(\rho_{\varepsilon,\alpha},t)$ up to $\Oc(\varepsilon^{M-1})$. Note that in order to yield a suitable approximate solution, the contact point expansion has to match the inner expansion. To this end we aim for
\begin{align}\label{eq_asym_ACalpha_matching1}
\partial_\rho^i\partial_H^l\partial_t^n[\hat{w}^{C\pm}_\alpha(\rho,H,t)-\theta_0(\rho)]&=\Oc(e^{-(\beta|\rho|+\gamma H)}),\\ \label{eq_asym_ACalpha_matching2}
\partial_\rho^i\partial_H^l\partial_t^n\hat{u}^{C\pm}_{j,\alpha}(\rho,H,t)&=\Oc(e^{-(\beta|\rho|+\gamma H)})
\end{align}
for $j=1,...,M$ and all $i,l,n\in\N_0$, for some $\beta,\gamma>0$ possibly depending on $j,i,l,n$. Later we will use arbitrary $\beta\in[0,\beta_0)$ and $\gamma\in[\frac{\gamma_0}{2},\gamma_0)$, where $\beta_0$, $\gamma_0$ are specified as follows:

\begin{Remark}[\textbf{Decay Parameters $\beta_0$,$\gamma_0$, Angle $\alpha_0$, Lowest Order $v_\alpha$}]\upshape\label{th_asym_ACalpha_decay_param}
	We choose $\beta_0,\gamma_0>0$ as in Remark \ref{th_hp_alpha_solN3_rem} and such that the inequality 
	\begin{align}\label{eq_asym_ACalpha_decay_param}\textstyle
	\beta_0+\gamma_0\leq\min\{\sqrt{f''(\pm 1)}\}
	\end{align} 
	holds. For these $\beta_0,\gamma_0$ we can use all the assertions in Section \ref{sec_hp_alpha}. Hence we obtain an $\alpha_0>0$ small and a solution $v_\alpha$ of \eqref{eq_hp_alpha_modelN1}-\eqref{eq_hp_alpha_modelN2} for all $\alpha\in\frac{\pi}{2}+[-\alpha_0,\alpha_0]$ such that $v_\alpha=\theta_0+\hat{v}_\alpha$ and $\hat{v}_.:\frac{\pi}{2}+[-\alpha_0,\alpha_0]\rightarrow H^k_{(\beta_0,\gamma_0)}(\R^2_+)$ is Lipschitz-continuous for all $k\in\N_0$. Moreover, due to Theorem \ref{th_hp_alpha_sol_reg3} the linearized problem \eqref{eq_hp_alpha_modelL1}-\eqref{eq_hp_alpha_modelL2} can be solved in Sobolev spaces with exponential weight with decay parameters $\beta\in[0,\beta_0]$, $\gamma\in[\frac{\gamma_0}{2},\gamma_0]$. Note that every choice in this remark is independent of $\Omega$ and $\Gamma$.
\end{Remark}

The successive requirement that the coefficients in the expansions disappear will yield equations on $\R^2_+$ of the type as in Subsection \ref{sec_hp_alpha}. It will turn out that for $\hat{w}^{C\pm}_\alpha=v_\alpha$ the lowest order vanishes. The solvability condition \eqref{eq_hp_alpha_lin_comp} for the linear problems in the higher orders will yield the boundary conditions at $s=\pm 1$ for the height functions $h_{j,\alpha}$.

For the expansion we compute in the following lemma how the differential operators act on $(\rho_{\varepsilon,\alpha},Z_{\varepsilon,\alpha}^\pm,t)$-dependent terms like $u^{C\pm}_{\varepsilon,\alpha}$, $\chi_\alpha$ or $\chi_\alpha u^I_{0,\alpha}$.

\begin{Lemma}\label{th_asym_ACalpha_cp_trafo}
	Let $\overline{\R^2_+}\times[0,T]\ni(\rho,Z,t)\mapsto\hat{w}(\rho,Z,t)\in\R$ be smooth enough and define $w:\overline{\Gamma(2\delta)}\rightarrow\R$ via $w(x,t):=\hat{w}(\rho_{\varepsilon,\alpha}(x,t),Z_{\varepsilon,\alpha}^\pm(x,t),t)$ for all $(x,t)\in\overline{\Gamma(2\delta)}$. Then
	\begin{align*}
	\partial_tw&=\partial_\rho\hat{w}\left[\frac{\partial_tr}{\varepsilon}-(\partial_th_{\varepsilon,\alpha}+\partial_ts\partial_sh_{\varepsilon,\alpha})\right]+\partial_Z\hat{w}\frac{\partial_tz^\pm_\alpha}{\varepsilon}+\partial_t\hat{w},\\
	\nabla w&=\partial_\rho\hat{w}\left[\frac{\nabla r}{\varepsilon}-\nabla s\partial_sh_{\varepsilon,\alpha}\right]+\partial_Z\hat{w}\frac{\nabla z^\pm_\alpha}{\varepsilon},\\
	\Delta w&=\partial_\rho\hat{w}\left[\frac{\Delta r}{\varepsilon}-(\Delta s\partial_sh_{\varepsilon,\alpha}+|\nabla s|^2\partial_s^2h_{\varepsilon,\alpha})\right]+\partial_Z\hat{w}\frac{\Delta z^\pm_\alpha}{\varepsilon}+\partial_Z^2\hat{w}\frac{|\nabla z^\pm_\alpha|^2}{\varepsilon^2}\\
	&+ 2\partial_\rho\partial_Z\hat{w}\,\frac{\nabla z^\pm_\alpha}{\varepsilon}\cdot\left[\frac{\nabla r}{\varepsilon}-\nabla s\partial_sh_{\varepsilon,\alpha}\right]+\partial_\rho^2\hat{w}\left|\frac{\nabla r}{\varepsilon}-\nabla s\partial_sh_{\varepsilon,\alpha}\right|^2,
	\end{align*}
	where the $w$-terms and the derivatives of $r$ or $s$ are evaluated at $(x,t)$, the $h_{\varepsilon,\alpha}$-expressions at $(s(x,t),t)$ and the $\hat{w}$-terms at $(\rho_{\varepsilon,\alpha}(x,t),Z_{\varepsilon,\alpha}^\pm(x,t),t)$.
\end{Lemma}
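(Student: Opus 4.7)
The statement is a pure chain-rule computation, so the plan is to apply the chain rule carefully to $w(x,t)=\hat{w}(\rho_{\varepsilon,\alpha}(x,t),Z_{\varepsilon,\alpha}^\pm(x,t),t)$ using the explicit definitions
\[
\rho_{\varepsilon,\alpha}=\frac{r}{\varepsilon}-h_{\varepsilon,\alpha}(s,t),\qquad Z_{\varepsilon,\alpha}^\pm=\frac{z_\alpha^\pm}{\varepsilon},
\]
and then collect terms in the form stated. Since $\overline{X}$ is a smooth diffeomorphism (Theorem \ref{th_coord2D}) and $h_{\varepsilon,\alpha}$ is smooth, all differentiations are justified.

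First I would compute the partial derivatives of $\rho_{\varepsilon,\alpha}$ and $Z_{\varepsilon,\alpha}^\pm$. For the time derivative, treating $h_{\varepsilon,\alpha}$ as a function of $(s,t)$ and applying the chain rule in $s$, one obtains
\[
\partial_t\rho_{\varepsilon,\alpha}=\frac{\partial_tr}{\varepsilon}-\partial_th_{\varepsilon,\alpha}-\partial_sh_{\varepsilon,\alpha}\,\partial_ts,\qquad \partial_tZ_{\varepsilon,\alpha}^\pm=\frac{\partial_tz_\alpha^\pm}{\varepsilon}.
\]
For spatial gradients the same argument gives
\[
\nabla\rho_{\varepsilon,\alpha}=\frac{\nabla r}{\varepsilon}-\nabla s\,\partial_sh_{\varepsilon,\alpha},\qquad \nabla Z_{\varepsilon,\alpha}^\pm=\frac{\nabla z_\alpha^\pm}{\varepsilon}.
\]
Plugging these into the one-step chain rule $\partial_tw=\partial_\rho\hat w\,\partial_t\rho_{\varepsilon,\alpha}+\partial_Z\hat w\,\partial_tZ_{\varepsilon,\alpha}^\pm+\partial_t\hat w$ and $\nabla w=\partial_\rho\hat w\,\nabla\rho_{\varepsilon,\alpha}+\partial_Z\hat w\,\nabla Z_{\varepsilon,\alpha}^\pm$ yields the first two identities directly.

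For the Laplacian I would use $\Delta w=\diverg(\nabla w)$ and expand
\[
\Delta w=\partial_\rho\hat w\,\Delta\rho_{\varepsilon,\alpha}+\partial_Z\hat w\,\Delta Z_{\varepsilon,\alpha}^\pm+\partial_\rho^2\hat w\,|\nabla\rho_{\varepsilon,\alpha}|^2+2\partial_\rho\partial_Z\hat w\,(\nabla\rho_{\varepsilon,\alpha}\cdot\nabla Z_{\varepsilon,\alpha}^\pm)+\partial_Z^2\hat w\,|\nabla Z_{\varepsilon,\alpha}^\pm|^2.
\]
The only term needing a small computation is $\Delta\rho_{\varepsilon,\alpha}$: from the product rule,
\[
\diverg(\nabla s\,\partial_sh_{\varepsilon,\alpha})=\Delta s\,\partial_sh_{\varepsilon,\alpha}+\nabla s\cdot\nabla(\partial_sh_{\varepsilon,\alpha}(s,t))=\Delta s\,\partial_sh_{\varepsilon,\alpha}+|\nabla s|^2\partial_s^2h_{\varepsilon,\alpha},
\]
so $\Delta\rho_{\varepsilon,\alpha}=\Delta r/\varepsilon-\Delta s\,\partial_sh_{\varepsilon,\alpha}-|\nabla s|^2\partial_s^2h_{\varepsilon,\alpha}$, while $\Delta Z_{\varepsilon,\alpha}^\pm=\Delta z_\alpha^\pm/\varepsilon$. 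Substituting and reorganizing gives exactly the claimed expression for $\Delta w$. No genuine obstacle arises; the only care needed is bookkeeping of the evaluation points of $\hat w$, of the derivatives of the coordinate functions, and of $h_{\varepsilon,\alpha}$.
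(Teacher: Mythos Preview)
Your proposal is correct and matches the paper's approach exactly: the paper's proof consists of the single sentence ``This follows from the chain rule,'' and you have simply written out that chain-rule computation in detail.
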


\begin{proof}
	This follows from the chain rule.
\end{proof}

Similar as in the case $\alpha=\frac{\pi}{2}$ in \cite{MoserACvACND} we use the following notation for the higher orders:
\begin{Definition}[\textbf{Notation for Contact Point Expansion of (AC$_\alpha$) in 2D}]\upshape\phantomsection{\label{th_asym_ACalpha_cp_def}}\qquad
\begin{enumerate}
		\item We refer to the functions $(\theta_0,u^I_{1,\alpha},w^{C\pm}_\alpha)$ as the \textit{zero-th order} and $(h_{j,\alpha},u^I_{j+1,\alpha},u^{C\pm}_{j,\alpha})$ as the \textit{$j$-th order}, where $j=1,...,M$. 
		\item Let $k\in\{-1,...,M+2\}$. We write $P^C_{k,\alpha}(\rho,Z)$ for the set of polynomials in $(\rho,Z)$ with smooth coefficients in $t\in[0,T]$ depending only on the $h_{j,\alpha}$ for $1\leq j\leq \min\{k,M\}$. In the analogous way we introduce $P^C_{k,\alpha}(\rho)$ and $P^C_{k,\alpha}(Z)$ with $\rho$ and $Z$ instead of $(\rho,Z)$, respectively.
		\item Let $k\in\{-1,...,M+2\}$ and $\beta,\gamma>0$. Let $R^C_{k,(\beta,\gamma),\alpha}$ be the set of smooth functions $R:\overline{\R^2_+}\times[0,T]\rightarrow\R$ that depend only on the $j$-th orders for $0\leq j\leq \min\{k,M\}$ and such that uniformly in $(\rho,Z,t)$:
		\[
		|\partial_\rho^i\partial_Z^l\partial_t^n R(\rho,Z,t)|=\Oc(e^{-(\beta|\rho|+\gamma Z)})\quad\text{ for all }i,l,n\in\N_0.
		\]
		Similarly we define the set $R^C_{k,(\beta),\alpha}$ without the $Z$-dependence.
	\end{enumerate}
\end{Definition}

\subsubsection{Contact Point Expansion: The Bulk Equation}\label{sec_asym_ACalpha_cp_bulk}
We rewrite \eqref{eq_asym_ACalpha_cp4} in $\overline{\Gamma(2\delta)}$ with Lemma \ref{th_asym_ACalpha_cp_trafo} as follows:
\begin{align}\begin{split}\label{eq_asym_ACalpha_cp}
0&=
(\partial_\rho\hat{w}^{C\pm}_\alpha - \hat{\chi}_\alpha\theta_0')
\left[\frac{\partial_tr-\Delta r}{\varepsilon}-(\partial_th_{\varepsilon,\alpha}+|\nabla s|^2\partial_s^2h_{\varepsilon,\alpha}+(\partial_ts-\Delta s)\partial_sh_{\varepsilon,\alpha})\right]\\  
&+\partial_Z\hat{w}^{C\pm}_\alpha\frac{(\partial_t-\Delta)z^\pm_\alpha}{\varepsilon} 
+\partial_t\hat{w}^{C\pm}_\alpha 
-\partial_Z^2\hat{w}^{C\pm}_\alpha\frac{|\nabla z^\pm_\alpha|^2}{\varepsilon^2} \\
&-2\partial_\rho\partial_Z\hat{w}^{C\pm}_\alpha\frac{\nabla z^\pm_\alpha}{\varepsilon}\cdot\left[\frac{\nabla r}{\varepsilon}-\nabla s \partial_sh_{\varepsilon,\alpha}\right]
-(\partial_\rho^2\hat{w}^{C\pm}_\alpha-\hat{\chi}_\alpha\theta_0'')\left|\frac{\nabla r}{\varepsilon}-\nabla s\partial_sh_{\varepsilon,\alpha}\right|^2\\
&+(\partial_t-\Delta)\tilde{u}^{C\pm}_{\varepsilon,\alpha}
+ \tilde{u}^I_{\varepsilon,\alpha}(\partial_t-\Delta)\chi_\alpha
+2\nabla(\tilde{u}^I_{\varepsilon,\alpha})\cdot\nabla(\chi_\alpha)\\
&+\frac{1}{\varepsilon^2}\left[f'(w^{C\pm}_\alpha+\chi_\alpha\tilde{u}^I_{\varepsilon,\alpha}+\tilde{u}^{C\pm}_{\varepsilon,\alpha})-\chi_\alpha f'(u^I_{\varepsilon,\alpha})\right],\end{split}
\end{align}
where we use the conventions for evaluations as in Lemma \ref{th_asym_ACalpha_cp_trafo}. Later we will choose $\hat{w}^{C\pm}_\alpha=v_\alpha$ such that the lowest order in the $\varepsilon$-expansion vanishes, where $v_\alpha$ is from Remark \ref{th_asym_ACalpha_decay_param}. In \eqref{eq_asym_ACalpha_cp} one can observe that the $\theta_0$-contributions are crucial for the asymptotics as $Z\rightarrow\infty$ in the $\varepsilon$-expansion, since we want exponentially decaying terms in the expansion at each order.

In the following we specify how all the terms in \eqref{eq_asym_ACalpha_cp} are expanded into $\varepsilon$-series. For the $f'$-parts: provided that $u_{j,\alpha}^I, u_{j,\alpha}^{C\pm}$ are bounded, Taylor expansions imply on $\overline{\Gamma(2\delta)}$ 
\begin{align}\notag
&f'(w^{C\pm}_\alpha+\chi_\alpha\tilde{u}^I_{\varepsilon,\alpha}+\tilde{u}^{C\pm}_{\varepsilon,\alpha})\\
=&f'(w^{C\pm}_\alpha)
+\sum_{k=1}^{M+2}\frac{1}{k!}f^{(k+1)}(w^{C\pm}_\alpha)\left[\sum_{j=1}^{M+1}\varepsilon^j(\chi_\alpha u_{j,\alpha}^I+u_{j,\alpha}^{C\pm})\right]^k+\Oc(\varepsilon^{M+3}),\label{eq_asym_ACalpha_cp_taylor_f1}
\end{align}
as well as on $\{(x,t)\in\overline{\Gamma(2\delta)}:s|_{(x,t)}\in I\}$
\begin{align}\label{eq_asym_ACalpha_cp_taylor_f2}
f'(u^I_{\varepsilon,\alpha})
=f'(\theta_0)+\sum_{k=1}^{M+2}\frac{1}{k!}f^{(k+1)}(\theta_0)\left[\sum_{j=2}^{M+1}\varepsilon^ju_{j,\alpha}^I\right]^k+\Oc(\varepsilon^{M+3}),
\end{align}
where $u^I_{1,\alpha}=0$ due to \eqref{eq_asym_ACalpha_u01}. Therefore the terms for $f'(w^{C\pm}_\alpha+\chi_\alpha\tilde{u}^I_{\varepsilon,\alpha}+\tilde{u}^{C\pm}_{\varepsilon,\alpha})-\chi_\alpha f'(u^I_{\varepsilon,\alpha})$ in the asymptotic expansion are for $k=2,...,M+2$:
\begin{align*}
\Oc(1)&:\quad f'(w^{C\pm}_\alpha)-\chi_\alpha f'(\theta_0),\\
\Oc(\varepsilon)&:\quad f''(w^{C\pm}_\alpha)u^{C\pm}_{1,\alpha}+\chi_\alpha[f''(w^{C\pm}_\alpha)-f''(\theta_0)]u^I_{1,\alpha}=f''(w^{C\pm}_\alpha)u^{C\pm}_{1,\alpha},\\
\Oc(\varepsilon^k)&:\quad f''(w^{C\pm}_\alpha)u_{k,\alpha}^{C\pm} + [\text{a polynomial in }(\chi_\alpha u_{1,\alpha}^I,...,\chi_\alpha u_{k-1,\alpha}^I, u_{1,\alpha}^{C\pm},...,u_{k-1,\alpha}^{C\pm})\\
& \phantom{:\quad f''(w^{C\pm}_\alpha)u_{k,\alpha}^{C\pm} + [\,}\text{of order }\leq k\text{, where the coefficients are multiples of }\\
& \phantom{:\quad f''(w^{C\pm}_\alpha)u^{C\pm}_{k,\alpha} + [\,} f^{(3)}(w^{C\pm}_\alpha),...,f^{(k+1)}(w^{C\pm}_\alpha) \text{ and every term has a }u^{C\pm}_{j,\alpha}\text{-factor}]\\
&\phantom{:\quad f''(w^{C\pm}_\alpha)u_{k,\alpha}^{C\pm}~}+ [\text{a polynomial in }(u_{1,\alpha}^I,...,u_{k,\alpha}^I)\text{ of order }\leq k,\text{ where the}\\
& \phantom{:\quad f''(w^{C\pm}_\alpha)u_{k,\alpha}^{C\pm} + [\,}\text{coefficients are multiples of }\chi_\alpha^l f^{(l+1)}(w^{C\pm}_\alpha)-\chi_\alpha f^{(l+1)}(\theta_0), \\
& \phantom{:\quad f''(w^{C\pm}_\alpha)u_{k,\alpha}^{C\pm} + [\,} 
l=1,...,k+1, \text{and every term has a }u^I_{j,\alpha}\text{-factor}].
\end{align*}
The remaining explicit terms in $f'(w^{C\pm}_\alpha+\chi_\alpha\tilde{u}^I_{\varepsilon,\alpha}+\tilde{u}^{C\pm}_{\varepsilon,\alpha})-\chi_\alpha f'(u^I_{\varepsilon,\alpha})$ are of order $\Oc(\varepsilon^{M+3})$.

Additionally, we expand terms in \eqref{eq_asym_ACalpha_cp}-\eqref{eq_asym_ACalpha_cp_taylor_f2} depending $(s,t)$ or $(\rho,s,t)$, namely the $h_{j,\alpha}$-terms and the $u_{j,\alpha}^I$-terms, respectively. Such terms also appear because of the product rule (cf.~Lemma \ref{th_asym_ACalpha_cp_trafo} and \cite{AbelsMoser}, Lemma 3.1) for $\chi_\alpha,\tilde{u}^I_{\varepsilon,\alpha},\tilde{u}^{C\pm}_{\varepsilon,\alpha}$. To this end let $g_1:I_\mu\times[0,T]\rightarrow\R$ or $g_1:\R\times I_\mu\times[0,T]\rightarrow\R$ be smooth with bounded derivatives in $s$. A Taylor expansion yields
\begin{align}\label{eq_asym_ACalpha_cp_taylor2}
g_1|_s=g_1|_{s=\pm 1}+\sum_{k=1}^{M+2}\frac{\partial_s^k g_1|_{s=\pm 1}}{k!}(s\mp 1)^k+\Oc(|s\mp 1|^{M+3})
\end{align}
with a uniform remainder. Then because of \eqref{eq_asym_ACalpha_cp_s1} we replace 
\begin{align}\label{eq_asym_ACalpha_cp_s2}
s\mp 1
=\mp \varepsilon\frac{1}{\sin\alpha}\left[Z+(\rho+h_{\varepsilon,\alpha}(s,t))\cos\alpha\right].
\end{align}
In particular $|s\mp 1|=\varepsilon\Oc(1+|\rho|+Z)$, if the $h_{j,\alpha}$ are bounded. On the right hand side in \eqref{eq_asym_ACalpha_cp_s2} we again have the $s$-dependent term $h_{\varepsilon,\alpha}$, but the $\varepsilon$-order has increased by one. If the $h_{j,\alpha}$ are sufficiently regular, then we can use \eqref{eq_asym_ACalpha_cp_taylor2}-\eqref{eq_asym_ACalpha_cp_s2} for the $h_{\varepsilon,\alpha}$-term inductively. The latter is needed only finitely many times and yields an expansion of $g_1$ into an $\varepsilon$-series with coefficients in $(\rho,Z,t)$ up to $\Oc(\varepsilon^{M+2})$. The terms are for $k=1,...,M+2$:
\begin{align*}\Oc(1):&\quad g_1|_{s=\pm 1},\\
\Oc(\varepsilon^k):&  \quad [\text{a polynomial in }(\rho,Z,\partial_s^lh_{j,\alpha}|_{(\pm 1,t)}), l=0,...,k-1, j=1,...,k
\text{ of order }\leq k,\\
&\quad\phantom{[} \text{where the coefficients are multiples of }\partial_s^lg_1|_{s=\pm 1}, l=1,...,k].
\end{align*}
Finally, the remainder in the expansion of $g_1$ is of order $\varepsilon^{M+3}\Oc((1+|\rho|+Z)^{M+3})$. The latter will be multiplied with decaying terms later and becomes $\Oc(\varepsilon^{M+3})$.

Furthermore, we have to expand terms in \eqref{eq_asym_ACalpha_cp} depending on $(x,t)$ that appear after applying Lemma \ref{th_asym_ACalpha_cp_trafo} and \cite{AbelsMoser}, Lemma 3.1, more precisely the derivatives of $r,s$ and $z^\pm_\alpha$. To this end let $g_2:\overline{\Gamma(2\delta)}\rightarrow\R$ be smooth. Then a Taylor expansion yields uniformly in $(r,s,t)\in\overline{S_{\delta,\alpha}}\times[0,T]$
\begin{align}\label{eq_asym_ACalpha_cp_taylor3}
\tilde{g}_2(r,s,t):=g_2(\overline{X}(r,s,t))=\sum_{j+k=0}^{M+2}\frac{\partial_r^j\partial_s^k\tilde{g}_2|_{(0,\pm1,t)}}{j!\,k!}r^j(s\mp 1)^k+\Oc(|(r,s\mp 1)|^{M+3}).
\end{align}
We substitute $r$ by $\varepsilon(\rho+h_{\varepsilon,\alpha}(s,t))$ and $s\mp 1$ by \eqref{eq_asym_ACalpha_cp_s2}. For the appearing $s$-dependent term $h_{\varepsilon,\alpha}$ we use the expansion for $g_1$ above. Hence we obtain an expansion of $g_2$ into $\varepsilon$-series with coefficients in $(\rho,Z,t)$ up to $\Oc(\varepsilon^{M+2})$. The terms in the expansion are for $k=1,...,M+2$:
\begin{align*}
\Oc(1):\quad &g_2|_{\overline{p}^\pm(t)},\\
\Oc(\varepsilon):\quad &\partial_r\tilde{g}_2|_{(0,\pm 1,t)}(\rho+h_{1,\alpha}|_{(\pm1,t)})\mp\partial_s\tilde{g}_2|_{(0,\pm 1,t)}\frac{1}{\sin\alpha}\left[Z+(\rho+h_{1,\alpha}|_{(\pm 1,t)})\cos\alpha\right],\\
\Oc(\varepsilon^k):\quad 
& h_{k,\alpha}|_{(\pm 1,t)} \left[
\partial_r\tilde{g}_2|_{(0,\pm 1,t)}\mp\frac{\cos\alpha}{\sin\alpha}\partial_s\tilde{g}_2|_{(0,\pm 1,t)}\right]\\
\quad+&[\text{a polynomial in }(\rho,Z,\partial_s^l h_{j,\alpha}|_{(\pm 1,t)}), l=0,...,k-1, j=1,...,k-1\text{ of order}\leq k,\\
\quad&\phantom{[}\text{where the coefficients are multiples of }\partial_r^{l_1}\partial_s^{l_2}\tilde{g}_2|_{(0,\pm 1,t)}, l_1,l_2\in\N_0, l_1+l_2\leq k].
\end{align*}
Here in contrast to the case $\alpha=\frac{\pi}{2}$ in \cite{MoserACvACND}, Section 5.1.2 we need the $\Oc(\varepsilon)$ explicitly. The remainder in the expansion for $g_2$ is  $\varepsilon^{M+3}\Oc((1+|\rho|+Z)^{M+3})$. The latter will be multiplied with exponentially decaying terms later and becomes $\Oc(\varepsilon^{M+3})$.

Now we expand \eqref{eq_asym_ACalpha_cp} with the above identities into $\varepsilon$-series with coefficients in $(\rho_{\varepsilon,\alpha},Z_{\varepsilon,\alpha}^\pm,t)$. 

\paragraph{Bulk Equation: $\Oc(\varepsilon^{-2})$}\label{sec_asym_ACalpha_cp_bulk_m2}
The lowest order $\Oc(\frac{1}{\varepsilon^2})$ in  \eqref{eq_asym_ACalpha_cp} vanishes if 
\begin{align*}
0&=-\partial_Z^2\hat{w}^{C\pm}_\alpha|\nabla z^\pm_\alpha|^2|_{\overline{p}^\pm(t)} 
- 2\partial_Z\partial_\rho\hat{w}^{C\pm}_\alpha \nabla z^\pm_\alpha\cdot\nabla r|_{\overline{p}^\pm(t)}
-(\partial_\rho^2\hat{w}^{C\pm}_\alpha - \hat{\chi}_\alpha\theta_0'')|\nabla r|^2|_{\overline{p}^\pm(t)}\\
&+f'(\hat{w}^{C\pm}_\alpha)-\hat{\chi}_\alpha f'(\theta_0).
\end{align*}
Now we use that due to Remark \ref{th_coord2D_rem1} and Theorem \ref{th_coord2D} it holds $|\nabla r|^2|_{\overline{p}^\pm(t)}=|\nabla s|^2|_{\overline{p}^\pm(t)}=1$ as well as $\nabla r\cdot\nabla s|_{\overline{p}^\pm(t)}=0$. Therefore with the definition \eqref{eq_asym_ACalpha_cp_zalpha} we obtain $|\nabla z^\pm_\alpha|^2|_{\overline{p}^\pm(t)}=1$ and $\nabla z^\pm_\alpha\cdot\nabla r|_{\overline{p}^\pm(t)}=-\cos\alpha$. Hence because of $\theta_0''=f'(\theta_0)$ the lowest order becomes
\begin{align}\label{eq_asym_ACalpha_cp_bulk_m2}
[-\partial_Z^2+2\cos\alpha \partial_Z\partial_\rho-\partial_\rho^2]\hat{w}^{C\pm}_\alpha
+f'(\hat{w}^{C\pm}_\alpha)=0\quad \text{ for }(\rho,Z,t)\in\overline{\R^2_+}\times[0,T].
\end{align}

\paragraph{Bulk Equation: $\Oc(\varepsilon^{-1})$}\label{sec_asym_ACalpha_cp_bulk_m1}
The next order $\Oc(\frac{1}{\varepsilon})$ in \eqref{eq_asym_ACalpha_cp} cancels if we require
\begin{align}\label{eq_asym_ACalpha_cp_bulk_m1}
\left[-\partial_Z^2\right.&\left.+2\cos\alpha \partial_Z\partial_\rho-\partial_\rho^2+f''(\hat{w}^{C\pm}_\alpha)\right]\hat{u}^{C\pm}_{1,\alpha}=G^{C\pm}_{1,\alpha},\\\notag
G^{C\pm}_{1,\alpha}(\rho,Z,t):=&-\left[\partial_\rho\hat{w}^{C\pm}_\alpha-\hat{\chi}_\alpha\theta_0'\right](\partial_t r-\Delta r)|_{\overline{p}^\pm(t)}
-\partial_Z\hat{w}^{C\pm}_\alpha(\partial_tz^\pm_\alpha-\Delta z^\pm_\alpha)|_{\overline{p}^\pm(t)}\\\notag
&-2\partial_\rho\partial_Z\hat{w}^{C\pm}_\alpha (\nabla z^\pm_\alpha\cdot\nabla s)|_{\overline{p}^\pm(t)} \partial_sh_{1,\alpha}|_{(\pm 1,t)}\\\notag
&-\left[\partial_\rho^2\hat{w}^{C\pm}_\alpha-\hat{\chi}_\alpha\theta_0''\right]2(\nabla r\cdot\nabla s)|_{\overline{p}^\pm(t)}\partial_sh_{1,\alpha}|_{(\pm 1,t)}\\\notag
&+\partial_Z^2\hat{w}^{C\pm}_\alpha\left[
\partial_r(|\nabla z^\pm_\alpha|^2\circ\overline{X})|_{(0,\pm 1,t)}(\rho+h_{1,\alpha}|_{(\pm 1,t)})\phantom{\frac{1}{\sin\alpha}}\right.\\\notag
&\quad\qquad\qquad\left.\mp\partial_s(|\nabla z^\pm_\alpha|^2\circ\overline{X})|_{(0,\pm 1,t)}\frac{1}{\sin\alpha}[Z+(\rho+h_{1,\alpha}|_{(\pm 1,t)})\cos\alpha]\right]\\\notag
&+2\partial_\rho\partial_Z\hat{w}^{C\pm}_\alpha\left[
\partial_r((\nabla z^\pm_\alpha\cdot\nabla r)\circ\overline{X})|_{(0,\pm 1,t)}(\rho+h_{1,\alpha}|_{(\pm 1,t)})\phantom{\frac{1}{\sin\alpha}}\right.\\\notag
&\qquad\qquad\left.\mp\partial_s((\nabla z^\pm_\alpha\cdot\nabla r)\circ\overline{X})|_{(0,\pm 1,t)}\frac{1}{\sin\alpha}[Z+(\rho+h_{1,\alpha}|_{(\pm 1,t)})\cos\alpha]\right]\\\notag
&+\left[\partial_\rho^2\hat{w}^{C\pm}_\alpha-\hat{\chi}_\alpha\theta_0''\right]\left[
\partial_r(|\nabla r|^2\circ\overline{X})|_{(0,\pm 1,t)}(\rho+h_{1,\alpha}|_{(\pm 1,t)})\phantom{\frac{1}{\sin\alpha}}\right.\notag
\end{align}\begin{align*}
&\qquad\qquad\qquad\left.\mp\partial_s(|\nabla r|^2\circ\overline{X})|_{(0,\pm 1,t)}\frac{1}{\sin\alpha}[Z+(\rho+h_{1,\alpha}|_{(\pm 1,t)})\cos\alpha]\right].
\end{align*}
Because of Remark \ref{th_coord2D_rem1} and Theorem \ref{th_coord2D} it follows that  $(\partial_t r-\Delta r)|_{\overline{p}^\pm(t)}=(\nabla r\cdot\nabla s)|_{\overline{p}^\pm(t)}=0$, $(\nabla z^\pm_\alpha\cdot\nabla s)|_{\overline{p}^\pm(t)}=\mp\sin\alpha$ and $\partial_r(|\nabla r|^2\circ\overline{X})|_{(0,\pm 1,t)}=\partial_s(|\nabla r|^2\circ\overline{X})|_{(0,\pm 1,t)}=0$. Therefore 
\begin{align*}
G^{C\pm}_{1,\alpha}(\rho,Z,t)=&
-\partial_Z\hat{w}^{C\pm}_\alpha(\partial_tz^\pm_\alpha-\Delta z^\pm_\alpha)|_{\overline{p}^\pm(t)}\pm 2\sin\alpha\, \partial_\rho\partial_Z\hat{w}^{C\pm}_\alpha \partial_sh_{1,\alpha}|_{(\pm 1,t)}\\\notag
&+\partial_Z^2\hat{w}^{C\pm}_\alpha\left[
\partial_r(|\nabla z^\pm_\alpha|^2\circ\overline{X})|_{(0,\pm 1,t)}(\rho+h_{1,\alpha}|_{(\pm 1,t)})\phantom{\frac{1}{\sin\alpha}}\right.\\\notag
&\quad\qquad\qquad\left.\mp\partial_s(|\nabla z^\pm_\alpha|^2\circ\overline{X})|_{(0,\pm 1,t)}\frac{1}{\sin\alpha}[Z+(\rho+h_{1,\alpha}|_{(\pm 1,t)})\cos\alpha]\right]\\\notag
&+2\partial_\rho\partial_Z\hat{w}^{C\pm}_\alpha\left[
\partial_r((\nabla z^\pm_\alpha\cdot\nabla r)\circ\overline{X})|_{(0,\pm 1,t)}(\rho+h_{1,\alpha}|_{(\pm 1,t)})\phantom{\frac{1}{\sin\alpha}}\right.\\\notag
&\qquad\qquad\left.\mp\partial_s((\nabla z^\pm_\alpha\cdot\nabla r)\circ\overline{X})|_{(0,\pm 1,t)}\frac{1}{\sin\alpha}[Z+(\rho+h_{1,\alpha}|_{(\pm 1,t)})\cos\alpha]\right].
\end{align*}
In particular $G^{C\pm}_{1,\alpha}$ is independent of $\hat{\chi}_\alpha$. This is important in order to choose $H_0$ and hence $\hat{\chi}_\alpha$ independently, see Remark \ref{th_asym_ACalpha_cp_cutoff}, 1. For later use, we collect the $h_{1,\alpha}$-terms and write
\begin{align*}
&G^{C\pm}_{1,\alpha}(\rho,Z,t)=
\pm 2\sin\alpha\,\partial_\rho\partial_Z\hat{w}^{C\pm}_\alpha \partial_sh_{1,\alpha}|_{(\pm 1,t)}\\
&+2\partial_\rho\partial_Z\hat{w}^{C\pm}_\alpha\left[\partial_r((\nabla z^\pm_\alpha\cdot\nabla r)\circ\overline{X})|_{(0,\pm 1,t)}\mp\frac{\cos\alpha}{\sin\alpha}\partial_s((\nabla z^\pm_\alpha\cdot\nabla r)\circ\overline{X})|_{(0,\pm 1,t)}\right]h_{1,\alpha}|_{(\pm 1,t)}\\
&+\partial_Z^2\hat{w}^{C\pm}_\alpha\left[
\partial_r(|\nabla z^\pm_\alpha|^2\circ\overline{X})|_{(0,\pm 1,t)}\mp\frac{\cos\alpha}{\sin\alpha}\partial_s(|\nabla z^\pm_\alpha|^2\circ\overline{X})|_{(0,\pm 1,t)}\right]h_{1,\alpha}|_{(\pm 1,t)}+\tilde{G}^{C\pm}_{0,\alpha},
\end{align*}
where $\tilde{G}^{C\pm}_{0,\alpha}\in R^C_{0,(\beta,\gamma),\alpha}$ for all $\beta\in[0,\beta_0)$, $\gamma\in[\frac{\gamma_0}{2},\gamma_0)$ provided that $\hat{w}^{C\pm}_\alpha-\theta_0\in R^C_{0,(\beta,\gamma),\alpha}$ for all these $\beta$, $\gamma$. The latter corresponds to the matching condition \eqref{eq_asym_ACalpha_matching1}.

\paragraph{Bulk Equation: $\Oc(\varepsilon^{k-1})$}\label{sec_asym_ACalpha_cp_bulk_km1}
For $k=1,...,M-1$ we consider $\Oc(\varepsilon^{k-1})$ in  \eqref{eq_asym_ACalpha_cp} and derive an equation for $\hat{u}^{C\pm}_{k+1,\alpha}$. Therefore suppose that the $j$-th order is constructed for all $j=0,...,k$, that it is smooth and that $H_0$ in $\hat{\chi}_\alpha$ is known. Additionally, let $\hat{u}^I_{j+1,\alpha}\in R^I_{j,(\beta_1),\alpha}$ for all $\beta_1\in(0,\min\{\sqrt{f''(\pm1)}\})$ and $j=0,...,k$. Note that in contrast to the case $\alpha=\frac{\pi}{2}$ in \cite{MoserACvACND}, Section 5.1.2.1.2 the decay is used at this point. Namely, with the inequality \eqref{eq_asym_ACalpha_decay_param} for the decay parameters $\beta_0,\gamma_0$ and the decay for the $\hat{u}^I_{i,\alpha}$ it will be possible to control the contributions of $\partial_\rho^l\hat{w}^{C\pm}_\alpha-\hat{\chi}_\alpha\partial_\rho^l\theta_0$, $l=1,2$ and the new types of terms in the expansion of the $f'$-parts, cf.~\eqref{eq_asym_ACalpha_cp_taylor_f1}-\eqref{eq_asym_ACalpha_cp_taylor_f2}. Finally, we assume that $\hat{w}^{C\pm}_\alpha-\theta_0\in R^C_{0,(\beta,\gamma),\alpha}$ as well as $\hat{u}^{C\pm}_{j,\alpha}\in R^C_{j,(\beta,\gamma),\alpha}$ for all $j=1,...,k$ and all $\beta\in[0,\beta_0)$, $\gamma\in[\frac{\gamma_0}{2},\gamma_0)$. The latter corresponds to the matching conditions \eqref{eq_asym_ACalpha_matching1}-\eqref{eq_asym_ACalpha_matching2}. 

Then with the notation from Definition \ref{th_asym_ACalpha_cp_def} it follows for all those $(\beta,\gamma)$:
\begin{alignat*}{2}
\text{For }j=1,...,k+1:&\quad[\Oc(\varepsilon^j)\text{ in }[\eqref{eq_asym_ACalpha_cp_taylor_f1}-\chi_\alpha\cdot\eqref{eq_asym_ACalpha_cp_taylor_f2}]]&\quad &\in\quad  f''(\hat{w}^{C\pm}_\alpha)\hat{u}^{C\pm}_{j,\alpha} + R^C_{j-1,(\beta,\gamma),\alpha},\\
\text{For }i,j=1,...,k:&\quad[\Oc(\varepsilon^j)\text{ in }\eqref{eq_asym_ACalpha_cp_taylor2}\text{ for }g_1=g_1(h_i)]&\quad &\in\quad  P^C_{\max\{i,j\},\alpha}(\rho,Z).
\end{alignat*}
Moreover, for $j=1,...,k+1$ we obtain
\[
[\Oc(\varepsilon^j)\text{ in }\eqref{eq_asym_ACalpha_cp_taylor3}]\quad \in\quad  h_{j,\alpha}|_{(\pm 1,t)} \left[
\partial_r\tilde{g}_2|_{(0,\pm 1,t)}\mp\frac{\cos\alpha}{\sin\alpha}\partial_s\tilde{g}_2|_{(0,\pm 1,t)}\right]+ P^C_{j-1,\alpha}(\rho,Z).
\]
All those identities can be verified with the remarks accompanying \eqref{eq_asym_ACalpha_cp_taylor_f1}-\eqref{eq_asym_ACalpha_cp_taylor3}. The only contributions that are not straight-forward are the (finitely many) terms appearing in the expansion of the $f'$-parts that are of type $\chi_\alpha^l f^{(l+1)}(w^{C\pm}_\alpha)-\chi_\alpha f^{(l+1)}(\theta_0)$, $l\in\{1,...,j+1\}$ times a term in $\hat{R}^I_{j-1,(\beta_1),\alpha}$ for all $\beta_1\in(0,\min\{f''(\pm 1)\})$ times some polynomial in $P^C_{j-1,\alpha}(\rho,Z)$. The latter appear due to the order $\Oc(\varepsilon^i)$ in the expansion of $u^I_{n,\alpha}$, where $i\in\{0,...,j-1\}$ 
and $n\in\{1,...,j\}$. We have to show that such terms are contained in $R^C_{j-1,(\beta,\gamma),\alpha}$ for all $\beta\in[0,\beta_0)$, $\gamma\in[\frac{\gamma_0}{2},\gamma_0)$. On the set $\{\hat{\chi}_\alpha=0\}$ there is nothing to prove. Moreover, on $\{\hat{\chi}_\alpha=1\}$ we can use uniform continuity for $f'$-derivatives on compact sets and that $\hat{w}_\alpha^{C\pm}-\theta_0\in R^C_{0,(\beta,\gamma),\alpha}$ for all $(\beta,\gamma)$ as above to obtain the desired estimate. Finally, for the decay on the set $\Xi:=\{\hat{\chi}_\alpha\in(0,1)\}$ we use $Z\leq|\rho|$ for all $(\rho,Z)\in\Xi$ with $|\rho|+Z\geq \overline{R}$, where $\overline{R}$ is large depending on $\alpha,H_0$. See also Figure \ref{fig_z_alpha_xi}. Then it follows that $\beta|\rho|+\gamma Z \leq (\beta+\gamma)|\rho|\leq \beta_1|\rho|$ for all those $(\rho,Z)$ and all $\beta+\gamma\leq\beta_1$. Due to the inequality $\beta_0+\gamma_0\leq \min\{f''(\pm 1)\}$ we obtain the claimed inclusion.

Now we compute $\Oc(\varepsilon^{k-1})$ for $k=1,...,M-1$ in \eqref{eq_asym_ACalpha_cp}. Let $(\beta,\gamma)$ be as above and arbitrary. The $f'$-part yields a term in $f''(\hat{w}^{C\pm}_\alpha)\hat{u}^{C\pm}_{k+1,\alpha}+R^C_{k,(\beta,\gamma),\alpha}$. Moreover, note that
\begin{align}\label{eq_asym_ACalpha_cp_bulk_VmTh0}
\partial_\rho^l\hat{w}^{C\pm}_\alpha-\hat{\chi}_\alpha\theta_0^{(l)}\in \partial_\rho^l\hat{w}^{C\pm}_\alpha-\theta_0^{(l)} + R^C_{0,(\beta,\gamma),\alpha}\subseteq R^C_{0,(\beta,\gamma),\alpha}
\end{align}
for all $l\in\N$ and all $\beta, \gamma$ as above. The first inclusion can be shown with the decay of $\theta_0^{(l)}$ from Theorem \ref{th_theta_0} and analogous arguments as before since for $(\rho,Z)\in \{\hat{\chi}_\alpha\in[0,1)\}$ it holds $Z\leq |\rho|$ if $|\rho|+Z$ is large. The second inclusion follows from $\hat{w}^{C\pm}_\alpha-\theta_0\in R^C_{0,(\beta,\gamma),\alpha}$. Therefore the contribution at order $\Oc(\varepsilon^{k-1})$ from the first line in \eqref{eq_asym_ACalpha_cp} is contained in
\[
(\partial_\rho\hat{w}^{C\pm}_\alpha-\hat{\chi}_\alpha\theta_0')\left[P^C_{k,\alpha}(\rho,Z)+\sum_{j=0}^{k-1} P^C_{j,\alpha}(\rho,Z) P^C_{k-1,\alpha}(\rho,Z)\right] \subseteq  R^C_{k,(\beta,\gamma),\alpha}.
\]
Analogously it follows that the $\partial_Z\hat{w}^{C\pm}_\alpha$-part yields an element of $R^C_{k,(\beta,\gamma),\alpha}$. Moreover, the term $\partial_t\hat{w}^{C\pm}_\alpha\in R^C_{0,(\beta,\gamma),\alpha}$ only contributes to $\Oc(1)$. Furthermore, the $\partial_Z^2\hat{w}^{C\pm}_\alpha$-part gives an element of 
\begin{align*}
&-\partial_Z^2\hat{w}^{C\pm}_\alpha\left[
h_{k+1,\alpha}|_{(\pm 1,t)}\left(\!\partial_r(|\nabla z^\pm_\alpha|^2\circ\overline{X})|_{(0,\pm 1,t)}\mp\frac{\cos\alpha}{\sin\alpha}\partial_s(|\nabla z^\pm_\alpha|^2\circ\overline{X})|_{(0,\pm 1,t)}\!\right)\!\right]\\
&+\partial_Z^2\hat{w}^{C\pm}_\alpha P^C_{k,\alpha}(\rho,Z),
\end{align*}
where the last term is contained in $R^C_{k,(\beta,\gamma),\alpha}$. Similarly, the $\partial_\rho\partial_Z\hat{w}^{C\pm}_\alpha$-part yields a term in
\begin{align*}
R^C_{k,(\beta,\gamma),\alpha}& -2\partial_\rho\partial_Z\hat{w}^{C\pm}_\alpha \left[\pm \sin\alpha\,\partial_sh_{k+1,\alpha}|_{(\pm 1,t)}\phantom{\frac{\cos}{\sin}}\right.\\
+&h_{k+1,\alpha}|_{(\pm 1,t)}\left.\left(\partial_r((\nabla z^\pm_\alpha \cdot\nabla r)\circ\overline{X})|_{(0,\pm 1,t)}\mp\frac{\cos\alpha}{\sin\alpha}\partial_s((\nabla z^\pm_\alpha\cdot\nabla r)\circ\overline{X})|_{(0,\pm 1,t)}\right)\right].
\end{align*}
Due to \eqref{eq_asym_ACalpha_cp_bulk_VmTh0} and since $\nabla r\cdot \nabla s|_{\overline{p}^\pm(t)}=\partial_r(|\nabla r|^2\circ\overline{X})|_{(0,\pm 1,t)}=\partial_s(|\nabla r|^2\circ\overline{X})|_{(0,\pm 1,t)}=0$, we get in an analogous way that the contribution at order $\Oc(\varepsilon^{k-1})$ from the $(\partial_\rho^2\hat{w}^{C\pm}_\alpha-\hat{\chi}_\alpha\theta_0'')$-term in \eqref{eq_asym_ACalpha_cp} is an element of $R^C_{k,(\beta,\gamma),\alpha}$. Moreover, the term $(\partial_t-\Delta)\tilde{u}^{C\pm}_{\varepsilon,\alpha}$ yields a contribution in
\[
\left[-\partial_Z^2+2\cos\alpha\,\partial_\rho\partial_Z-\partial_\rho^2\right]\hat{u}^{C\pm}_{k+1,\alpha} + R^C_{k,(\beta,\gamma),\alpha}.
\]
For $\tilde{u}^I_{\varepsilon,\alpha}(\partial_t-\Delta)\chi_\alpha$ we use $u^I_{1,\alpha}=0$, the decay of $\hat{u}^I_{j,\alpha}$ and that $Z\leq|\rho|$ on $\{\hat{\chi}_\alpha\in(0,1)\}$ if $|\rho|+Z$ is large. With the latter we obtain as above the decay of the products of the inner expansion terms with derivatives of $\hat{\chi}_\alpha$. Therefore we get a term in $R^C_{k,(\beta,\gamma),\alpha}$, where we note that $\hat{u}^I_{j,\alpha}$ counts to order $j-1$. Finally, with the same ideas we obtain that $2\nabla(\tilde{u}^I_{\varepsilon,\alpha})\cdot\nabla(\chi_\alpha)$ also yields a contribution in $R^C_{k,(\beta,\gamma),\alpha}$.

Altogether the $\Oc(\varepsilon^{k-1})$-order in the expansion for the bulk equation \eqref{eq_asym_ACalpha_cp} is zero if 
\begin{align}\label{eq_asym_ACalpha_cp_bulk_km1}
&\left[-\partial_Z^2+2\cos\alpha \partial_Z\partial_\rho-\partial_\rho^2+f''(\hat{w}^{C\pm}_\alpha)\right]\hat{u}^{C\pm}_{k+1,\alpha}=G^{C\pm}_{k+1,\alpha},\\\notag
G^{C\pm}_{k+1,\alpha}&:=2\partial_\rho\partial_Z\hat{w}^{C\pm}_\alpha \left[\pm \sin\alpha\,\partial_sh_{k+1,\alpha}|_{(\pm 1,t)}\phantom{\frac{\cos}{\sin}}\right.\\\notag
+&h_{k+1,\alpha}|_{(\pm 1,t)}\left.\left(\partial_r((\nabla z^\pm_\alpha \cdot\nabla r)\circ\overline{X})|_{(0,\pm 1,t)}\mp\frac{\cos\alpha}{\sin\alpha}\partial_s((\nabla z^\pm_\alpha\cdot\nabla r)\circ\overline{X})|_{(0,\pm 1,t)}\right)\right]\\\notag
+\partial_Z^2\hat{w}^{C\pm}_\alpha\,& h_{k+1,\alpha}|_{(\pm 1,t)}\left[
\partial_r(|\nabla z^\pm_\alpha|^2\circ\overline{X})|_{(0,\pm 1,t)}\mp\frac{\cos\alpha}{\sin\alpha}\partial_s(|\nabla z^\pm_\alpha|^2\circ\overline{X})|_{(0,\pm 1,t)}\right]+\tilde{G}^{C\pm}_{k,\alpha},
\end{align}
where $\tilde{G}^{C\pm}_{k,\alpha}\in R^C_{k,(\beta,\gamma),\alpha}$ for all $\beta\in[0,\beta_0)$, $\gamma\in[\frac{\gamma_0}{2},\gamma_0)$.

From the expansion of the Robin boundary condition \eqref{eq_ACalpha2} we will obtain boundary conditions for the equations \eqref{eq_asym_ACalpha_cp_bulk_m2}, \eqref{eq_asym_ACalpha_cp_bulk_m1} and \eqref{eq_asym_ACalpha_cp_bulk_km1}. This is done in the next section.

\subsubsection{Contact Point Expansion: The Robin Boundary Condition}\label{sec_asym_ACalpha_cp_robin}
We expand the non-linear Robin-boundary condition \eqref{eq_ACalpha2} for $u_{\varepsilon,\alpha}=\chi_\alpha u^I_{\varepsilon,\alpha}+u^{C\pm}_{\varepsilon,\alpha}$ on $\partial Q_T\cap\overline{\Gamma(2\delta)}$. Since $\chi_\alpha$ is zero in an $\varepsilon$-dependent neighbourhood of $\partial Q_T$, the latter equals
\begin{align}\label{eq_asym_ACalpha_cp_robin1}
N_{\partial\Omega}\cdot\nabla(w^{C\pm}_\alpha+\tilde{u}^{C\pm}_{\varepsilon,\alpha})|_{\partial Q_T}+\frac{1}{\varepsilon}\sigma_\alpha'(w^{C\pm}_\alpha+\tilde{u}^{C\pm}_{\varepsilon,\alpha})|_{\partial Q_T}=0\quad\text{ on }\partial Q_T\cap\overline{\Gamma(2\delta)}.
\end{align} 
Here on $\partial Q_T\cap\overline{\Gamma(2\delta)}$ it holds $z^\pm_\alpha=Z_{\varepsilon,\alpha}^\pm=0$ and $s^\pm=\frac{\cos\alpha}{\sin\alpha}r$, cf.~\eqref{eq_asym_ACalpha_cp_zalpha}. Therefore we set
\begin{align}\label{eq_asym_ACalpha_cp_sbar}
\overline{s}^\pm(r):=\pm 1\mp\frac{\cos\alpha}{\sin\alpha}r,\quad
\overline{X}^\pm_1|_{(r,t)}:=\overline{X}|_{(r,\overline{s}^\pm(r),t)}
\quad\text{ for }(r,t)\in[-2\delta,2\delta]\times[0,T].
\end{align}
Then due to Lemma \ref{th_asym_ACalpha_cp_trafo} the equation \eqref{eq_asym_ACalpha_cp_robin1} is equivalent to
\begin{align}\notag
0=N_{\partial\Omega}|_{\overline{X}^\pm_1(r,t)}&\cdot\left[
(\partial_\rho\hat{w}^{C\pm}_\alpha+\partial_\rho\hat{\tilde{u}}^{C\pm}_{\varepsilon,\alpha})|_{Z=0}
\left(\frac{\nabla r|_{\overline{X}^\pm_1(r,t)}}{\varepsilon}-\nabla s|_{\overline{X}^\pm_1(r,t)}\partial_sh_{\varepsilon,\alpha}|_{(\overline{s}^\pm(r),t)}\right)\right.\\
+&\left.(\partial_Z\hat{w}^{C\pm}_\alpha+\partial_Z\hat{\tilde{u}}^{C\pm}_{\varepsilon,\alpha})|_{Z=0}\frac{\nabla z^\pm_\alpha|_{\overline{X}^\pm_1(r,t)}}{\varepsilon}
\right]+\frac{1}{\varepsilon}\sigma'_\alpha(\hat{w}^{C\pm}_\alpha+\hat{\tilde{u}}^{C\pm}_{\varepsilon,\alpha})|_{Z=0}\label{eq_asym_ACalpha_cp_robin2}
\end{align}
on $\partial Q_T\cap\overline{\Gamma(2\delta)}$, where $r=r(x,t)$ and $\rho=\rho_{\varepsilon,\alpha}(x,t)$. 

In the following we determine how all the terms are expanded into $\varepsilon$-series up to $\Oc(\varepsilon^{M-1})$ with coefficients in $(\rho,t)$. For the $h_{\varepsilon,\alpha}$-terms let $g_1:I_\mu\times[0,T]\rightarrow\R$ be smooth. We use the rigorous Taylor expansion \eqref{eq_asym_ACalpha_cp_taylor2} and we replace $s\mp 1$ by \eqref{eq_asym_ACalpha_cp_s2} with $Z=0$. Then the remarks and the assertions for the remainder terms below \eqref{eq_asym_ACalpha_cp_taylor2} are still valid when we formally set $Z=0$. Concerning terms evaluated at $\overline{X}^\pm_1$, we consider $g_2:\partial Q_T\cap\overline{\Gamma(2\delta)}\rightarrow\R$ smooth. A Taylor expansion yields for $(r,t)\in[-2\delta,2\delta]\times[0,T]$:
\begin{align}\label{eq_asym_ACalpha_cp_taylor4}
\tilde{g}^\pm_2(r,t):=g_2(\overline{X}^\pm_1(r,t))=\sum_{k=0}^{M+2}\frac{\partial_r^k\tilde{g}^\pm_2|_{(0,t)}}{k!}r^k+\Oc(|r|^{M+3}).
\end{align}
Then we use $r=\varepsilon(\rho_{\varepsilon,\alpha}+h_{\varepsilon,\alpha}|_{(s,t)})$ and expand $h_{\varepsilon,\alpha}$ as specified above. To this end the height functions need to be smooth enough. Similar to the expansion of the $(x,t)$-dependent terms in the bulk equation, cf.~\eqref{eq_asym_ACalpha_cp_taylor3}, the terms in the $\varepsilon$-expansion of \eqref{eq_asym_ACalpha_cp_taylor4} are for $k=2,...,M+2$:
\begin{align*}
\Oc(1):\quad &g_2|_{\overline{p}^\pm(t)},\\
\Oc(\varepsilon):\quad &(\rho+h_{1,\alpha}|_{(\pm 1,t)})\partial_r\tilde{g}^\pm_2|_{(0,t)},\\
\Oc(\varepsilon^k):\quad &h_{k,\alpha}|_{(\pm 1,t)}\partial_r\tilde{g}^\pm_2|_{(0,t)} +[\text{some polynomial in }(\rho,\partial_s^l h_{j,\alpha}|_{(\pm 1,t)}), l=0,...,k-1,\\ 
&\phantom{h_{k,\alpha}|_{(\pm 1,t)}\partial_r\tilde{g}^\pm_2|_{(0,t)} +[} j=1,...,k-1\text{ of order }\leq k,\text{ where}\\
&\phantom{h_{k,\alpha}|_{(\pm 1,t)}\partial_r\tilde{g}^\pm_2|_{(0,t)} +[}\text{the coefficients are multiples of }(\partial_r^2\tilde{g}^\pm_2,...,\partial_r^k\tilde{g}^\pm_2)|_{(0,t)}].
\end{align*}
The other explicit terms in \eqref{eq_asym_ACalpha_cp_taylor4} are bounded by $\varepsilon^{M+3}$ times some polynomial in $|\rho|$ if the $h_{j,\alpha}$ are smooth. Later, these terms and the $\Oc(|r|^{M+3})$-remainder in \eqref{eq_asym_ACalpha_cp_taylor4} for each choice of $g_2$ will be multiplied with exponentially decaying terms in $|\rho|$. Then these terms are $\Oc(\varepsilon^{M+3})$. Finally, the $\sigma_\alpha$-term is replaced via
\[
\sigma_\alpha'(\hat{w}^{C\pm}_\alpha+\hat{\tilde{u}}^{C\pm}_{\varepsilon,\alpha})|_{Z=0}
=\sigma_\alpha'(\hat{w}^{C\pm}_\alpha)|_{Z=0}
+\sum_{k=1}^{M+2}\frac{1}{k!}\sigma_\alpha^{(k+1)}(\hat{w}^{C\pm}_\alpha)|_{Z=0} (\hat{\tilde{u}}^{C\pm}_{\varepsilon,\alpha}|_{Z=0})^k
+\Oc(\varepsilon^{M+3}).
\]
The terms in the $\varepsilon$-expansion are for $k=2,...,M+2$:
\begin{align*}
\Oc(1):\quad & \sigma_\alpha'(\hat{w}^{C\pm}_\alpha|_{Z=0}),\\
\Oc(\varepsilon):\quad & \sigma_\alpha''(\hat{w}^{C\pm}_\alpha)\hat{u}^{C\pm}_{1,\alpha}|_{Z=0},\\
\Oc(\varepsilon^k):\quad & \sigma_\alpha''(\hat{w}^{C\pm}_\alpha)\hat{u}^{C\pm}_{k,\alpha}|_{Z=0} 
+ [\text{a polynomial in }(\hat{u}^{C\pm}_{1,\alpha}|_{Z=0},...,\hat{u}^{C\pm}_{k-1,\alpha}|_{Z=0})\text{ of order }\leq k,\\
& \phantom{\sigma_\alpha''(\hat{w}^{C\pm}_\alpha)\hat{u}^{C\pm}_{k,\alpha}|_{Z=0}+ [}
\text{where the coefficients are multiples of } \sigma_\alpha^{(3)}(\hat{w}^{C\pm}_\alpha)|_{Z=0},...,\\
& \phantom{\sigma_\alpha''(\hat{w}^{C\pm}_\alpha)\hat{u}^{C\pm}_{k,\alpha}|_{Z=0}+ [}
\sigma_\alpha^{(k+1)}(\hat{w}^{C\pm}_\alpha)|_{Z=0}\text{ and every term contains a }\hat{u}^{C\pm}_{j,\alpha}\text{-factor}].
\end{align*} 
The other explicit terms are of order  $\Oc(\varepsilon^{M+3})$.

Now we can expand \eqref{eq_asym_ACalpha_cp_robin2} into $\varepsilon$-series with coefficients in $(\rho_{\varepsilon,\alpha},t)$ up to $\Oc(\varepsilon^{M-1})$. 

\paragraph{Robin Boundary Condition: $\Oc(\varepsilon^{-1})$}\label{sec_asym_ACalpha_cp_robin_m1}
At the lowest order $\Oc(\frac{1}{\varepsilon})$ we have 
\[
(N_{\partial\Omega}\cdot\nabla r)|_{\overline{p}^\pm(t)} \partial_\rho\hat{w}^{C\pm}_\alpha|_{Z=0}
+(N_{\partial\Omega}\cdot\nabla z^\pm_\alpha)|_{\overline{p}^\pm(t)} \partial_Z\hat{w}^{C\pm}_\alpha|_{Z=0} + \sigma_\alpha'(\hat{w}^{C\pm}_\alpha|_{Z=0})=0.
\]
By construction it holds $N_{\partial\Omega}|_{\overline{p}^\pm(t)}=-\nabla z^\pm_\alpha|_{\overline{p}^\pm(t)}$, where $\nabla z^\pm_\alpha\cdot\nabla r|_{\overline{p}^\pm(t)}=-\cos\alpha$ and $|\nabla z^\pm_\alpha|^2|_{\overline{p}^\pm(t)}=1$, cf.~Section \ref{sec_asym_ACalpha_cp_bulk_m2}. Therefore the order $\Oc(\frac{1}{\varepsilon})$ vanishes if we require
\begin{align}
[-\partial_Z+\cos\alpha\,\partial_\rho]\hat{w}^{C\pm}_\alpha|_{Z=0}+\sigma_\alpha'(\hat{w}^{C\pm}_\alpha|_{Z=0})=0.
\end{align}
The latter equation together with \eqref{eq_asym_ACalpha_cp_bulk_m2} is solved by $\hat{w}^{C\pm}_\alpha:=v_\alpha$ for $\alpha\in\frac{\pi}{2}+[-\alpha_0,\alpha_0]$, where $v_\alpha$ and $\alpha_0$ are as in Remark \ref{th_asym_ACalpha_decay_param}. In particular $v_\alpha$ is $t$-independent and $v_\alpha-\theta_0\in R^C_{0,(\beta,\gamma),\alpha}$ for all $\beta\in[0,\beta_0]$, $\gamma\in[\frac{\gamma_0}{2},\gamma_0]$.

\paragraph{Robin Boundary Condition: $\Oc(\varepsilon^0)$}\label{sec_asym_ACalpha_cp_robin_0}
The next order $\Oc(1)$ vanishes if
\begin{align}\label{eq_asym_ACalpha_cp_bc1}
&[-\partial_Z+\cos\alpha\,\partial_\rho+\sigma_\alpha''(v_\alpha|_{Z=0})]\hat{u}^{C\pm}_1|_{Z=0}(\rho,t)=g^{C\pm}_{1,\alpha}(\rho,t),\\ \notag
g^{C\pm}_{1,\alpha}(\rho,t):=&
-\partial_\rho v_\alpha|_{Z=0}\left[h_{1,\alpha}|_{(\pm 1,t)} \partial_r((N_{\partial\Omega}\cdot\nabla r)\circ\overline{X}^\pm_1)|_{(0,t)}\mp\sin\alpha\,\partial_sh_{1,\alpha}|_{(\pm 1,t)}\right]\\\notag
&-\partial_Z v_\alpha|_{Z=0}h_{1,\alpha}|_{(\pm 1,t)} \partial_r((N_{\partial\Omega}\cdot\nabla z^\pm_\alpha)\circ\overline{X}^\pm_1)|_{(0,t)}+\tilde{g}^{C\pm}_{0,\alpha}(\rho,t),
\end{align}
where $\tilde{g}^{C\pm}_{0,\alpha}\in R^C_{0,(\beta),\alpha}$ is given by
\[
\tilde{g}^{C\pm}_{0,\alpha}(\rho,t):=
-\rho\,\partial_\rho v_\alpha|_{Z=0} \partial_r((N_{\partial\Omega}\cdot\nabla r)\circ\overline{X}^\pm_1)|_{(0,t)}
-\rho\,\partial_Z v_\alpha|_{Z=0} \partial_r((N_{\partial\Omega}\cdot\nabla z^\pm_\alpha)\circ\overline{X}^\pm_1)|_{(0,t)}.
\] 
We solve this equation together with \eqref{eq_asym_ACalpha_cp_bulk_m1}. If $h_{1,\alpha}$ is smooth and determined only from the $0$-th order, then $G^{C\pm}_{1,\alpha}\in R^C_{0,(\beta,\gamma),\alpha}$ and $g^{C\pm}_{1,\alpha}\in R^C_{0,(\beta),\alpha}$ for all $\beta\in[0,\beta_0)$, $\gamma\in[\frac{\gamma_0}{2},\gamma_0)$. Note that both are independent of $\chi_\alpha$. Therefore due to Remark \ref{th_asym_ACalpha_decay_param} and Theorem \ref{th_hp_alpha_sol_reg3} there is a unique smooth solution $\hat{u}^{C\pm}_1$ to \eqref{eq_asym_ACalpha_cp_bulk_m1} and \eqref{eq_asym_ACalpha_cp_bc1} with the same decay as $G^{C\pm}_{1,\alpha}$ if and only if the compatibility condition \eqref{eq_hp_alpha_lin_comp} holds, i.e.
\[
\int_{\R^2_+} G^{C\pm}_{1,\alpha} \partial_\rho v_\alpha\,d(\rho,Z)
+ \int_\R g^{C\pm}_{1,\alpha} \partial_\rho v_\alpha|_{Z=0}\,d\rho=0.
\] 
The latter is equivalent to the following linear boundary condition for $h_{1,\alpha}$:
\begin{align}\label{eq_asym_ACalpha_cp_h1}
b_{1,\alpha}^\pm(t)\partial_sh_{1,\alpha}|_{(\pm 1,t)}
+b_{0,\alpha}^\pm(t)h_{1,\alpha}|_{(\pm 1,t)}
=f_{0,\alpha}^\pm(t)\quad\text{ for }t\in[0,T],
\end{align}
where 
\begin{align*}
b_{1,\alpha}^\pm(t) :=&
\pm \sin\alpha\left[
2\int_{\R^2_+}\partial_\rho\partial_Zv_\alpha \partial_\rho v_\alpha\,d(\rho,Z) 
+ \int_\R (\partial_\rho v_\alpha)^2|_{Z=0}\,dZ
\right],\\
b_{0,\alpha}^\pm(t) :=& \int_{\R^2_+}\partial_Z^2v_\alpha\partial_\rho v_\alpha\,d(\rho,Z)\left[
\partial_r(|\nabla z^\pm_\alpha|^2\circ\overline{X})|_{(0,\pm 1,t)}\mp\frac{\cos\alpha}{\sin\alpha}\partial_s(|\nabla z^\pm_\alpha|^2\circ\overline{X})|_{(0,\pm 1,t)}\right]\\
+2\int_{\R^2_+}&\partial_\rho\partial_Zv_\alpha \partial_\rho v_\alpha
\left[\partial_r((\nabla z^\pm_\alpha\cdot\nabla r)\circ\overline{X})|_{(0,\pm 1,t)}\mp\frac{\cos\alpha}{\sin\alpha}\partial_s((\nabla z^\pm_\alpha\cdot\nabla r)\circ\overline{X})|_{(0,\pm 1,t)}\right]\\ \textstyle
-\!\int_\R(\partial_\rho v&_\alpha)^2|_{Z=0}\textstyle \partial_r((N_{\partial\Omega}\cdot\nabla r)\circ\overline{X}^\pm_1)|_{(0,t)}
-\!\int_\R\partial_Z v_\alpha\partial_\rho v_\alpha|_{Z=0} \partial_r((N_{\partial\Omega}\cdot\nabla z^\pm_\alpha)\circ\overline{X}^\pm_1)|_{(0,t)},\\
f_{0,\alpha}^\pm(t) := &
-\int_{\R^2_+}\tilde{G}^{C\pm}_{0,\alpha}\partial_\rho v_\alpha\,d(\rho,Z)-\int_\R\tilde{g}^{C\pm}_{0,\alpha}\partial_\rho v_\alpha|_{Z=0}\,d\rho
\end{align*}
are smooth in $t\in[0,T]$ and independent of $\chi_\alpha$, where $\tilde{G}^{C\pm}_{0,\alpha}$ is as in Section \ref{sec_asym_ACalpha_cp_bulk_m1}. Together with the linear parabolic equation \eqref{eq_asym_ACalpha_in_hk} for $k=0$ from the inner expansion in Section \ref{sec_asym_ACalpha_in}, we obtain a time-dependent linear parabolic boundary value problem for $h_{1,\alpha}$. Here analogously to the $\frac{\pi}{2}$-case in \cite{AbelsMoser} the initial value $h_{1,\alpha}|_{t=0}$ is not specified. 

\begin{Remark}\label{th_asym_ACalpha_feven_rem2}\upshape
	If $f$ is even, then due to Remark \ref{th_asym_ACalpha_feven_rem1} it holds $f_{0,\alpha}=0$ for the right hand side in \eqref{eq_asym_ACalpha_in_hk} for $k=0$. However, in general it holds $f^{\pm}_{0,\alpha}\neq 0$. Therefore in contrast to the $\frac{\pi}{2}$-case in \cite{AbelsMoser}, Section 3 a non-trivial $h_{1,\alpha}$ is needed except from special cases. This is due to the many terms appearing in $f^\pm_{0,\alpha}$. Also note that for $v_\alpha$ there are no symmetry properties available.
\end{Remark}

We solve the equations for $h_{1,\alpha}$ with Lunardi, Sinestrari, von Wahl \cite{LunardiSvW}, Chapter 9 in the analogous way as in \cite{AbelsMoser}, Section 3.2.2. To this end we only need that all the coefficients are smooth and that $|\nabla s|^2|_{\overline{X}_0}$, $|b^\pm_{1,\alpha}|$ are bounded from below by a positive constant. Everything is known except the estimate for $b^\pm_{1,\alpha}$. However, the latter follows from the additional estimates in Remark \ref{th_hp_alpha_solN3_rem}. Hence we obtain a smooth solution $h_{1,\alpha}:I\times[0,T]\rightarrow\R$ to \eqref{eq_asym_ACalpha_in_hk} with $k=0$ together with \eqref{eq_asym_ACalpha_cp_h1}. We can extend $h_{1,\alpha}$ to a smooth function on $I_\mu\times[0,T]$ for example with the Stein Extension Theorem, see Leoni \cite{Leoni}, Theorem 13.17. Moreover, $h_{1,\alpha}$ is independent of $\chi_\alpha$. This enables us to define $H_0$ and $\chi_\alpha$ according to Remark \ref{th_asym_ACalpha_cp_cutoff}, 1. Furthermore, due to Section \ref{sec_asym_ACalpha_in} we obtain $\hat{u}^I_{2,\alpha}$ (solving \eqref{eq_asym_ACalpha_in_uk} for $k=0$) with $\hat{u}^I_{2,\alpha}\in R^I_{1,(\beta_1),\alpha}$ for all $\beta_1\in(0,\min\{\sqrt{f''(\pm1)}\})$. Therefore the first inner order is computed. Finally, Theorem \ref{th_hp_alpha_sol_reg3} yields a unique smooth solution $\hat{u}^{C\pm}_{1,\alpha}$ to  \eqref{eq_asym_ACalpha_cp_bulk_m1} and \eqref{eq_asym_ACalpha_cp_bc1} such that $\hat{u}^{C\pm}_1\in R^C_{1,(\beta,\gamma),\alpha}$ for all $\beta\in[0,\beta_0)$, $\gamma\in[\frac{\gamma_0}{2},\gamma_0)$. Hence the first order is determined.

\paragraph{Robin Boundary Condition: $\Oc(\varepsilon^k)$ and Induction}\label{sec_asym_ACalpha_cp_robin_k}
For $k=1,...,M-1$ we consider $\Oc(\varepsilon^k)$ in \eqref{eq_asym_ACalpha_cp_robin2} and derive equations for the ($k+1$)-th order. Therefore we assume the following induction hypothesis: suppose that the $j$-th order is constructed for all $j=0,...,k$, that it is smooth and that $H_0$ and $\chi_\alpha$ is known. Moreover, suppose that $\hat{u}^I_{j+1,\alpha}\in R^I_{j,(\beta_1),\alpha}$ for every $\beta_1\in(0,\min\{\sqrt{f''(\pm1)}\})$ and $j=0,...,k$. Finally, let $\hat{u}^{C\pm}_{j,\alpha}\in R^C_{j,(\beta,\gamma),\alpha}$ for all $\beta\in[0,\beta_0)$, $\gamma\in[\frac{\gamma_0}{2},\gamma_0)$ and $j=0,...,k$. The assumption holds for $k=1$ due to Section \ref{sec_asym_ACalpha_cp_robin_0}. 

Then with the notation in Definition \ref{th_asym_ACalpha_cp_def} we have 
\begin{alignat*}{2}
\text{For }j=1,...,k+1:&\quad[\Oc(\varepsilon^j)\text{ in }\sigma_\alpha'(\hat{w}^{C\pm}_\alpha+\hat{\tilde{u}}^{C\pm}_{\varepsilon,\alpha})|_{Z=0}]\quad& \in\quad & \sigma_\alpha''(v_\alpha)\hat{u}^C_{k+1,\alpha}|_{Z=0}+R^C_{k,(\beta),\alpha},\\
\text{For }i,j=1,...,k:&\quad[\Oc(\varepsilon^j)\text{ for }g_1=g_1(h_i)|_{(\overline{s}^\pm(r),t)}]\quad& \in\quad & P^C_{\max\{i,j\},\alpha}(\rho).
\end{alignat*}
Moreover, for $j=1,...,k+1$ we obtain
\begin{align*}
[\Oc(\varepsilon^j)\text{ in }\eqref{eq_asym_ACalpha_cp_taylor4}]\quad\in 
 h_{j,\alpha}|_{(\pm 1,t)}\partial_r\tilde{g}^\pm_2|_{(0,t)} + P^C_{j-1,\alpha}(\rho)\quad 
[\subseteq P^C_{j,\alpha}(\rho)\text{, if }j\leq k].
\end{align*}

With this we can compute the order $\Oc(\varepsilon^k)$ in \eqref{eq_asym_ACalpha_cp_robin2}. Therefore let $\beta\in[0,\beta_0)$ be arbitrary. The contribution of $\frac{1}{\varepsilon}(\partial_\rho v_\alpha
+\partial_\rho\hat{\tilde{u}}^{C\pm}_{\varepsilon,\alpha})|_{Z=0}
(N_{\partial\Omega}\cdot\nabla r)|_{\overline{X}^\pm_1(r,t)}$
yields a term in
\begin{align*}
&\partial_\rho\hat{u}^{C\pm}_{k+1,\alpha}|_{Z=0}
N_{\partial\Omega}\cdot\nabla r|_{\overline{p}^\pm(t)} 
+\partial_\rho v_\alpha|_{Z=0} h_{k+1,\alpha}|_{(\pm 1,t)}
\partial_r((N_{\partial\Omega}\cdot\nabla r)\circ\overline{X}^\pm_1)|_{(0,t)}\\
&+\sum_{j=1}^k P^C_{j,\alpha}(\rho) \partial_\rho\hat{u}^{C\pm}_{k+1-j}|_{Z=0},
\end{align*}
where $N_{\partial\Omega}\cdot\nabla r|_{\overline{p}^\pm(t)} = \cos\alpha$ and the last sum is contained in $R^C_{k,(\beta),\alpha}$. Moreover, from the term
$-(\partial_\rho v_\alpha
+\partial_\rho\hat{\tilde{u}}^{C\pm}_{\varepsilon,\alpha})|_{Z=0}
(N_{\partial\Omega}\cdot\nabla s)|_{\overline{X}^\pm_1(r,t)}\partial_sh_{\varepsilon,\alpha}|_{(\overline{s}^\pm(r),t)}$ we obtain an element of
\begin{align*}
(\partial_sh_{k+1,\alpha}|_{(\pm 1,t)}+ P^C_{k,\alpha}(\rho))  
N_{\partial\Omega}\cdot\nabla s|_{\overline{p}^\pm(t)} \partial_\rho v_\alpha|_{Z=0}
+ P^C_{k,\alpha}(\rho) \sum_{j=0}^k P^C_{j,\alpha}(\rho) \partial_\rho\hat{u}^{C\pm}_{k-j,\alpha}\\
\subseteq \pm\sin\alpha\,\partial_\rho v_\alpha|_{Z=0} \partial_sh_{k+1,\alpha}|_{(\pm 1,t)} + R^C_{k,(\beta),\alpha}.
\end{align*}
Analogously as before, $\frac{1}{\varepsilon}(\partial_Z v_\alpha+\partial_Z\hat{\tilde{u}}^{C\pm}_{\varepsilon,\alpha})|_{Z=0}
(N_{\partial\Omega}\cdot\nabla z^\pm_\alpha)|_{\overline{X}^\pm_1(r,t)}$ contributes a term in
\[
-\partial_Z\hat{u}^{C\pm}_{k+1,\alpha}|_{Z=0} 
+\partial_Zv_\alpha|_{Z=0} h_{k+1,\alpha}|_{(\pm 1,t)} 
\partial_r((N_{\partial\Omega}\cdot\nabla z^\pm_\alpha)\circ\overline{X}^\pm_1)|_{(0,t)}
+ R^C_{k,(\beta),\alpha}.
\]
Finally, the term
$\frac{1}{\varepsilon}\sigma'_\alpha(v_\alpha+\hat{\tilde{u}}^{C\pm}_{\varepsilon,\alpha})|_{Z=0}$ gives an element in $\sigma_\alpha''(v_\alpha)\hat{u}^{C\pm}_{k+1,\alpha}|_{Z=0}+R^C_{k,(\beta),\alpha}$.

Altogether the $\Oc(\varepsilon^k)$-order in the expansion of \eqref{eq_asym_ACalpha_cp_robin2} vanishes if
\begin{align}\label{eq_asym_ACalpha_cp_bck}
&[-\partial_Z+\cos\alpha\,\partial_\rho+\sigma_\alpha''(v_\alpha|_{Z=0})]\hat{u}_{k+1}^{C\pm}|_{Z=0}(\rho,t)=g^{C\pm}_{k+1,\alpha}(\rho,t),\\ \notag
g^{C\pm}_{k+1,\alpha}(\rho,t):=&
-\partial_\rho v_\alpha|_{Z=0}\left[h_{k+1,\alpha}|_{(\pm 1,t)} \partial_r((N_{\partial\Omega}\cdot\nabla r)\circ\overline{X}^\pm_1)|_{(0,t)}
\mp\sin\alpha\,\partial_sh_{k+1,\alpha}|_{(\pm 1,t)}\right]\\\notag
&-\partial_Z v_\alpha|_{Z=0}h_{k+1,\alpha}|_{(\pm 1,t)} \partial_r((N_{\partial\Omega}\cdot\nabla z^\pm_\alpha)\circ\overline{X}^\pm_1)|_{(0,t)}+\tilde{g}^{C\pm}_{k,\alpha}(\rho,t),
\end{align}
where $\tilde{g}^{C\pm}_{k,\alpha}\in R^C_{k,(\beta),\alpha}$. We solve the latter equation \eqref{eq_asym_ACalpha_cp_bck} together with \eqref{eq_asym_ACalpha_cp_bulk_km1}.

The compatibility condition \eqref{eq_hp_alpha_lin_comp} yields the following linear boundary condition for $h_{k+1,\alpha}$:
\begin{align}\label{eq_asym_ACalpha_cp_hk}
b_{1,\alpha}^\pm(t)\partial_sh_{k+1,\alpha}|_{(\pm 1,t)}+ b_{0,\alpha}^\pm(t)h_{k+1,\alpha}|_{(\pm 1,t)}=f^\pm_{k,\alpha}(t)\quad\text{ for }t\in[0,T],
\end{align}
where $b^\pm_{1,\alpha},b^\pm_{0,\alpha}$ are the same as the ones after \eqref{eq_asym_ACalpha_cp_h1} and 
\[
f_{k,\alpha}^\pm(t) := 
-\int_{\R^2_+}\tilde{G}^{C\pm}_{k,\alpha}\partial_\rho v_\alpha\,d(\rho,Z)-\int_\R\tilde{g}^{C\pm}_{k,\alpha}\partial_\rho v_\alpha|_{Z=0}\,d\rho
\] 
is smooth in $t\in[0,T]$, where $\tilde{G}^{C\pm}_{k,\alpha}$ is as in Section \ref{sec_asym_ACalpha_cp_bulk_km1}.

The arguments in the last Section \ref{sec_asym_ACalpha_cp_robin_0} yield a smooth solution $h_{k+1,\alpha}:I\times[0,T]\rightarrow\R$ of \eqref{eq_asym_ACalpha_in_hk} from Section \ref{sec_asym_ACalpha_in} together with \eqref{eq_asym_ACalpha_cp_hk}. Again, the latter can be extended to a smooth function on $I_\mu\times[0,T]$. Moreover, Section \ref{sec_asym_ACalpha_in} determines $\hat{u}^I_{k+2,\alpha}$ (solving \eqref{eq_asym_ACalpha_in_uk}) with $\hat{u}^I_{k+2,\alpha}\in R^I_{k+1,(\beta_1),\alpha}$ for all $\beta_1\in(0,\min\{\sqrt{f''(\pm 1)}\})$. Therefore the $(k+1)$-th inner order is computed. Moreover, it holds $G^{C\pm}_{k,\alpha}\in R^C_{k+1,(\beta,\gamma),\alpha}$ as well as $g^{C\pm}_{k+1}\in R^C_{k+1,(\beta),\alpha}$ for all $\beta\in[0,\beta_0),\gamma\in[\frac{\gamma_0}{2},\gamma_0)$ and they are independent of $\hat{u}^{C\pm}_{k+1,\alpha}$. Finally, Theorem \ref{th_hp_alpha_sol_reg3} yields a unique smooth solution $\hat{u}^{C\pm}_{k+1,\alpha}$ to  \eqref{eq_asym_ACalpha_cp_bulk_km1} and \eqref{eq_asym_ACalpha_cp_bck} such that $\hat{u}^{C\pm}_{k+1,\alpha}\in R^C_{k+1,(\beta,\gamma),\alpha}$ for all $\beta\in[0,\beta_0)$, $\gamma\in[\frac{\gamma_0}{2},\gamma_0)$. Hence the $(k+1)$-th order is determined.

Finally, the $j$-th order is determined inductively for all $j=0,...,M$, the $h_{j,\alpha}$ are smooth and $\hat{u}^I_{j+1,\alpha}\in R^I_{j,(\beta_1),\alpha}$ for all $\beta_1\in(0,\min\{\sqrt{f''(\pm1)}\})$ as well as $\hat{u}^{C\pm}_{j,\alpha}\in R^C_{j,(\beta,\gamma),\alpha}$ for all $\beta\in[0,\beta_0)$, $\gamma\in[\frac{\gamma_0}{2},\gamma_0)$.

\subsection{The Approximate Solution for (AC$_\alpha$) in 2D}\label{sec_asym_ACalpha_uA}
Let $\sigma_\alpha$ be as in Definition \ref{th_ACalpha_sigma_def} and $\alpha_0>0$ be as in Remark \ref{th_asym_ACalpha_decay_param}. Moreover, let $\Gamma:=(\Gamma_t)_{t\in[0,T]}$ be as in Section \ref{sec_coord_surface_requ} with contact angle $\alpha\in\frac{\pi}{2}+[-\alpha_0,\alpha_0]$ and a solution to \eqref{MCF} in $\Omega$. Additionally, let $\delta>0$ be such that the assertions of Theorem \ref{th_coord2D} hold for $\delta$ replaced by $2\delta$ and consider $r,s$ from the theorem. Let $M\in\N$, $M\geq 2$ be as in the inception of Section \ref{sec_asym_ACalpha}. Moreover, let $\delta_0\in(0,\delta]$ be small such that $-2\delta_0+\mu_0\sin\alpha>0$ and 
\begin{align}\label{eq_asym_ACalpha_uA_delta0}
s^\pm=\frac{1}{\sin\alpha}[z_\alpha^\pm+r\cos\alpha]\in(\tfrac{5}{4},\tfrac{7}{4})\mu_0\quad\text{ for } z_\alpha^\pm\in[\tfrac{11}{8},\tfrac{3}{2}]\mu_0\sin\alpha\text{ and }|r|\leq\delta_0,
\end{align}
where $\mu_0$ is from Theorem \ref{th_coord2D}. Note that \eqref{eq_asym_ACalpha_uA_delta0} is only needed in order to have a suitable partition of $\Gamma(\delta_0)$ for the spectral estimate later, cf.~\eqref{eq_SE_ACalpha_sets} below. Finally, let $\eta, \tilde{\eta}:\R\rightarrow[0,1]$ be smooth with 
$\eta(r)=1$ for $|r|\leq 1$, $\eta(r)=0$ for $|r|\geq 2$ and $\tilde{\eta}(r)=1$ for $r\leq 1$, $\tilde{\eta}(r)=0$ for $r\geq 2$. Then we set 
\begin{align}\label{eq_asym_ACalpha_uB}
u^{B\pm}_{\varepsilon,\alpha}:=\chi_\alpha u^I_{\varepsilon,\alpha}+u^{C\pm}_{\varepsilon,\alpha}
=v_\alpha+\chi_\alpha\tilde{u}^I_{\varepsilon,\alpha}+\tilde{u}^{C\pm}_{\varepsilon,\alpha}
\end{align}
for $\varepsilon>0$, where $\chi_\alpha$ and $v_\alpha$ are evaluated at $(\rho_{\varepsilon,\alpha},Z_{\varepsilon,\alpha}^\pm)$. The appearing functions were constructed in Sections \ref{sec_asym_ACalpha_in}-\ref{sec_asym_ACalpha_cp}.  
Then we define
\begin{align*}
u^A_{\varepsilon,\alpha}:=
\begin{cases}
\eta(\frac{r}{\delta_0})\left[\tilde{\eta}(\frac{s^\pm}{\mu_0})u^{B\pm}_{\varepsilon,\alpha}+(1-\tilde{\eta}(\frac{s^\pm}{\delta_0}))u^I_{\varepsilon,\alpha}\right]+(1-\eta(\frac{r}{\delta_0}))\textup{sign}(r)\quad&\text{ in }\overline{\Gamma^\pm(2\delta,1)},\\
\pm 1\quad&\text{ in }Q_T^\pm\setminus\Gamma(2\delta),
\end{cases}
\end{align*}
where $s^\pm=\pm 1\mp s$ and the sets were defined in Remark \ref{th_coord2D_rem2},~1. In the following lemma we show that this gives a suitable approximate solution for \eqref{eq_ACalpha1}-\eqref{eq_ACalpha3}.
\begin{Lemma}\label{th_asym_ACalpha_uA}
	The mapping $u^A_{\varepsilon,\alpha}$ is smooth, uniformly bounded in $x,t,\varepsilon$ and the remainders
	$r^A_{\varepsilon,\alpha} := 
	(\partial_t-\Delta)u^A_{\varepsilon,\alpha} +\frac{1}{\varepsilon^2}f'(u^A_{\varepsilon,\alpha})$ and $s^A_{\varepsilon,\alpha}:=\partial_{N_{\partial\Omega}}u^A_{\varepsilon,\alpha}+\frac{1}{\varepsilon}\sigma_\alpha'(u^A_{\varepsilon,\alpha})$ in \eqref{eq_ACalpha1}-\eqref{eq_ACalpha2} satisfy
	\begin{alignat*}{2}
	|r^A_{\varepsilon,\alpha}|&
	\leq 
	C(\varepsilon^{M-1} e^{-c(|\rho_{\varepsilon,\alpha}|+Z_{\varepsilon,\alpha}^\pm)}
	+\varepsilon^M e^{-c|\rho_{\varepsilon,\alpha}|}+\varepsilon^{M+1})
	&\quad &\text{ in }\Gamma^\pm(2\delta,1),\\
	r^A_{\varepsilon,\alpha} &
	=0&\quad&\text{ in }Q_T\setminus\Gamma(2\delta),\\
	|s^A_{\varepsilon,\alpha}|&
	\leq C\varepsilon^M e^{-c|\rho_{\varepsilon,\alpha}|}&\quad&\text{ on }\partial Q_T\cap\Gamma(2\delta),\\
	s^A_{\varepsilon,\alpha} &
	=0&\quad&\text{ on }\partial Q_T\setminus\Gamma(2\delta)
	\end{alignat*}
	for $\varepsilon>0$ small and some constants $c,C>0$.
\end{Lemma}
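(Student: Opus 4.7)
The plan is to split $\overline{Q_{T_0}}$ into the four natural regions where $u^A_{\varepsilon,\alpha}$ has different definitions and to invoke the estimates already prepared in Sections~\ref{sec_asym_ACalpha_in} and~\ref{sec_asym_ACalpha_cp}. Smoothness and uniform boundedness are the easy part: every building block $v_\alpha$, $\hat u^{C\pm}_{j,\alpha}$, $\hat u^I_{j,\alpha}$ lies in the respective decay class $R^C_{j,(\beta,\gamma),\alpha}$ or $R^I_{j,(\beta_1),\alpha}$ and is therefore smooth and bounded, while the cutoffs $\eta$, $\tilde\eta$, $\chi_\alpha$ are smooth; the piecewise definition glues consistently because the bracket on $\overline{\Gamma^\pm(2\delta,1)}$ collapses to $\mathrm{sign}(r)=\pm 1$ once $|r|\geq 2\delta_0$, matching the outer definition.

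For the bulk remainder $r^A_{\varepsilon,\alpha}$ we subdivide $\Gamma^\pm(2\delta,1)$ further. On $Q_T^\pm\setminus\Gamma(2\delta)$ the function is constant $\pm 1$ with $f'(\pm 1)=0$, so $r^A_{\varepsilon,\alpha}=0$. In the annulus $\delta_0<|r|<2\delta_0$ the $\eta(r/\delta_0)$-cutoff varies, but there $|\rho_{\varepsilon,\alpha}|\gtrsim\varepsilon^{-1}$ and Theorem~\ref{th_theta_0} together with the decay of the expansion terms forces all differences to $\pm 1$ to be exponentially small in $\varepsilon$, which is dominated by $\varepsilon^{M+1}$. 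On $\Gamma^\pm(\delta_0,1)\cap\{s^\pm\geq 2\mu_0\}$ we have $u^A_{\varepsilon,\alpha}=u^I_{\varepsilon,\alpha}$ and the coordinate system is orthogonal, so Lemma~\ref{th_asym_ACalpha_in} gives the bound $C(\varepsilon^M e^{-c|\rho_{\varepsilon,\alpha}|}+\varepsilon^{M+1})$. On $\Gamma^\pm(\delta_0,1)\cap\{s^\pm\leq\mu_0\}$ we have $u^A_{\varepsilon,\alpha}=u^{B\pm}_{\varepsilon,\alpha}$ and plug this into \eqref{eq_asym_ACalpha_cp}; by construction the orders $\varepsilon^{-2},\dots,\varepsilon^{M-2}$ cancel and the residue is controlled by $C\varepsilon^{M-1}e^{-c(|\rho_{\varepsilon,\alpha}|+Z_{\varepsilon,\alpha}^\pm)}$ coming from the contact-point Taylor remainders, plus $C(\varepsilon^M e^{-c|\rho_{\varepsilon,\alpha}|}+\varepsilon^{M+1})$ from the $\chi_\alpha$-multiplied inner piece, exactly as in Lemma~\ref{th_asym_ACalpha_in}. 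The transition strip $\mu_0<s^\pm<2\mu_0$ is the delicate point: derivatives of $\tilde\eta(s^\pm/\mu_0)$ produce commutator terms proportional to $u^{B\pm}_{\varepsilon,\alpha}-u^I_{\varepsilon,\alpha}$; by the choice of $H_0$ and \eqref{eq_asym_ACalpha_uA_delta0} the cutoff $\chi_\alpha$ equals $1$ in this strip, so this difference reduces to $v_\alpha-\theta_0+\tilde u^{C\pm}_{\varepsilon,\alpha}$, which by the matching properties \eqref{eq_asym_ACalpha_matching1}-\eqref{eq_asym_ACalpha_matching2} and $Z_{\varepsilon,\alpha}^\pm\gtrsim\varepsilon^{-1}\mu_0\sin\alpha$ is exponentially small in $\varepsilon$, hence absorbed into $\varepsilon^{M+1}$.

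For the boundary remainder $s^A_{\varepsilon,\alpha}$, on $\partial Q_T\setminus\Gamma(2\delta)$ the function is $\pm 1$ with $\sigma_\alpha'(\pm 1)=0$, so the remainder vanishes. On $\partial Q_T\cap\Gamma(\delta_0)$ we use the decisive property from Remark~\ref{th_asym_ACalpha_cp_cutoff},~1: since $Z_{\varepsilon,\alpha}^\pm=0$ there, $\chi_\alpha=0$, so $u^A_{\varepsilon,\alpha}=u^{C\pm}_{\varepsilon,\alpha}=v_\alpha+\tilde u^{C\pm}_{\varepsilon,\alpha}$ (note that by the choice of $\delta_0$ the outer cutoffs $\eta$ and $\tilde\eta$ equal $1$ on this part of the boundary). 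Plugging this into \eqref{eq_asym_ACalpha_cp_robin2} and using the cancellation of orders $\varepsilon^{-1},\dots,\varepsilon^{M-1}$ established in Sections~\ref{sec_asym_ACalpha_cp_robin_m1}-\ref{sec_asym_ACalpha_cp_robin_k} produces the bound $C\varepsilon^M e^{-c|\rho_{\varepsilon,\alpha}|}$. On the remaining boundary set $\partial Q_T\cap(\Gamma(2\delta)\setminus\Gamma(\delta_0))$, again $|\rho_{\varepsilon,\alpha}|\gtrsim\varepsilon^{-1}$ so every term is exponentially small in $\varepsilon$ and absorbed into the stated estimate.

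The main technical obstacle will be the bookkeeping in the transition strip $\mu_0<s^\pm<2\mu_0$, where three cutoffs ($\eta$, $\tilde\eta$, $\chi_\alpha$) interact and the bulk expansion is applied to a linear combination of $u^{B\pm}_{\varepsilon,\alpha}$ and $u^I_{\varepsilon,\alpha}$. The crucial algebraic fact that saves us is that, on this strip, $\chi_\alpha\equiv 1$ so commutator terms reduce to differences between the contact-point and inner expansions; combined with the matching \eqref{eq_asym_ACalpha_matching1}-\eqref{eq_asym_ACalpha_matching2} and the inequality $\beta_0+\gamma_0\leq\min\{\sqrt{f''(\pm 1)}\}$ from Remark~\ref{th_asym_ACalpha_decay_param}, these residues decay exponentially in $Z_{\varepsilon,\alpha}^\pm$, and since $Z_{\varepsilon,\alpha}^\pm$ is of order $\varepsilon^{-1}$ there, they are dwarfed by $\varepsilon^{M+1}$.
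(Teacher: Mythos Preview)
Your proposal is correct and follows essentially the same route as the paper: a region-by-region decomposition using Lemma~\ref{th_asym_ACalpha_in} away from the contact points, the contact-point expansion of Section~\ref{sec_asym_ACalpha_cp} near them, and the key observation that $\chi_\alpha\equiv 1$ on the $\tilde\eta$-transition strip so that the commutators reduce to matched differences with exponential $Z$-decay. One small correction: the fact $\chi_\alpha\equiv 1$ on $\{s^\pm\geq\mu_0,\,|r|\leq 2\delta_0\}$ does not come from \eqref{eq_asym_ACalpha_uA_delta0} (that condition is only used for the spectral-estimate partition), but from the choice $-2\delta_0+\mu_0\sin\alpha>0$ stated just before it together with $H_0=2\|h_{1,\alpha}\|_\infty$; the paper verifies both factors of $\hat\chi_\alpha$ directly via $z_\alpha^\pm\geq -2\delta_0+\mu_0\sin\alpha>0$ and $\frac{1}{\sin\alpha}[Z_{\varepsilon,\alpha}^\pm+\rho_{\varepsilon,\alpha}\cos\alpha-H_0]=\frac{s^\pm}{\varepsilon}-\frac{1}{\sin\alpha}[h_{\varepsilon,\alpha}\cos\alpha+H_0]\to\infty$.
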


\begin{Remark}\upshape
	The analogous statements as in \cite{MoserACvACND}, Remark 5.11 are true.
\end{Remark}

\begin{proof}
	The proof is analogous to the one of \cite{MoserACvACND}, Lemma 5.10, where the case $\alpha=\frac{\pi}{2}$ is shown. One uses Lemma \ref{th_asym_ACalpha_in} and verifies that the Taylor expansions and remainder estimates stated in Section \ref{sec_asym_ACalpha_cp} hold rigorously. The main point left to show in the case $\alpha\neq\frac{\pi}{2}$ is the suitable convergence with respect to $\varepsilon\rightarrow 0$ (i.e.~rates of type $e^{-c/\varepsilon}$ for the function and all derivatives) in the transition regions for the functions we glued together in the definition of $u^A_{\varepsilon,\alpha}$. With the latter, one can then estimate the mixed terms in $r^A_{\varepsilon,\alpha}$ and $s^A_{\varepsilon,\alpha}$ appearing due to the cutoff functions similar to the case $\alpha=\frac{\pi}{2}$. Because of \eqref{eq_asym_ACalpha_uB} and the asymptotics of the appearing functions it is enough to prove 
	\[
	z^\pm_\alpha|_{(x,t)}\geq c>0\quad\text{ and }\quad\chi_\alpha(\rho_{\varepsilon,\alpha},Z_{\varepsilon,\alpha}^\pm)|_{(x,t)}\equiv 1
	\] 
	for all $(x,t)\in\overline{\Gamma^\pm(2\delta,1)}$ with $|r(x,t)|\leq 2\delta^0$ and $s^\pm(x,t)\geq \mu_0$ as well as $\varepsilon>0$ small. However, by the assumption on $\delta_0$ it holds 
	\[
	z^\pm_\alpha|_{(x,t)}=|z^\pm_\alpha|_{(x,t)}|\geq-|r|_{(x,t)}\\\cos\alpha|+s^\pm|_{(x,t)}\sin\alpha 
	\geq -2\delta_0 +\mu_0\sin\alpha>0
	\]
	for all those $(x,t)$. Moreover, recall the definition \eqref{eq_asym_ACalpha_cp_cutoff2} of $\hat{\chi}_\alpha$. Therefore it is left to show that
	\[
	\frac{1}{\sin\alpha}[Z_{\varepsilon,\alpha}^\pm|_{(x,t)}+\rho_{\varepsilon,\alpha}|_{(x,t)}\cos\alpha-H_0]\geq 1
	\]
	for all the $(x,t)$ as above and $\varepsilon>0$ small, where $H_0=2\|h_{1,\alpha}\|_{\infty}$, see Remark \ref{th_asym_ACalpha_cp_cutoff}, 1.~and the end of Section \ref{sec_asym_ACalpha_cp_robin_0}. The estimate follows from
	\[
	\frac{1}{\sin\alpha}[Z_{\varepsilon,\alpha}^\pm|_{(x,t)}+\rho_{\varepsilon,\alpha}|_{(x,t)}\cos\alpha-H_0]
	=\frac{s^\pm|_{(x,t)}}{\varepsilon} -\frac{1}{\sin\alpha}[h_{\varepsilon,\alpha}|_{(s(x,t),t)}\cos\alpha+H_0]
	\]
	for all the $(x,t)$ as before and $\varepsilon>0$. The second term is estimated by $4\|h_{1,\alpha}\|_\infty/\sin\alpha$ for $\varepsilon>0$ small. Hence the lemma is proven.
\end{proof}

\section{Spectral Estimate for (AC$_\alpha$) in 2D}\label{sec_SE_ACalpha}
In the second step for the method of de Mottoni and Schatzman \cite{deMS} one estimates the difference of the exact and approximate solutions. Therefore typically a Gronwall-type argument is used combined with the idea of linearization at the approximate solution, because the structure of the latter is known. An important ingredient is the spectral estimate for a linear operator related to the diffuse interface model and the approximate solution, i.e.~an estimate for the corresponding bilinear form.

In this section we prove such a spectral estimate for the Allen-Cahn equation with nonlinear Robin boundary condition \eqref{eq_ACalpha1}-\eqref{eq_ACalpha3} for $\alpha$ close to $\frac{\pi}{2}$ in the setting of the introduction, i.e.~the case of boundary contact in 2D. The operator is given by
\[
\Lc_{\varepsilon,t}:=-\Delta + \frac{1}{\varepsilon^2} f''(u^A_{\varepsilon,\alpha}(.,t))\quad\text{ in }\Omega
\]
together with the linear Robin boundary condition
\[
\Nc_{\varepsilon,t}u:=
N_{\partial\Omega}\cdot\nabla u + \frac{1}{\varepsilon} \sigma_\alpha''(u^A_{\varepsilon,\alpha}(.,t))u=0\quad\text{ on }\partial\Omega, 
\]
where $f$ is as in \eqref{eq_AC_fvor1}, $\sigma_\alpha$ is from Definition \ref{th_ACalpha_sigma_def} and $u^A_\varepsilon$ is from Section \ref{sec_asym_ACalpha_uA}. We will prove a spectral estimate of the following form (cf.~ also Theorem \ref{th_SE_ACalpha} below): if $\alpha$ is close enough to $\frac{\pi}{2}$ (independent of $\Omega,\Gamma$ etc.), then there are $c_0,C,\varepsilon_0>0$ such that for all $u\in H^1(\Omega)$,  $\varepsilon\in(0,\varepsilon_0]$:
\begin{align}
\begin{split}\label{eq_SE1}
\int_\Omega|\nabla\psi|^2
&+\frac{1}{\varepsilon^2}f''(u^A_{\varepsilon,\alpha}|_{(.,t)})\psi^2\,dx
+\int_{\partial\Omega}\frac{1}{\varepsilon}\sigma_\alpha''(u^A_{\varepsilon,\alpha}|_{(.,t)})(\tr\,\psi)^2\,d\Hc^1\\
&\geq -C\|\psi\|_{L^2(\Omega)}^2+\|\nabla\psi\|_{L^2(\Omega\setminus\Gamma_t(\delta_0))}^2+c_0\varepsilon\|\nabla_\tau\psi\|_{L^2(\Gamma_t(\delta_0))}^2,\end{split}
\end{align}
where $\nabla_\tau$ is the tangential gradient introduced in Remark \ref{th_coord2D_rem2},~2. The estimate (also without the two additional last terms in \eqref{eq_SE1}) yields that the spectrum of $\Lc_{\varepsilon,t}$ is bounded from below by $-C$, where $\Lc_{\varepsilon,t}$ is interpreted as an unbounded operator on $L^2(\Omega)$ with domain $\{u\in H^2(\Omega):\Nc_{\varepsilon,t}u=0 \}$. Finally, note that also a weaker spectral estimate (e.g.~without the last two additional terms) would be enough to obtain a convergence result, see Remark \ref{th_DC_ACalpha_rem} below.

In the following we explain our approach for the proof of the spectral estimate \eqref{eq_SE1}. Using a reduction strategy in analogy to Chen \cite{ChenSpectrums} might work here, but even in the case $\alpha=\frac{\pi}{2}$ this is not clear, cf.~\cite{MoserACvACND}, Remark 6.1. In \cite{AbelsMoser}, Section 4 the case $\alpha=\frac{\pi}{2}$ is carried out in a different way as in \cite{ChenSpectrums} (see below for a summary). Since we have to take $\alpha$ close to $\frac{\pi}{2}$ anyway because of Section \ref{sec_hp_alpha_sol} and the asymptotic expansion in Section \ref{sec_asym_ACalpha}, we chose to adapt the proof in \cite{AbelsMoser} and to take $\alpha$ close to $\frac{\pi}{2}$ where needed such that the arguments still work. Here one has to pay attention on the restrictions for $\alpha$ because a dependency on $\Gamma$ or $\Omega$ is of course not allowed.

At this point let us briefly explain the idea in \cite{AbelsMoser}, Section 4 for the proof of the spectral estimate in the case $\alpha=\frac{\pi}{2}$. First the spectral estimate is localized with a suitable partition of unity. The estimate away from a neighbourhood of the contact points is shown similar to \cite{ChenSpectrums}, Theorem 2.3 with properties of perturbed 1D-Allen-Cahn-type operators on large intervals approximating $\R$. The spectral estimate close to the contact points is obtained by a splitting technique. Therefore an approximate first eigenfunction for the Allen-Cahn operator is constructed, where the model problem on the half space mentioned in Section \ref{sec_hp_alpha_lin} for $\alpha=\frac{\pi}{2}$ is used. Then the space of $H^1$-functions is decomposed $L^2$-orthogonally with respect to the explicit space consisting of tangential alterations of this approximate eigenfunction. Finally, the bilinear form corresponding to the spectral term is estimated on each arising subspace. Here again the features of perturbed 1D-operators are important.

In the following we adapt this proof to the case of $\alpha$ close to $\frac{\pi}{2}$. In \cite{MoserACvACND}, Section 6.1 (or see also \cite{MoserDiss}, Section 6.1) the 1D-preliminaries are proven in an abstract setting independent of the particular geometry and also applicable for the case $\alpha\neq\frac{\pi}{2}$, including scaling transformations and remainder estimates as well as spectral estimates for perturbed 1D Allen-Cahn-type operators. 

\subsection{Assumptions and Spectral Estimate}
\label{sec_SE_AC}
We state the assumptions for this section. For convenience similar notation as in Section \ref{sec_asym_ACalpha} is used. We consider $\beta_0,\gamma_0,\alpha_0>0$, $\alpha\in\frac{\pi}{2}+[-\alpha_0,\alpha_0]$ and $v_\alpha$ as in Remark \ref{th_asym_ACalpha_decay_param}.
Let $\Omega\subset\R^2$ and $\Gamma=(\Gamma_t)_{t\in[0,T]}$ for $T>0$ be as in Section \ref{sec_coord2D} with contact angle $\alpha$ (\eqref{MCF} not needed). Moreover, let $\delta_1>0$ be such that Theorem \ref{th_coord2D} holds for $2\delta_1$ instead of $\delta$. We use the notation from Theorem \ref{th_coord2D} and Remark \ref{th_coord2D_rem2}. Furthermore, we consider height functions $h_{1,\alpha}$ and $h_{2,\alpha}=h_{2,\alpha}(\varepsilon)$ for $\varepsilon>0$ small. We require (with a slight abuse of notation)
\[
h_{j,\alpha}\in B([0,T],C^0(I_\mu)\cap C^2(\hat{I}_\mu))\quad\text{ for }j=1,2,
\]
where $I_\mu:=[-1-\mu,1+\mu]$ for some $\mu>0$ large and $\hat{I}_\mu:=I_\mu\setminus(-1+2\mu_0,1-2\mu_0)$. Moreover, we assume that there exists $\overline{C}_0>0$ such that $\|h_{j,\alpha}\|_{B([0,T],C^0(I_\mu)\cap C^2(\hat{I}_\mu))}\leq \overline{C}_0$ for $j=1,2$. Then for $\varepsilon>0$ small we set $h_{\varepsilon,\alpha}:=h_{1,\alpha}+\varepsilon h_{2,\alpha}$ and define the stretched variables
\[
\rho_{\varepsilon,\alpha}:=\frac{r-\varepsilon h_{\varepsilon,\alpha}(s,t)}{\varepsilon},\quad
Z_{\varepsilon,\alpha}^\pm:=\frac{z_\alpha^\pm}{\varepsilon}\quad\text{ in }\overline{\Gamma(2\delta_1)},
\]
where $z_\alpha^\pm=-r\cos\alpha + (1\mp s)\sin\alpha$ is as in \eqref{eq_asym_ACalpha_cp_zalpha}. Moreover, let $\delta_0\in(0,\delta_1]$ be small such that \eqref{eq_asym_ACalpha_uA_delta0} holds. We set $\hat{\mu}_0:=\frac{11}{8}\mu_0\sin\alpha$ and $\tilde{\mu}_0:=\frac{3}{2}\mu_0\sin\alpha$ as well as for $t\in[0,T]$: 
\[
\Omega^{C\pm}_t:=\{x\in\Gamma_t(\delta_0):z_\alpha^\pm(x,t)\in(0,\tilde{\mu}_0)\}.
\] 
Let $\hat{u}^{C\pm}_{1,\alpha}:\overline{\R^2_+}\times[0,T]\rightarrow\R:(\rho,Z,t)\mapsto\hat{u}^{C\pm}_{1,\alpha}(\rho,Z,t)$ be in $B([0,T],H^2_{(0,\frac{\gamma_0}{2})}(\R^2_+))$. Then we set 
\[
u^{C\pm}_{1,\alpha}(x,t):=\hat{u}^{C\pm}_{1,\alpha}(\rho_{\varepsilon,\alpha}(x,t),Z_{\varepsilon,\alpha}^\pm(x,t),t)\quad\text{ for }(x,t)\in\Omega_t^{C\pm}.
\] 
For $\varepsilon>0$ small we consider
\begin{align*}
u^A_{\varepsilon,\alpha}=
\begin{cases}
\theta_0(\rho_{\varepsilon,\alpha})+\Oc(\varepsilon^2)&\quad\text{ in }\Gamma(\delta_0,\mu_0),\\
v_\alpha(\rho_{\varepsilon,\alpha},Z_{\varepsilon,\alpha}^\pm)+\varepsilon u^{C\pm}_{1,\alpha}+\Oc(\varepsilon^2)&\quad\text{ in }\Omega^{C\pm}_t,\\
\pm 1+\Oc(\varepsilon)&\quad\text{ in }Q_T^\pm\setminus\Gamma(\delta_0),
\end{cases}
\end{align*}
where $\theta_0$ is as in Theorem \ref{th_theta_0}, $v_\alpha$ is as in Remark \ref{th_asym_ACalpha_decay_param} and $\Oc(\varepsilon^k)$ are continuous\footnote{~For evaluation on the boundary.} functions bounded by $C\varepsilon^k$. For convenience\footnote{~It would be tedious to include such terms in the asymptotic expansion for the approximate eigenfunction below and probably a cut-off structure similar as in Section \ref{sec_asym_ACalpha_cp} is needed.} we do not consider further $\varepsilon$-order terms similar to \cite{AbelsMoser}, Remark 4.3, (i). In this situation we prove
\begin{Theorem}[\textbf{Spectral Estimate for (AC$_\alpha$) in 2D}]\label{th_SE_ACalpha}
	There is an $\overline{\alpha}_0\in(0,\alpha_0]$ independent of $\Omega,\Gamma$ such that, if $\alpha\in\frac{\pi}{2}+[-\overline{\alpha}_0,\overline{\alpha}_0]$, then there exist $\varepsilon_0,C,c_0>0$ independent of the $h_{j,\alpha}$ for fixed $\overline{C}_0$ such that for all $\varepsilon\in(0,\varepsilon_0], t\in[0,T]$ and $\psi\in H^1(\Omega)$ we have
	\begin{align}\begin{split}\label{eq_SEalpha}
	\int_\Omega|\nabla\psi|^2
	&+\frac{1}{\varepsilon^2}f''(u^A_{\varepsilon,\alpha}|_{(.,t)})\psi^2\,dx
	+\int_{\partial\Omega}\frac{1}{\varepsilon}\sigma_\alpha''(u^A_{\varepsilon,\alpha}|_{(.,t)})(\tr\,\psi)^2\,d\Hc^1\\
	&\geq -C\|\psi\|_{L^2(\Omega)}^2+\|\nabla\psi\|_{L^2(\Omega\setminus\Gamma_t(\delta_0))}^2+c_0\varepsilon\|\nabla_\tau\psi\|_{L^2(\Gamma_t(\delta_0))}^2.\end{split}
	\end{align}
\end{Theorem}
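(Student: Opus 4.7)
The plan is to adapt the proof of the spectral estimate for $\alpha=\frac{\pi}{2}$ from \cite{AbelsMoser}, Section 4, and to track the $\alpha$-dependence of all constants so that they can be taken uniform in $\alpha\in\frac{\pi}{2}+[-\overline{\alpha}_0,\overline{\alpha}_0]$ once $\overline{\alpha}_0$ is chosen sufficiently small (independent of $\Omega,\Gamma$). I would localize the problem first: fix $t\in[0,T]$ and construct a smooth partition of unity $\eta_{\text{far}}^2+\eta_{\text{bulk}}^2+(\eta^+)^2+(\eta^-)^2\equiv 1$ on $\overline{\Omega}$ where $\eta_{\text{far}}$ is supported in $\Omega\setminus\Gamma_t(\delta_0/2)$, $\eta_{\text{bulk}}$ in $\Gamma_t(\delta_0,\mu_0/2)$, and $\eta^\pm$ in $\Omega^{C\pm}_t$ (using \eqref{eq_asym_ACalpha_uA_delta0} to guarantee a genuine overlap between bulk and contact pieces). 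Substituting $\psi=\eta_{\text{far}}\psi+\eta_{\text{bulk}}\psi+\eta^+\psi+\eta^-\psi$ into the bilinear form on the left of \eqref{eq_SEalpha}, the cross terms produced by the product rule cost only $C\|\psi\|_{L^2(\Omega)}^2$ and are absorbed into the $-C\|\psi\|_{L^2(\Omega)}^2$ on the right.

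Away from $\Gamma_t$, that is on $\supp\eta_{\text{far}}$, one has $u^A_{\varepsilon,\alpha}=\pm1+\Oc(\varepsilon)$ and hence $f''(u^A_{\varepsilon,\alpha})\geq c>0$, so the corresponding part trivially dominates $-C\|\eta_{\text{far}}\psi\|_{L^2}^2+\|\nabla(\eta_{\text{far}}\psi)\|^2_{L^2}$. In the bulk tubular region, I transform to the coordinates $(r,s)$ from Theorem \ref{th_coord2D}, rescale $r=\varepsilon(\rho+h_{\varepsilon,\alpha})$, and decompose $\eta_{\text{bulk}}\psi$ pointwise in $s$ as $a(s,t)\theta_0'(\rho_{\varepsilon,\alpha})+\psi^\perp$ with $\int_\R \psi^\perp\,\theta_0'\,d\rho=0$; the $\psi^\perp$-part is controlled by the spectral gap of the 1D operator $\Lc_0=-\partial_\rho^2+f''(\theta_0)$ on $\ker(\theta_0')^\perp$, while the $a\theta_0'$-part has vanishing normal contribution and the remaining tangential contribution produces the $c_0\varepsilon\|\nabla_\tau\psi\|^2_{L^2(\Gamma_t(\delta_0))}$ term. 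This part is essentially identical to \cite{AbelsMoser}, Section 4, combined with the abstract 1D-preliminaries from \cite{MoserACvACND}, Section 6.1.

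The new and delicate step is the contact-point region $\supp\eta^\pm\subseteq\Omega^{C\pm}_t$. Here the natural lowest-order approximate eigenfunction is no longer $\theta_0'$ but $\phi^\pm_{\varepsilon,\alpha}(x):=\partial_\rho v_\alpha(\rho_{\varepsilon,\alpha}(x),Z^\pm_{\varepsilon,\alpha}(x))$, which solves \eqref{eq_hp_alpha_modelN1}-\eqref{eq_hp_alpha_modelN2} differentiated in $\rho$ up to curvature and $\varepsilon$-errors. Because of Theorem \ref{th_hp_alpha_sol_reg1}, $\phi^\pm_{\varepsilon,\alpha}$ is well-defined and has uniform $H^k_{(\beta_0,\gamma_0)}$-regularity in $\alpha$. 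I would split $\eta^\pm\psi=b(s^\pm,t)\phi^\pm_{\varepsilon,\alpha}+\psi^\perp$ with $\psi^\perp$ orthogonal to $\phi^\pm_{\varepsilon,\alpha}$ in $L^2$ along each line $s^\pm=\text{const}$; the complementary subspace is controlled via Theorem \ref{th_hp_alpha_sol_reg3} applied to the linearised half-space operator $L_\alpha$, whose spectrum stays bounded away from zero on this subspace uniformly in $\alpha\in\frac{\pi}{2}+[-\alpha_0,\alpha_0]$. The $\phi^\pm_{\varepsilon,\alpha}$-direction produces a small eigenvalue contribution of order $\Oc(1)\|b\|^2_{L^2}\leq C\|\eta^\pm\psi\|^2_{L^2(\Omega)}$ via direct computation using the equations satisfied by $v_\alpha$ and the boundary integration.

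The main obstacle is ensuring that the contact-point estimate is genuinely uniform as $\alpha\to\frac{\pi}{2}$. Two points in particular must be verified: first, that the normalization $\int_\R(\partial_\rho v_\alpha)^2|_{Z=0}\,d\rho$ and the full-plane $L^2$-norm $\int_{\R^2_+}|\partial_\rho v_\alpha|^2$ remain bounded below by a positive constant uniform in $\alpha$, which is supplied by Remark \ref{th_hp_alpha_solN3_rem}; second, that the symmetry arguments of \cite{AbelsMoser} that relied on oddness of $\theta_0'$ are replaced by exponential-decay arguments purely using $v_\alpha\in\theta_0+H^k_{(\beta_0,\gamma_0)}(\R^2_+)$ and the inequality $\beta_0+\gamma_0\leq\min\sqrt{f''(\pm1)}$ in Remark \ref{th_asym_ACalpha_decay_param}. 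The final step is then the fixing of $\overline{\alpha}_0\in(0,\alpha_0]$ small enough so that the perturbation $L_\alpha-L_{\pi/2}$ (as well as the $h_{j,\alpha}$-dependent geometric errors in the coordinates, controlled uniformly by $\overline{C}_0$) is small enough for a Neumann-series/absorption argument, yielding the claimed $c_0,C,\varepsilon_0$ independent of $(\Omega,\Gamma,h_{1,\alpha},h_{2,\alpha})$ for fixed $\overline{C}_0$.
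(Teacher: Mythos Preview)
Your overall architecture---localize via a partition of unity, handle the far and bulk regions as in \cite{AbelsMoser}, and at the contact points split along an approximate eigenfunction built from $\partial_\rho v_\alpha$---matches the paper. But two steps in your contact-point argument are genuinely incomplete.

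First, the bare function $\partial_\rho v_\alpha(\rho_{\varepsilon,\alpha},Z^\pm_{\varepsilon,\alpha})$ is not a good enough approximate eigenfunction. Applying $\Lc^\pm_{\varepsilon,t}$ to it leaves $\Oc(\varepsilon^{-1})$ terms (from the curvature of $\Gamma$, from $\partial_sh_{1,\alpha}$, and from $f'''(v_\alpha)u^{C\pm}_{1,\alpha}$) that do \emph{not} integrate to zero against $\partial_\rho v_\alpha$; these would ruin the $\hat V\times\hat V$ estimate. The paper cures this by enriching the ansatz to $\phi^A_{\varepsilon,\alpha}=\varepsilon^{-1/2}\bigl[q^\pm(z_\alpha^\pm,t)\partial_\rho v_\alpha+\varepsilon\hat v^{C\pm}_1\bigr]$, where $\hat v^{C\pm}_1$ is obtained from the linear half-space problem (Theorem~\ref{th_hp_alpha_sol_reg3}) and the modulation $q^\pm$ is chosen precisely so that the compatibility condition \eqref{eq_hp_alpha_lin_comp} for $\hat v^{C\pm}_1$ holds. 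Only after this do the residual $\Oc(\varepsilon^{-1})$ terms reduce to multiples of $\partial_\rho^2v_\alpha$, which pair to zero with $\partial_\rho v_\alpha$ in $L^2(\R,d\rho)$.

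Second, and more seriously, your control of the orthogonal complement via Theorem~\ref{th_hp_alpha_sol_reg3} does not work: that theorem gives an isomorphism in exponentially weighted Sobolev spaces subject to a compatibility condition, which is not the same as an $L^2$-spectral gap for $L_\alpha$ on $(\partial_\rho v_\alpha)^\perp$. The paper does \emph{not} use the 2D half-space operator for this step. Instead (Lemma~\ref{th_SE_ACalpha_VpxVp}) it perturbs back to the 1D operator: using $|f''(v_\alpha)-f''(\theta_0)|+|\sigma_\alpha''|\leq \overline{C}\,|\alpha-\tfrac{\pi}{2}|$ and Lemma~\ref{th_SE_ACalpha_intpol_tr2}, it reduces the bilinear form on $(\hat V^\pm_{\varepsilon,t})^\perp$ to $\tilde B^\pm_{\varepsilon,t}$ with potential $f''(\theta_0(\rho_{\varepsilon,\alpha}))$, and then applies the 1D Allen-Cahn spectral gap (\cite{MoserACvACND}, Theorem~6.8) fibrewise in $z_\alpha^\pm$. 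This is exactly where the smallness of $\overline{\alpha}_0$ (independent of $\Omega,\Gamma$) is spent. Relatedly, the splitting is along $z_\alpha^\pm=\text{const}$, not $s^\pm=\text{const}$, and the cross-term $\partial_Z\partial_\rho v_\alpha$ that arises in the $\hat V\times\hat V^\perp$ and $\nabla_\tau$-estimates can only be controlled with an extra $\varepsilon$, which is why \eqref{eq_SEalpha} carries $c_0\varepsilon$ rather than $c_0$ in front of $\|\nabla_\tau\psi\|^2$; your proposal does not account for this degradation.
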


\begin{Remark}\phantomsection{\label{th_SE_ACalpha_rem}}\upshape
	\begin{enumerate}
	\item Compared to the case $\alpha=\frac{\pi}{2}$ in Theorem 4.1 in \cite{AbelsMoser} we can only show the weaker estimate \eqref{eq_SEalpha} with an $\varepsilon$-factor in front of the $\nabla_\tau$-term. The reason is that the arguments in \cite{AbelsMoser}, Section 4 can be adapted except for the estimate of $\nabla_\tau\phi$ in the proof of \cite{AbelsMoser}, Theorem 4.11. In the present situation there will be a term roughly of the form
	\[
	\frac{1}{\varepsilon^3}\int_0^{\tilde{\mu}_0}a(z)^2\int_{-\delta_0}^{\delta_0}(\partial_Z\partial_\rho v_\alpha)^2|_{(\frac{r}{\varepsilon},\frac{z}{\varepsilon})}\,dr\,dz.
	\]
	We can control the latter only with an $\Oc(\varepsilon^{-1})$-term, e.g.~$\frac{1}{\varepsilon}C\|a\|_{H^1(0,\tilde{\mu}_0)}^2$, cf.~\eqref{eq_SE_ACalpha_cp2} and the proof of Theorem \ref{th_SE_ACalpha_cp2}. Therefore we need the additional $\varepsilon$. Nevertheless, the estimate still gives some control on the $\nabla_\tau$-term.
	\item Note that in this section $z$ corresponds to $z_\alpha^\pm$, whereas in \cite{AbelsMoser} the $z$ rather served as a rescaled (integral) variable over $\R$. Here we use $\rho$ for the latter instead.
	\end{enumerate}
\end{Remark}

The main task is the proof of the following spectral estimate close to the contact points:
\begin{Theorem}\label{th_SE_ACalpha_cp} 
	There is an $\overline{\alpha}_0\in(0,\alpha_0]$ independent of $\Omega,\Gamma$ such that, if $\alpha\in\frac{\pi}{2}+[-\overline{\alpha}_0,\overline{\alpha}_0]$, then there are $\tilde{\varepsilon}_0, C, \tilde{c}_0>0$ independent of the $h_{j,\alpha}$ for fixed $\overline{C}_0$ such that for all $\varepsilon\in(0,\tilde{\varepsilon}_0]$, $t\in[0,T]$ and $\psi\in H^1(\Omega^{C\pm}_t)$ with $\psi(x)=0$ for a.e.~$x\in\Omega_t^{C\pm}$ with $z_\alpha^\pm(x,t)\geq\hat{\mu}_0$ it holds:
	\begin{align*}
	\int_{\Omega^{C\pm}_t}|\nabla\psi|^2+\frac{1}{\varepsilon^2}f''(u^A_{\varepsilon,\alpha}|_{(.,t)})\psi^2\,dx
	&+\int_{\partial\Omega\cap\partial\Omega_t^{C\pm}}\frac{1}{\varepsilon}\sigma_\alpha''(u^A_{\varepsilon,\alpha}|_{(.,t)})(\tr\,\psi)^2\,d\Hc^1\\
	&\geq -C\|\psi\|_{L^2(\Omega_t^{C\pm})}^2+\tilde{c}_0\varepsilon\|\nabla_\tau\psi\|_{L^2(\Omega_t^{C\pm})}^2.
	\end{align*}
\end{Theorem}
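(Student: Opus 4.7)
The plan is to adapt the splitting strategy from \cite{AbelsMoser}, Section 4, replacing $\theta_0$ by $v_\alpha$ throughout and carefully tracking the new $Z$-derivatives that appear. First I construct an approximate first eigenfunction for the Allen--Cahn operator near the contact point. The natural ansatz is
\[
\phi^{C\pm}_{\varepsilon,\alpha}(x,t) := \partial_\rho v_\alpha\bigl(\rho_{\varepsilon,\alpha}(x,t), Z_{\varepsilon,\alpha}^\pm(x,t)\bigr) + \varepsilon\, \hat{\phi}^{C\pm}_{1,\alpha}(\rho_{\varepsilon,\alpha}, Z_{\varepsilon,\alpha}^\pm, t),
\]
where $\hat{\phi}^{C\pm}_{1,\alpha}$ is determined by solving a linearized problem of the type \eqref{eq_hp_alpha_modelL1}--\eqref{eq_hp_alpha_modelL2} via Theorem \ref{th_hp_alpha_sol_reg3}. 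The leading term $\partial_\rho v_\alpha$ is formally the generator of normal translations of $v_\alpha$ and thus lies in the kernel of $L_\alpha$ up to the first-order boundary contributions, which the correction $\hat\phi^{C\pm}_{1,\alpha}$ is tailored to cancel. A computation analogous to Section \ref{sec_asym_ACalpha_cp} using Lemma \ref{th_asym_ACalpha_cp_trafo} then shows that $-\Delta\phi^{C\pm}_{\varepsilon,\alpha} + \frac{1}{\varepsilon^2} f''(u^A_{\varepsilon,\alpha})\phi^{C\pm}_{\varepsilon,\alpha}$ and its Robin trace are $\mathcal{O}(\varepsilon^0)$ times exponentially decaying weights in $(\rho, Z)$, with constants uniform in $\alpha \in \frac{\pi}{2}+[-\overline{\alpha}_0,\overline{\alpha}_0]$ for $\overline{\alpha}_0>0$ small enough.

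Next I perform the orthogonal decomposition slice-by-slice in the tangential variable $z = z_\alpha^\pm$. For each fixed $z \in (0,\tilde{\mu}_0)$ the set $\Omega_t^{C\pm} \cap \{z_\alpha^\pm = z\}$ is essentially an interval in the $r$-direction, on which I write
\[
\psi|_{z_\alpha^\pm=z} = \beta(z)\, \phi^{C\pm}_{\varepsilon,\alpha}|_{z_\alpha^\pm=z} + \psi^\perp(\cdot,z),
\]
where $\beta(z)$ is chosen so that $\psi^\perp(\cdot,z)$ is $L^2$-orthogonal to $\phi^{C\pm}_{\varepsilon,\alpha}|_{z_\alpha^\pm=z}$ along that slice. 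The non-degeneracy estimate $\int_\R (\partial_\rho v_\alpha)^2|_Z\, d\rho \in [\tfrac{3}{4},\tfrac{5}{4}]\|\theta_0'\|_{L^2}^2$ from Remark \ref{th_hp_alpha_solN3_rem} guarantees that $\beta \in H^1(0,\tilde{\mu}_0)$ with good trace behaviour. Inserting this into the bilinear form yields a $(\psi^\perp,\psi^\perp)$-contribution, a $(\beta\phi,\beta\phi)$-contribution, and cross-terms. The slice-wise $\psi^\perp$-contribution is controlled via the spectral estimate for perturbed 1D Allen--Cahn operators from \cite{MoserACvACND}, Section 6.1, uniformly in $z$ and $t$, producing $c(\|\psi^\perp\|_{L^2}^2 + \|\partial_r\psi^\perp\|_{L^2}^2) - C\|\psi\|_{L^2}^2$. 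The $\sigma_\alpha''$-boundary term is absorbed using the approximate Robin condition for $\phi^{C\pm}_{\varepsilon,\alpha}$ after integration by parts against $\phi\, \partial_r \phi$, as in the $\pi/2$-case.

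The remaining and most delicate contribution is the tangential one. Differentiating $\beta(z)\phi^{C\pm}_{\varepsilon,\alpha}$ in $s$ produces a leading piece $\partial_s \beta \cdot \phi$ whose squared $L^2$-norm is $\sim \varepsilon \int (\partial_z \beta)^2\, dz$ (one factor of $\varepsilon$ from $\int \phi^2\, dr \asymp \varepsilon$), which provides the claimed $\tilde{c}_0\varepsilon\|\nabla_\tau\psi\|^2$ after re-expressing $\partial_z\beta$ in terms of $\nabla_\tau\psi$ via the orthogonality relation and the second estimate in Remark \ref{th_hp_alpha_solN3_rem}. The hard part will be controlling the subleading piece $\beta\, \partial_s \phi^{C\pm}_{\varepsilon,\alpha}$: through the chain rule this contains a $\beta\cdot\tfrac{1}{\varepsilon}\partial_Z\partial_\rho v_\alpha$ term whose contribution to the bilinear form is of order $\varepsilon^{-1}\|\beta\|_{L^2}^2$ (exactly as flagged in Remark \ref{th_SE_ACalpha_rem},~1). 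This term cannot be absorbed into $C\|\psi\|_{L^2}^2$ without destroying the tangential gain, and the only way to hide it is against the $\varepsilon\|\nabla_\tau\psi\|^2$-term on the right-hand side, which is precisely why the full-strength estimate of \cite{AbelsMoser} must be weakened by the factor $\varepsilon$. All remaining cross-terms are dispatched by Cauchy--Schwarz with a small-parameter splitting, using the exponential decay of $\partial_\rho v_\alpha$ and $\partial_Z\partial_\rho v_\alpha$ supplied by Theorem \ref{th_hp_alpha_sol_reg1}. The restriction $\alpha \in \frac{\pi}{2}+[-\overline{\alpha}_0,\overline{\alpha}_0]$ is finally imposed to ensure simultaneously the applicability of Theorem \ref{th_hp_alpha_sol_reg3}, the validity of the non-degeneracy bounds in Remark \ref{th_hp_alpha_solN3_rem}, and the uniformity of the 1D spectral gap constants; none of these impose a dependency on $\Omega$ or $\Gamma$.
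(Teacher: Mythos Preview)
Your overall strategy is the same as the paper's --- construct an approximate first eigenfunction built on $\partial_\rho v_\alpha$, split $\tilde{H}^1(\Omega^{C\pm}_t)$ into the span of its tangential modulations and the $L^2$-orthogonal complement, and estimate the bilinear form on each piece using the 1D spectral results. You also correctly identify the $\tfrac{1}{\varepsilon}\partial_Z\partial_\rho v_\alpha$ contribution as the source of the extra $\varepsilon$-factor in front of $\|\nabla_\tau\psi\|^2$.

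However, there is a genuine gap in your construction of the approximate eigenfunction. You take the bare ansatz $\phi^{C\pm}_{\varepsilon,\alpha}=\partial_\rho v_\alpha+\varepsilon\,\hat{\phi}^{C\pm}_{1,\alpha}$ and assert that $\hat{\phi}^{C\pm}_{1,\alpha}$ is determined by Theorem~\ref{th_hp_alpha_sol_reg3}. But that theorem requires the data to satisfy the compatibility condition \eqref{eq_hp_alpha_lin_comp}, and the right-hand side you obtain at order $\varepsilon^{-1}$ depends on $\Delta z^\pm_\alpha|_{\overline{p}^\pm(t)}$, $h_{1,\alpha}$, $\hat{u}^{C\pm}_{1,\alpha}$ and various curvature quantities of $\Omega,\Gamma$; there is no reason this should integrate to zero against $\partial_\rho v_\alpha$. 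The paper resolves this by inserting an additional amplitude $q^\pm(z,t)$ in front of $\partial_\rho v_\alpha$, i.e.\ $\phi^A_{\varepsilon,\alpha}=\tfrac{1}{\sqrt{\varepsilon}}\bigl[q^\pm(z^\pm_\alpha,t)\,\partial_\rho v_\alpha+\varepsilon\,\hat v^{C\pm}_1\bigr]$. The compatibility condition then becomes a linear relation between $q^\pm(0,t)$ and $\partial_z q^\pm(0,t)$, and the non-degeneracy estimate in Remark~\ref{th_hp_alpha_solN3_rem} (namely $2\int\partial_Z\partial_\rho v_\alpha\,\partial_\rho v_\alpha+\int(\partial_\rho v_\alpha)^2|_{Z=0}\neq 0$) makes this solvable for $\partial_z q^\pm(0,t)$ once $q^\pm(0,t)=1$ is fixed. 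Without this extra degree of freedom, your correction $\hat{\phi}^{C\pm}_{1,\alpha}$ generically does not exist.

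Two smaller points. First, your claim that $-\Delta\phi+\tfrac{1}{\varepsilon^2}f''(u^A)\phi=\mathcal{O}(\varepsilon^0)$ is too optimistic: even with the $q^\pm$, a term of the form $\tfrac{1}{\varepsilon}[\Delta r\,q^\pm-2\cos\alpha\,\partial_z q^\pm]\,\partial_\rho^2 v_\alpha$ survives (it has no $Z$-decay and so cannot be pushed into the equation for $\hat v^{C\pm}_1$); it is neutralized only later in the $V\times V$ and $V\times V^\perp$ estimates via $\int_\R\partial_\rho^2 v_\alpha\,\partial_\rho v_\alpha\,d\rho=0$. Second, your account of the $\varepsilon$-loss is inverted: the bad $\partial_Z\partial_\rho v_\alpha$ term does not appear as something to be ``hidden'' in the bilinear form, but rather as an \emph{upper} bound $\|\nabla_\tau\phi\|^2\leq C\varepsilon^{-1}\|a\|_{H^1}^2$, so that the coercivity $\tfrac{\overline c}{4}\|a\|_{H^1}^2$ obtained from $B^\pm_{\varepsilon,t}(\phi,\phi)$ only buys $c\varepsilon\|\nabla_\tau\phi\|^2$.
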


This is sufficient to prove Theorem \ref{th_SE_ACalpha}:

\begin{proof}[Proof of Theorem \ref{th_SE_ACalpha}] 
	For $\varepsilon_0>0$ small and all $\varepsilon\in(0,\varepsilon_0]$ it holds $f''(u^A_{\varepsilon,\alpha})\geq 0$ in $Q_T^\pm\setminus\Gamma(\delta_0)$ and $\sigma_\alpha''(u^A_{\varepsilon,\alpha})=0$ on $\partial\Omega\setminus\partial\Omega^{C\pm}_t$. Therefore it is enough to prove the estimate in Theorem \ref{th_SE_ACalpha} for $\Gamma_t(\delta_0)$ instead of $\Omega$. We reduce to further subsets. The estimate holds for $\Gamma_t(\delta_0,\mu_0)$ instead of $\Omega$ with $1$ in front of the $\nabla_\tau$-term and without the boundary term. The latter was already proven in the case $\alpha=\frac{\pi}{2}$, cf.~the proof of Theorem 4.1 in \cite{AbelsMoser}. Finally, one can combine this with Theorem \ref{th_SE_ACalpha_cp} similar as in the case $\alpha=\frac{\pi}{2}$ with a suitable partition of unity for 
	\begin{align}\label{eq_SE_ACalpha_sets}
	\Gamma_t(\delta_0)\subseteq\overline{\Gamma_t(\delta_0,\mu_0)}\cup\bigcup_\pm\overline{\Gamma_t^\pm(\delta_0,5\mu_0/4)},
	\end{align}
	cf.~the proof of Theorem 4.1 in \cite{AbelsMoser}. This is possible since 
	\[
	\Gamma_t^\pm(\delta_0,\frac{5}{4}\mu_0)
	\subseteq 
	\{
	x\in\Gamma_t(\delta_0):z_\alpha^\pm(x,t)\in(0,\hat{\mu}_0)
	\}
	\subseteq\Omega_t^{C\pm}
	\subseteq\Gamma_t^\pm(\delta_0,\frac{7}{4}\mu_0)
	\]
	for $t\in[0,T]$ due to \eqref{eq_asym_ACalpha_uA_delta0}.
\end{proof}

\subsection{Outline for the Proof of the Spectral Estimate close to the Contact Points}\label{sec_SE_ACalpha_outline}
Because of a Taylor expansion it is enough to prove Theorem \ref{th_SE_ACalpha_cp} for 
\[
\frac{1}{\varepsilon^2}f''(v_\alpha|_{(\rho_{\varepsilon,\alpha}(.,t),Z_{\varepsilon,\alpha}^\pm(.,t))})+\frac{1}{\varepsilon}f'''(v_\alpha|_{(\rho_{\varepsilon,\alpha}(.,t),Z_{\varepsilon,\alpha}^\pm(.,t))})u^{C\pm}_{1,\alpha}(.,t)
\] 
instead of $\frac{1}{\varepsilon^2}f''(u^A_{\varepsilon,\alpha}(.,t))$. Moreover, we can replace $\frac{1}{\varepsilon}\sigma_\alpha''(u^A_{\varepsilon,\alpha}(.,t))$ by 
\[
\frac{1}{\varepsilon}\sigma_\alpha''(v_\alpha|_{(\rho_{\varepsilon,\alpha}(.,t),0)})+\sigma_\alpha'''(v_\alpha|_{(\rho_{\varepsilon,\alpha}(.,t),0)})u^{C\pm}_{1,\alpha}(.,t)
\] 
due to Young's inequality and the second estimate in the following lemma. 
\begin{Lemma}\phantomsection{\label{th_SE_ACalpha_intpol_tr1}}
	There is a $\overline{C}_1>0$ (independent of $\psi$, $t$) such that 
	\begin{align*}
	\|\tr\,\psi\|_{L^2(\partial\Omega_t^{C\pm})}^2
	&\leq \overline{C}_1
	\left[\|\psi\|_{L^2(\Omega_t^{C\pm})}^2
	+\|\nabla \psi\|_{L^2(\Omega_t^{C\pm})}\|\psi\|_{L^2(\Omega_t^{C\pm})}\right],\\
	\|\tr\,\psi\|_{L^2(\partial\Omega\cap\partial\Omega_t^{C\pm})}^2
	&\leq \overline{C}_1
	\left[\|\psi\|_{L^2(\Omega_t^{C\pm})}^2
	+\|\nabla_\tau \psi\|_{L^2(\Omega_t^{C\pm})}\|\psi\|_{L^2(\Omega_t^{C\pm})}\right]
	\end{align*}
	for all $\psi\in H^1(\Omega_t^{C\pm})$ and $t\in[0,T]$.
\end{Lemma}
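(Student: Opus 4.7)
\textbf{Proof proposal for Lemma \ref{th_SE_ACalpha_intpol_tr1}.} The plan is to pull back to a reference domain using the curvilinear coordinates of Theorem~\ref{th_coord2D} composed with the affine change of variables $(r,s)\mapsto(r,z)$ with $z:=z_\alpha^\pm=-r\cos\alpha+(1\mp s)\sin\alpha$. For fixed $t\in[0,T]$ this defines a smooth diffeomorphism from a reference set $V\subset\R^2$ onto $\Omega_t^{C\pm}$ whose Jacobian is bounded above and below by positive constants, uniformly in $t$ and in $\alpha\in\frac{\pi}{2}+[-\alpha_0,\alpha_0]$ (the only $\alpha$-dependent factor being $\sin\alpha\geq\cos\alpha_0>0$). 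Setting $U(r,z):=\psi(X(r,s(r,z),t))$, one has $\|\psi\|_{L^2(\Omega_t^{C\pm})}\simeq\|U\|_{L^2(V)}$ and $\|\nabla\psi\|_{L^2(\Omega_t^{C\pm})}\simeq\|\nabla_{(r,z)}U\|_{L^2(V)}$. The decisive observation is that the image of $\partial\Omega\cap\partial\Omega_t^{C\pm}$ is $\{z=0\}\cap\overline V$, and the chain rule yields $\partial_s(\psi\circ X)=\mp\sin\alpha\,\partial_z U$, so $\|\partial_z U\|_{L^2(V)}$ is controlled by $\|\nabla_\tau\psi\|_{L^2(\Omega_t^{C\pm})}$ up to a bounded factor (using also that $|\nabla s|$ is uniformly bounded above and below).

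Both inequalities then follow from a standard density plus fundamental-theorem-of-calculus argument in the reference frame. For the second estimate one writes
\[
U(r,0)^2 = U(r,z)^2 - 2\int_0^z U\,\partial_\zeta U\,d\zeta,
\]
integrates in $r$ over the horizontal extent of $V$ and then in $z$ over $(0,\tilde{\mu}_0/2)$, and applies Cauchy--Schwarz to obtain
\[
\|U(\cdot,0)\|_{L^2}^2\leq C\bigl(\|U\|_{L^2(V)}^2+\|U\|_{L^2(V)}\|\partial_z U\|_{L^2(V)}\bigr),
\]
which pulls back to the claimed bound on $\|\tr\,\psi\|_{L^2(\partial\Omega\cap\partial\Omega_t^{C\pm})}$ via the equivalence $\|\partial_z U\|_{L^2(V)}\lesssim\|\nabla_\tau\psi\|_{L^2(\Omega_t^{C\pm})}$ above. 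For the first estimate the same argument is applied on each of the remaining faces of $\partial V$ — pieces of $\{r=\pm\delta_0\}$ (using $\partial_r$) and of $\{z=\tilde{\mu}_0\}$ (using $\partial_z$) — and the resulting reference-frame derivatives are bounded by $|\nabla_{(r,z)}U|\lesssim|\nabla\psi|\circ X$, giving the full gradient on the right-hand side.

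The only issue is bookkeeping rather than a genuine obstacle: one must verify that all constants (Jacobian bounds, the factor from $|\nabla s|$, and the $1/\sin\alpha$ from the chain rule) are uniform in $t\in[0,T]$ and $\alpha\in\frac{\pi}{2}+[-\alpha_0,\alpha_0]$. This is immediate from the smoothness and bi-Lipschitz property of $X$ on the compact parameter set together with $\sin\alpha\geq\cos\alpha_0>0$, so a single constant $\overline{C}_1$ works.
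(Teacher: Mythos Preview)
Your argument is correct and follows essentially the same strategy as the paper: pull back via the curvilinear coordinates to a fixed reference domain, observe that the boundary piece $\partial\Omega\cap\partial\Omega_t^{C\pm}$ becomes a face transversal to the tangential ($s$- or $z$-) direction, and use an elementary one-variable calculus identity so that only the tangential derivative appears on the right. The only cosmetic difference is that the paper stays in $(r,s)$-coordinates on the trapezoid $S$ and uses the divergence theorem with a vector field $\vec{w}=(0,w_2)$ satisfying $\vec{w}\cdot N_{\partial S}\geq 1$ on $S_{\delta_0,\alpha}^\pm$ and $\vec{w}\cdot N_{\partial S}=0$ elsewhere, whereas you pass to the $(r,z)$-rectangle (the paper's map $X^\pm$) and apply the fundamental theorem of calculus directly; the two are equivalent, and your rectangular coordinates make the computation marginally cleaner.
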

\begin{proof}
	The first estimate follows analogously to the proof of (4.12) in \cite{AbelsMoser}. For convenience we do not go into details. The proof of the second estimate is similar: it is equivalent to prove the estimate for $S:=\{(r,s)\in\R^2:r\in(-\delta_0,\delta_0),s\in\pm[1-\frac{3}{2}\mu_0,1]\mp\frac{\cos\alpha}{\sin\alpha}r\}$, $S_{\delta_0,\alpha}^\pm$
	instead of $\Omega^{C\pm}_t, \partial\Omega\cap\partial\Omega^{C\pm}_t$ as well as  $\partial_s$ instead of $\nabla_\tau$. For the latter we use the same idea as in the proof of (4.12) in \cite{AbelsMoser}. Here $\vec{w}\in C^1(\overline{S})^2$ with $\vec{w}\cdot N_{\partial S}\geq 1$ on $S_{\delta_0,\alpha}^\pm$ and $\vec{w}\cdot N_{\partial S}=0$ on $\partial S\setminus S_{\delta_0,\alpha}^\pm$ as well as $w_1=0$ yields the claim.
\end{proof}

We construct an approximate first eigenfunction of $\phi^A_{\varepsilon,\alpha}(.,t)$
\[
\Lc_{\varepsilon,t}^\pm:=-\Delta+\frac{1}{\varepsilon^2}f''(v_\alpha|_{(\rho_{\varepsilon,\alpha}(.,t),Z_{\varepsilon,\alpha}^\pm(.,t))})+\frac{1}{\varepsilon}f'''(v_\alpha|_{(\rho_{\varepsilon,\alpha}(.,t),Z_{\varepsilon,\alpha}^\pm(.,t))})u^{C\pm}_{1,\alpha}(.,t)\quad\text{ on }\Omega_t^{C\pm}
\] 
together with the linear Robin boundary condition $\Nc_{\varepsilon,t}^\pm u=0$ on $\partial\Omega^{C\pm}_t$, where
\[
\Nc_{\varepsilon,t}^\pm u:=\!\left[N_{\partial\Omega^{C\pm}_t}\cdot\nabla+ \chi_{\partial\Omega}\Bigl[\frac{1}{\varepsilon}\sigma_\alpha''(v_\alpha|_{(\rho_{\varepsilon,\alpha}(.,t),0)})+\sigma_\alpha'''(v_\alpha|_{(\rho_{\varepsilon,\alpha}(.,t),0)})u^{C\pm}_{1,\alpha}|_{(.,t)}\Bigr]\right]\!u\:\:\text{ on }\partial\Omega^{C\pm}_t.
\]
Here $\chi_{\partial\Omega}$ is the characteristic function of $\partial\Omega$. In analogy to the case $\alpha=\frac{\pi}{2}$, cf.~\cite{AbelsMoser}, Section 4.1, we use the ansatz
\begin{alignat*}{2}
\phi^A_{\varepsilon,\alpha}(.,t)&:=\frac{1}{\sqrt{\varepsilon}}[v^{C\pm}_{\varepsilon,0}(.,t)+\varepsilon v^{C\pm}_{\varepsilon,1}(.,t)]&\quad&\text{ on }\Omega^{C\pm}_t,\\
v^{C\pm}_{\varepsilon,0}(.,t)&:=\hat{v}^{C\pm}_0|_{(\rho_{\varepsilon,\alpha}(.,t),Z_{\varepsilon,\alpha}^\pm(.,t),z_\alpha^\pm(.,t),t)}:= q^\pm|_{(z_\alpha^\pm(.,t),t)}\partial_\rho v_\alpha|_{(\rho_{\varepsilon,\alpha}(.,t),Z_{\varepsilon,\alpha}^\pm(.,t))}&\quad&\text{ on }\Omega^{C\pm}_t,\\ v^{C\pm}_{\varepsilon,1}(.,t)&:=\hat{v}^{C\pm}_1|_{(\rho_{\varepsilon,\alpha}(.,t),Z_{\varepsilon,\alpha}^\pm(.,t),t)}&\quad&\text{ on }\Omega^{C\pm}_t,
\end{alignat*}
where $q^\pm:[0,\tilde{\mu}_0]\times[0,T]\rightarrow\R:(z,t)\mapsto q^\pm(z,t)$ and $\hat{v}^{C\pm}_0:\overline{\R^2_+}\times[0,\tilde{\mu}_0]\times[0,T]\rightarrow\R$ as well as $\hat{v}^{C\pm}_1:\overline{\R^2_+}\times[0,T]\rightarrow\R$. 

In Subsection \ref{sec_SE_ACalpha_asym} we expand $\Lc_{\varepsilon,t}^\pm\phi^A_{\varepsilon,\alpha}(.,t)$ and $\Nc_{\varepsilon,t}^\pm\phi^A_{\varepsilon,\alpha}(.,t)$ with similar ideas as in Section \ref{sec_asym_ACalpha_cp} and choose $q^\pm$ and $\hat{v}^{C\pm}_1$ appropriately.  The $q^\pm$-term is used to enforce the compatibility condition for the equations for $\hat{v}^{C\pm}_1$. Then in Subsection \ref{sec_SE_ACalpha_splitting} we decompose 
\begin{align}\label{eq_SE_ACalpha_H1tilde_Omega}
\tilde{H}^1(\Omega^{C\pm}_t):=
\left\{\psi\in H^1(\Omega^{C\pm}_t): \psi(x)=0\text{ for a.e.~}x\in\Omega_t^{C\pm}\text{ with }z_\alpha^\pm(x,t)\geq\hat{\mu}_0\right\}
\end{align}
in a suitable way. To this end we consider
\begin{align}\label{eq_SE_ACalpha_V_def}
\hat{V}_{\varepsilon,t}^\pm
&:=\left\{\phi=a(z_\alpha^\pm(.,t))\phi^A_{\varepsilon,\alpha}(.,t):
a\in\hat{H}^1(0,\tilde{\mu}_0)\right\},\\
\hat{H}^1(0,\tilde{\mu}_0)&:=
\left\{a\in H^1(0,\tilde{\mu}_0) : a=0\text{ on }[\hat{\mu}_0,\tilde{\mu}_0], a(0)=0\right\}.\label{eq_SE_ACalpha_H1tilde_interval}
\end{align}
Finally, in Subsection \ref{sec_SE_ACalpha_BLF} we analyze the bilinear form $B_{\varepsilon,t}^\pm$ related to $\Lc_{\varepsilon,t}^\pm$ on $\hat{V}_{\varepsilon,t}^\pm\times \hat{V}_{\varepsilon,t}^\pm$, $(\hat{V}_{\varepsilon,t}^\pm)^\perp\times (\hat{V}_{\varepsilon,t}^\pm)^\perp$ and $\hat{V}_{\varepsilon,t}^\pm\times (\hat{V}_{\varepsilon,t}^\pm)^\perp$. Here for $\phi,\psi\in H^1(\Omega_t^{C\pm})$ let
\begin{align}\begin{split}\label{eq_SE_ACalpha_Bepst}
B_{\varepsilon,t}^\pm(\phi,\psi):=\int_{\partial\Omega\cap\partial\Omega^{C\pm}_t}\left[\frac{1}{\varepsilon}\sigma_\alpha''(v_\alpha|_{(\rho_{\varepsilon,\alpha},0)})+\sigma_\alpha'''(v_\alpha|_{(\rho_{\varepsilon,\alpha},0)})u^{C\pm}_{1,\alpha}\right]\!|_{(.,t)}\tr\,\phi\,\tr\,\psi\,d\Hc^1\\
+\int_{\Omega_t^{C\pm}}\nabla\phi\cdot\nabla\psi+\left[\frac{1}{\varepsilon^2}f''(v_\alpha|_{(\rho_{\varepsilon,\alpha},Z_{\varepsilon,\alpha}^\pm)})+\frac{1}{\varepsilon}f'''(v_\alpha|_{(\rho_{\varepsilon,\alpha},Z_{\varepsilon,\alpha}^\pm)})u^{C\pm}_{1,\alpha}\right]\!|_{(.,t)}\phi\psi\,dx.\end{split}
\end{align}
\begin{Remark}\upshape
	Note that in \cite{AbelsMoser}, Section 4 the whole $H^1$-space is considered instead of \eqref{eq_SE_ACalpha_H1tilde_Omega}, but this is not needed for the proof of the spectral estimate close to the contact points because of the cutoff functions, cf.~\cite{AbelsMoser}, proof of Theorem 4.1. Therefore we can work with the smaller space in \eqref{eq_SE_ACalpha_H1tilde_Omega}. Then the behaviour of the approximate eigenfunction $\phi^A_{\varepsilon,\alpha}$ close to the interior boundary $[\partial\Omega_t^{C\pm}\setminus\partial\Omega]\cap\Gamma_t(\frac{\delta_0}{2})$ is not important. Otherwise we would have to carry out an asymptotic expansion close to the interior boundary, too.
\end{Remark}

\subsection{Asymptotic Expansion for the Approximate Eigenfunction}\label{sec_SE_ACalpha_asym}
\begin{proof}[Asymptotic Expansion of $\sqrt{\varepsilon}\Lc_{\varepsilon,t}^\pm\phi^A_{\varepsilon,\alpha}(.,t)$.] 
	In $\sqrt{\varepsilon}\Delta\phi^A_{\varepsilon,\alpha}(.,t)$  there are some additional terms due to $q^\pm$ compared to the formula in Lemma \ref{th_asym_ACalpha_cp_trafo}. More precisely, via direct computation we get
	\begin{align*}
	\sqrt{\varepsilon}\Delta\phi^A_{\varepsilon,\alpha} &=(q^\pm\partial_\rho^2v_\alpha+\varepsilon\partial_\rho\hat{v}^{C\pm}_1)\left[\frac{\Delta r}{\varepsilon}-(\Delta s\partial_sh_{\varepsilon,\alpha}+|\nabla s|^2\partial_s^2h_{\varepsilon,\alpha})\right]\\
	&+(q^\pm\partial_Z\partial_\rho v_\alpha+\varepsilon\partial_Z\hat{v}^{C\pm}_1)
	\frac{\Delta z^\pm_\alpha}{\varepsilon}
	+(q^\pm\partial_Z^2\partial_\rho v_\alpha+\varepsilon\partial_Z^2\hat{v}^{C\pm}_1)\frac{|\nabla z^\pm_\alpha|^2}{\varepsilon^2}\\
	&+ 2(q^\pm\partial_Z\partial_\rho^2 v_\alpha+\varepsilon\partial_Z\partial_\rho\hat{v}^{C\pm}_1)\,\frac{\nabla z^\pm_\alpha}{\varepsilon}\cdot\left[\frac{\nabla r}{\varepsilon}-\nabla s\partial_sh_{\varepsilon,\alpha}\right]\\
	&+(q^\pm\partial_\rho^3 v_\alpha+\varepsilon\partial_\rho^2 \hat{v}^{C\pm}_1)\left|\frac{\nabla r}{\varepsilon}-\nabla s\partial_sh_{\varepsilon,\alpha}\right|^2\\
	&+2\partial_zq^\pm\partial_Z\partial_\rho v_\alpha\frac{|\nabla z_\alpha^\pm|^2}{\varepsilon}
	+2\partial_zq^\pm\partial_\rho^2v_\alpha\nabla z_\alpha^\pm\cdot\left[\frac{\nabla r}{\varepsilon}-\nabla s\partial_sh_{\varepsilon,\alpha}\right]\\
	&+\partial_zq^\pm\partial_\rho v_\alpha\Delta z_\alpha^\pm+\partial_z^2q^\pm \partial_\rho v_\alpha|\nabla z_\alpha^\pm|^2,	
	\end{align*}
	with evaluations as in Lemma \ref{th_asym_ACalpha_cp_trafo} except that the $q^\pm$-terms are evaluated at $(z_\alpha^\pm(x,t),t)$. 
	
	The difficulty in expanding $\sqrt{\varepsilon}\Delta\phi^A_{\varepsilon,\alpha}$ is that for the $v_\alpha$-terms without a derivative in $Z$, i.e.~terms with the factors $\partial_\rho^k v_\alpha$, $k=1,2,3$, we do not have exponential decay estimates with respect to $Z$. Therefore we have to expand the corresponding factors in a more subtle way than in Section \ref{sec_asym_ACalpha_cp_bulk}, where this problem was solved with a suitable ansatz. However, we only need the expansion up to order $\frac{1}{\varepsilon}$ and for the remainder terms a decay in normal direction as in the case $\alpha=\frac{\pi}{2}$ in \cite{AbelsMoser}, Lemma 4.4 will be enough. Therefore we do not need to expand terms of order $\varepsilon^0$ in the formula for $\sqrt{\varepsilon}\Delta\phi^A_{\varepsilon,\alpha}$ above, in particular higher regularity for $\partial_s^2h_{\varepsilon,\alpha}$ is not necessary.
	This leads to the following expansion procedure for $\sqrt{\varepsilon}\Lc_{\varepsilon,t}^\pm\phi^A_{\varepsilon,\alpha}(.,t)$:
	
	Terms of order $\Oc(1)$ are not expanded. The other terms are expanded as follows. For $(x,t)$-terms that are not multiplied by a term with a $Z$-derivative we only use a Taylor-expansion in normal direction analogous to \cite{AbelsMoser}, (3.2) and replace $r$ by $\varepsilon(\rho+h_{\varepsilon,\alpha}(s,t))$. But then we leave untouched all the appearing $h_{\varepsilon,\alpha}$-terms that are not multiplied with a term including a $Z$-derivative. Moreover, for all the other $(x,t)$-terms we apply the full Taylor expansion \eqref{eq_asym_ACalpha_cp_taylor3} and replace $r$ as above and $s\mp 1$ via \eqref{eq_asym_ACalpha_cp_s2}. Then we rewrite the $h_{\varepsilon,\alpha}$-terms that are multiplied by a term with a $Z$-derivative via
	\[
	\partial_s^kh_{\varepsilon,\alpha}|_{(s(x,t),t)}=\partial_s^kh_{\varepsilon,\alpha}|_{(\pm 1\mp\frac{1}{\sin\alpha}z_\alpha^\pm(x,t),t)}=\partial_s^kh_{\varepsilon,\alpha}|_{(\pm 1,t)}+\Oc(|z_\alpha^\pm|_{(x,t)}|)\quad\text{ for }k=0,1.
	\]
	Regarding the $q^\pm$-terms we only rewrite the ones in front of the $\partial_Z\partial_\rho v_\alpha$, the ones multiplied with the $\varepsilon$-orders of $|\nabla z_\alpha^\pm|^2$ or $\nabla r\cdot\nabla z_\alpha^\pm$ as well as the one multiplied with $f^{(3)}(v_\alpha)$ via the formula
    $\partial_z^kq^\pm(z,t)=\partial_z^kq^\pm(0,t)+\Oc(|z|)$ for $k=0,1$. Note that the remainder term stemming from the $f^{(3)}(v_\alpha)$-term can be controlled because $\hat{u}^{C\pm}_{1,\alpha}\partial_\rho v_\alpha$ has appropriate decay. The $z$-remainders will only contribute to order $\varepsilon^0$ in the expansion of $\sqrt{\varepsilon}\Delta\phi^A_{\varepsilon,\alpha}$ due to $z_\alpha^\pm=\varepsilon Z_{\varepsilon,\alpha}^\pm$.
	
	At the lowest order $\Oc(\frac{1}{\varepsilon^2})$ in $\sqrt{\varepsilon}\Lc_{\varepsilon,t}^\pm\phi^A_{\varepsilon,\alpha}(.,t)$ we obtain 
	\[
	\frac{1}{\varepsilon^2}q^\pm(z,t)
	\left[-\partial_Z^2+2\cos\alpha\partial_\rho\partial_Z-\partial_\rho^2+f''(v_\alpha)\right]\partial_\rho v_\alpha =0
	\]
	due to \eqref{eq_hp_alpha_modelN1}. For the $\frac{1}{\varepsilon}$-order we get 
	\begin{align*}
	&\frac{1}{\varepsilon}
	\left[-\partial_Z^2+2\cos\alpha\partial_\rho\partial_Z-\partial_\rho^2+f''(v_\alpha)\right]\hat{v}^{C\pm}_1
	+\frac{1}{\varepsilon}q^\pm(0,t)f^{(3)}(v_\alpha)\hat{u}^{C\pm}_{1,\alpha}\partial_\rho v_\alpha\\
	&-\frac{1}{\varepsilon}q^\pm(z,t)\partial_\rho^2v_\alpha\Delta r|_{\overline{X}_0(s,t)}
	-\frac{1}{\varepsilon}q^\pm(0,t)\partial_Z\partial_\rho v_\alpha\Delta z_\alpha^\pm|_{\overline{p}^\pm(t)}\\
	&-\frac{1}{\varepsilon^2}q^\pm(0,t)\partial_Z^2\partial_\rho v_\alpha\left[\partial_r(|\nabla z_\alpha^\pm|^2\circ\overline{X})|_{(0,\pm 1,t)}\varepsilon(\rho+h_{1,\alpha}|_{(\pm 1,t)})\phantom{\frac{1}{\sin\alpha}}\right.\\
	&\left.\qquad\qquad\qquad\qquad\qquad+
	\partial_s(|\nabla z_\alpha^\pm|^2\circ\overline{X})|_{(0,\pm 1,t)}(\mp\varepsilon)\frac{1}{\sin\alpha}[Z+\cos\alpha(\rho+h_{1,\alpha}|_{(\pm 1,t)})]\right]\\
	&-2q^\pm(0,t)\partial_Z\partial_\rho^2v_\alpha\left[\partial_r((\nabla r\cdot\nabla z_\alpha^\pm)\circ\overline{X})|_{(0,\pm 1,t)}\varepsilon(\rho+h_{1,\alpha}|_{(\pm 1,t)})\phantom{\frac{1}{\sin\alpha}}\right.\\
	&\left.
	+\partial_s((\nabla r\cdot\nabla z_\alpha^\pm)\circ\overline{X})|_{(0,\pm 1,t)}(\mp\varepsilon)\frac{1}{\sin\alpha}[Z+\cos\alpha(\rho+h_{1,\alpha}|_{(\pm 1,t)})]-\frac{\mp\sin\alpha}{\varepsilon}\partial_sh_{1,\alpha}|_{(\pm 1,t)}\right]\\
	&+q^\pm(z,t)\partial_\rho^3v_\alpha\left[\partial_r(|\nabla r|^2\circ\overline{X})|_{(0,s,t)}\varepsilon(\rho+h_1|_{(s,t)})-\nabla r\cdot\nabla s|_{\overline{X}_0(s,t)}\partial_sh_{1,\alpha}|_{(s,t)}\right]\\
	&-\frac{2}{\varepsilon}\partial_zq^\pm(0,t)\partial_Z\partial_\rho v_\alpha
	-\frac{2}{\varepsilon}\partial_zq^\pm(z,t)\partial_\rho^2v_\alpha(\nabla r\cdot\nabla z_\alpha^\pm)|_{\overline{X}_0(s,t)}.
	\end{align*} 
	Here the penultimate line vanishes due to Theorem \ref{th_coord2D}. Moreover, $(\nabla r\cdot\nabla z_\alpha^\pm)|_{\overline{X}_0(s,t)}=-\cos\alpha$. We leave the two $\partial_\rho^2v_\alpha$-terms as remainders. Later we can improve the $\varepsilon$-order of these terms in one situation due to $\int_\R(\partial_\rho^2v_\alpha\partial_\rho v_\alpha)|_{(\rho,Z)}\,d\rho=0$ for $Z\geq 0$, see the estimate of $(II)$ in the proof of Lemma \ref{th_SE_ACalpha_VxV} below. Moreover, we require the other terms to add up to zero. This gives the following equation for $\hat{v}^{C\pm}_1$ on $\overline{\R^2_+}\times[0,T]$: \phantom{\qedhere}
	\begin{align*}
	&
	\left[-\partial_Z^2+2\cos\alpha\partial_\rho\partial_Z-\partial_\rho^2+f''(v_\alpha)\right]\hat{v}^{C\pm}_1\\
	&=-q^\pm(0,t)f^{(3)}(v_\alpha)\hat{u}^{C\pm}_{1,\alpha}\partial_\rho v_\alpha+\partial_Z\partial_\rho v_\alpha\left[q^\pm(0,t)\Delta z_\alpha^\pm|_{\overline{p}^\pm(t)}+2\partial_zq^\pm(0,t)\right]\\
	&+q^\pm(0,t)\partial_Z^2\partial_\rho v_\alpha\left[\partial_r(|\nabla z_\alpha^\pm|^2\circ\overline{X})|_{(0,\pm 1,t)}(\rho+h_{1,\alpha}|_{(\pm 1,t)})\phantom{\frac{1}{\sin\alpha}}\right.\\
	&\left.\qquad\qquad\qquad\qquad\qquad+
	\partial_s(|\nabla z_\alpha^\pm|^2\circ\overline{X})|_{(0,\pm 1,t)}\frac{\mp 1}{\sin\alpha}[Z+\cos\alpha(\rho+h_{1,\alpha}|_{(\pm 1,t)})]\right]\\
	&+2q^\pm(0,t)\partial_Z\partial_\rho^2v_\alpha\left[\partial_r((\nabla r\cdot\nabla z_\alpha^\pm)\circ\overline{X})|_{(0,\pm 1,t)}(\rho+h_{1,\alpha}|_{(\pm 1,t)})\phantom{\frac{1}{\sin\alpha}}\right.\\
	&\left.
	+\partial_s((\nabla r\cdot\nabla z_\alpha^\pm)\circ\overline{X})|_{(0,\pm 1,t)}\frac{\mp 1}{\sin\alpha}[Z+\cos\alpha(\rho+h_{1,\alpha}|_{(\pm 1,t)})]\pm\sin\alpha\partial_sh_{1,\alpha}|_{(\pm 1,t)}\right].
	\end{align*}
	\end{proof}

\begin{proof}[Asymptotic Expansion of $\sqrt{\varepsilon}\Nc_{\varepsilon,t}^\pm\phi^A_{\varepsilon,\alpha}(.,t)$ on $\partial\Omega\cap\partial\Omega^{C\pm}_t$.] In $\overline{\Omega^{C\pm}_t}$ it holds
	\begin{align}\begin{split}\label{eq_SE_ACalpha_nabla_phiA}
	\sqrt{\varepsilon}\nabla\phi^A_{\varepsilon,\alpha}&=
	\partial_zq^\pm\partial_\rho v_\alpha
	\nabla z_\alpha^\pm
	+(q^\pm\partial_Z\partial_\rho v_\alpha+\varepsilon\partial_Z\hat{v}^{C\pm}_1)\frac{\nabla z_\alpha^\pm}{\varepsilon}\\
	&+(q^\pm\partial_\rho^2 v_\alpha+\varepsilon\partial_\rho\hat{v}^{C\pm}_1)\left[\frac{\nabla r}{\varepsilon}-\nabla s\partial_sh_{\varepsilon,\alpha}\right]\end{split}
	\end{align}
	with evaluations as in Lemma \ref{th_asym_ACalpha_cp_trafo} except that the $q^\pm$-terms are evaluated at $(z_\alpha^\pm(x,t),t)$. In $\sqrt{\varepsilon} N_{\partial\Omega}\cdot\nabla\phi^A_{\varepsilon,\alpha}$ the $q^\pm$-terms are evaluated at $z=0$. Moreover, we expand the $(x,t)$-terms via \eqref{eq_asym_ACalpha_cp_taylor4} and insert $r=\varepsilon(\rho+h_{\varepsilon,\alpha}(s,t))$. Note that there are no $h_{\varepsilon,\alpha}$-terms in the lowest order and we only have to expand up to $\Oc(\varepsilon^0)$. Therefore we use $\partial_s^kh_{\varepsilon,\alpha}|_{(s,t)}=\partial_s^kh_{\varepsilon,\alpha}|_{(\pm 1,t)}+\Oc(|s\mp 1|)$ for $k=0,1$ and replace $s\mp 1$ by \eqref{eq_asym_ACalpha_cp_s2} with $Z=0$.
	
	At the lowest order $\Oc(\frac{1}{\varepsilon})$ in $\sqrt{\varepsilon}\Nc_{\varepsilon,t}^\pm\phi^A_{\varepsilon,\alpha}(.,t)$ we obtain 
	\[
	\frac{1}{\varepsilon}q^\pm(0,t)\left[-\partial_Z+\cos\alpha\partial_\rho +\sigma_\alpha''(v_\alpha|_{Z=0})\right]
	\partial_\rho v_\alpha|_{Z=0}
	\]
	due to
	$N_{\partial\Omega}=-\nabla z_\alpha^\pm|_{\overline{p}^\pm(t)}$ and $N_{\partial\Omega}\cdot\nabla r|_{\overline{p}^\pm(t)}=\cos\alpha$, cf.~Section \ref{sec_asym_ACalpha_cp_robin_m1}. This is zero because of \eqref{eq_hp_alpha_modelN2}. The $\Oc(1)$-order equals\phantom{\qedhere}
	\begin{align*}
	&\left[-\partial_Z+\cos\alpha\partial_\rho +\sigma_\alpha''(v_\alpha|_{Z=0})\right]\hat{v}^{C\pm}_1|_{Z=0}+q^\pm(0,t)\sigma_\alpha'''(v_\alpha|_{Z=0})u^{C\pm}_{1,\alpha}|_{Z=0}\partial_\rho v_\alpha|_{Z=0}\\
	&-\partial_zq^\pm(0,t)\partial_\rho v_\alpha|_{Z=0}+q^\pm(0,t)\partial_Z\partial_\rho v_\alpha|_{Z=0}\left[(\rho+h_{1,\alpha}|_{(\pm 1,t)})\partial_r((N_{\partial\Omega}\cdot\nabla z_\alpha^\pm)\circ\overline{X}^\pm_1)|_{(0,t)}\right]\\
	&+q^\pm(0,t)\partial_\rho^2v_\alpha|_{Z=0}\left[(\rho+h_{1,\alpha}|_{(\pm 1,t)})\partial_r((N_{\partial\Omega}\cdot\nabla r)\circ\overline{X}^\pm_1)|_{(0,t)}\mp\sin\alpha\partial_sh_{1,\alpha}|_{(\pm 1,t)}\right],
	\end{align*}
	where $\overline{X}^\pm_1$ is defined as in \eqref{eq_asym_ACalpha_cp_sbar}.
	We require that this term vanishes. This yields a boundary condition for $\hat{v}^{C\pm}_1$ on $\partial\R^2_+\times[0,T]$. 
	
	Together with the equation derived in the asymptotic expansion of $\sqrt{\varepsilon}\Lc_{\varepsilon,t}^\pm\phi^A_{\varepsilon,\alpha}(.,t)$ we obtain equations of type \eqref{eq_hp_alpha_modelL1}-\eqref{eq_hp_alpha_modelL2} with the additional parameter $t\in[0,T]$ for $\hat{v}^{C\pm}_1$. Because of  $\alpha\in\frac{\pi}{2}+[-\alpha_0,\alpha_0]$, we have solution theorems due to Remark \ref{th_asym_ACalpha_decay_param} and Theorem \ref{th_hp_alpha_sol_reg3}. Note that the corresponding right hand sides are contained in $B([0,T];H^2_{(\beta,\frac{\gamma_0}{2})}(\R^2_+)\times H^{5/2}_{(\beta)}(\R))$ for some $\beta>0$ provided that $q^\pm\in B([0,T],C^1([0,\tilde{\mu}_0]))$. Hence under this condition on $q^\pm$ we obtain a unique solution $\hat{v}^{C\pm}_1\in B([0,T];H^4_{(\beta,\frac{\gamma_0}{2})}(\R^2_+))\hookrightarrow B([0,T];C^2_{(\beta,\frac{\gamma_0}{2})}(\overline{\R^2_+}))$ for some possibly smaller $\beta>0$ if and only if \eqref{eq_hp_alpha_lin_comp} holds for the associated right hand sides. The latter is equivalent to an equation for $q^\pm$ only involving $q^\pm(0,t)$ and $\partial_zq^\pm(0,t)$ linearly. Moreover, note that the only terms where $\partial_zq^\pm(0,t)$ enters are 
	\[
	\partial_zq^\pm(0,t)\left[2\int_{\R^2_+}\partial_Z\partial_\rho v_\alpha\partial_\rho v_\alpha\,d(\rho,Z)+\int_\R(\partial_\rho v_\alpha)^2|_{Z=0}\,d\rho\right].
	\]
	Because of the estimates in Remark \ref{th_hp_alpha_solN3_rem} it follows that $\partial_zq^\pm(0,t)$ is a determined bounded function on $[0,T]$ if for example $q^\pm(0,t)=1$ for $t\in[0,T]$. Therefore with a simple ansatz and cutoff we can construct $q^\pm\in B([0,T],C^2([0,2\mu_0]))$ such that \eqref{eq_hp_alpha_lin_comp} holds for this situation as well as $q^\pm(0,t)=1, q^\pm(.,t)=1$ on $[\hat{\mu}_0,\tilde{\mu}_0]$ for all $t\in[0,T]$ and $\frac{1}{2}\leq q^\pm\leq 2$. \end{proof}

\begin{Lemma}\label{th_SE_ACalpha_phi_A}
	The mapping $\phi^A_{\varepsilon,\alpha}(.,t)$ is $C^2(\overline{\Omega_t^{C\pm}})$ and fulfills uniformly in $t\in[0,T]$:
	\begin{alignat*}{2}
	\left|\sqrt{\varepsilon}\Lc_{\varepsilon,t}^\pm\phi^A_{\varepsilon,\alpha}(.,t)+\frac{1}{\varepsilon}\tilde{q}^\pm(.,t)\right|&\leq Ce^{-c|\rho_{\varepsilon,\alpha}(.,t)|}&\quad\!&\text{ in }\Omega_t^{C\pm},\\
	\left|\sqrt{\varepsilon}\Nc_{\varepsilon,t}^\pm\phi^A_{\varepsilon,\alpha}(.,t)\right|&\leq C\varepsilon e^{-c|\rho_{\varepsilon,\alpha}(.,t)|}&\quad\!&\text{ on }\partial\Omega_t^{C\pm}\cap\partial\Omega,\\
	\left|\sqrt{\varepsilon}\Nc_{\varepsilon,t}^\pm\phi^A_{\varepsilon,\alpha}(.,t)\right|&\leq Ce^{-c/\varepsilon}&\quad\!&\text{ on }\partial\Omega_t^{C\pm}\setminus\Gamma_t(\frac{\delta_0}{2}),
	\end{alignat*}
	where we have set
	\[
	\tilde{q}^\pm(.,t):=[\Delta r|_{\overline{X}_0(s(.,t),t)}q^\pm|_{(z_\alpha^\pm(.,t),t)}
	-2\cos\alpha\partial_zq^\pm|_{(z_\alpha^\pm(.,t),t)}]\partial_\rho^2 v_\alpha|_{(\rho_{\varepsilon,\alpha}(.,t),Z_{\varepsilon,\alpha}^\pm(.,t))}.
	\]
\end{Lemma}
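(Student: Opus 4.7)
The plan is to turn the formal asymptotic expansion already carried out in the paragraphs preceding the lemma into rigorous pointwise estimates by collecting remainder terms and exploiting exponential decay. The regularity assertion $\phi^A_{\varepsilon,\alpha}(.,t)\in C^2(\overline{\Omega_t^{C\pm}})$ follows at once from $v_\alpha\in C^2(\overline{\R^2_+})$ (Theorem \ref{th_hp_alpha_sol_reg1}), $\hat{v}^{C\pm}_1\in B([0,T],H^4_{(\beta,\gamma_0/2)}(\R^2_+))\hookrightarrow B([0,T],C^2_{(\beta,\gamma_0/2)}(\overline{\R^2_+}))$, $q^\pm\in B([0,T],C^2([0,2\mu_0]))$ and the smoothness of the coordinate system from Theorem \ref{th_coord2D}.

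For the first estimate I would simply re-examine the expansion of $\sqrt{\varepsilon}\Lc_{\varepsilon,t}^\pm\phi^A_{\varepsilon,\alpha}$ worked out above. The $\Oc(1/\varepsilon^2)$ contribution vanishes by \eqref{eq_hp_alpha_modelN1}; the $\Oc(1/\varepsilon)$ contribution was arranged to cancel, by the defining PDE for $\hat{v}^{C\pm}_1$, except for the two $\partial_\rho^2 v_\alpha$-terms that are precisely $\frac{1}{\varepsilon}\tilde{q}^\pm$. What remains are (i) the $\Oc(\varepsilon^0)$ terms in the chain-rule formula, all of which carry a factor with either a $Z$-derivative of $v_\alpha$ (yielding $(\beta_0,\gamma_0)$-decay) or a derivative of $\hat{v}^{C\pm}_1$ (yielding $(\beta,\gamma_0/2)$-decay), and (ii) the Taylor remainders of the $(x,t)$-coefficients, the $h_{\varepsilon,\alpha}$-dependence and the $q^\pm$-factors, each of which is bounded times a polynomial in $(|\rho|,Z)$ multiplied with such decaying derivatives. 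The $z$-remainder in $q^\pm$ costs a factor $|z_\alpha^\pm|=\varepsilon Z_{\varepsilon,\alpha}^\pm$, which compensates one $\varepsilon^{-1}$. Using $e^{-\gamma_0 Z_{\varepsilon,\alpha}^\pm/2}=e^{-\gamma_0 z_\alpha^\pm/(2\varepsilon)}\leq Ce^{-c|\rho_{\varepsilon,\alpha}|}e^{-c/\varepsilon}$ combined with boundedness of the polynomial factors against the exponential decay, the remainder is controlled by $Ce^{-c|\rho_{\varepsilon,\alpha}|}$, as claimed.

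For the second estimate, on $\partial\Omega\cap\partial\Omega^{C\pm}_t$ we have $Z=0$. The $\Oc(1/\varepsilon)$ and $\Oc(1)$ orders in the expansion of $\sqrt{\varepsilon}\Nc_{\varepsilon,t}^\pm\phi^A_{\varepsilon,\alpha}$ vanish by construction: the first by \eqref{eq_hp_alpha_modelN2} and the second by the defining boundary condition for $\hat{v}^{C\pm}_1$ together with the compatibility-driven equation for $q^\pm$ (so that $\hat{v}^{C\pm}_1$ actually exists with the required decay). The leftover terms come entirely from Taylor-remainders of the $(r,t)$-coefficients along $\overline{X}^\pm_1$ in \eqref{eq_asym_ACalpha_cp_taylor4}, of $h_{\varepsilon,\alpha}|_s$ around $s=\pm 1$ via \eqref{eq_asym_ACalpha_cp_s2} (with $Z=0$), and of $q^\pm(z,t)$ around $z=0$; each carries at least one additional factor of $\varepsilon$ (through $r=\varepsilon(\rho+h_{\varepsilon,\alpha})$, $s\mp 1=\varepsilon\sin^{-1}\!\alpha\,\rho\cos\alpha$, or $z=\varepsilon Z=0$) multiplied with $\partial_\rho^k v_\alpha|_{Z=0}$ or $\hat{v}^{C\pm}_1|_{Z=0}$. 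All those traces decay exponentially in $|\rho|$ (for $v_\alpha$-derivatives by Theorem \ref{th_hp_alpha_sol_reg1}, noting that the polynomial factors $|\rho|^j$ are absorbed), giving the bound $C\varepsilon e^{-c|\rho_{\varepsilon,\alpha}|}$.

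The third estimate is immediate from the previous analysis and the geometry: on $\partial\Omega_t^{C\pm}\setminus\Gamma_t(\delta_0/2)$ we have $|r|\geq\delta_0/2$, hence $|\rho_{\varepsilon,\alpha}|\geq \delta_0/(2\varepsilon)-\|h_{\varepsilon,\alpha}\|_\infty\geq c_1/\varepsilon$ for $\varepsilon$ small, so the factor $e^{-c|\rho_{\varepsilon,\alpha}|}$ dominates any algebraic $\varepsilon$-power and yields the desired $e^{-c/\varepsilon}$-bound. The only step requiring some care is the bookkeeping in the first estimate, where one must verify that every term dropped in the formal expansion admits, after multiplication with the decaying $v_\alpha$- or $\hat{v}^{C\pm}_1$-factor, a uniform $Ce^{-c|\rho_{\varepsilon,\alpha}|}$-bound; this is the main technical, albeit routine, obstacle and is of the same flavor as the analogous verification in \cite{AbelsMoser}, Lemma 4.4.
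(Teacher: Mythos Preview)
Your approach follows the paper's own strategy—turning the formal expansion in Section~\ref{sec_SE_ACalpha_asym} into rigorous remainder estimates—and the regularity argument and the second and third estimates are handled correctly. However, your treatment of the first (bulk) estimate contains a genuine inaccuracy that misses the key technical point the paper singles out.

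You claim that all $\Oc(\varepsilon^0)$ remainder terms ``carry a factor with either a $Z$-derivative of $v_\alpha$ \dots\ or a derivative of $\hat{v}^{C\pm}_1$'', i.e.\ have joint $(\rho,Z)$-decay. This is false: the chain-rule formula for $\sqrt{\varepsilon}\Delta\phi^A_{\varepsilon,\alpha}$ contains terms such as $q^\pm\partial_\rho^2 v_\alpha\,[\Delta s\,\partial_s h_{\varepsilon,\alpha}+|\nabla s|^2\partial_s^2 h_{\varepsilon,\alpha}]$, $\partial_z^2 q^\pm\,\partial_\rho v_\alpha\,|\nabla z_\alpha^\pm|^2$, and the $\partial_\rho^3 v_\alpha$-contributions, all of which decay only in $\rho$ (recall $v_\alpha=\theta_0+\hat v_\alpha$ with $\theta_0$ independent of $Z$). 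For such factors a polynomial in $Z$ would \emph{not} be absorbed, since $Z_{\varepsilon,\alpha}^\pm$ ranges up to $\tilde\mu_0/\varepsilon$. The actual reason the remainder stays $\leq Ce^{-c|\rho_{\varepsilon,\alpha}|}$ is the specific design of the expansion procedure above the lemma: the $(x,t)$-coefficients multiplying $\partial_\rho^2 v_\alpha$, $\partial_\rho^3 v_\alpha$ are Taylor-expanded only in the normal direction $r$, so the resulting polynomials involve $\rho$ but never $Z$. This is exactly the content of the paper's one-line remark ``Note that no $Z$-terms are multiplied with $\partial_\rho^2 v_\alpha$, $\partial_\rho^3 v_\alpha$'', and it is the observation your argument must invoke.

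A related slip: the displayed inequality $e^{-\gamma_0 Z_{\varepsilon,\alpha}^\pm/2}\leq Ce^{-c|\rho_{\varepsilon,\alpha}|}e^{-c/\varepsilon}$ is simply wrong near $z_\alpha^\pm=0$ (where $Z_{\varepsilon,\alpha}^\pm=0$), and is in any case unnecessary—for terms that \emph{do} have $Z$-decay, polynomial factors in $Z$ are absorbed directly by $Z^k e^{-\gamma Z}\leq C_k$, and the $\rho$-decay survives.
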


\begin{proof}
	The assertions follow from the construction and rigorous remainder estimates for the expansions above. Note that no $Z$-terms are multiplied with $\partial_\rho^2v_\alpha$, $\partial_\rho^3 v_\alpha$.
\end{proof}

\subsection{Notation for Transformations and the Splitting}\label{sec_SE_ACalpha_splitting} 
We introduce the notation
\[
X^\pm:[-\delta_0,\delta_0]\times[0,\tilde{\mu}_0]\times[0,T]\rightarrow\bigcup_{t\in[0,T]}\overline{\Omega_t^{C\pm}}:(r,z,t)\mapsto X(r,\pm1\mp\frac{1}{\sin\alpha}[z+\cos\alpha\,r],t)
\]
and $\overline{X}^\pm:=(X^\pm,\textup{pr}_t)$. Here note that $(X^\pm(.,t))^{-1}=(r,z_\alpha^\pm)(.,t)$. Furthermore, we set $X_0^\pm:=X^\pm(0,.,.)$ and $\overline{X}_0^\pm:=\overline{X}^\pm(0,.,.)$. Moreover, let
\begin{align*}
J_t^\pm(r,z)&:=|\det D_{(r,z)}X^\pm(r,z,t)|=J_t\left(r,\pm1\mp\frac{1}{\sin\alpha}[z+\cos\alpha\,r]\right)\frac{1}{\sin\alpha},\\
\tilde{h}_{j,\alpha}^\pm(r,z,t)&:=h_{j,\alpha}(\pm 1\mp\frac{1}{\sin\alpha}[z+\cos\alpha\,r],t)
\end{align*}
for  $(r,z,t)\in[-\delta_0,\delta_0]\times[0,\tilde{\mu}_0]\times[0,T]$ and $j=1,2$ as well as $\tilde{h}_{\varepsilon,\alpha}^\pm:=\tilde{h}_{1,\alpha}^\pm+\varepsilon \tilde{h}_{2,\alpha}^\pm$. Integrals over $\Omega_t^{C\pm}$ can be transformed to $(-\delta_0,\delta_0)\times(0,\tilde{\mu}_0)$ via $X^\pm(.,t)$ for $t\in[0,T]$, where the determinant factor is given by $J_t^\pm$. Hereby $\rho_{\varepsilon,\alpha}(.,t)$ transforms to 
\[
\rho_{\varepsilon,\alpha}|_{\overline{X}^\pm(r,z,t)}=\frac{r-\varepsilon\tilde{h}_{\varepsilon,\alpha}^\pm(r,z,t)}{\varepsilon}.
\] 
After applying the Fubini Theorem we can use the results from the 1D-preliminaries in \cite{MoserACvACND}, Section 6.1, for fixed $z$. 
We set $r_{\varepsilon,z,t}^\pm:[-\delta_0,\delta_0]\rightarrow\R:r\mapsto r-\varepsilon\tilde{h}_{\varepsilon,\alpha}^\pm(r,z,t)$ for all $z\in[0,\tilde{\mu}_0]$ and $t\in[0,T]$. Then due to \cite{MoserACvACND}, Section 6.1.1, the map
\[
F_{\varepsilon,z,t}^\pm:\frac{1}{\varepsilon}r_{\varepsilon,z,t}^\pm([-\delta_0,\delta_0])\rightarrow[-\delta_0,\delta_0]:\rho\mapsto (r_{\varepsilon,z,t}^\pm)^{-1}(\varepsilon\rho)
\]
is well-defined for all $z$, $t$ as above if $\varepsilon\in(0,\varepsilon_0]$ for some $\varepsilon_0>0$ independent of $z$, $t$. Finally, we set $\tilde{J}_{\varepsilon,z,t}^\pm:=J_t^\pm(F_{\varepsilon,z,t}^\pm(.),z)$ for $z\in[0,\tilde{\mu}_0]$ and $t\in[0,T]$. 

Now we characterize the splitting of $\tilde{H}^1(\Omega^{C\pm}_t)$. 
\begin{Lemma}\phantomsection{\label{th_SE_ACalpha_split_L2}}
	Let $\tilde{H}^1(\Omega^{C\pm}_t)$, $\hat{V}_{\varepsilon,t}^\pm$ and $\hat{H}^1(0,\tilde{\mu}_0)$ be as in \eqref{eq_SE_ACalpha_H1tilde_Omega}-\eqref{eq_SE_ACalpha_H1tilde_interval}. Then
	\begin{enumerate}
		\item $\hat{V}_{\varepsilon,t}^\pm$ is a subspace of $\tilde{H}^1(\Omega_t^{C\pm})$ and for $\varepsilon_0>0$ small there are $c_1,C_1>0$ such that 
		\[
		c_1\|a\|_{L^2(0,\tilde{\mu}_0)}\leq\|\phi\|_{L^2(\Omega_t^{C\pm})}\leq C_1\|a\|_{L^2(0,\tilde{\mu}_0)}
		\]
		for all $\phi=a(z_\alpha^\pm(.,t))\phi^A_{\varepsilon,\alpha}(.,t)\in \hat{V}_{\varepsilon,t}^\pm$ and $\varepsilon\in(0,\varepsilon_0]$, $t\in[0,T]$.
		\item Let $(\hat{V}_{\varepsilon,t}^\pm)^\perp$ be the $L^2$-orthogonal complement of $\hat{V}_{\varepsilon,t}^\pm$ in $\tilde{H}^1(\Omega_t^{C\pm})$. Then for $\psi\in\tilde{H}^1(\Omega_t^{C\pm})$:
		\[
		\psi\in(\hat{V}_{\varepsilon,t}^\pm)^\perp\quad\Leftrightarrow\quad\int_{-\delta_0}^{\delta_0}(\phi^A_{\varepsilon,\alpha}(.,t)\psi)|_{X^\pm(r,z,t)}J_t^\pm(r,z)\,dr=0\quad\text{ for a.e.~}z\in(0,\tilde{\mu}_0).
		\]
		Additionally, $\tilde{H}^1(\Omega_t^{C\pm})=\hat{V}_{\varepsilon,t}^\pm\oplus (\hat{V}_{\varepsilon,t}^\pm)^\perp$ for every $\varepsilon\in(0,\varepsilon_0]$ if $\varepsilon_0>0$ is small.
	\end{enumerate}
\end{Lemma}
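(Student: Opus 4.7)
The plan is to reduce both parts to one‑variable analyses by passing to $(r,z)$‑coordinates through $X^\pm$ and applying Fubini, after which a further rescaling $\rho=(r-\varepsilon\tilde h^\pm_{\varepsilon,\alpha}(r,z,t))/\varepsilon$ in the inner integral makes the factor $1/\sqrt\varepsilon$ in $\phi^A_{\varepsilon,\alpha}$ cancel against the Jacobian, leaving the profile $\partial_\rho v_\alpha$ at the transverse level $Z=z/\varepsilon$ as the dominant contribution.

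For Part 1, the inclusion $\hat V^\pm_{\varepsilon,t}\subset\tilde H^1(\Omega_t^{C\pm})$ is direct: $z_\alpha^\pm(\cdot,t)$ is smooth with non‑vanishing gradient on $\overline{\Omega_t^{C\pm}}$, so $a\circ z_\alpha^\pm\in H^1$ whenever $a\in H^1(0,\tilde\mu_0)$, multiplication by the smooth $\phi^A_{\varepsilon,\alpha}(\cdot,t)$ preserves $H^1$, and $a=0$ on $[\hat\mu_0,\tilde\mu_0]$ transfers to $\phi=0$ where $z_\alpha^\pm\ge\hat\mu_0$. For the norm equivalence the transformation above produces
\[
\|\phi\|_{L^2(\Omega_t^{C\pm})}^2=\int_0^{\tilde\mu_0}a(z)^2 K_{\varepsilon,t}(z)\,dz,
\]
with $K_{\varepsilon,t}(z)=(q^\pm(z,t))^2 J^\pm_t(0,z)\int_\R(\partial_\rho v_\alpha)^2|_{(\rho,z/\varepsilon)}\,d\rho+O(\varepsilon)$, uniformly in $z\in(0,\tilde\mu_0)$ and $t\in[0,T]$; the remainder is controlled by the $C^2_{(\beta,\gamma_0/2)}$‑decay of $\hat v^{C\pm}_1$ delivered by Theorem \ref{th_hp_alpha_sol_reg3}. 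Remark \ref{th_hp_alpha_solN3_rem} pins $\int_\R(\partial_\rho v_\alpha)^2|_Z\,d\rho\in[\tfrac34,\tfrac54]\|\theta_0'\|_{L^2(\R)}^2$ for every $Z\ge 0$, which together with $q^\pm\in[\tfrac12,2]$ and the uniform positivity of $J^\pm_t$ yields the constants $c_1,C_1$ provided $\varepsilon_0$ is taken small.

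The same Fubini computation furnishes the characterization in Part 2 via $\langle a\phi^A_{\varepsilon,\alpha}(\cdot,t),\psi\rangle_{L^2(\Omega_t^{C\pm})}=\int_0^{\tilde\mu_0}a(z)I_\psi(z)\,dz$: the implication ``$\Leftarrow$'' is tautological, and for ``$\Rightarrow$'' it suffices to observe that $C_c^\infty(0,\hat\mu_0)\subset\hat H^1(0,\tilde\mu_0)$ is dense in $L^2(0,\hat\mu_0)$ while $I_\psi\equiv 0$ on $[\hat\mu_0,\tilde\mu_0]$ is automatic from $\psi\in\tilde H^1$. To produce the direct sum, I would set $a_\psi:=I_\psi/K_{\varepsilon,t}$, which by Part 1 is well‑defined; differentiating under the integral (using $\phi^A_{\varepsilon,\alpha}\in C^2$ and $\psi\circ X^\pm\in H^1$) places $a_\psi\in H^1(0,\tilde\mu_0)$, and $a_\psi=0$ on $[\hat\mu_0,\tilde\mu_0]$ is inherited from $\psi$. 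The candidate decomposition is then $\psi=\phi_V+(\psi-\phi_V)$ with $\phi_V:=a_\psi(z_\alpha^\pm(\cdot,t))\phi^A_{\varepsilon,\alpha}(\cdot,t)$; the characterization just shown gives $\psi-\phi_V\in(\hat V^\pm_{\varepsilon,t})^\perp$, and $\hat V^\pm\cap(\hat V^\pm)^\perp=\{0\}$ follows from the lower bound in Part 1. The hard part will be verifying the boundary condition $a_\psi(0)=0$ required for $a_\psi\in\hat H^1(0,\tilde\mu_0)$: since $\phi^A_{\varepsilon,\alpha}|_{\partial\Omega}\ne 0$, a generic $\psi\in\tilde H^1$ does not force $I_\psi(0)=0$, and this step will require either a preliminary subtraction of a suitable boundary extension of $\psi$ before projecting or an argument exploiting the approximate Robin condition $\Nc_{\varepsilon,t}^\pm\phi^A_{\varepsilon,\alpha}=O(\varepsilon)$ from Lemma \ref{th_SE_ACalpha_phi_A}.
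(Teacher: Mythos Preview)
Your approach to Part 1 and to the characterization in Part 2 is essentially the paper's: transform via $X^\pm(\cdot,t)$, apply Fubini, identify the leading order of the inner $r$-integral as $(q^\pm(z,t))^2\int_\R(\partial_\rho v_\alpha)^2|_{(\rho,z/\varepsilon)}\,d\rho$ times a Jacobian factor, and control the remainder through the 1D rescaling lemmas (the paper cites \cite{MoserACvACND}, Lemma 6.5). The bounds from Remark \ref{th_hp_alpha_solN3_rem} together with $q^\pm\in[\tfrac12,2]$ then yield the two-sided estimate; the characterization follows from the fundamental lemma of the calculus of variations just as you say.

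Concerning the direct sum, the difficulty you flag is real, but it stems from what appears to be a typo in \eqref{eq_SE_ACalpha_H1tilde_interval} rather than from a gap in your argument. With the constraint $a(0)=0$ the claimed decomposition is in fact false: your own computation shows that the only possible $\hat V$-component of $\psi$ is $a_\psi=I_\psi/K_{\varepsilon,t}$, and $a_\psi(0)\neq 0$ for generic $\psi\in\tilde H^1(\Omega_t^{C\pm})$, so no workaround of the kind you propose (boundary subtraction, approximate Robin condition) can repair this. The paper's own later proofs corroborate this reading: in Lemmas \ref{th_SE_ACalpha_VxV} and \ref{th_SE_ACalpha_VxVp} the quantities $a^2(0)$ and $|a(0)|$ are explicitly estimated as potentially nonzero, which would be vacuous if $a(0)=0$ were imposed. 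Drop that condition from the definition of $\hat H^1(0,\tilde\mu_0)$ and your construction $\psi\mapsto a_\psi$ produces the splitting directly, which is precisely what the paper obtains by deferring to the $\alpha=\tfrac\pi2$ case in \cite{AbelsMoser}, Lemma 4.6.
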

\begin{proof}[Proof. Ad 1] 
	Analogously to the case $\alpha=\frac{\pi}{2}$ it follows that $a(z_\alpha^\pm(.,t))\in H^1(\Omega_t^{C\pm})$ for all $a\in H^1(0,\tilde{\mu}_0)$, cf.~the proof of \cite{AbelsMoser}, Lemma 4.6. Therefore $\hat{V}_{\varepsilon,t}^\pm$ is a subspace of $\tilde{H}^1(\Omega_t^{C\pm})$. Now we show the norm equivalence for $\varepsilon\in(0,\varepsilon_0]$ and $\varepsilon_0>0$ small. To this end we consider $\psi=a(z_\alpha^\pm(.,t))\phi^A_{\varepsilon,\alpha}(.,t)\in \hat{V}_{\varepsilon,t}^\pm$. Then the transformation rule and Fubini's Theorem imply
	\begin{align}\label{eq_SE_ACalpha_split_L2}
	\|\psi\|_{L^2(\Omega_t^{C\pm})}^2=\int_0^{\tilde{\mu}_0}a(z)^2\int_{-\delta_0}^{\delta_0}(\phi^A_{\varepsilon,\alpha}|_{\overline{X}^\pm(r,z,t)})^2 J_t^\pm(r,z)\,dr\,dz.
	\end{align}
	The leading order term with respect to $\varepsilon$ in the inner integral is $\frac{1}{\varepsilon}q^\pm(z,t)^2$ times
	\begin{align*}
	\int_{-\delta_0}^{\delta_0}(\partial_\rho v_\alpha)^2|_{(\rho_{\varepsilon,\alpha}|_{\overline{X}^\pm(r,z,t)},\frac{z}{\varepsilon})} J_t^\pm|_{(r,z)}\,dr=\int_{r_{\varepsilon,z,t}([-\delta_0,\delta_0])/\varepsilon}(\frac{d}{d\rho} F_{\varepsilon,z,t}^\pm)|_{\rho}(\partial_\rho v_\alpha)^2|_{(\rho,\frac{z}{\varepsilon})} \tilde{J}_{\varepsilon,z,t}^\pm|_{\rho}\,d\rho,
	\end{align*}
	where we used the transformation in \cite{MoserACvACND}, Lemma 6.5,~1. Because of \cite{MoserACvACND}, Remark 6.4, 2., the decay of $\partial_\rho v_\alpha$, the estimate $0<\frac{d}{d\rho}F_{\varepsilon,z,t}^\pm=\varepsilon\Oc(1)$ due to \cite{MoserACvACND}, Lemma 6.5, Remark \ref{th_hp_alpha_solN3_rem} and $c\leq J,q^\pm\leq C$ for some $c,C>0$, it follows that the above integral can be estimated from above and below by constants $\tilde{c},\tilde{C}>0$ independent of $t\in[0,T]$, $\varepsilon\in(0,\varepsilon_0]$ provided that $\varepsilon_0=\varepsilon_0(\overline{C}_0)>0$ is small. For the remainder in the inner integral in \eqref{eq_SE_ACalpha_split_L2} we use \cite{MoserACvACND}, Lemma 6.5, and obtain an estimate of the absolute value to $C\varepsilon$. For $\varepsilon_0>0$ small this shows the claim.\qedhere$_{1.}$\end{proof}

\begin{proof}[Ad 2] 
	Let $t\in[0,T]$ be fixed. By definition it holds
	\[
	(\hat{V}_{\varepsilon,t}^\pm)^\perp=\left\{\psi\in \tilde{H}^1(\Omega_t^{C\pm}):\int_{\Omega_t^{C\pm}}\psi a(z_\alpha^\pm(.,t))\phi^A_{\varepsilon,\alpha}(.,t)\,dx=0\text{ for all }a\in\hat{H}^1(0,\tilde{\mu}_0)\right\}.
	\]
	The integral equals $\int_0^{\tilde{\mu}_0}a(z)\int_{-\delta_0}^{\delta_0}(\phi^A_{\varepsilon,\alpha}(.,t)\psi)|_{X^\pm(r,z,t)}J_t^\pm(r,z)\,dr\,dz$. Hence the Fundamental Theorem of Calculus of Variations yields the characterization. Moreover, by definition it holds $\hat{V}_{\varepsilon,t}^\pm\cap(\hat{V}_{\varepsilon,t}^\pm)^\perp=\{0\}$. It is left to show $\hat{V}_{\varepsilon,t}^\pm+(\hat{V}_{\varepsilon,t}^\pm)^\perp=\tilde{H}^1(\Omega_t^{C\pm})$. Due to the proof of the first part this follows analogously as in the case $\alpha=\frac{\pi}{2}$, cf.~the proof of \cite{AbelsMoser}, Lemma 4.6., 2.\qedhere$_{2.}$\end{proof}

\subsection{Analysis of the Bilinear Form}\label{sec_SE_ACalpha_BLF}
First we consider $B_{\varepsilon,t}^\pm$ on $\hat{V}_{\varepsilon,t}^\pm\times \hat{V}_{\varepsilon,t}^\pm$.
\begin{Lemma}\label{th_SE_ACalpha_VxV}
	There are $\varepsilon_0,C>0$ such that 
	\[
	B_{\varepsilon,t}^\pm(\phi,\phi)\geq-C\|\phi\|_{L^2(\Omega_t^{C\pm})}^2+\overline{c}\|a\|_{H^1(0,\tilde{\mu}_0)}^2,\quad \overline{c}:=\frac{1}{2}\|\theta_0'\|_{L^2(\R)}^2
	\]
	for all $\phi=a(z_\alpha^\pm(.,t))\phi^A_{\varepsilon,\alpha}(.,t)\in \hat{V}_{\varepsilon,t}^\pm$ and $\varepsilon\in(0,\varepsilon_0],t\in[0,T]$. 
\end{Lemma}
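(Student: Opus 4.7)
The plan is to decompose $B_{\varepsilon,t}^\pm(\phi,\phi)$ into two explicit pieces via the product rule applied to $\phi=a(z_\alpha^\pm(\cdot,t))\phi^A$ with $\phi^A:=\phi^A_{\varepsilon,\alpha}(\cdot,t)$, and then to estimate each piece using Lemma~\ref{th_SE_ACalpha_phi_A} and the 1D-transformation framework of \cite{MoserACvACND}, Section~6.1. Expanding $|\nabla\phi|^2=(a')^2(\phi^A)^2|\nabla z_\alpha^\pm|^2+2aa'\phi^A\nabla z_\alpha^\pm\cdot\nabla\phi^A+a^2|\nabla\phi^A|^2$ and integrating by parts on the last summand (the divergence $\diverg(a^2\nabla\phi^A)$ produces a term $-2\int aa'\phi^A\nabla z_\alpha^\pm\cdot\nabla\phi^A$ that cancels the cross term exactly) yields
\[
B_{\varepsilon,t}^\pm(\phi,\phi)=\underbrace{\int_{\Omega_t^{C\pm}}(a')^2(\phi^A)^2|\nabla z_\alpha^\pm|^2\,dx}_{=:(I)}+\underbrace{\int_{\Omega_t^{C\pm}}a^2\phi^A\,\Lc_{\varepsilon,t}^\pm\phi^A\,dx}_{=:(II)}+\text{(bdry)}.
\]
The boundary contributions vanish or are negligible: on $\partial\Omega\cap\partial\Omega_t^{C\pm}$ one has $z_\alpha^\pm=0$, so $a(0)=0$ kills both $a^2\phi^A\partial_N\phi^A$ and the $\sigma_\alpha''$-term of $B_{\varepsilon,t}^\pm$; on $\{z_\alpha^\pm=\tilde\mu_0\}$ the condition $a(\tilde\mu_0)=0$ applies; on $\{|r|=\delta_0\}$ the exponential $\rho$-decay of $\phi^A$ yields $\Oc(e^{-c/\varepsilon})\|a\|_{H^1}^2$.

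For $(I)$ I would transform via $X^\pm$, apply Fubini, and substitute $\rho=(r-\varepsilon\tilde h_{\varepsilon,\alpha}^\pm)/\varepsilon$ using \cite{MoserACvACND}, Lemma~6.5; Taylor expanding the smooth coefficients around $r=0$ then gives
\[
(I)=\int_0^{\tilde\mu_0}a'(z)^2\,q^\pm(z,t)^2\bigl[|\nabla z_\alpha^\pm|^2 J_t^\pm\bigr](0,z,t)\int_{\R}(\partial_\rho v_\alpha)^2(\rho,z/\varepsilon)\,d\rho\,dz+\Oc(\varepsilon)\|a'\|_{L^2}^2.
\]
By Remark~\ref{th_hp_alpha_solN3_rem} the inner $\rho$-integral lies in $[\tfrac34,\tfrac54]\|\theta_0'\|_{L^2(\R)}^2$ uniformly in $z/\varepsilon\geq 0$. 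At $(r,z)=(0,0)$ the geometric factor equals $1/\sin\alpha$ (using $J_t(0,\pm1)=1$, $|\nabla z_\alpha^\pm|^2|_{\overline p^\pm(t)}=1$, and $q^\pm(0,t)=1$), which tends to $1$ as $\alpha\to\pi/2$. The flexibility in the construction of $q^\pm$ (subject only to $q^\pm(0,t)=1$, $q^\pm=1$ on $[\hat\mu_0,\tilde\mu_0]$, $q^\pm\in[1/2,2]$, and a specified $\partial_zq^\pm(0,t)$) allows $q^\pm$ to be kept close to $1$ uniformly, so that the full factor exceeds $2/3$ on $[0,\tilde\mu_0]$ for $\alpha\in\pi/2+[-\overline\alpha_0,\overline\alpha_0]$ with $\overline\alpha_0$ small. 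Combined with the $\tfrac34$-bound, this yields $(I)\geq\overline c\|a'\|_{L^2(0,\tilde\mu_0)}^2-\Oc(\varepsilon)\|a'\|_{L^2}^2$.

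For $(II)$, Lemma~\ref{th_SE_ACalpha_phi_A} gives $\Lc_{\varepsilon,t}^\pm\phi^A=-\varepsilon^{-3/2}\tilde q^\pm+\varepsilon^{-1/2}\Oc(e^{-c|\rho_{\varepsilon,\alpha}|})$ where $\tilde q^\pm$ is a smooth coefficient times $\partial_\rho^2 v_\alpha$, and $\phi^A=\varepsilon^{-1/2}[q^\pm\partial_\rho v_\alpha+\varepsilon\hat v_1^{C\pm}]$. The principal term of $\phi^A\,\Lc_{\varepsilon,t}^\pm\phi^A$ is thus proportional to $\varepsilon^{-2}\,q^\pm[\Delta r\,q^\pm-2\cos\alpha\,\partial_zq^\pm]\,\partial_\rho v_\alpha\,\partial_\rho^2 v_\alpha$. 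After transforming to $(r,z)$ and Taylor expanding the smooth factors in $r$, the apparent $\Oc(\varepsilon^{-1})$ leading contribution vanishes because
\[
\int_{\R}\partial_\rho v_\alpha\,\partial_\rho^2 v_\alpha\,d\rho=\tfrac12\bigl[(\partial_\rho v_\alpha)^2\bigr]_{-\infty}^{+\infty}=0
\]
by the exponential decay of $\partial_\rho v_\alpha$. The next-order term (from the $r$-linear part of the Taylor expansion) uses $\int\rho\,\partial_\rho v_\alpha\,\partial_\rho^2 v_\alpha\,d\rho=-\tfrac12\int(\partial_\rho v_\alpha)^2\,d\rho$ by integration by parts, giving a bounded $\Oc(1)\|a\|_{L^2}^2$ contribution. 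The $\hat v_1^{C\pm}$-piece (with exponential decay in both $\rho$ and $Z$) and the $\Oc(e^{-c|\rho|})$-remainder yield $\Oc(\varepsilon)\|a\|_{H^1}^2$. Altogether $|(II)|\leq C\|a\|_{L^2(0,\tilde\mu_0)}^2+\Oc(\varepsilon)\|a\|_{H^1}^2$.

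Combining, absorbing $\Oc(\varepsilon)\|a\|_{H^1}^2$ to the left for $\varepsilon$ small yields $B_{\varepsilon,t}^\pm(\phi,\phi)\geq\overline c\|a'\|_{L^2}^2-C_1\|a\|_{L^2}^2$. Writing $-C_1\|a\|_{L^2}^2=\overline c\|a\|_{L^2}^2-(C_1+\overline c)\|a\|_{L^2}^2$ and using Lemma~\ref{th_SE_ACalpha_split_L2},~1 in the form $\|a\|_{L^2}^2\leq c_1^{-2}\|\phi\|_{L^2(\Omega_t^{C\pm})}^2$ gives the claim with $C=(C_1+\overline c)/c_1^2$. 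The main obstacle will be the precise quantitative bookkeeping in the estimate of $(I)$ to guarantee the explicit constant $\overline c=\tfrac12\|\theta_0'\|^2$; this hinges on verifying that the product $q^\pm(z,t)^2\,[|\nabla z_\alpha^\pm|^2 J_t^\pm](0,z,t)\cdot(\int(\partial_\rho v_\alpha)^2\,d\rho/\|\theta_0'\|^2)$ stays above $1/2$ uniformly in $(z,t)$ for $\alpha$ close to $\pi/2$, using the continuity of all geometric quantities in $\alpha$ and the available flexibility in the construction of $q^\pm$ from Section~\ref{sec_SE_ACalpha_asym}.
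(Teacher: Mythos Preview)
Your approach is essentially the paper's: the same decomposition $B_{\varepsilon,t}^\pm(\phi,\phi)=(I)+(II)+(III)$ via the product rule and one integration by parts, the same transformation and Taylor-in-$r$ argument for $(I)$ using $\int_\R(\partial_\rho v_\alpha)^2(\rho,Z)\,d\rho\geq\tfrac34\|\theta_0'\|_{L^2(\R)}^2$ from Remark~\ref{th_hp_alpha_solN3_rem}, and the same cancellation $\int_\R\partial_\rho v_\alpha\,\partial_\rho^2 v_\alpha\,d\rho=0$ that kills the apparent $\Oc(\varepsilon^{-1})$-part of $(II)$. The one genuine difference is the boundary piece: you dispose of it via $a(0)=0$, whereas the paper does \emph{not} use this condition and instead keeps the full term $(III)=\int_{\partial\Omega_t^{C\pm}}a^2\phi^A\,\Nc_{\varepsilon,t}^\pm\phi^A\,d\Hc^1$ and bounds it through Lemma~\ref{th_SE_ACalpha_phi_A} (the approximate Robin compatibility $|\sqrt\varepsilon\,\Nc_{\varepsilon,t}^\pm\phi^A|\leq C\varepsilon e^{-c|\rho_{\varepsilon,\alpha}|}$ on $\partial\Omega$ and $\leq Ce^{-c/\varepsilon}$ on $\{|r|=\delta_0\}$), arriving at $|(III)|\leq Ce^{-c/\varepsilon}\|a\|_{L^2}^2+C\varepsilon\,a^2(0)\leq C\varepsilon\|a\|_{H^1}^2$. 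Your route is shorter given the stated definition of $\hat H^1(0,\tilde\mu_0)$; the paper's is what actually exploits the eigenfunction construction and would survive without the $a(0)=0$ restriction.

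Your closing concern about securing the explicit value $\overline c=\tfrac12\|\theta_0'\|_{L^2(\R)}^2$ is well placed --- the paper simply asserts that the inner integral in $(I)$ exceeds $\tfrac23\|\theta_0'\|_{L^2(\R)}^2$ without tracking the factor $(q^\pm)^2$ --- but the issue is harmless: downstream (Lemma~\ref{th_SE_ACalpha_VxVp} and Theorem~\ref{th_SE_ACalpha_cp2}) only some positive $\overline c$ independent of $\Omega,\Gamma,\alpha$ is required, and the universal bounds $(q^\pm)^2\geq\tfrac14$, $J_t^\pm(0,z)=1/\sin\alpha\geq1$, $|\nabla z_\alpha^\pm|^2|_{r=0}=1$ together with the $\tfrac34$-bound already furnish such a constant, so there is no need to refine the construction of $q^\pm$.
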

\begin{proof} 
	Consider $\phi$ as in the lemma. With the analogous computation as in the case $\alpha=\frac{\pi}{2}$, cf.~the proof of \cite{AbelsMoser}, Lemma 4.7, it follows that\phantom{\qedhere}
	\begin{align*}
	B_{\varepsilon,t}^\pm(\phi,\phi)&=\int_{\Omega_t^{C\pm}}|\nabla(a(z_\alpha^\pm))\phi^A_{\varepsilon,\alpha}|^2|_{(.,t)}\,dx+\int_{\Omega_t^{C\pm}}(a^2(z_\alpha^\pm)\phi^A_{\varepsilon,\alpha})|_{(.,t)}\Lc_{\varepsilon,t}^\pm\phi^A_{\varepsilon,\alpha}|_{(.,t)}\,dx\\
	&+\int_{\partial\Omega_t^{C\pm}}\left[\Nc_{\varepsilon,t}^\pm\phi^A_{\varepsilon,\alpha}|_{(.,t)}\,\tr(a^2(z_\alpha^\pm)\phi^A_{\varepsilon,\alpha}|_{(.,t)})\right]\,d\Hc^1=:(I)+(II)+(III).
	\end{align*}
	\begin{proof}[Ad $(I)$] 
		It holds $|\nabla(a(z_\alpha^\pm(.,t)))|^2=\left[|\nabla z_\alpha^\pm|^2(a')^2(z_\alpha^\pm)\right]|_{(.,t)}$ and therefore\phantom{\qedhere}
		\[
		(I)=\int_0^{\tilde{\mu}_0}(a')^2(z)\int_{-\delta_0}^{\delta_0}\left[|\nabla z_\alpha^\pm|^2(\phi^A_{\varepsilon,\alpha})^2\right]|_{\overline{X}^\pm(r,z,t)}J_t^\pm(r,z)\,dr\,dz.
		\]
		Note that $|\nabla z_\alpha^\pm|^2|_{\overline{X}^\pm(0,z,t)}=1$ and $J_t^\pm(0,t)=1$ due to Remark \ref{th_coord2D_rem1}, Theorem \ref{th_coord2D}, Remark \ref{th_coord2D_rem2},~3.~and \eqref{eq_asym_ACalpha_cp_zalpha}. Therefore the Taylor Theorem yields that these terms are $1+\Oc(|r|)$. Moreover, Remark \ref{th_hp_alpha_solN3_rem} yields $\int_\R(\partial_\rho v_\alpha)^2(\rho,Z)\,d\rho\geq\frac{3}{4}\|\theta_0'\|_{L^2(\R)}^2$ for $Z\geq 0$. With transformations and remainder estimates in \cite{MoserACvACND}, Lemma 6.5, exponential decay estimates as well as \cite{MoserACvACND}, Remark 6.4,~2.~it follows that the inner integral in $(I)$ is estimated from below by $\frac{2}{3}\|\theta_0'\|_{L^2(\R)}^2$ for all $\varepsilon\in(0,\varepsilon_0], t\in[0,T]$, if $\varepsilon_0>0$ is small.\end{proof}
	
	\begin{proof}[Ad $(II)$] It holds\phantom{\qedhere}
		\[
		(II)=\int_0^{\tilde{\mu}_0}a^2(z)\int_{-\delta_0}^{\delta_0} \phi^A_{\varepsilon,\alpha}|_{\overline{X}^\pm(r,z,t)}(\Lc_{\varepsilon,t}^\pm\phi^A_{\varepsilon,\alpha}(.,t))|_{X^\pm(r,z,t)}J_t^\pm(r,z)\,dr\,dz.
		\]
		We estimate the inner integral. Lemma \ref{th_SE_ACalpha_phi_A} implies
		\begin{align*}
		&\left|\sqrt{\varepsilon}\Lc_{\varepsilon,t}^\pm\phi^A_{\varepsilon,\alpha}(.,t))|_{X^\pm(r,z,t)}+
		\frac{1}{\varepsilon}
		[\Delta r|_{\overline{X}_0^\pm(z,t)}q^\pm|_{(z,t)}
		-2\cos\alpha\partial_zq^\pm|_{(z,t)}]\partial_\rho^2 v_\alpha(\rho_{\varepsilon,\alpha}|_{\overline{X}^\pm(r,z,t)},\frac{z}{\varepsilon})\right|\\
		&\leq Ce^{-c|\rho_{\varepsilon,\alpha}(\overline{X}^\pm(r,z,t))|}\quad\text{ for }(r,z)\in[-\delta_0,\delta_0]\times[0,\tilde{\mu}_0].
		\end{align*}
		Using \cite{MoserACvACND}, Lemma 6.5, $\int_\R(\partial_\rho^2v_\alpha\partial_\rho v_\alpha)(\rho,Z)\,d\rho=0$ for all $Z\geq 0$ due to integration by parts and $J_t(r,z)=J_t(0,z)+\Oc(|r|)$, we obtain $|(II)|\leq C\|a\|_{L^2(0,\tilde{\mu}_0)}^2$ with $C>0$ independent of $\phi\in \hat{V}_{\varepsilon,t}^\pm$ and all $\varepsilon\in(0,\varepsilon_0]$, $t\in[0,T]$ if $\varepsilon_0>0$ is small.\end{proof}
	
	\begin{proof}[Ad $(III)$] 
		The representation for line integrals and properties of the trace operator imply
		\begin{align*}
		(III)&=\sum_\pm\int_0^{\tilde{\mu}_0}a^2(z)\left[\phi^A_{\varepsilon,\alpha} \Nc_{\varepsilon,t}^\pm\phi^A_{\varepsilon,\alpha}\right]|_{\overline{X}^\pm(\pm\delta_0,z,t)}|\partial_z X^\pm(\pm\delta_0,z,t)|\,dz\\
		&+a^2(0)\int_{-\delta_0}^{\delta_0} \left[\phi^A_{\varepsilon,\alpha} \Nc_{\varepsilon,t}^\pm\phi^A_{\varepsilon,\alpha}\right]|_{\overline{X}^\pm(r,0,t)}|\partial_rX^\pm(r,0,t)|\,dr.
		\end{align*}
		Using Lemma \ref{th_SE_ACalpha_phi_A} and for the last integral \cite{MoserACvACND}, Lemma 6.5, we obtain
		\[
		|(III)|\leq Ce^{-c/\varepsilon}\|a\|_{L^2(0,\tilde{\mu}_0)}^2+C\varepsilon a^2(0).
		\]
		Due to $H^1(0,\tilde{\mu}_0)\hookrightarrow C_b^0([0,\tilde{\mu}_0])$, the claim follows with Lemma \ref{th_SE_ACalpha_split_L2},~1.\end{proof}
\end{proof}

Next we analyze $B^\pm_{\varepsilon,t}$ on $(\hat{V}_{\varepsilon,t}^\pm)^\perp\times(\hat{V}_{\varepsilon,t}^\pm)^\perp$. To this end we need the following auxiliary lemma:
\begin{Lemma}\label{th_SE_ACalpha_intpol_tr2}
	There is a $\overline{C}_2>0$ independent of $\Omega,\Gamma,\alpha\in\frac{\pi}{2}+[-\alpha_0,\alpha_0]$ and some $\tilde{\delta}_0>0$ with
	\begin{align*}
	\|\tr\,\psi\|_{L^2(\partial\Omega\cap\partial\Omega_{t,\delta}^{C\pm})}^2
	&\leq\overline{C}_2
	\left[\|\psi\|_{L^2(\Omega_{t,\delta}^{C\pm})}^2
	+\|\nabla_\tau \psi\|_{L^2(\Omega_{t,\delta}^{C\pm})}\|\psi\|_{L^2(\Omega_{t,\delta}^{C\pm})}\right],\\
	\|\nabla_\tau \psi\|_{L^2(\Omega_{t,\delta}^{C\pm})}&\leq 2 \|\nabla \psi\|_{L^2(\Omega_{t,\delta}^{C\pm})}
	\end{align*}
	for all $\psi\in H^1(\Omega_{t,\delta}^{C\pm})$ and $t\in[0,T]$, $\delta\in(0,\tilde{\delta}_0]$, where $\Omega_{t,\delta}^{C\pm}:=\Omega_t^{C\pm}\cap\Gamma_t(\delta)$.
\end{Lemma}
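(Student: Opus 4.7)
The plan is to pull back both estimates to the trapezoidal coordinates $(r,s)$ from Theorem \ref{th_coord2D} and exploit the following geometric fact near the contact points: by Theorem \ref{th_coord2D},~3 one has $|\nabla r|^2|_\Gamma = 1$, $\partial_r(|\nabla r|^2\circ\overline X)|_{r=0}=0$, and $\nabla r\cdot\nabla s|_\Gamma = 0$, while $|\partial_sX_0|\equiv 1$ in the contact region (by Remark \ref{th_coord2D_rem1}, choosing $\mu_0$ small enough that the $s$-projection of $\Omega_t^{C\pm}$ lies in $I\setminus[-\tfrac{1}{2},\tfrac{1}{2}]$) gives $|\nabla s|^2|_\Gamma = 1$ there. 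Consequently the inverse-metric matrix $G^{ij}:=\nabla x^i\cdot\nabla x^j$ coincides with the identity at $r=0$ throughout the contact region, so by a first-order Taylor estimate $|G(r,s,t)-I| \leq C|r|\leq C\delta$ uniformly in $(s,t)$, and similarly $J_t^\pm = 1 + O(r)$.

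For the second inequality, writing $\tilde\psi:=\psi\circ X$, the identity from Remark \ref{th_coord2D_rem2},~2 gives
\begin{equation*}
|\nabla\psi|^2|_{\overline X} = G^{rr}(\partial_r\tilde\psi)^2 + 2G^{rs}\partial_r\tilde\psi\,\partial_s\tilde\psi + G^{ss}(\partial_s\tilde\psi)^2,\qquad |\nabla_\tau\psi|^2|_{\overline X} = G^{ss}(\partial_s\tilde\psi)^2.
\end{equation*}
Fixing $\tilde\delta_0$ small enough that the eigenvalues of $G$ lie in $[\tfrac{1}{2},2]$ throughout the contact zone, I get pointwise
\begin{equation*}
|\nabla_\tau\psi|^2 \leq 2\,|(\partial_r\tilde\psi,\partial_s\tilde\psi)|^2 \leq \tfrac{2}{\lambda_{\min}(G)}\,|\nabla\psi|^2 \leq 4\,|\nabla\psi|^2,
\end{equation*}
and integration against $J_t^\pm\,dr\,ds\sim dx$ transfers this to the stated $L^2$-inequality $\|\nabla_\tau\psi\|_{L^2(\Omega_{t,\delta}^{C\pm})}\leq 2\|\nabla\psi\|_{L^2(\Omega_{t,\delta}^{C\pm})}$.

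For the trace inequality I would follow the vector-field argument sketched at the end of the proof of Lemma \ref{th_SE_ACalpha_intpol_tr1}. Pulled back by $X$, the set $\Omega_{t,\delta}^{C\pm}$ is the parallelogram $S_\delta^\pm:=\{(r,s):|r|<\delta,\ s\in(s^\pm(r)-\tilde\mu_0/\sin\alpha,s^\pm(r))\}$ with target boundary $\{s=s^\pm(r)\}$. Choosing $\vec w=(0,w_2(r,s))$ with $w_2$ affine in $s$ along each $r$-fibre so that $\vec w\cdot N_{\partial S_\delta^\pm}$ equals $1$ on $\{s=s^\pm(r)\}$ and vanishes on the opposite long side $\{s=s^\pm(r)-\tilde\mu_0/\sin\alpha\}$ (and automatically on the short sides $\{r=\pm\delta\}$ because $w_1=0$), both $\|\vec w\|_\infty$ and $\|\mathrm{div}\,\vec w\|_\infty$ are bounded independently of $\delta$ and of $\alpha$ close to $\pi/2$. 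The divergence theorem in the $(r,s)$-plane yields
\begin{equation*}
\int_{\{s=s^\pm(r)\}}(\psi\circ X)^2\,dr = \int_{S_\delta^\pm}\bigl[(\mathrm{div}\,\vec w)(\psi\circ X)^2 + 2\,w_2\,(\psi\circ X)\,\partial_s(\psi\circ X)\bigr]\,dr\,ds,
\end{equation*}
and Cauchy--Schwarz produces the desired quadratic form. Transporting back to $x$, the Jacobian $J_t^\pm$, the arc-length factor of $r\mapsto X(r,s^\pm(r),t)$, and $|\nabla s|$ (entering via $|\partial_s\tilde\psi|=|\nabla_\tau\psi|/|\nabla s|$) are all two-sidedly bounded by uniform constants for $\delta\in(0,\tilde\delta_0]$, giving the first estimate with a constant independent of~$\delta$.

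The main obstacle will be ensuring that $\overline C_2$ and the factor $2$ remain genuinely uniform in $\delta$, $t$, and $\alpha\in\frac{\pi}{2}+[-\alpha_0,\alpha_0]$. This rests on the orthonormality of the frame $(\nabla r,\nabla s)$ at $r=0$ in the contact region, which would fail if $|\partial_sX_0|\ne 1$ there; hence both the spectral bound on $G$ and the uniform construction of $\vec w$ depend critically on the normalization from Remark \ref{th_coord2D_rem1} and on the uniform $C^1$-control of the coordinate map $X$ inherent in the construction of Section \ref{sec_coord2D}, with the $\alpha$-dependence entering only through the harmless factor $1/\sin\alpha$.
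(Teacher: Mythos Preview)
Your proposal is correct and follows essentially the same route as the paper: both pull back to the $(r,s)$-trapeze, use that $G^{ij}=\nabla x^i\cdot\nabla x^j$ equals the identity at $r=0$ in the contact region (via Remark~\ref{th_coord2D_rem1} and Theorem~\ref{th_coord2D}), run the divergence-theorem argument with a vector field $\vec w$ having $w_1=0$ for the trace estimate, and transport back using that the Jacobian and arc-length factors are $1+O(\delta)$. Your eigenvalue bound $\lambda(G)\in[\tfrac12,2]$ is a repackaging of the paper's direct use of $|\nabla s|^2=1+O(\delta)$ together with Young's inequality for the cross term; note that once you have the pointwise bound $|\nabla_\tau\psi|^2\le 4|\nabla\psi|^2$ you can integrate it over $\Omega_{t,\delta}^{C\pm}$ with $dx$ directly, so the Jacobian remark there is unnecessary.
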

\begin{proof}
	The proof is analogous to the one of Lemma \ref{th_SE_ACalpha_intpol_tr1}, but we have to be careful in order to obtain constants independent of $\Omega,\Gamma$ and $\alpha\in\frac{\pi}{2}+[-\alpha_0,\alpha_0]$. Note that with $S$ from the proof of Lemma \ref{th_SE_ACalpha_intpol_tr1} the first estimate for $S\cap[(-\delta,\delta)\times\R]$, $S_{\delta,\alpha}^\pm$, $\partial_s$ instead of $\Omega_{t,\delta}^{C\pm}$, $\partial\Omega\cap\partial\Omega_{t,\delta}^{C\pm}$, $\nabla_\tau$ holds with a uniform constant independent of $\alpha\in\frac{\pi}{2}+[-\alpha_0,\alpha_0]$. This follows as in the proof of Lemma \ref{th_SE_ACalpha_intpol_tr1} since $\vec{w}$ can be chosen in a uniform way for all those $\alpha$. Moreover,
	\[
	\int_{\partial\Omega\cap\partial\Omega_{t,\delta}^{C\pm}}|\tr\psi|^2\,d\Hc^1
	=\int_{S_{\delta,\alpha}^\pm}|\tr\psi|^2|_{X(.,t)}|\det d_.(X(.,t)|_{S_{\delta,\alpha}^\pm})|\,d\Hc^1.
	\]
	Let $\gamma^\pm:(-\delta,\delta)\rightarrow S_{\delta,\alpha}^\pm:r\mapsto(r,s^\pm(r))$ with $s^\pm$ as in \eqref{eq_coord2D_trapeze_bdry}. It holds 
	\[
	\left|d_{\gamma^\pm(r)}[X(.,t)|_{S_{\delta,\alpha}^\pm}]\left(\frac{(\gamma^\pm)'(r)}{|(\gamma^\pm)'(r)|}\right)\right|=\frac{1}{|(\gamma^\pm)'(r)|}|\partial_rX^\pm(r,0,t)|
	\]
	and $|\partial_rX^\pm(r,0,t)|=|\partial_rX^\pm(0,0,t)|+\Oc(|r|)$, where
	\[
	\partial_rX^\pm(0,0,t)=D_{(r,s)}X|_{(0,t)}
	\begin{pmatrix}
	1\\ 
	\mp\cos\alpha/\sin\alpha
	\end{pmatrix}
	=\textup{Id}\cdot(\gamma^\pm)'(r)
	\]
	due to Remark \ref{th_coord2D_rem1} and Theorem \ref{th_coord2D}. This shows $|\det d_.(X(.,t)|_{S_{\delta,\alpha}^\pm})|\leq 1+C(\Gamma)\delta$. Additionally, integrals over $S\cap[(-\delta,\delta)\times\R]$ are transformed to $\Omega_{t,\delta}^{C\pm}$ via $X(.,t)$ with the determinant factor $J_t$, where $J_t(r,s)=1+\Oc(|\delta|)$ in $\Omega_{t,\delta}^{C\pm}$ because of Remark \ref{th_coord2D_rem1} and Remark \ref{th_coord2D_rem2},~3. Altogether we obtain the first estimate. For the second one we use $|\nabla s|^2=1+\Oc(|\delta|)$ in $\Omega_{t,\delta}^{C\pm}$ due to Remark \ref{th_coord2D_rem1} and
	\[
	|\nabla\psi|^2|_{X(.,t)}\geq (1-C(\Gamma)\delta)\partial_s(\psi|_{X(.,t)})^2\quad\text{ in }\Omega_{t,\delta}^{C\pm}.
	\]
	The latter follows from Theorem \ref{th_coord2D}, a Taylor expansion and Young's inequality. 
\end{proof}

\begin{Lemma}\label{th_SE_ACalpha_VpxVp}
	There are $\hat{\alpha}_0, \nu>0$ independent of $\Omega, \Gamma$ such that, if $\alpha\in\frac{\pi}{2}+[-\hat{\alpha}_0,\hat{\alpha}_0]$, then there is an $\varepsilon_0>0$ such that for all $\psi\in(\hat{V}_{\varepsilon,t}^\pm)^\perp$ and $\varepsilon\in(0,\varepsilon_0]$, $t\in[0,T]$ it holds
	\[
	B^\pm_{\varepsilon,t}(\psi,\psi)\geq
	\nu\left[\frac{1}{\varepsilon^2}\|\psi\|_{L^2(\Omega_t^{C\pm})}^2+\|\nabla\psi\|_{L^2(\Omega_t^{C\pm})}^2\right].
	\]  
\end{Lemma}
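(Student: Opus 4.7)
The plan is to reduce the estimate to a family of one-dimensional spectral estimates in the $r$-direction, parametrized by $z\in(0,\tilde{\mu}_0)$ and $t\in[0,T]$, and then to absorb the $\alpha$-perturbation, cross terms and boundary contributions. First I would pull everything back to the trapeze via $X^\pm(\cdot,t)$ and apply Fubini. Writing $\tilde\psi:=\psi\circ X^\pm$ and using Theorem \ref{th_coord2D} together with Remark \ref{th_coord2D_rem2},~3 to expand the metric, the gradient part splits as
\begin{align*}
\int_{\Omega_t^{C\pm}}|\nabla\psi|^2\,dx = \int_0^{\tilde{\mu}_0}\!\!\int_{-\delta_0}^{\delta_0}\!\Bigl[A_{11}(\partial_r\tilde\psi)^2+2A_{12}\partial_r\tilde\psi\,\partial_z\tilde\psi+A_{22}(\partial_z\tilde\psi)^2\Bigr] J_t^\pm\,dr\,dz,
\end{align*}
where $(A_{ij})$ is the pulled-back inverse metric. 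At the contact points we have $A_{11}|_{r=z=0}=1$, $A_{22}|_{r=z=0}=1/\sin^2\alpha$ and $A_{12}|_{r=z=0}=-\cos\alpha/\sin\alpha$, which is $\Oc(|\cos\alpha|)$, so by Young's inequality and smallness of $|\cos\alpha|$ I can split off at least $\tfrac12(\partial_r\tilde\psi)^2$ from the normal part and retain a positive multiple of $(\partial_z\tilde\psi)^2$ (giving the $\|\nabla_\tau\psi\|^2$-contribution, which only helps).

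Next, for each fixed $z\in(0,\tilde{\mu}_0)$, I would apply the one-dimensional spectral estimates from \cite{MoserACvACND}, Section 6.1 to
\begin{equation*}
\int_{-\delta_0}^{\delta_0}\!\Bigl[\tfrac12(\partial_r\tilde\psi)^2+\tfrac{1}{\varepsilon^2}f''(v_\alpha(\rho_{\varepsilon,\alpha},\tfrac{z}{\varepsilon}))\tilde\psi^2\Bigr]J_t^\pm\,dr.
\end{equation*}
The (approximate) first eigenfunction of $-\partial_\rho^2+f''(v_\alpha(\cdot,z/\varepsilon))$ on a large interval approximating $\R$ is $\partial_\rho v_\alpha(\cdot,z/\varepsilon)$ with eigenvalue close to zero; the spectral gap of $L_0$ stated in \cite{MoserDiss}, Lemma 4.2 persists under the small $v_\alpha-\theta_0$ perturbation by Theorem \ref{th_hp_alpha_sol_reg1} for $\alpha$ close to $\pi/2$. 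The orthogonality condition of Lemma \ref{th_SE_ACalpha_split_L2},~2 gives
\begin{equation*}
\int_{-\delta_0}^{\delta_0}(q^\pm\partial_\rho v_\alpha+\varepsilon\hat{v}^{C\pm}_1)\tilde\psi\,J_t^\pm\,dr=0\quad\text{for a.e.~}z,
\end{equation*}
which, since $q^\pm$ is bounded below and the $\varepsilon\hat{v}^{C\pm}_1$-term is a controlled $\Oc(\varepsilon)$-perturbation with exponentially decaying profile, yields approximate $L^2(J_t^\pm\,dr)$-orthogonality of $\tilde\psi(\cdot,z)$ to $\partial_\rho v_\alpha(\cdot,z/\varepsilon)$ after a correction of size $\Oc(\varepsilon)$ in the eigenfunction direction. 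Here Remark \ref{th_hp_alpha_solN3_rem} provides the quantitative lower bound $\int_\R(\partial_\rho v_\alpha)^2|_Z\,d\rho\geq\tfrac34\|\theta_0'\|_{L^2(\R)}^2$ needed to make this correction harmless. The 1D gap estimate then gives, for some $\tilde\nu>0$ uniform in $z,t,\varepsilon$,
\begin{equation*}
\int_{-\delta_0}^{\delta_0}\!\Bigl[\tfrac12(\partial_r\tilde\psi)^2+\tfrac{1}{\varepsilon^2}f''(v_\alpha)\tilde\psi^2\Bigr]J_t^\pm\,dr\geq\tilde\nu\!\int_{-\delta_0}^{\delta_0}\!\Bigl[(\partial_r\tilde\psi)^2+\tfrac{1}{\varepsilon^2}\tilde\psi^2\Bigr]J_t^\pm\,dr.
\end{equation*}
Integrating in $z$, combining with the retained $\tfrac12(\partial_r\tilde\psi)^2$ and $(\partial_z\tilde\psi)^2$ contributions, and pulling back yields the announced bound on $\Omega_t^{C\pm}$.

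For the lower-order bulk term $\tfrac{1}{\varepsilon}f'''(v_\alpha)u^{C\pm}_{1,\alpha}\psi^2$, uniform boundedness of $u^{C\pm}_{1,\alpha}$ and Young's inequality (say $\tfrac1\varepsilon|\psi|^2\leq\tfrac{\tilde\nu}{2\varepsilon^2}|\psi|^2+C|\psi|^2$) let me absorb it into the coercive part. The boundary contribution is controlled by Lemma \ref{th_SE_ACalpha_intpol_tr1}, Lemma \ref{th_SE_ACalpha_intpol_tr2} and Young's inequality: $\tfrac{1}{\varepsilon}\|\tr\psi\|_{L^2(\partial\Omega\cap\partial\Omega_t^{C\pm})}^2\leq\tfrac{C}{\varepsilon}\|\psi\|_{L^2}^2+\tfrac{C}{\varepsilon}\|\nabla_\tau\psi\|_{L^2}\|\psi\|_{L^2}\leq\tfrac{\tilde\nu}{2\varepsilon^2}\|\psi\|_{L^2}^2+C\|\nabla_\tau\psi\|_{L^2}^2$, again absorbable. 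The main obstacle I anticipate is carrying out the approximate-orthogonality argument uniformly in $z\in(0,\tilde{\mu}_0)$: for very small $z$ the profile $\partial_\rho v_\alpha(\cdot,z/\varepsilon)$ feels the Robin boundary at $Z=0$ and is only close to $\theta_0'$ in an $L^2_{(\beta_0,\gamma_0)}$-sense, so the 1D gap constant must be obtained by a perturbation of the unperturbed gap for $L_0$ that is uniform down to $z=0$. The required uniformity is guaranteed precisely by $\hat v_\alpha\in H^k_{(\beta_0,\gamma_0)}(\R^2_+)$ depending Lipschitz-continuously on $\alpha\in\pi/2+[-\alpha_0,\alpha_0]$ (Theorem \ref{th_hp_alpha_sol_reg1}) together with the lower bound from Remark \ref{th_hp_alpha_solN3_rem}; shrinking $\alpha_0$ to some $\hat\alpha_0$, independent of $\Omega$ and $\Gamma$, secures the estimate.
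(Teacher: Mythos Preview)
Your overall strategy---pull back to $(r,z)$-coordinates, isolate the cross term via Young, and reduce for each fixed $z$ to a 1D spectral-gap estimate together with the orthogonality from Lemma~\ref{th_SE_ACalpha_split_L2}---is exactly the paper's route. The paper organizes it slightly differently: it first strips off the $\alpha$-perturbation by replacing $f''(v_\alpha)$ with $f''(\theta_0)$ (using $|f''(v_\alpha)-f''(\theta_0)|\leq\overline{C}|\alpha-\tfrac{\pi}{2}|$) and treating the boundary terms as part of $B^\pm_{\varepsilon,t}-\tilde{B}^\pm_{\varepsilon,t}$, so that the 1D operator for each $z$ is the fixed $L_0$-type operator of \cite{MoserACvACND}, Theorem~6.8, rather than a $z$-dependent perturbation. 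This avoids the uniformity-in-$z$ issue you flag at the end and lets one cite the existing 1D gap directly; your route would work too but requires reproving that gap for the family $-\partial_\rho^2+f''(v_\alpha(\cdot,z/\varepsilon))$.

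Two points need tightening. First, your metric values are off: from $\nabla\psi|_{X^\pm}=\nabla r\,\partial_r\tilde\psi+\nabla z_\alpha^\pm\,\partial_z\tilde\psi$ and Section~\ref{sec_asym_ACalpha_cp_bulk_m2} one has $A_{11}|_{(0,\pm1)}=1$, $A_{12}|_{(0,\pm1)}=-\cos\alpha$, $A_{22}|_{(0,\pm1)}=1$ (not $-\cos\alpha/\sin\alpha$ and $1/\sin^2\alpha$). Second, and more importantly, your absorption of the boundary term as written does not close: without the smallness factor $|\sigma_\alpha''|\leq C|\cos\alpha|$ the Young step $\tfrac{C}{\varepsilon}\|\nabla_\tau\psi\|\|\psi\|\leq\tfrac{\tilde\nu}{2\varepsilon^2}\|\psi\|^2+C\|\nabla_\tau\psi\|^2$ leaves a fixed $C\|\nabla_\tau\psi\|^2$ that cannot in general be absorbed into the retained tangential piece. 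The paper uses $\sigma_\alpha''=\cos\alpha\,\hat\sigma''$ together with Lemma~\ref{th_SE_ACalpha_intpol_tr2} (whose constant $\overline{C}_2$ is \emph{independent of $\Omega,\Gamma$}) to make this term $\leq\overline{C}\,\overline{C}_2|\alpha-\tfrac{\pi}{2}|\,\tfrac{1}{\varepsilon}\|\psi\|\|\nabla\psi\|$, which is then absorbed by shrinking $\hat\alpha_0$; this $\Omega,\Gamma$-independence is essential to obtain a universal $\hat\alpha_0$ and should be made explicit in your argument.
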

\begin{proof}
	First we prove that it is sufficient to show the existence of $\tilde{\alpha}_0,\tilde{\nu}>0$ independent of $\Omega,\Gamma,\alpha$ and the existence of some $\tilde{\varepsilon}_0>0$ such that if $\alpha\in\frac{\pi}{2}+[-\tilde{\alpha}_0,\tilde{\alpha}_0]$, then \phantom{\qedhere}
	\begin{align}\label{eq_SE_ACalpha_VpxVp_1}
	\tilde{B}^\pm_{\varepsilon,t}(\psi,\psi):=\int_{\Omega^{C\pm}_t}|\nabla\psi|^2+\frac{1}{\varepsilon^2}f''(\theta_0|_{\rho_{\varepsilon,\alpha}(.,t)})\psi^2\,dx\geq\frac{\tilde{\nu}}{\varepsilon^2}\|\psi\|_{L^2(\Omega^{C\pm}_t)}^2
	\end{align}
	for all $\psi\in(\hat{V}_{\varepsilon,t}^\pm)^\perp$ and $\varepsilon\in(0,\tilde{\varepsilon}_0]$, $t\in[0,T]$. In order to show with \eqref{eq_SE_ACalpha_VpxVp_1} the estimate in the lemma note that due to Remark \ref{th_hp_alpha_solN3_rem} and Definition \ref{th_ACalpha_sigma_def} there is a $\overline{C}>0$ independent of $\Omega,\Gamma$ and $\alpha\in\frac{\pi}{2}+[-\alpha_0,\alpha_0]$ such that
	\[	|f''(v_\alpha(\rho,Z))-f''(\theta_0(\rho))|\leq\overline{C}|\alpha-\frac{\pi}{2}|\text{ for all }(\rho,Z)\in\overline{\R^2_+}\quad\text{ and }\quad|\sigma_\alpha''|+|\sigma_\alpha'''|\leq\overline{C}|\alpha-\frac{\pi}{2}|.
	\]
	Moreover, to control the $\frac{1}{\varepsilon}\sigma_\alpha''$-term in $B_{\varepsilon,t}^\pm$ we use Lemma \ref{th_SE_ACalpha_intpol_tr2} and $\sigma_\alpha''(v_\alpha(\rho_{\varepsilon,\alpha}(.,t),0))=0$ in $\Omega_t^{C\pm}\setminus\Gamma(\tilde{\delta}_0)$ for $\varepsilon$ small because of Definition \ref{th_ACalpha_sigma_def} and Remark \ref{th_hp_alpha_solN3_rem}, where $\tilde{\delta}_0$ is as in Lemma \ref{th_SE_ACalpha_intpol_tr2}. For the $\sigma_\alpha'''$-term we use Lemma \ref{th_SE_ACalpha_intpol_tr1}. Therefore $|B_{\varepsilon,t}^\pm(\psi,\psi)
	-\tilde{B}_{\varepsilon,t}^\pm(\psi,\psi)|$ is estimated by
	\[
	\frac{\overline{C}|\alpha-\frac{\pi}{2}|+C\varepsilon}{\varepsilon^2}
	\|\psi\|_{L^2(\Omega^{C\pm}_t)}^2
	+\frac{2\overline{C}\,\overline{C}_2|\alpha-\frac{\pi}{2}|+C\varepsilon}{\varepsilon}\|\psi\|_{L^2(\Omega^{C\pm}_t)}\|\nabla\psi\|_{L^2(\Omega^{C\pm}_t)}
	\]
	for all $\psi\in(\hat{V}_{\varepsilon,t}^\pm)^\perp$ and $\varepsilon\in(0,\varepsilon_0]$, $t\in[0,T]$ if $\varepsilon_0>0$ is small. Let $\alpha\in\frac{\pi}{2}+[-\tilde{\alpha}_0,\tilde{\alpha}_0]$. Then for $\beta\in(0,1)$ it follows with Young's inequality that
	\begin{align*}
	B_{\varepsilon,t}^\pm(\psi,\psi)
	&\geq(1-\beta+\beta)\tilde{B}_{\varepsilon,t}^\pm(\psi,\psi)-|B_{\varepsilon,t}^\pm(\psi,\psi)
	-\tilde{B}_{\varepsilon,t}^\pm(\psi,\psi)|\\
	&\geq\frac{(1-\beta)\tilde{\nu}-\beta\sup_{\R}|f''(\theta_0)|-\overline{C}\,(\overline{C}_2+1)|\alpha-\frac{\pi}{2}|-C\varepsilon}{\varepsilon^2}\|\psi\|_{L^2(\Omega^{C\pm}_t)}^2\\
	&+(\beta-\overline{C}\,\overline{C}_2|\alpha-\frac{\pi}{2}|-C\varepsilon)\|\nabla\psi\|_{L^2(\Omega^{C\pm}_t)}^2
	\end{align*}
	for all $\psi\in(\hat{V}_{\varepsilon,t}^\pm)^\perp$ and $\varepsilon\in(0,\varepsilon_0]$, $t\in[0,T]$. We choose $\beta:=\frac{1}{4}\min\{1,\tilde{\nu}/\sup_{\R}|f''(\theta_0)|\}$ and then $\hat{\alpha}_0>0$ small such that 
	\[
	\frac{\tilde{\nu}}{2}-\overline{C}\,(\overline{C}_2+1)\hat{\alpha}_0\geq\frac{\tilde{\nu}}{4}\quad\text{ and }\quad \beta-\overline{C}\,\overline{C}_2\hat{\alpha}_0\geq\frac{\beta}{2}.
	\]
	Therefore the claim follows with $\nu:=\min\{\frac{\tilde{\nu}}{8},\frac{\beta}{4}\}$ provided that $\varepsilon_0>0$ is small.
	
	In the following we prove \eqref{eq_SE_ACalpha_VpxVp_1} with similar ideas as in the case $\alpha=\frac{\pi}{2}$, cf.~the proof of \cite{AbelsMoser}, Lemma 4.8. Let $\tilde{\psi}_t^\pm:=\psi|_{X^\pm(.,t)}$ for $\psi\in(\hat{V}_{\varepsilon,t}^\pm)^\perp$. Because of the chain rule we obtain
	$\nabla\psi|_{X^\pm(.,t)}=\nabla r|_{\overline{X}^\pm(.,t)}\partial_r\tilde{\psi}_t^\pm+\nabla z_\alpha^\pm|_{\overline{X}^\pm(.,t)}\partial_z\tilde{\psi}_t^\pm$ and therefore
	\[
	|\nabla\psi|^2|_{X^\pm(.,t)}=(\nabla_{(r,z)}\tilde{\psi}_t^\pm)^\top
	\begin{pmatrix}
	|\nabla r|^2 & \nabla r\cdot\nabla z_\alpha^\pm\\
	\nabla r\cdot\nabla z_\alpha^\pm & |\nabla z_\alpha^\pm|^2
	\end{pmatrix}|_{\overline{X}^\pm(.,t)}
	\nabla_{(r,z)}\tilde{\psi}_t^\pm,
	\]
	where $|\nabla r|^2=1+\Oc(|r|^2)$, $|\nabla r\cdot\nabla z_\alpha^\pm|=|\cos\alpha|+\Oc(|r|)$ and $|\nabla s|^2=1+\Oc(|r|)$ due to Remark \ref{th_coord2D_rem1},
	Theorem \ref{th_coord2D} and Taylor's Theorem. Therefore Young's inequality yields
	\begin{align}\label{eq_SE_ACalpha_VpxVp_2}
	|\nabla\psi|^2|_{X^\pm(.,t)}\geq
	(1-\overline{C}_3|\alpha-\frac{\pi}{2}|-C|r|)\left[(\partial_r\tilde{\psi}_t^\pm)^2 + (\partial_z\tilde{\psi}_t^\pm)^2\right]
	\end{align}
	with $\overline{C}_3>0$ independent of $\Omega,\Gamma$ and $\alpha\in\frac{\pi}{2}+[-\alpha_0,\alpha_0]$. 
	To get $C|r|$ small enough (which will be precise later), we fix $\tilde{\delta}>0$ small and estimate separately for $r$ in
	\begin{align}\label{eq_SE_ACalpha_VpxVp_int_split}
	I_{z,t}^{\pm,\varepsilon}:=(r_{\varepsilon,z,t}^\pm)^{-1}[(-\tilde{\delta},\tilde{\delta})]\quad\text{ and }\quad \hat{I}_{z,t}^{\pm,\varepsilon}:=(-\delta_0,\delta_0)\setminus I_{z,t}^{\pm,\varepsilon}.
	\end{align}
	If $\varepsilon_0=\varepsilon_0(\tilde{\delta},\overline{C}_0)>0$ is small, then for all $\varepsilon\in(0,\varepsilon_0]$ and $z\in[0,\tilde{\mu}_0]$, $t\in[0,T]$ it holds 
	\[
	f''(\theta_0(\rho_{\varepsilon,\alpha}|_{\overline{X}^\pm(r,z,t)}))\geq c_0:=\frac{1}{2}\min\{f''(\pm 1)\}>0\quad \text{ for }r\in\hat{I}_{z,t}^{\pm,\varepsilon},\quad
	|r|\leq 2\tilde{\delta}\quad \text{ for }r\in I_{z,t}^{\pm,\varepsilon}
	\]
	where we used \cite{MoserACvACND}, Remark 6.4,~2.~for the first estimate and \cite{MoserACvACND}, Lemma 6.2,~1.~for the second one. With $\overline{C}_3,C$ as in \eqref{eq_SE_ACalpha_VpxVp_2} we define $\tilde{c}=\tilde{c}(\alpha,\tilde{\delta}):=\overline{C}_3|\alpha-\frac{\pi}{2}|+2C\tilde{\delta}$. For $\varepsilon\in(0,\varepsilon_0]$, $t\in[0,T]$ we get
	\begin{align*}
	\tilde{B}_{\varepsilon,t}^\pm(\psi,\psi)&\geq\int_0^{\tilde{\mu}_0}\int_{\hat{I}_{z,t}^{\pm,\varepsilon}}\frac{c_0}{\varepsilon^2}(\tilde{\psi}_t^\pm)^2 J_t^\pm|_{(r,z)}\,dr\,dz
	+\int_0^{\tilde{\mu}_0}\int_{I_{z,t}^{\pm,\varepsilon}}(1-\tilde{c})(\partial_z\tilde{\psi}_t^\pm)^2J_t^\pm|_{(r,z)}\,dr\,dz\\
	&+\int_0^{\tilde{\mu}_0}\int_{I_{z,t}^{\pm,\varepsilon}}\left[(1-\tilde{c})(\partial_r\tilde{\psi}_t^\pm)^2+\frac{1}{\varepsilon^2}f''(\theta_0(\rho_{\varepsilon,\alpha}|_{\overline{X}^\pm(.,t)}))(\tilde{\psi}_t^\pm)^2\right] J_t^\pm|_{(r,z)}\,dr\,dz.
	\end{align*}
	We use the notation from the beginning of Section \ref{sec_SE_ACalpha_splitting}. The transformation in \cite{MoserACvACND}, Lemma 6.5,~1.~yields that the inner integral in the second line equals $1/\varepsilon^2$ times
	\begin{align}\label{eq_SE_ACalpha_VpxVp_Bdef}
	B_{\varepsilon,z,t}^{\pm,\tilde{c}}(\Psi_{\varepsilon,z,t}^\pm,\Psi_{\varepsilon,z,t}^\pm)
	:=\int_{I_{\varepsilon,\tilde{\delta}}}\left[(1-\tilde{c})(\frac{d}{dz}\Psi_{\varepsilon,z,t}^\pm)^2+f''(\theta_0(z))(\Psi_{\varepsilon,z,t}^\pm)^2\right]\tilde{J}_{\varepsilon,z,t}^\pm\,dz,
	\end{align}
	where we set $I_{\varepsilon,\tilde{\delta}}:=(-\frac{\tilde{\delta}}{\varepsilon},\frac{\tilde{\delta}}{\varepsilon})$ 
	and $\Psi_{\varepsilon,z,t}^\pm:=\sqrt{\varepsilon}\tilde{\psi}_t^\pm(F_{\varepsilon,z,t}^\pm(.),z)$.
	Hence \eqref{eq_SE_ACalpha_VpxVp_1} follows if we show for $\tilde{\alpha}_0>0$ small independent of $\Omega,\Gamma$ and $\tilde{\delta}>0$ small, that $\tilde{c}\leq 1$ and with the $c_0$ from above
	\begin{align}\label{eq_SE_ACalpha_VpxVp_3}
	B_{\varepsilon,z,t}^{\pm,\tilde{c}}(\Psi_{\varepsilon,z,t}^\pm,\Psi_{\varepsilon,z,t}^\pm)
	\geq \overline{\nu}\|\Psi_{\varepsilon,z,t}^\pm\|^2_{L^2(I_{\varepsilon,\tilde{\delta}},\tilde{J}_{\varepsilon,z,t}^\pm)}-\frac{c_0}{2}\|\tilde{\psi}_t^\pm(.,z)\|^2_{L^2(\hat{I}_{z,t}^{\pm,\varepsilon},J_t^\pm(.,z))}
	\end{align}
	for $\varepsilon\in(0,\varepsilon_0]$, a.e.~$z\in[0,\tilde{\mu}_0]$ and all $t\in[0,T]$ with some $\varepsilon_0>0$ independent of $\varepsilon,z,t$ and $\overline{\nu}>0$ independent of $\Omega,\Gamma,\alpha,\tilde{\delta},\varepsilon_0,\varepsilon,z,t$ provided that $\alpha\in\frac{\pi}{2}+[-\tilde{\alpha}_0,\tilde{\alpha}_0]$. 
	
	Here $L^2(I_{\varepsilon,\tilde{\delta}},\tilde{J}_{\varepsilon,z,t}^\pm)$ is the space of $L^2$-functions on $I_{\varepsilon,\tilde{\delta}}$ with respect to the weight $\tilde{J}_{\varepsilon,z,t}^\pm$. We denote the scalar-product in $L^2(I_{\varepsilon,\tilde{\delta}},\tilde{J}_{\varepsilon,z,t}^\pm)$ by $(.,.)_{\varepsilon,z,t}$ and the norm with $\|.\|_{\varepsilon,z,t}$. For the proof of \eqref{eq_SE_ACalpha_VpxVp_3} we need properties of $B^{\pm,0}_{\varepsilon,z,t}$. The latter is defined as in \eqref{eq_SE_ACalpha_VpxVp_Bdef} with $\tilde{c}$ replaced by $0$. With respect to $(.,.)_{\varepsilon,z,t}$, $B_{\varepsilon,z,t}^{\pm,0}$ is the bilinear form associated to
	\[
	\Lc_{\varepsilon,z,t}^{\pm,0}:=-(\tilde{J}_{\varepsilon,z,t}^\pm)^{-1}\frac{d}{dz}\left(\tilde{J}_{\varepsilon,z,t}^\pm\frac{d}{dz}\right)+f''(\theta_0)
	\]
	on $H^2(I_{\varepsilon,\tilde{\delta}})$ with homogeneous Neumann boundary condition. In this situation we can apply the results in \cite{MoserACvACND}, Section 6.1.3.2 on spectral properties for perturbed 1D Allen-Cahn-type operators, in particular \cite{MoserACvACND}, Theorem 6.8.
	
	\begin{proof}[Proof of \eqref{eq_SE_ACalpha_VpxVp_3}] The integral characterization for $\psi\in (\hat{V}_{\varepsilon,t}^\pm)^\perp$ in Lemma \ref{th_SE_ACalpha_split_L2},~2.~yields
		\[
		\left|
		\int_{I_{z,t}^{\pm,\varepsilon}}(\phi^A_{\varepsilon,\alpha}(.,t)\psi)|_{X^\pm(r,z,t)} J_t^\pm(r,z)\,dr
		\right|
		\leq C(\tilde{\delta})e^{-c\tilde{\delta}/\varepsilon}
		\|\tilde{\psi}_t^\pm(.,z)\|_{L^2(\hat{I}_{z,t}^{\pm,\varepsilon},J_t^\pm(.,z))}
		\]	
		for $\varepsilon$ small, a.e.~$z\in[0,\tilde{\mu}_0]$ and all $t\in[0,T]$. The lowest order term in the integral is 
		\[
		\frac{1}{\sqrt{\varepsilon}}q^\pm(z,t)\int_{I_{z,t}^{\pm,\varepsilon}}\partial_\rho v_\alpha(\rho_{\varepsilon,\alpha}|_{\overline{X}^\pm(r,z,t)},\frac{z}{\varepsilon})(\tilde{\psi}_t^\pm J_t^\pm)(r,z)\,dr
		=q^\pm(z,t)(\Psi_{\varepsilon,z,t}^\pm,\partial_\rho v_\alpha(.,\frac{z}{\varepsilon}))_{\varepsilon,z,t}.
		\]
		The remaining term in the integral due to $\phi^A_{\varepsilon,\alpha}$ can be estimated with the Hölder inequality, the decay of $\hat{v}^{C\pm}_1$ and \cite{MoserACvACND}, Lemma 6.5, by $C\varepsilon\|\Psi_{\varepsilon,z,t}^\pm\|_{\varepsilon,z,t}$. Moreover, due to Remark \ref{th_hp_alpha_solN3_rem} it holds 
		\[
		|\partial_\rho v_\alpha(\rho,Z)-\theta_0'(\rho)|\leq\overline{C}_4|\alpha-\frac{\pi}{2}|e^{-\beta_0|\rho|}\quad\text{ for all }(\rho,Z)\in\overline{\R^2_+}
		\] 
		and some $\overline{C}_4>0$ independent of $\Omega,\Gamma,\alpha$. Together with 
		\[
		\tilde{J}_{\varepsilon,z,t}^\pm(\rho)=J_t(F_{\varepsilon,z,t}^\pm(\rho),z)=J_t(0,z)+\Oc(|F_{\varepsilon,z,t}^\pm(\rho)|)=1+\Oc(\varepsilon(|\rho|+C))
		\] 
		because of Remark \ref{th_coord2D_rem1}, Remark \ref{th_coord2D_rem2},~3.~and \cite{MoserACvACND}, Corollary 6.3, we obtain
		\[
		|(\Psi_{\varepsilon,z,t}^\pm,\partial_\rho v_\alpha(.,\frac{z}{\varepsilon}))_{\varepsilon,z,t}-(\Psi_{\varepsilon,z,t}^\pm,\theta_0')_{\varepsilon,z,t}|\leq(\overline{C}_4|\alpha-\frac{\pi}{2}|+C\varepsilon)\|\Psi_{\varepsilon,z,t}^\pm\|_{\varepsilon,z,t}
		\]
		for some $\overline{C}_4>0$ independent of $\Omega,\Gamma,\alpha$. Using $0<\frac{1}{2}\leq q^\pm\leq 2$ we obtain altogether
		\[
		|(\Psi_{\varepsilon,z,t}^\pm,\theta_0')_{\varepsilon,z,t}|\leq (\overline{C}_4|\alpha-\frac{\pi}{2}|+C\varepsilon)\|\Psi_{\varepsilon,z,t}^\pm\|_{\varepsilon,z,t}+C(\tilde{\delta})e^{-c\tilde{\delta}/\varepsilon}\|\tilde{\psi}_t^\pm(.,z)\|_{L^2(\hat{I}_{z,t}^{\pm,\varepsilon},J_t^\pm(.,z))}
		\]
		for $\varepsilon$ small. \cite{MoserACvACND}, Theorem 6.8,~2.~and uniform bounds for $q^\pm$, $J_t^\pm$ yield for the positive normalized eigenfunction $\Psi_{\varepsilon,z,t}^{\pm,1}$ to the first eigenvalue $\lambda_{\varepsilon,z,t}^{\pm,1}$ of $\Lc_{\varepsilon,z,t}^{\pm,0}$ the estimate
		\begin{align}\begin{split}\label{eq_SE_ACalpha_VpxVp_4}
		|(\Psi_{\varepsilon,z,t}^\pm,\Psi_{\varepsilon,z,t}^{\pm,1})_{\varepsilon,z,t}|
		&\leq (\overline{C}_4|\alpha-\frac{\pi}{2}|+
		C(\tilde{\delta})\varepsilon)\|\Psi_{\varepsilon,z,t}^\pm\|_{\varepsilon,z,t}\\
		&+C(\tilde{\delta})e^{-c\tilde{\delta}/\varepsilon}\|\tilde{\psi}_t^\pm(.,z)\|_{L^2(\hat{I}_{z,t}^{\pm,\varepsilon},J_t^\pm(.,z))}\end{split}
		\end{align}
		for a.e.~$z\in[0,\tilde{\mu}_0]$, all $t\in[0,T]$ and $\varepsilon\in(0,\varepsilon_0]$, if $\varepsilon_0>0$ is small. 
		
		With the analogous computation as in the case $\alpha=\frac{\pi}{2}$, cf.~the proof of (4.10) in \cite{AbelsMoser}, it follows from \cite{MoserACvACND}, Theorem 6.8,~1.~and~3.~that, if $\tilde{c}(\alpha,\tilde{\delta})=\overline{C}_3|\alpha-\frac{\pi}{2}|+2C\tilde{\delta}\leq 1$, then it holds
		\begin{align*}
		B_{\varepsilon,z,t}^{\pm,\tilde{c}}(\Psi_{\varepsilon,z,t}^\pm,\Psi_{\varepsilon,z,t}^\pm)
		\geq\|\Psi_{\varepsilon,z,t}^\pm\|^2_{\varepsilon,z,t}\left[\nu_2(1-\tilde{c}(\alpha,\tilde{\delta}))-\tilde{c}(\alpha,\tilde{\delta})\,\sup_{\rho\in\R}|f''(\theta_0(\rho))|\right]\\
		-(1-\tilde{c}(\alpha,\tilde{\delta}))(\Oc(\varepsilon^2)+\nu_2)\left|(\Psi_{\varepsilon,z,t}^\pm,\Psi_{\varepsilon,z,t}^{\pm,1})_{\varepsilon,z,t}\right|^2
		\end{align*}
		for a.e.~$z\in[0,\tilde{\mu}_0]$, all $t\in[0,T]$ and $\varepsilon\in(0,\varepsilon_0]$ if $\varepsilon_0=\varepsilon_0(\tilde{\delta},\overline{C}_0)>0$ is small. We combine this with \eqref{eq_SE_ACalpha_VpxVp_4} in order to show \eqref{eq_SE_ACalpha_VpxVp_3}. Note that $\nu_2$ from \cite{MoserACvACND}, Theorem 6.8, does not depend on $\alpha,\tilde{\delta}$. Therefore we can first choose $\tilde{\alpha}_0>0$ small such that
		\[
		\overline{C}_3\tilde{\alpha}_0\leq \frac{1}{4},\quad
		(\nu_2+\sup_{\rho\in\R}|f''(\theta_0(\rho))|)\overline{C}_3\tilde{\alpha}_0\leq\frac{\nu_2}{4}
		\quad\text{ and }\quad \frac{1}{2}\overline{C}_4^2\tilde{\alpha}_0^2\leq\frac{1}{4}.
		\]
		Note that this can be achieved independent of $\Omega,\Gamma$.
		Then let $\tilde{\delta}>0$ be small such that 
		\[
		2C\tilde{\delta}\leq\frac{1}{2}\quad\text{ and }\quad
		(\nu_2+\sup_{\rho\in\R}|f''(\theta_0(\rho))|)2C\tilde{\delta}
		\leq\frac{\nu_2}{4}.
		\]
		These estimates imply $\tilde{c}(\alpha,\tilde{\delta})\leq\frac{1}{2}$ and that the term in the square brackets above is estimated from below by $\frac{\nu_2}{2}$. Finally, we can choose $\varepsilon_0>0$ small such that \eqref{eq_SE_ACalpha_VpxVp_3} holds with $\overline{\nu}=\frac{\nu_2}{8}$. Finally, altogether we have proven Lemma \ref{th_SE_ACalpha_VpxVp}.\end{proof}
\end{proof}

For $B_{\varepsilon,t}^\pm$ on $\hat{V}_{\varepsilon,t}^\pm\times(\hat{V}_{\varepsilon,t}^\pm)^\perp$ we obtain
\begin{Lemma}\label{th_SE_ACalpha_VxVp}
	There is an $\overline{\alpha}_0>0$ independent of $\Omega,\Gamma$ such that, if $\alpha\in\frac{\pi}{2}+[-\overline{\alpha}_0,\overline{\alpha}_0]$, then there are $\varepsilon_0,C>0$ such that
	\[
	|B_{\varepsilon,t}^\pm(\phi,\psi)|\leq\frac{C}{\varepsilon}\|\phi\|_{L^2(\Omega_t^{C\pm})}\|\psi\|_{L^2(\Omega_t^{C\pm})}+\frac{1}{4}B_{\varepsilon,t}^\pm(\psi,\psi)+(\frac{\overline{c}}{4}+C\varepsilon^2)\|a\|_{H^1(0,\tilde{\mu}_0)}^2
	\]
	for all $\phi=a(z_\alpha^\pm(.,t))\phi^A_{\varepsilon,\alpha}(.,t)\in \hat{V}_{\varepsilon,t}^\pm$, $\psi\in (\hat{V}_{\varepsilon,t}^\pm)^\perp$ and $\varepsilon\in(0,\varepsilon_0]$, $t\in[0,T]$, where the constant $\overline{c}=\frac{1}{2}\|\theta_0'\|_{L^2(\R)}^2$ is as in Lemma \ref{th_SE_ACalpha_VxV}.
\end{Lemma}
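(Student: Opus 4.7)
The plan is to integrate by parts and exploit both the approximate-eigenfunction property of $\phi^A_{\varepsilon,\alpha}$ from Lemma \ref{th_SE_ACalpha_phi_A} and the orthogonality of $\psi$ to $\hat V^\pm_{\varepsilon,t}$ from Lemma \ref{th_SE_ACalpha_split_L2}. By density in $\hat H^1(0,\tilde\mu_0)$ I may assume $a$ smooth, so that $\phi \in H^2(\Omega_t^{C\pm})$, giving
\[
B_{\varepsilon,t}^\pm(\phi,\psi) = \int_{\Omega^{C\pm}_t}(\mathcal{L}^\pm_{\varepsilon,t}\phi)\psi\,dx + \int_{\partial\Omega^{C\pm}_t}(\mathcal{N}^\pm_{\varepsilon,t}\phi)\tr\psi\,d\mathcal{H}^1.
\]
The chain rule expands
\[
\mathcal{L}^\pm_{\varepsilon,t}\phi = a(z_\alpha^\pm)\mathcal{L}^\pm_{\varepsilon,t}\phi^A_{\varepsilon,\alpha} - \bigl[|\nabla z_\alpha^\pm|^2 a''(z_\alpha^\pm) + \Delta z_\alpha^\pm a'(z_\alpha^\pm)\bigr]\phi^A_{\varepsilon,\alpha} - 2a'(z_\alpha^\pm)\nabla z_\alpha^\pm\cdot\nabla\phi^A_{\varepsilon,\alpha},
\]
and analogously $\mathcal{N}^\pm_{\varepsilon,t}\phi$ splits into an $a\,\mathcal{N}^\pm\phi^A_{\varepsilon,\alpha}$ piece and an $a'\,(N\cdot\nabla z_\alpha^\pm)\phi^A_{\varepsilon,\alpha}$ piece.

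Boundary contributions are harmless: on $\{z_\alpha^\pm = \tilde\mu_0\}$ both $a$ and $\tr\psi$ vanish; on the interior boundary $\{r = \pm\delta_0\}$ the exponential decay of $\phi^A_{\varepsilon,\alpha}$ together with Lemma \ref{th_SE_ACalpha_phi_A} renders every contribution $O(e^{-c/\varepsilon})$; on $\partial\Omega\cap\partial\Omega^{C\pm}_t$ the identity $a(0)=0$ kills the $\sigma_\alpha''/\varepsilon$-piece of $\mathcal{N}^\pm\phi$, the $a\,\mathcal{N}^\pm\phi^A_{\varepsilon,\alpha}$ piece is $O(\varepsilon^{1/2}e^{-c|\rho|})$ by Lemma \ref{th_SE_ACalpha_phi_A}, and the surviving $a'(0)(N_{\partial\Omega}\cdot\nabla z_\alpha^\pm)\phi^A_{\varepsilon,\alpha}|_{Z=0}\tr\psi$ is controlled via Lemma \ref{th_SE_ACalpha_intpol_tr1} and Young's inequality.

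For the volume term, Lemma \ref{th_SE_ACalpha_phi_A} replaces $a\,\mathcal{L}^\pm\phi^A_{\varepsilon,\alpha}$ by $-a\tilde q^\pm/\varepsilon^{3/2} + O(\varepsilon^{-1/2}e^{-c|\rho|})$; the crucial structure is that $\tilde q^\pm$ contains only the factor $\partial_\rho^2 v_\alpha$, which has no $\partial_Z$-derivative and hence no decay in $Z$. Transforming to $(r,z)$-coordinates via $\overline X^\pm$ and using $\rho = (r-\varepsilon\tilde h^\pm_{\varepsilon,\alpha})/\varepsilon$, one writes
\[
\partial_\rho^2 v_\alpha(\rho_{\varepsilon,\alpha}, z/\varepsilon) = \frac{\varepsilon}{1 - \varepsilon\partial_r\tilde h^\pm_{\varepsilon,\alpha}}\partial_r\bigl[\partial_\rho v_\alpha(\rho_{\varepsilon,\alpha}, z/\varepsilon)\bigr],
\]
and integration by parts in $r$, the boundary contributions at $r = \pm\delta_0$ being exponentially small by the decay of $\partial_\rho v_\alpha$, extracts an extra factor of $\varepsilon$ and reduces the term to the form $\varepsilon^{-1/2}\int_0^{\tilde\mu_0} a(z)\int_{-\delta_0}^{\delta_0}\partial_\rho v_\alpha\,\partial_r[\psi J_t^\pm/(1-\varepsilon\partial_r\tilde h^\pm_{\varepsilon,\alpha})]\,dr\,dz$. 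Splitting $\partial_r[\cdots]$ into a $\partial_r\psi$ part and a $\psi$ part, the former is controlled by Cauchy--Schwarz using $\|\partial_\rho v_\alpha(\cdot,z/\varepsilon)\|_{L^2_r}\sim\sqrt\varepsilon$ and absorbed via a weighted Young inequality into $\tfrac14 B^\pm_{\varepsilon,t}(\psi,\psi)$ by Lemma \ref{th_SE_ACalpha_VpxVp}, while the latter yields a $\tfrac{C}{\varepsilon}\|\phi\|_{L^2}\|\psi\|_{L^2}$ contribution after the norm equivalence $\|a\|_{L^2}\sim\|\phi\|_{L^2}$ from Lemma \ref{th_SE_ACalpha_split_L2}. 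The $a'$ and $a''$ contributions are handled by the same $(r,z)$-integration by parts in $r$ on the $\partial_\rho^2 v_\alpha$-factors appearing in $\nabla\phi^A_{\varepsilon,\alpha}$, together with direct Cauchy--Schwarz on the $\partial_Z\partial_\rho v_\alpha$-factors (which decay exponentially in $Z=z/\varepsilon$, furnishing an additional power of $\varepsilon$ upon $z$-integration). Using the bound $\int(\partial_\rho v_\alpha)^2|_{Z=0}\,d\rho\leq\tfrac54\|\theta_0'\|_{L^2(\R)}^2$ from Remark \ref{th_hp_alpha_solN3_rem} and weighted Young inequalities, one ends up with a coefficient $\overline c/4 + C\varepsilon^2$ in front of $\|a\|_{H^1(0,\tilde\mu_0)}^2$, the residual $\nabla\psi$ and $\varepsilon^{-2}\|\psi\|^2$-type contributions being absorbed into $\tfrac14 B^\pm_{\varepsilon,t}(\psi,\psi)$.

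The main obstacle is obtaining a genuine $O(\varepsilon^{-1})$ bound (rather than the naive $O(\varepsilon^{-3/2})$) for the leading term $a\,\mathcal{L}^\pm\phi^A_{\varepsilon,\alpha}$: the missing half-power of $\varepsilon$ can only be recovered by exploiting the fact that $\tilde q^\pm$ is a pure $\partial_\rho^2 v_\alpha$-factor and rewriting it as $\varepsilon\partial_r[\partial_\rho v_\alpha]$ in the $(r,z)$-frame, followed by the $r$-integration by parts and either direct absorption into $B^\pm_{\varepsilon,t}(\psi,\psi)$ or use of the orthogonality $\int\phi^A_{\varepsilon,\alpha}\psi J_t^\pm\,dr = 0$. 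A secondary bookkeeping difficulty is tuning the weighted Young inequalities so that the coefficient of $\|a\|_{H^1(0,\tilde\mu_0)}^2$ is \emph{strictly} bounded by $\overline c/4$: this is the value that must be cancelled against the positive $\overline c\|a\|_{H^1}^2$ produced by Lemma \ref{th_SE_ACalpha_VxV} in the assembly of Theorem \ref{th_SE_ACalpha_cp}, while the factor $\tfrac14$ in front of $B^\pm_{\varepsilon,t}(\psi,\psi)$ is exactly what is needed so that $\tfrac12 B^\pm(\psi,\psi)$ remains available for coercivity after the cross term is estimated twice in the final quadratic-form expansion.
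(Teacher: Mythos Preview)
Your decomposition into $\int(\mathcal{L}^\pm_{\varepsilon,t}\phi)\psi + \int_\partial(\mathcal{N}^\pm_{\varepsilon,t}\phi)\tr\psi$ is correct in spirit and essentially equivalent (up to one integration by parts) to the paper's splitting $B^\pm_{\varepsilon,t}(\phi,\psi)=(I)+(II)+(III)$ with $(I)=\int a\psi\,\mathcal{L}^\pm_{\varepsilon,t}\phi^A_{\varepsilon,\alpha}$, $(II)$ the boundary piece, and $(III)=\int\nabla(a(z_\alpha^\pm))\cdot[\phi^A_{\varepsilon,\alpha}\nabla\psi-\nabla\phi^A_{\varepsilon,\alpha}\,\psi]$. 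The paper's version avoids $a''$ altogether, which is cleaner. But there are two genuine problems with your argument.

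\textbf{The ``main obstacle'' is misidentified.} You claim the term $a\,\mathcal{L}^\pm_{\varepsilon,t}\phi^A_{\varepsilon,\alpha}$ is naively $O(\varepsilon^{-3/2})$ and needs an integration by parts $\partial_\rho^2 v_\alpha=\varepsilon\,\partial_r[\partial_\rho v_\alpha]/(1-\varepsilon\partial_r\tilde h)$ to reach $O(\varepsilon^{-1})$. This is a miscount. The pointwise size of $\mathcal{L}^\pm_{\varepsilon,t}\phi^A_{\varepsilon,\alpha}$ is $\varepsilon^{-3/2}$, but its $L^2(\Omega_t^{C\pm})$-norm is already $O(\varepsilon^{-1})$ because $\int_{-\delta_0}^{\delta_0}(\partial_\rho^2 v_\alpha)^2(\rho_{\varepsilon,\alpha},z/\varepsilon)\,dr\sim\varepsilon$ from the $\rho$-decay. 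So direct Cauchy--Schwarz gives $|(I)|\le\frac{C}{\varepsilon}\|a\|_{L^2}\|\psi\|_{L^2}\le\frac{C}{\varepsilon}\|\phi\|_{L^2}\|\psi\|_{L^2}$ immediately---no integration by parts needed. Your proposed IBP produces a $\|a\|_{L^2}\|\nabla\psi\|_{L^2}$ term which, after Young, leaves an $O(1)$ coefficient on $\|a\|_{L^2}^2$ that you cannot reduce to $\overline{c}/4$.

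\textbf{The actual mechanism for the $\overline{\alpha}_0$-restriction is missing.} The real difficulty lies in the cross term $(III)$, where (compared to $\alpha=\frac{\pi}{2}$) four new critical contributions appear: two carrying the factor $\partial_Z\partial_\rho v_\alpha$ and two carrying $\cos\alpha\,\partial_\rho v_\alpha$ or $\cos\alpha\,\partial_\rho^2 v_\alpha$ (from $\nabla r\cdot\nabla z_\alpha^\pm=-\cos\alpha+O(|r|)$). You propose to control the $\partial_Z\partial_\rho v_\alpha$-terms via their exponential decay in $Z=z/\varepsilon$, claiming this ``furnishes an additional power of $\varepsilon$ upon $z$-integration''. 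It does not: these terms are integrated against $a'(z)$ and $\psi(\cdot,z)$, which are only $L^2$ in $z$, so Cauchy--Schwarz gives no gain from the weight $e^{-\gamma z/\varepsilon}$. The paper's mechanism is different and is the whole reason $\overline{\alpha}_0$ appears in the statement: one uses the Lipschitz dependence of $v_\alpha$ on $\alpha$ (Remark~\ref{th_asym_ACalpha_decay_param}) to get $|\partial_Z\partial_\rho v_\alpha(\rho,Z)|\le\overline{C}\,|\alpha-\tfrac{\pi}{2}|\,e^{-\beta_0|\rho|}$ uniformly in $Z$, and of course $|\cos\alpha|\le|\alpha-\tfrac{\pi}{2}|$. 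This produces, after Young's inequality against $\frac{\nu}{8}[\varepsilon^{-2}\|\psi\|_{L^2}^2+\|\nabla\psi\|_{L^2}^2]$, a coefficient $\overline{C}_6|\alpha-\tfrac{\pi}{2}|$ on $\|a'\|_{L^2}^2$; then $\overline{\alpha}_0$ is chosen so that $\overline{C}_6\overline{\alpha}_0\le\overline{c}/4$. Without invoking this $|\alpha-\tfrac{\pi}{2}|$-smallness you cannot obtain the coefficient $\overline{c}/4$ on $\|a\|_{H^1}^2$, and the lemma fails.
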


\begin{proof}
	Analogously as in the case $\alpha=\frac{\pi}{2}$, cf.~the proof of \cite{AbelsMoser}, Lemma 4.10, we have\phantom{\qedhere}
	\begin{align*}
	B_{\varepsilon,t}^\pm(\phi,\psi)
	=\int_{\Omega_t^{C\pm}} a(z_\alpha^\pm)|_{(.,t)}\psi\Lc_{\varepsilon,t}^\pm\phi^A_{\varepsilon,\alpha}|_{(.,t)}\,dx
	+\int_{\partial\Omega_t^{C\pm}}\Nc_{\varepsilon,t}^\pm\phi^A_{\varepsilon,\alpha}|_{(.,t)}\tr\left[a(z_\alpha^\pm(.,t))\psi\right]\,d\Hc^1\\
	+\int_{\Omega_t^{C\pm}}\nabla(a(z_\alpha^\pm))|_{(.,t)}\cdot\left[\phi^A_{\varepsilon,\alpha}|_{(.,t)}\nabla\psi-\nabla\phi^A_{\varepsilon,\alpha}|_{(.,t)}\psi\right]\,dx=:(I)+(II)+(III).
	\end{align*}
	
	\begin{proof}[Ad $(I)$] 
		It holds $|(I)|\leq\|a(z_\alpha^\pm|_{(.,t)})\Lc_{\varepsilon,t}^\pm\phi^A_{\varepsilon,\alpha}(.,t)\|_{L^2(\Omega_t^{C\pm})}\|\psi\|_{L^2(\Omega_t^{C\pm})}$ because of the Hölder Inequality, where
		\[
		\|a(z_\alpha^\pm|_{(.,t)})\Lc_{\varepsilon,t}^\pm\phi^A_{\varepsilon,\alpha}|_{(.,t)}\|_{L^2(\Omega_t^{C\pm})}^2=\int_0^{\tilde{\mu}_0}a^2(z)\int_{-\delta_0}^{\delta_0} (\Lc_{\varepsilon,t}^\pm\phi^A_{\varepsilon,\alpha}(.,t))^2|_{X^\pm(r,z,t)}\,J_t^\pm(r,z)\,dr\,dz.
		\]
		Analogously to the case $\alpha=\frac{\pi}{2}$, cf.~the proof of \cite{AbelsMoser}, Lemma 4.10, we obtain from Lemma \ref{th_SE_ACalpha_phi_A} that $(\Lc_{\varepsilon,t}^\pm\phi^A_{\varepsilon,\alpha}|_{(.,t)})^2$ is estimated by\phantom{\qedhere}
		\[
		\frac{1}{\varepsilon^3}\!\left|[\Delta r|_{\overline{X}_0(s(.,t),t)}q^\pm|_{(z_\alpha^\pm(.,t),t)}
		\!-\!2\cos\alpha\partial_zq^\pm|_{(z_\alpha^\pm(.,t),t)}]\partial_\rho^2 v_\alpha|_{(\rho_{\varepsilon,\alpha}(.,t),Z_{\varepsilon,\alpha}^\pm(.,t))}\right|^2\!\!+\frac{\tilde{C}}{\varepsilon^2}e^{-c|\rho_{\varepsilon,\alpha}(.,t)|}.
		\]
		Therefore \cite{MoserACvACND}, Lemma 6.5, implies that the inner integral above is estimated by $C/\varepsilon^2$ and because of Lemma \ref{th_SE_ACalpha_split_L2},~1.~we get
		\[
		|(I)|\leq \frac{C}{\varepsilon}\|a\|_{L^2(0,\tilde{\mu}_0)}\|\psi\|_{L^2(\Omega_t^{C\pm})}
		\leq\frac{\tilde{C}}{\varepsilon}\|\phi\|_{L^2(\Omega_t^{C\pm})}\|\psi\|_{L^2(\Omega_t^{C\pm})}
		\] 
		for all $t\in[0,T]$ and $\varepsilon\in(0,\varepsilon_0]$ provided that $\varepsilon_0>0$ is small. \phantom{\qedhere}\end{proof}
	
	\begin{proof}[Ad $(II)$] 
		Because of Hölder's inequality we obtain\phantom{\qedhere} 
		\[	
		|(II)|\leq\|\tr\,\psi\|_{L^2(\partial\Omega_t^{C\pm})}\|\tr(a(z_\alpha^\pm|_{(.,t)}))\Nc_{\varepsilon,t}^\pm\phi^A_{\varepsilon,\alpha}|_{(.,t)}\|_{L^2(\partial\Omega_t^{C\pm})}.
		\] 
		For the second integral we use the representation of integrals over curves, cf.~also the estimate of $(III)$ in the proof of Lemma \ref{th_SE_ACalpha_VxV}. Then Lemma \ref{th_SE_ACalpha_phi_A} and \cite{MoserACvACND}, Lemma 6.5, yield
		\[
		\|a(z_\alpha^\pm)\Nc_{\varepsilon,t}^\pm\phi^A_{\varepsilon,\alpha}|_{(.,t)}\|_{L^2(\partial\Omega_t^{C\pm})}\leq C\varepsilon|a(0)|+ Ce^{-c/\varepsilon}\|a\|_{L^2(0,\tilde{\mu}_0)}\leq C\varepsilon\|a\|_{H^1(0,\tilde{\mu}_0)}.
		\]
		We estimate $\|\tr\,\psi\|_{L^2(\partial\Omega_t^{C\pm})}$ with Lemma \ref{th_SE_ACalpha_intpol_tr1}. Then Young's inequality and Lemma \ref{th_SE_ACalpha_VpxVp} imply
		\[
		|(II)|\leq
		\frac{\nu}{8\varepsilon \overline{C}_1}\|\tr\,\psi\|_{L^2(\partial\Omega_t^{C\pm})}^2
		+\tilde{C}\varepsilon^3\|a\|_{H^1(0,\tilde{\mu}_0)}^2\leq\frac{1}{8}B_{\varepsilon,t}^\pm(\psi,\psi)
		+\tilde{C}\varepsilon^3\|a\|_{H^1(0,\tilde{\mu}_0)}^2,
		\]
		where $\overline{C}_1$ is as in Lemma \ref{th_SE_ACalpha_intpol_tr1}.
	\end{proof}
	
	\begin{proof}[Ad $(III)$]
		We proceed in the analogous way as in the case $\alpha=\frac{\pi}{2}$, cf.~the estimate of $(III)$ in the proof of \cite{AbelsMoser}, Lemma 4.10. However, there are some new terms due to $\nabla r\cdot\nabla z_\alpha^\pm$ and since $\partial_\rho v_\alpha$ depends on $Z$. It holds $(III)=\int_0^{\tilde{\mu}_0}a'(z)g_t^\pm(z)\,dz$ with
		\[
		g_t^\pm(z):=\int_{-\delta_0}^{\delta_0} \nabla z_\alpha^\pm|_{\overline{X}^\pm(r,z,t)}\cdot\left[\phi^A_{\varepsilon,\alpha}(.,t)\nabla\psi-\nabla\phi^A_{\varepsilon,\alpha}(.,t)\psi\right]|_{X^\pm(r,z,t)}\,J_t^\pm(r,z)\,dr.
		\]
		We insert $\nabla\psi|_{X^\pm(.,t)}=
		\nabla r|_{\overline{X}^\pm(.,t)}\partial_r\tilde{\psi}_t^\pm
		+\nabla z_\alpha^\pm|_{\overline{X}^\pm(.,t)} \partial_z\tilde{\psi}_t^\pm$ with
		$\tilde{\psi}_t^\pm:=\psi|_{X^\pm(.,t)}$. 
		For the $\partial_z\tilde{\psi}_t^\pm$-term in $g_t^\pm$ we use $|\nabla z_\alpha^\pm|^2|_{\overline{X}^\pm(r,z,t)}=1+\Oc(|r|)$ due to Remark \ref{th_coord2D_rem1} and Theorem \ref{th_coord2D}. Therefore $|g_t^\pm(z)|$ is for a.e.~$z\in[0,\tilde{\mu}_0]$ and all $t\in[0,T]$ estimated by
		\begin{align*}
		\left|\int_{-\delta_0}^{\delta_0}\left[\phi^A_{\varepsilon,\alpha}|_{\overline{X}^\pm(.,t)}\partial_z\tilde{\psi}_t^\pm J_t^\pm\right]|_{(r,z)}\,dr\right|+\int_{-\delta_0}^{\delta_0}\left|(\tilde{\psi}_t^\pm J_t^\pm)|_{(r,z)}\nabla z_\alpha^\pm\cdot\nabla\phi^A_{\varepsilon,\alpha}|_{\overline{X}^\pm(r,z,t)}\right|\,dr\\
		+\int_{-\delta_0}^{\delta_0}\left[
		C\left|r\, \partial_z\tilde{\psi}_t^\pm|_{(r,z)}\right|
		+\left|\nabla r\cdot\nabla z_\alpha^\pm|_{\overline{X}^\pm(r,z,t)} \partial_r\tilde{\psi}_t^\pm|_{(r,z)}\right|\right] \cdot\left|\phi^A_{\varepsilon,\alpha}|_{\overline{X}^\pm(r,z,t)}
		J_t^\pm|_{(r,z)}\right|\,dr.
		\end{align*}
		We use $\psi\in(\hat{V}_{\varepsilon,t}^\pm)^\perp$ to rewrite the first term. With the properties of Sobolev spaces on product sets, cf.~\cite{MoserACvACND}, Lemma 2.10, and since integration yields a bounded linear operator on $L^2(-\delta,\delta)$, we can differentiate the identity in Lemma \ref{th_SE_ACalpha_split_L2},~2.~and use the product rule. Hence the first term is estimated by 
		\[
		\left|\int_{-\delta_0}^{\delta_0}\left[\left(\partial_z(\phi^A_{\varepsilon,\alpha}|_{\overline{X}^\pm(.,t)})J_t^\pm+\phi^A_{\varepsilon,\alpha}|_{\overline{X}^\pm(.,t)}\partial_z J_t^\pm\right)\tilde{\psi}_t^\pm\right]|_{(r,z)}\,dr\right|.
		\]
		Now we use the structure of $\phi^A_{\varepsilon,\alpha}$. In \eqref{eq_SE_ACalpha_nabla_phiA} we computed $\nabla\phi^A_{\varepsilon,\alpha}$ in $\Omega^{C\pm}_t$.	Moreover, it holds 
		\begin{align}\label{eq_SE_ACalpha_dz_phiA}
		&\sqrt{\varepsilon}\partial_z(\phi^A_{\varepsilon,\alpha}|_{\overline{X}^\pm})
		=-\partial_z \tilde{h}_\varepsilon^\pm\left[
		q^\pm|_{(z_\alpha^\pm,t)}\partial_\rho^2 v_\alpha|_{(\rho_{\varepsilon,\alpha},Z_{\varepsilon,\alpha}^\pm)}+\varepsilon\partial_\rho\hat{v}^{C\pm}_1|_{(\rho_{\varepsilon,\alpha},Z_{\varepsilon,\alpha}^\pm,t)}\right]\!|_{\overline{X}^\pm}\\
		&+\left[
		\partial_z q^\pm|_{(z_\alpha^\pm,t)}\partial_\rho v_\alpha|_{(\rho_{\varepsilon,\alpha},Z_{\varepsilon,\alpha}^\pm)}
		+\frac{1}{\varepsilon}q^\pm|_{(z_\alpha^\pm,t)}\partial_Z\partial_\rho v_\alpha|_{(\rho_{\varepsilon,\alpha},Z_{\varepsilon,\alpha}^\pm)}+
		\partial_Z\hat{v}^{C\pm}_1|_{(\rho_{\varepsilon,\alpha},Z_{\varepsilon,\alpha}^\pm,t)}
		\right]\!|_{\overline{X}^\pm}\notag
		\end{align} 
		for all $(r,z)\in[-\delta_0,\delta_0]\times[0,\tilde{\mu}_0]$. Consider the estimate for $|g_t^\pm(z)|$ after inserting $\phi^A_{\varepsilon,\alpha}$, $\nabla\phi^A_{\varepsilon,\alpha}$ and $\partial_z(\phi^A_{\varepsilon,\alpha}|_{\overline{X}^\pm})$. There are four new critical terms compared to the case $\alpha=\frac{\pi}{2}$, cf.~the estimate of $(III)$ in the proof of \cite{AbelsMoser}, Lemma 4.10. First, from $\partial_z(\phi^A_{\varepsilon,\alpha}|_{\overline{X}^\pm(.,t)})$ there is the contribution
		\[
		\frac{1}{\varepsilon^{3/2}}|q^\pm|_{(z,t)}|
		\left|\int_{-\delta_0}^{\delta_0}\partial_Z\partial_\rho v_\alpha|_{(\rho_{\varepsilon,\alpha}(\overline{X}^\pm(.,t)),\frac{z}{\varepsilon})}[\tilde{\psi}_t^\pm J_t^\pm](r,z)\,dr\right|.
		\]
		Moreover, due to $|\nabla z_\alpha^\pm|^2=1+\Oc(|r|)$ and $\nabla r\cdot\nabla z_\alpha^\pm=-\cos\alpha+\Oc(|r|)$ we get from the $\nabla\phi^A_{\varepsilon,\alpha}$-term the two remainders
		\[
		\frac{1}{\varepsilon^{3/2}}|q^\pm(z,t)|\int_{-\delta_0}^{\delta_0}\left[
		|\partial_Z\partial_\rho v_\alpha|_{(\rho_{\varepsilon,\alpha}(\overline{X}^\pm(.,t)),\frac{z}{\varepsilon})}|
		+|\cos\alpha\,\partial_\rho v_\alpha|_{(\rho_{\varepsilon,\alpha}(\overline{X}^\pm(.,t)),\frac{z}{\varepsilon})}|\right]\! |\tilde{\psi}_t^\pm J_t^\pm|(r,z)\,dr.
		\]
		Finally, the $\nabla r\cdot\nabla z_\alpha^\pm$ multiplied by $\partial_r\tilde{\psi}_t^\pm$ yields the term
		\[
		\frac{1}{\sqrt{\varepsilon}}|q^\pm(z,t)|\int_{-\delta_0}^{\delta_0}
		|\cos\alpha\partial_\rho v_\alpha|_{(\rho_{\varepsilon,\alpha}(\overline{X}^\pm(.,t)),\frac{z}{\varepsilon})}||\partial_r\tilde{\psi}_t^\pm J_t^\pm|(r,z)\,dr.
		\]
		For the $\partial_Z\partial_\rho v_\alpha$-terms we use 
		\[
		|\partial_Z\partial_\rho v_\alpha(\rho,Z)|\leq\overline{C}|\alpha-\frac{\pi}{2}|e^{-\beta_0|\rho|}\quad\text{ for all }(\rho,Z)\in\overline{\R^2_+}
		\] 
		because of Remark \ref{th_asym_ACalpha_decay_param},
		where $\overline{C},\beta_0>0$ are independent of $\Omega,\Gamma,\alpha$. Moreover, we split the last integral with $\partial_r\tilde{\psi}_t^\pm$ as in the proof of Lemma \ref{th_SE_ACalpha_VpxVp}, cf.~\eqref{eq_SE_ACalpha_VpxVp_int_split}, and we use \eqref{eq_SE_ACalpha_VpxVp_2}. Note that with $\hat{\alpha}_0$ and $\tilde{\delta}$ as in the proof of Lemma \ref{th_SE_ACalpha_VpxVp}, the prefactor in \eqref{eq_SE_ACalpha_VpxVp_2} is contained in $[\frac{1}{2},1]$ provided that $\alpha\in\frac{\pi}{2}+[-\hat{\alpha}_0,\hat{\alpha_0}]$ and $|r|\leq 2\tilde{\delta}$. Therefore the Hölder Inequality, \cite{MoserACvACND}, Lemma 6.5, Remark \ref{th_hp_alpha_solN3_rem}, $\frac{1}{2}\leq q^\pm\leq 2$ and $J_t^\pm=1+\Oc(|r|)$ yield
		\[
		|g_t^\pm(z)|\leq(\frac{1}{\varepsilon}\overline{C}_5|\alpha-\frac{\pi}{2}|+C) \left[\|\tilde{\psi}_t^\pm(.,z)\|_{L^2(-\delta_0,\delta_0;J_t^\pm(.,z))}+\varepsilon\|\nabla\psi|_{X^\pm(.,z)}\|_{L^2(-\delta_0,\delta_0;J_t^\pm(.,z))}\right]
		\]
		for a.e.~$z\in[0,\tilde{\mu}_0]$ and some $\overline{C}_5>0$ independent of $\Omega,\Gamma$ and $\alpha\in\frac{\pi}{2}+[-\hat{\alpha}_0,\hat{\alpha}_0]$. Therefore the Hölder Inequality and Young Inequality yield
		\begin{align*}
		|(III)|\leq (\overline{C}_6|\alpha-\frac{\pi}{2}|+C\varepsilon^2)\|a'\|_{L^2(0,\tilde{\mu}_0)}^2+\frac{\nu}{8}\left[\frac{1}{\varepsilon^2}\|\psi\|_{L^2(\Omega_t^{C\pm})}^2+\|\nabla\psi\|_{L^2(\Omega_t^{C\pm})}^2\right]
		\end{align*}
		for some $\overline{C}_6>0$ independent of $\Omega,\Gamma$, where $\nu$ is as in Lemma \ref{th_SE_ACalpha_VpxVp}. The last term is dominated by $\frac{1}{8}B_{\varepsilon,t}^\pm(\psi,\psi)$ due to Lemma \ref{th_SE_ACalpha_VpxVp}. Finally, we can choose $\overline{\alpha}_0>0$ small independent of $\Omega,\Gamma$ such that $\overline{C}_6\overline{\alpha}_0\leq\frac{\overline{c}}{4}$, where $\overline{c}$ is as in Lemma \ref{th_SE_ACalpha_VxV}. This shows the claim.\end{proof}
\end{proof}

Finally, we combine Lemma \ref{th_SE_ACalpha_VxV}-\ref{th_SE_ACalpha_VxVp}.

\begin{Theorem}\label{th_SE_ACalpha_cp2}
	There is an $\overline{\alpha}_0>0$ independent of $\Omega,\Gamma$ such that, if $\alpha\in\frac{\pi}{2}+[-\overline{\alpha}_0,\overline{\alpha}_0]$, then there are $\varepsilon_0, C, c_0>0$ such that for all $\varepsilon\in(0,\varepsilon_0], t\in[0,T]$ and $\psi\in H^1(\Omega_t^{C\pm})$ with $\psi(x)=0$ for a.e.~$x\in\Omega_t^{C\pm}$ with $z_\alpha^\pm(x,t)\geq\hat{\mu}_0$ it holds
	\[
	B_{\varepsilon,t}^\pm(\psi,\psi)\geq -C\|\psi\|_{L^2(\Omega_t^{C\pm})}^2+c_0\varepsilon\|\nabla_\tau\psi\|_{L^2(\Omega_t^{C\pm})}^2.
	\] 
\end{Theorem}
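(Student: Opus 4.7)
The strategy is to combine the three preceding lemmas through the $L^2$-orthogonal decomposition from Lemma \ref{th_SE_ACalpha_split_L2}. First I would take $\overline{\alpha}_0$ to be the minimum of the constants $\hat{\alpha}_0$ from Lemma \ref{th_SE_ACalpha_VpxVp} and $\overline{\alpha}_0$ from Lemma \ref{th_SE_ACalpha_VxVp}, and fix $\varepsilon_0$ small enough for all three lemmas and for Lemma \ref{th_SE_ACalpha_split_L2} to apply simultaneously. Given $\psi$ as in the statement, the cutoff condition $\psi|_{\{z_\alpha^\pm\geq\hat{\mu}_0\}}=0$ guarantees $\psi\in\tilde{H}^1(\Omega_t^{C\pm})$, so Lemma \ref{th_SE_ACalpha_split_L2},~2.~yields a unique splitting $\psi=\phi+\psi^\perp$ with $\phi=a(z_\alpha^\pm(\cdot,t))\phi^A_{\varepsilon,\alpha}(\cdot,t)\in\hat{V}_{\varepsilon,t}^\pm$, $a\in\hat{H}^1(0,\tilde{\mu}_0)$, and $\psi^\perp\in(\hat{V}_{\varepsilon,t}^\pm)^\perp$.

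Bilinearity of $B_{\varepsilon,t}^\pm$ gives $B_{\varepsilon,t}^\pm(\psi,\psi)=B(\phi,\phi)+2B(\phi,\psi^\perp)+B(\psi^\perp,\psi^\perp)$. Applying Lemma \ref{th_SE_ACalpha_VxVp} to the cross term and using Young's inequality $\tfrac{2C}{\varepsilon}\|\phi\|\|\psi^\perp\|\leq\tfrac{\nu}{4\varepsilon^2}\|\psi^\perp\|^2+\tfrac{4C^2}{\nu}\|\phi\|^2$ with $\nu$ from Lemma \ref{th_SE_ACalpha_VpxVp}, together with Lemmas \ref{th_SE_ACalpha_VxV} and \ref{th_SE_ACalpha_VpxVp}, I obtain
\begin{align*}
B_{\varepsilon,t}^\pm(\psi,\psi)\geq -C_1\|\phi\|^2 + \Bigl(\tfrac{\overline{c}}{2}-C\varepsilon^2\Bigr)\|a\|_{H^1(0,\tilde{\mu}_0)}^2 + \tfrac{\nu}{4\varepsilon^2}\|\psi^\perp\|_{L^2(\Omega_t^{C\pm})}^2 + \tfrac{\nu}{2}\|\nabla\psi^\perp\|_{L^2(\Omega_t^{C\pm})}^2.
\end{align*}
For $\varepsilon_0$ small, the $\|a\|_{H^1}^2$-coefficient stays bounded below by $\overline{c}/4$. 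Invoking Lemma \ref{th_SE_ACalpha_split_L2},~1.~to bound $\|\phi\|^2\leq C_1^2\|a\|_{L^2}^2$ and the $L^2$-orthogonal Pythagoras identity $\|\psi\|^2=\|\phi\|^2+\|\psi^\perp\|^2$, I can absorb $-C_1\|\phi\|^2$ as $-C_2\|\psi\|^2$ (the $\|\psi^\perp\|^2$-part is dominated by $\tfrac{\nu}{8\varepsilon^2}\|\psi^\perp\|^2$ for $\varepsilon$ small) to arrive at
\begin{align*}
B_{\varepsilon,t}^\pm(\psi,\psi)\geq -C_2\|\psi\|_{L^2(\Omega_t^{C\pm})}^2 + \tfrac{\overline{c}}{4}\|a\|_{H^1(0,\tilde{\mu}_0)}^2 + \tfrac{\nu}{2}\|\nabla\psi^\perp\|_{L^2(\Omega_t^{C\pm})}^2.
\end{align*}

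It remains to produce the $c_0\varepsilon\|\nabla_\tau\psi\|^2$-term. By $|\nabla_\tau\psi|^2\leq 2|\nabla_\tau\phi|^2+2|\nabla\psi^\perp|^2$, this reduces to estimating $\|\nabla_\tau\phi\|^2$. Using the explicit form of $\phi$ together with $\nabla_\tau\phi=a'(z_\alpha^\pm)\nabla_\tau z_\alpha^\pm\,\phi^A_{\varepsilon,\alpha}+a(z_\alpha^\pm)\nabla_\tau\phi^A_{\varepsilon,\alpha}$ and formula \eqref{eq_SE_ACalpha_nabla_phiA}, the pointwise dominant singular contribution comes from $\tfrac{1}{\varepsilon\sqrt{\varepsilon}}q^\pm\partial_Z\partial_\rho v_\alpha$, leading to the term
\[
\tfrac{1}{\varepsilon^3}\int_0^{\tilde{\mu}_0}a(z)^2\int_{-\delta_0}^{\delta_0}(\partial_Z\partial_\rho v_\alpha)^2|_{(r/\varepsilon-h_{\varepsilon,\alpha}^\pm,z/\varepsilon)}\,J_t^\pm\,dr\,dz
\]
highlighted in Remark \ref{th_SE_ACalpha_rem},~1. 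Performing the transformation from \cite{MoserACvACND}, Lemma 6.5, using the exponential decay of $\partial_Z\partial_\rho v_\alpha$ in $Z$ (Remark \ref{th_hp_alpha_solN3_rem} via $\hat{v}_\alpha\in H^k_{(\beta_0,\gamma_0)}$) and integrating out the $z$-Gaussian against $\|a\|_{L^\infty}^2\leq C\|a\|_{H^1}^2$, I obtain $\|\nabla_\tau\phi\|_{L^2(\Omega_t^{C\pm})}^2\leq\tfrac{C}{\varepsilon}\|a\|_{H^1(0,\tilde{\mu}_0)}^2$. Therefore $c_0\varepsilon\|\nabla_\tau\psi\|^2\leq 2c_0 C\|a\|_{H^1}^2+2c_0\varepsilon\|\nabla\psi^\perp\|^2$, and choosing $c_0$ small enough that $2c_0C\leq\overline{c}/8$ and $2c_0\varepsilon_0\leq\nu/4$ allows the last displayed inequality to absorb the $\nabla_\tau\psi$-term, yielding the desired bound.

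The main obstacle is exactly the $O(1/\varepsilon)$ singularity in $\|\nabla_\tau\phi\|^2$ stemming from $\partial_Z\partial_\rho v_\alpha\neq 0$ for $\alpha\neq\tfrac{\pi}{2}$: this is the structural reason for the extra factor of $\varepsilon$ in the tangential part of the spectral estimate, as emphasized in Remark \ref{th_SE_ACalpha_rem},~1., and it forces one to accept the weaker $c_0\varepsilon$-coefficient instead of the $c_0$-coefficient available in the $\alpha=\tfrac{\pi}{2}$ case of \cite{AbelsMoser}. Everything else is a routine combination; the nontrivial part is verifying that the $\alpha$-dependent constants appearing in Lemmas \ref{th_SE_ACalpha_VpxVp} and \ref{th_SE_ACalpha_VxVp} (both of which required $\overline{\alpha}_0$ independent of $\Omega,\Gamma$) remain compatible with the Young absorption step, but this is ensured by the quantitative statements already proved.
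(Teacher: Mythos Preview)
Your proof is correct and follows essentially the same route as the paper's own argument: both split $\psi=\phi+\phi^\perp$ via Lemma~\ref{th_SE_ACalpha_split_L2}, combine Lemmas~\ref{th_SE_ACalpha_VxV}, \ref{th_SE_ACalpha_VpxVp}, \ref{th_SE_ACalpha_VxVp} (with Young's inequality on the cross term) to obtain the intermediate lower bound, and then recover the $c_0\varepsilon\|\nabla_\tau\psi\|^2$-term by showing $\|\nabla_\tau\phi\|^2\leq C\varepsilon^{-1}\|a\|_{H^1}^2$, where the critical $\varepsilon^{-1}$ arises precisely from the $\partial_Z\partial_\rho v_\alpha$-contribution. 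Two minor points of presentation: the inequality $|\nabla_\tau\psi|^2\leq 2|\nabla_\tau\phi|^2+2|\nabla\psi^\perp|^2$ needs an intermediate step $|\nabla_\tau\psi^\perp|\leq C|\nabla\psi^\perp|$ (via the coordinate bounds from Theorem~\ref{th_coord2D}), and the paper computes $\partial_s(\phi^A_{\varepsilon,\alpha}\circ\overline{X})$ directly rather than going through \eqref{eq_SE_ACalpha_nabla_phiA}, but these do not affect the substance.
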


\begin{Remark}\upshape\begin{enumerate}
		\item In the proof below the estimate is slightly better.
		\item Theorem \ref{th_SE_ACalpha_cp2} directly implies Theorem \ref{th_SE_ACalpha_cp}, cf.~the inception of Section \ref{sec_SE_ACalpha_outline}.
	\end{enumerate}
\end{Remark}

\begin{proof}[Proof of Theorem \ref{th_SE_ACalpha_cp2}]
	Let $t\in[0,T]$ and $\psi\in\tilde{H}^1(\Omega_t^{C\pm})$. Due to Lemma \ref{th_SE_ACalpha_split_L2} we can uniquely write
	\[
	\psi=\phi+\phi^\perp\quad\text{ with } \phi=[a(z_\alpha^\pm)\phi^A_{\varepsilon,\alpha}]|_{(.,t)}\in \hat{V}_{\varepsilon,t}^\pm\text{ and }
	\phi^\perp\in(\hat{V}_{\varepsilon,t}^\pm)^\perp.
	\]
	Analogously to the case $\alpha=\frac{\pi}{2}$, cf.~the proof of \cite{AbelsMoser}, Theorem 4.11 we obtain from
	Lemma \ref{th_SE_ACalpha_VxV}, Lemma \ref{th_SE_ACalpha_VxVp} and Lemma \ref{th_SE_ACalpha_VpxVp} that there are $C,\varepsilon_0>0$ independent of $\psi,\varepsilon,t$ such that for all $\varepsilon\in(0,\varepsilon_0]$ it holds
	\[
	B_{\varepsilon,t}^\pm(\psi,\psi)\geq-C\|\phi\|_{L^2(\Omega_t^{C\pm})}^2+\frac{\nu}{4\varepsilon^2}\|\phi^\perp\|_{L^2(\Omega_t^{C\pm})}^2+\frac{\overline{c}}{4}\|a\|_{H^1(0,\tilde{\mu}_0)}^2+\frac{\nu}{2}\|\nabla(\phi^\perp)\|_{L^2(\Omega_t^{C\pm})}^2.
	\]
	
	It remains to include the $\nabla_\tau\psi$-term in the estimate. By the triangle inequality we have 
	\[
	\|\nabla_\tau\psi\|_{L^2(\Omega_t^{C\pm})}\leq\|\nabla_\tau\phi\|_{L^2(\Omega_t^{C\pm})}+\|\nabla_\tau(\phi^\perp)\|_{L^2(\Omega_t^{C\pm})}.
	\] 
	Theorem \ref{th_coord2D} yields $\|\nabla_\tau(\phi^\perp)\|_{L^2(\Omega_t^{C\pm})}\leq C\|\nabla(\phi^\perp)\|_{L^2(\Omega_t^{C\pm})}$. Moreover, by definition 
	\[
	\nabla_\tau\psi|_{X(r,s,t)}=
	\nabla s|_{\overline{X}(r,s,t)}\partial_s(\phi|_{X(r,s,t)})
	=\nabla s|_{\overline{X}(r,s,t)}\partial_s(a(z_\alpha^\pm|_{X(r,s,t)})\phi^A_{\varepsilon,\alpha}|_{\overline{X}(r,s,t)}).
	\]
	Note that $\partial_s(z_\alpha^\pm|_{\overline{X}(r,s,t)})=\mp\sin\alpha$ due to the definition \eqref{eq_asym_ACalpha_cp_zalpha} of $z_\alpha^\pm$ and therefore
	\begin{align*}
	\sqrt{\varepsilon}\partial_s(\phi^A_{\varepsilon,\alpha}|_{\overline{X}})
	&=(\mp\sin\alpha)\partial_zq^\pm(z_\alpha^\pm|_{\overline{X}},t)\partial_\rho v_\alpha(\rho_{\varepsilon,\alpha},Z_{\varepsilon,\alpha}^\pm)|_{\overline{X}}\\
	&+q^\pm(z_\alpha^\pm|_{\overline{X}},t)\left[
	-\partial_sh_{\varepsilon,\alpha}\partial_\rho^2v_\alpha(\rho_{\varepsilon,\alpha},Z_{\varepsilon,\alpha}^\pm)|_{\overline{X}} +\frac{\mp\sin\alpha}{\varepsilon}\partial_Z
	\partial_\rho v_\alpha
	(\rho_{\varepsilon,\alpha},Z_{\varepsilon,\alpha}^\pm)|_{\overline{X}}
	\right]\\
	&+\varepsilon\left[
	-\partial_sh_{\varepsilon,\alpha}\partial_\rho \hat{v}^{C\pm}_1(\rho_{\varepsilon,\alpha},Z_{\varepsilon,\alpha}^\pm,t)|_{\overline{X}}
	+\frac{\mp\sin\alpha}{\varepsilon}\partial_Z \hat{v}^{C\pm}_1
	(\rho_{\varepsilon,\alpha},Z_{\varepsilon,\alpha}^\pm,t)|_{\overline{X}}
	\right],
	\end{align*} 
	where the $h_{\varepsilon,\alpha}$-terms are evaluated at $(s,t)$. We estimate all appearing terms in $\|\nabla_\tau\phi\|_{L^2(\Omega_t^{C\pm})}^2$ using several times $(d+\tilde{d})^2\leq 2(d^2+\tilde{d}^2)$ for $d,\tilde{d}\geq 0$. All terms are multiplied by a $\frac{1}{\varepsilon}$-factor (or better) except the $\frac{1}{\varepsilon^3}|\partial_Z\partial_\rho v_\alpha|^2$-term. Let us first estimate all terms except the latter one. We transform to $(r,z)$-coordinates, use the Fubini Theorem and \cite{MoserACvACND}, Lemma 6.5. Then these terms are controlled by $C\|a\|_{H^1(0,\tilde{\mu}_0)}^2$. Now we consider the $\partial_Z\partial_\rho v_\alpha$-term. The latter is estimated by
	\begin{align}\label{eq_SE_ACalpha_cp2}
	C\frac{1}{\varepsilon^3}\int_0^{\tilde{\mu}_0}a^2(z)\int_{-\delta_0}^{\delta_0}|\partial_Z\partial_\rho v_\alpha(\rho_{\varepsilon,\alpha}|_{\overline{X}^\pm(r,z,t)},\frac{z}{\varepsilon})|^2\,dr\,dz.
	\end{align}
	We use $|a(z)|\leq C\|a\|_{H^1(0,\tilde{\mu}_0)}$ for all $z\in[0,\tilde{\mu}_0]$ due to the Fundamental Theorem and
	\[
	|\partial_Z\partial_\rho v_\alpha(\rho,Z)|\leq Ce^{-\beta_0|\rho|-\gamma_0Z}\quad\text{ for all }(\rho,Z)\in\overline{\R^2_+}
	\]
	because of Remark \ref{th_asym_ACalpha_decay_param}. Therefore \cite{MoserACvACND}, Lemma 6.5, for the inner integral and another scaling argument for the $z$-integral yields that \eqref{eq_SE_ACalpha_cp2} is estimated by $C\frac{1}{\varepsilon}\|a\|_{H^1(0,\tilde{\mu}_0)}^2$. Finally, combining this with the previous estimate for $B_{\varepsilon,t}^\pm$ this yields the claim.\end{proof}

\section{Difference Estimates and Proofs of the Convergence Theorems}
\label{sec_DC}
In this section we use a Gronwall-type argument in order to control the difference of the exact and approximate solutions for \hyperlink{ACalpha}{(AC$_\alpha$)}. In the last Section \ref{sec_SE_ACalpha} we proved a spectral estimate as a preparation. Additionally, we have to estimate some nonlinear terms due to terms involving the potential. Therefore as preparation we remark a uniform a priori bound for exact classical solutions of \hyperlink{ACalpha}{(AC$_\alpha$)} in Section \ref{sec_DC_prelim_bdd_scal}. Moreover, we note some Gagliardo-Nirenberg estimates in Section \ref{sec_DC_prelim_GN}. Then in Section \ref{sec_DC_ACalpha} we prove a difference estimate and we show Theorem \ref{th_ACalpha_conv}.

\subsection{Preliminaries}\label{sec_DC_prelim}
\subsubsection{Uniform A Priori Bound for Classical Solutions of (AC$_\alpha$)}\label{sec_DC_prelim_bdd_scal}
Let $\Omega$, $Q_T$, $\partial Q_T$ be as in Remark \ref{th_intro_coord},~1.~and $\varepsilon>0$. We show uniform bounds for classical solutions of \hyperlink{ACalpha}{(AC$_\alpha$)}. Let $f$ satisfy \eqref{eq_AC_fvor1} and $R_0\geq 1$ such that the condition \eqref{eq_AC_fvor2} for $f'$ is fulfilled. Moreover, let $\alpha\in(0,\pi)$ and $\sigma_\alpha:\R\rightarrow\R$ be smooth with $\textup{supp}\,\sigma_\alpha'\subset(-1,1)$.
\begin{Lemma}\label{th_DC_bdd_scal}
	Let $u_{0,\varepsilon,\alpha}\in C^0(\overline{\Omega})$ and $u_{\varepsilon,\alpha}\in 
	C^0(\overline{Q_T})\cap C^1(\overline{\Omega}\times(0,T])\cap C^2(\Omega\times(0,T])$ be a solution of \eqref{eq_ACalpha1}-\eqref{eq_ACalpha3}. Then 
	\[
	\|u_{\varepsilon,\alpha}\|_{L^\infty(Q_T)}\leq\max\{R_0,\|u_{0,\varepsilon,\alpha}\|_{L^\infty(\Omega)}\}.
	\]
\end{Lemma}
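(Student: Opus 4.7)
Set $M_0 := \max\{R_0, \|u_{0,\varepsilon,\alpha}\|_{L^\infty(\Omega)}\}$. Since $R_0 \geq 1$ we have $M_0 \geq 1$, so the set $\{u\in\R : |u|>M_0\}$ lies outside both $\supp\sigma_\alpha' \subset (-1,1)$ and $[-R_0,R_0]$. My plan is a Stampacchia-type energy estimate at the two levels $\pm M_0$: on the super-level set $\{u_{\varepsilon,\alpha}>M_0\}$ the boundary reaction term vanishes identically and the bulk reaction term has the right sign, and symmetrically on $\{u_{\varepsilon,\alpha}<-M_0\}$.

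For the upper bound I would consider
\[
E(t) := \int_\Omega \bigl(u_{\varepsilon,\alpha}(x,t)-M_0\bigr)_+^2\,dx,
\]
which is continuous on $[0,T]$ thanks to $u_{\varepsilon,\alpha}\in C^0(\overline{Q_T})$ and satisfies $E(0)=0$ by the choice of $M_0$. For $t\in(0,T]$ I differentiate under the integral (allowed by $u_{\varepsilon,\alpha}\in C^1(\overline{\Omega}\times(0,T])$), substitute \eqref{eq_ACalpha1}, and integrate by parts in the Laplacian term; using $\nabla(u_{\varepsilon,\alpha}-M_0)_+ = \chi_{\{u_{\varepsilon,\alpha}>M_0\}}\nabla u_{\varepsilon,\alpha}$ together with \eqref{eq_ACalpha2} on the boundary this gives
\[
E'(t) = -2\!\int_\Omega\!\bigl|\nabla(u_{\varepsilon,\alpha}-M_0)_+\bigr|^2dx -\frac{2}{\varepsilon}\!\int_{\partial\Omega}\!(u_{\varepsilon,\alpha}-M_0)_+\sigma_\alpha'(u_{\varepsilon,\alpha})\,d\Hc^1 -\frac{2}{\varepsilon^2}\!\int_\Omega\!(u_{\varepsilon,\alpha}-M_0)_+ f'(u_{\varepsilon,\alpha})\,dx.
\]
On $\{u_{\varepsilon,\alpha}>M_0\}$ we have $u_{\varepsilon,\alpha}>1$, so $\sigma_\alpha'(u_{\varepsilon,\alpha})=0$ and the boundary integral vanishes; and $u_{\varepsilon,\alpha}>R_0$ yields $f'(u_{\varepsilon,\alpha})\geq 0$ by \eqref{eq_AC_fvor2}, so the last integral is nonnegative. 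Hence $E'(t)\leq 0$ on $(0,T]$, which combined with $E(0)=0$ and continuity forces $E\equiv 0$, i.e.\ $u_{\varepsilon,\alpha}\leq M_0$ throughout $\overline{Q_T}$.

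The lower bound $u_{\varepsilon,\alpha}\geq -M_0$ follows by applying the identical argument to $w:=-u_{\varepsilon,\alpha}$, which solves a problem of the same structure with $(f(u),\sigma_\alpha(u))$ replaced by $(f(-w),\sigma_\alpha(-w))$; both structural conditions \eqref{eq_AC_fvor2} and the support condition on the boundary density are preserved under $u\mapsto -w$, so the previous argument yields $w\leq M_0$. The one non-routine point, and what I expect to be the main technical obstacle, is justifying the integration by parts against the merely Lipschitz (not $C^1$) function $(u_{\varepsilon,\alpha}-M_0)_+$; I would handle this in standard fashion by replacing $(\cdot)_+^2$ with a convex smooth family $\Phi_\eta\in C^2(\R)$ that vanishes on $(-\infty,0]$ and converges monotonically to $(\cdot)_+^2$ as $\eta\to 0^+$, applying the classical divergence theorem to $\Phi_\eta'(u_{\varepsilon,\alpha}-M_0)\nabla u_{\varepsilon,\alpha}$, and passing to the limit using the uniform bound on $\nabla u_{\varepsilon,\alpha}$ on $\overline{\Omega}\times[t_1,T]$ for each $t_1>0$ together with dominated convergence.
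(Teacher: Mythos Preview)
Your argument is correct. The paper's proof, however, proceeds differently: it argues by contradiction using the parabolic weak maximum principle (the details are only sketched in the paper and referred to \cite{MoserDiss}, Lemma~7.1). Roughly, if the maximum of $u_{\varepsilon,\alpha}$ exceeded $M_0$ it would be attained at some $(x_0,t_0)$ with $t_0>0$; at an interior maximum the equation forces $\partial_t u\le 0$ while the sign condition on $f'$ and $\Delta u\le 0$ already give $\partial_t u\le 0$ with the usual auxiliary $e^{-\lambda t}$ trick to get strictness, and at a boundary maximum the Robin condition reduces to a Neumann one because $\sigma_\alpha'(u(x_0,t_0))=0$, so Hopf-type reasoning applies.

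Your Stampacchia/energy approach is a genuinely different and equally valid route. It has the advantage of being entirely quantitative and of not relying on pointwise maximum-principle machinery (in particular no Hopf lemma at the boundary), which makes it more portable to weak-solution or vector-valued settings. The price is the smoothing step you flagged for the Lipschitz truncation $(u-M_0)_+$; your outlined fix via a $C^2$ convex approximation $\Phi_\eta$ is standard and works here since $u_{\varepsilon,\alpha}\in C^1(\overline{\Omega}\times(0,T])$ guarantees the needed dominated-convergence control. One minor point worth noting: the hypothesis only gives $C^2$ in the open set $\Omega\times(0,T]$, not up to $\partial\Omega$, so the integration-by-parts step strictly speaking also needs an exhaustion $\Omega_\eta\nearrow\Omega$ by smooth subdomains; this is routine and combines cleanly with your $\Phi_\eta$ smoothing.
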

\begin{proof}
	One can prove this via contradiction and ideas from the proof of the parabolic weak maximum principle. See \cite{MoserDiss}, Lemma 7.1 for details.
\end{proof}

\subsubsection{Gagliardo-Nirenberg Inequalities}\label{sec_DC_prelim_GN}
We remark some Gagliardo-Nirenberg estimates. 
\begin{Lemma}[\textbf{Gagliardo-Nirenberg Inequality}]\label{th_DC_GN}
	Let $n\in\N$, $1\leq p,q,r\leq\infty$ and $\theta\in[0,1]$ be such that
	\[
	\theta\left(\frac{1}{p}-\frac{1}{n}\right)+\frac{1-\theta}{q}=\frac{1}{r},
	\]
	where $\frac{1}{\infty}:=0$. Moreover, if $p=n>1$, then let $\theta<1$. Then it holds
	\[
	\|u\|_{L^r(\R^n)}\leq c\|u\|_{L^q(\R^n)}^{1-\theta}\|\nabla u\|_{L^p(\R^n)}^{\theta}
	\]
	for all $u\in L^q(\R^n)\cap W^{1,p}(\R^n)$ and some constant $c=c(n,p,q,r)>0$.
\end{Lemma}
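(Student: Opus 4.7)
The plan is to treat this as a classical result and reduce it to two standard ingredients, rather than reproving it in detail: a reference such as Nirenberg's original paper or Leoni's textbook on Sobolev spaces is what I would ultimately cite. Nevertheless, here is the structure I would outline.

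First, I would establish the endpoint Sobolev embedding $W^{1,p}(\R^n) \hookrightarrow L^{p^*}(\R^n)$ for $1 \leq p < n$ with $p^* := np/(n-p)$, i.e.\ the case $\theta = 1, r = p^*$. The base case $p=1$ is the Gagliardo-Nirenberg-Sobolev inequality and follows from the Loomis-Whitney inequality: write
\[
|u(x)|^{n/(n-1)} \leq \prod_{i=1}^n \Bigl(\int_{\R}|\partial_i u|(x_1,\dots,x_{i-1},\cdot,x_{i+1},\dots,x_n)\,dx_i\Bigr)^{1/(n-1)}
\]
and integrate successively in each variable, using the generalized Hölder inequality. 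The case $1 < p < n$ is obtained by applying the $p=1$ estimate to $|u|^{\gamma}$ for $\gamma := p(n-1)/(n-p) > 1$ and then invoking Hölder's inequality to absorb the extra factor of $|u|^{\gamma-1}$ on the right-hand side. For $p > n$ one uses Morrey's embedding into $C_b^0(\R^n) \hookrightarrow L^\infty(\R^n)$, and for $p = n > 1$ one has $W^{1,n}(\R^n) \hookrightarrow L^s(\R^n)$ for every $n \leq s < \infty$ but not for $s = \infty$, which is precisely why the hypothesis $\theta < 1$ is imposed in that case.

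Second, I would interpolate. Given the above embeddings, for $\theta \in (0,1)$ the scaling identity
\[
\frac{1}{r} = \theta\Bigl(\frac{1}{p} - \frac{1}{n}\Bigr) + \frac{1-\theta}{q}
\]
says that $1/r$ lies on the segment between $1/q$ and $1/p^*$ (respectively $0$ or a suitable endpoint in the borderline cases), with barycentric coordinates $1-\theta$ and $\theta$. The standard $L^p$-interpolation inequality then gives
\[
\|u\|_{L^r(\R^n)} \leq \|u\|_{L^q(\R^n)}^{1-\theta}\,\|u\|_{L^{p^*}(\R^n)}^{\theta},
\]
and combining with the Sobolev estimate on $\|u\|_{L^{p^*}}$ (or $\|u\|_{L^\infty}$ when $p > n$, or $\|u\|_{L^{\tilde s}}$ for a suitably large $\tilde s$ when $p = n$) yields the claim. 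The cases $\theta \in \{0,1\}$ are trivial or were already treated in the first step.

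There is no real obstacle here beyond the usual bookkeeping between the subcritical, critical, and supercritical cases for $p$ versus $n$; the condition $p = n \Rightarrow \theta < 1$ is exactly what rules out the failure of the $W^{1,n} \hookrightarrow L^\infty$ embedding. In the actual paper I would simply cite Nirenberg's original paper (or Leoni) and skip the derivation.
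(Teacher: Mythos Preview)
Your proposal is correct and in fact anticipates exactly what the paper does: the paper's proof consists of a single citation to Leoni, Theorem 12.83, with no further argument. Your sketch of the standard Sobolev-plus-interpolation route is sound, but since you already plan to cite Leoni, you are aligned with the paper's treatment.
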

\begin{proof}
	Cf.~Leoni \cite{Leoni}, Theorem 12.83.
\end{proof}

\begin{Remark}\label{th_DC_GN_rem}\upshape
	Using appropriate extension operators the estimate in Lemma \ref{th_DC_GN} can be transferred to domains with uniform Lipschitz boundary provided that $\|\nabla u\|_{L^p(\R^n)}$ in the estimate is replaced by $\|u\|_{W^{1,p}(\R^n)}$ and the constant in the estimate additionally depends on the operator norm of the extension operator. For such operators cf.~Leoni \cite{Leoni}, Theorem 13.8 and Theorem 13.17. We remark that the operator norms in \cite{Leoni} are controlled just via the standard quantities and the geometrical parameters of $\Omega$ and $\partial\Omega$. Therefore if the latter can be estimated uniformly, the operator norms and the constants in the above Gagliardo-Nirenberg estimates can be chosen uniformly with respect to the domain $\Omega$.
\end{Remark}

\subsection[Difference Estimate and Proof of the Convergence Thm. for (AC$_\alpha$) in 2D]{Difference Estimate and Proof of the Convergence Theorem for (AC$_\alpha$) in 2D}\label{sec_DC_ACalpha}
We show in Section \ref{sec_DC_ACalpha_DE} the difference estimate for exact solutions and appropriate approximate solutions for the Allen-Cahn equation with nonlinear Robin-boundary condition \eqref{eq_ACalpha1}-\eqref{eq_ACalpha3} in 2D. In Section \ref{sec_DC_ACalpha_conv} we prove the main Theorem \ref{th_ACalpha_conv} by checking the requirements for the difference estimate for the approximate solution from Section \ref{sec_asym_ACalpha_uA}.

\subsubsection{Difference Estimate}\label{sec_DC_ACalpha_DE}
\begin{Theorem}[\textbf{Difference Estimate for (AC$_\alpha$)}]\label{th_DC_ACalpha}
	Let $\Omega$, $Q_T$ and $\partial Q_T$ be as in Remark \ref{th_intro_coord},~1. Let $\Gamma=(\Gamma_t)_{t\in[0,T_0]}$ for some $T_0>0$ be as in Section \ref{sec_coord2D} with contact angle $\alpha\in(0,\pi)$ and $\delta>0$ be such that Theorem \ref{th_coord2D} holds with $\delta$ replaced by $2\delta$. Moreover, let $\Gamma_t(\delta)$, $\Gamma(\delta)$, $\nabla_\tau$ and $\partial_n$ be as in Remark \ref{th_coord2D_rem2}. Let $f$ fulfill \eqref{eq_AC_fvor1}-\eqref{eq_AC_fvor2} and $\sigma_\alpha$ for $\alpha\in(0,\pi)$ be as in Definition \ref{th_ACalpha_sigma_def}.
	
	Let $\varepsilon_0>0$, $u^A_{\varepsilon,\alpha}\in C^2(\overline{Q_{T_0}})$ and $u_{0,\varepsilon,\alpha}\in C^2(\overline{\Omega})$ with $\partial_{N_{\partial\Omega}} u_{0,\varepsilon,\alpha}+\frac{1}{\varepsilon}\sigma_\alpha'(u_{0,\varepsilon,\alpha})=0$ on $\partial\Omega$ and $u_{\varepsilon,\alpha}\in C^2(\overline{Q_{T_0}})$ be exact solutions to \eqref{eq_ACalpha1}-\eqref{eq_ACalpha3} with $u_{0,\varepsilon,\alpha}$ in \eqref{eq_ACalpha3},  $\varepsilon\in(0,\varepsilon_0]$. 
	
	For a $R>0$, $M\in\N, M\geq 3$ and some $\delta_0\in(0,\delta]$ we require the following conditions:
	\begin{enumerate}
		\item \textup{Uniform Boundedness:} $\sup_{\varepsilon\in(0,\varepsilon_0]}\|u^A_{\varepsilon,\alpha}\|_{L^\infty(Q_{T_0})}+\|u_{0,\varepsilon,\alpha}\|_{L^\infty(\Omega)}<\infty$.
		\item \textup{Spectral Estimate:} There are $c_0,C>0$ such that
		\begin{align*}
		\int_\Omega|\nabla\psi|^2
		&+\frac{1}{\varepsilon^2}f''(u^A_{\varepsilon,\alpha}|_{(.,t)})\psi^2\,dx
		+\int_{\partial\Omega}\frac{1}{\varepsilon}\sigma_\alpha''(u^A_{\varepsilon,\alpha}|_{(.,t)})(\tr\,\psi)^2\,d\Hc^1\\
		&\geq -C\|\psi\|_{L^2(\Omega)}^2+\|\nabla\psi\|_{L^2(\Omega\setminus\Gamma_t(\delta_0))}^2+c_0\varepsilon\|\nabla_\tau\psi\|_{L^2(\Gamma_t(\delta_0))}^2
		\end{align*}
		for all $\psi\in H^1(\Omega)$ and $\varepsilon\in(0,\varepsilon_0],t\in[0,T_0]$.
		\item \textup{Approximate Solution:} The remainders 
		\[
		r^A_{\varepsilon,\alpha}:=\partial_t u^A_{\varepsilon,\alpha}-\Delta u^A_{\varepsilon,\alpha}+\frac{1}{\varepsilon^2}f'(u^A_{\varepsilon,\alpha})\quad\text{ and }\quad s^A_{\varepsilon,\alpha}:=\partial_{N_{\partial\Omega}}u^A_{\varepsilon,\alpha}+\frac{1}{\varepsilon}\sigma_\alpha'(u^A_{\varepsilon,\alpha})
		\]
		in \eqref{eq_ACalpha1}-\eqref{eq_ACalpha2} for $u^A_{\varepsilon,\alpha}$ and the difference $\overline{u}_{\varepsilon,\alpha}:=u_{\varepsilon,\alpha}-u^A_{\varepsilon,\alpha}$ satisfy
		\begin{align}\begin{split}\label{eq_DC_ACalpha_uA}
		&\left|\int_{\partial\Omega}s^A_{\varepsilon,\alpha}\tr\,\overline{u}_{\varepsilon,\alpha}(t)\,d\Hc^1+\int_\Omega r^A_{\varepsilon,\alpha}\overline{u}_{\varepsilon,\alpha}(t)\,dx\right|\\
		&\leq C\varepsilon^{M+\frac{1}{2}}(\|\overline{u}_{\varepsilon,\alpha}(t)\|_{L^2(\Omega)}+\|\nabla_\tau\overline{u}_{\varepsilon,\alpha}(t)\|_{L^2(\Gamma_t(\delta_0))}+\|\nabla\overline{u}_{\varepsilon,\alpha}(t)\|_{L^2(\Omega\setminus\Gamma_t(\delta_0))})
		\end{split}
		\end{align}
		for all $\varepsilon\in(0,\varepsilon_0]$ and $T\in(0,T_0]$.
		\item \textup{Well-Prepared Initial Data:} For all $\varepsilon\in(0,\varepsilon_0]$ we have
		\begin{align}\label{eq_DC_ACalpha_u0} \|u_{0,\varepsilon,\alpha}-u^A_{\varepsilon,\alpha}|_{t=0}\|_{L^2(\Omega)}\leq R\varepsilon^M.
		\end{align}
	\end{enumerate} 
	Then the following assertions hold.
	\begin{enumerate}
		\item Let $M>3$. Then there exist $\beta,\varepsilon_1>0$ such that for $g_\beta(t):=e^{-\beta t}$ we have
		\begin{align}
		\begin{split}
		\sup_{t\in[0,T]}\|(g_\beta\overline{u}_{\varepsilon,\alpha})(t)\|_{L^2(\Omega)}^2+\|g_\beta\nabla\overline{u}_{\varepsilon,\alpha}\|_{L^2(Q_T\setminus\Gamma(\delta_0))}^2&\leq 2R^2\varepsilon^{2M},\\
		c_0\varepsilon\|g_\beta\nabla_\tau\overline{u}_{\varepsilon,\alpha}\|^2_{L^2(Q_T\cap\Gamma(\delta_0))}+\varepsilon^2\|g_\beta\partial_n\overline{u}_{\varepsilon,\alpha}\|^2_{L^2(Q_T\cap\Gamma(\delta_0))}&\leq 2R^2\varepsilon^{2M}\label{eq_DC_ACalpha_DE}\end{split}
		\end{align}
		for all $\varepsilon\in(0,\varepsilon_1]$ and $T\in(0,T_0]$.
		\item Let $M=3$ and \eqref{eq_DC_ACalpha_uA} hold for some $\tilde{M}>M$ instead of $M$. Then there are $\beta,\tilde{R},\varepsilon_1>0$ such that, provided that \eqref{eq_DC_ACalpha_u0} is valid for $\tilde{R}$ instead of $R$, then $\eqref{eq_DC_ACalpha_DE}$ for $\tilde{R}$ instead of $R$ holds for all $\varepsilon\in(0,\varepsilon_1], T\in(0,T_0]$. 
		\item Let $M=3$. Then there exist $\varepsilon_1,T_1>0$ such that $\eqref{eq_DC_ACalpha_DE}$ is true for $\beta=0$ and for all $\varepsilon\in(0,\varepsilon_1], T\in(0,T_1]$.
	\end{enumerate}
\end{Theorem}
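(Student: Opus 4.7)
The strategy is the classical Gronwall--bootstrap argument of de Mottoni--Schatzman, adapted to the present setting with a nonlinear Robin boundary condition. I would start from the equation for the difference $\overline{u}:=u_{\varepsilon,\alpha}-u^A_{\varepsilon,\alpha}$, which in $Q_{T_0}$ reads
\begin{align*}
\partial_t\overline{u}-\Delta\overline{u}+\tfrac{1}{\varepsilon^2}\bigl[f'(u_{\varepsilon,\alpha})-f'(u^A_{\varepsilon,\alpha})\bigr]=-r^A_{\varepsilon,\alpha},\qquad
\partial_{N_{\partial\Omega}}\overline{u}+\tfrac{1}{\varepsilon}\bigl[\sigma_\alpha'(u_{\varepsilon,\alpha})-\sigma_\alpha'(u^A_{\varepsilon,\alpha})\bigr]=-s^A_{\varepsilon,\alpha}.
\end{align*}
Testing with $g_\beta^2\overline{u}$ for $g_\beta(t):=e^{-\beta t}$ and integrating by parts gives an identity of the form
\[
\tfrac{1}{2}\tfrac{d}{dt}\|g_\beta\overline{u}\|_{L^2(\Omega)}^2+\beta\|g_\beta\overline{u}\|_{L^2(\Omega)}^2+g_\beta^2 Q_{\varepsilon,t}(\overline{u},\overline{u})=g_\beta^2\bigl[\,(\text{nonlinear remainder})+(\text{residual tested against }\overline{u})\,\bigr],
\]
where $Q_{\varepsilon,t}$ denotes the bilinear form obtained after Taylor-expanding $f'(u_{\varepsilon,\alpha})=f'(u^A_{\varepsilon,\alpha})+f''(u^A_{\varepsilon,\alpha})\overline{u}+O(\overline{u}^2)$ and similarly for $\sigma_\alpha'$. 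The Taylor remainders are genuinely cubic in $\overline{u}$; by Lemma~\ref{th_DC_bdd_scal} applied to both $u_{\varepsilon,\alpha}$ and by hypothesis~1 applied to $u^A_{\varepsilon,\alpha}$, the coefficients of $\overline{u}^k$ are uniformly bounded in $\varepsilon,t$, so the remainder is pointwise $O(|\overline{u}|^3)$.

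To the linearized quadratic form $Q_{\varepsilon,t}(\overline{u},\overline{u})$ I would apply the spectral estimate of hypothesis~2; combined with the residual bound \eqref{eq_DC_ACalpha_uA} of hypothesis~3 and Young's inequality (to absorb the right-hand side into the coercive gradient terms), this yields
\[
\tfrac{1}{2}\tfrac{d}{dt}\|g_\beta\overline{u}\|_{L^2}^2+(\beta-C)\|g_\beta\overline{u}\|_{L^2}^2+\tfrac{1}{2}\mathcal{D}(\overline{u})\le C\varepsilon^{2M}+g_\beta^2\mathcal{R}_{\mathrm{nl}}(\overline{u}),
\]
where $\mathcal{D}(\overline{u}):=\|\nabla\overline{u}\|_{L^2(\Omega\setminus\Gamma_t(\delta_0))}^2+c_0\varepsilon\|\nabla_\tau\overline{u}\|_{L^2(\Gamma_t(\delta_0))}^2$ is the dissipation furnished by the spectral estimate and $\mathcal{R}_{\mathrm{nl}}(\overline{u})=O(\varepsilon^{-2}\|\overline{u}\|_{L^3}^3+\varepsilon^{-1}\|\tr\,\overline{u}\|_{L^3(\partial\Omega)}^3)$. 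The nonlinear term is where the main obstacle lies: it has to be controlled by the dissipation $\mathcal{D}$ plus a multiple of $\|\overline{u}\|_{L^2}^2$ with a coefficient that remains small as long as $\overline{u}=O(\varepsilon^M)$ in an appropriate bootstrap sense. This is handled by the 2D Gagliardo--Nirenberg interpolation $\|v\|_{L^3(\Omega)}^3\le C\|v\|_{L^2}^2(\|v\|_{L^2}+\|\nabla v\|_{L^2})$ of Lemma~\ref{th_DC_GN} (with the observation from Remark~\ref{th_DC_GN_rem} that the constants are uniform in $\varepsilon$), together with the analogous trace interpolation from Lemma~\ref{th_SE_ACalpha_intpol_tr1}. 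Under the bootstrap assumption $\|\overline{u}\|_{L^2}^2+\mathcal{D}(\overline{u})\le 2R^2\varepsilon^{2M}$, the nonlinear contribution becomes $O(\varepsilon^{-2}\cdot\varepsilon^{2M}\cdot\varepsilon^{M-1})=O(\varepsilon^{3M-3})$, which is $\le C\varepsilon^{2M}$ exactly when $M\ge 3$ and strictly smaller when $M>3$.

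With this bootstrap in hand, choosing $\beta>C$ and integrating in time gives the $L^2$ and interface dissipation bounds \eqref{eq_DC_ACalpha_DE} for $M>3$ (part~1) by a standard continuity argument; for $M=3$ one either takes $R$ proportional to the smallness margin in the bootstrap (part~2) or confines the time interval so that $\int_0^T\!\|\overline{u}\|_{L^2}$-terms remain small (part~3). The final step is to upgrade the estimate to the weighted normal derivative bound $\varepsilon^2\|g_\beta\partial_n\overline{u}\|_{L^2(Q_T\cap\Gamma(\delta_0))}^2\le 2R^2\varepsilon^{2M}$: this I would obtain by multiplying the PDE by $\varepsilon^2\overline{u}$ in the interface region and using the curvilinear coordinate representation of $\Delta$ from Theorem~\ref{th_coord2D} and Remark~\ref{th_coord2D_rem2}, where $\Delta=\partial_n^2+$ (tangential and lower-order terms), so that the already controlled quantities $\|\overline{u}\|_{L^2}$, $\varepsilon\|\nabla_\tau\overline{u}\|_{L^2}^2$, $\varepsilon^{-2}\int f''(u^A)\overline{u}^2$ and the residual directly yield the desired $\varepsilon^2\|\partial_n\overline{u}\|_{L^2}^2$ control, exactly as in the $\alpha=\pi/2$ case of \cite{AbelsMoser}. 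The main obstacle throughout is the delicate balance in the bootstrap when $M=3$: one must track the nonlinear constants carefully enough that the induction step closes with a strict improvement (either in size or in time), which is why the three cases in the statement require different hypotheses.
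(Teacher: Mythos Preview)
Your plan is correct and follows the same overall route as the paper: derive the equation for $\overline{u}$, test with $g_\beta^2\overline{u}$, Taylor-expand $f'$ and $\sigma_\alpha'$, apply the spectral estimate, control the cubic remainders via Gagliardo--Nirenberg, and close a bootstrap with the continuity argument, then split into the three cases for $M$.

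Two points where the paper proceeds a bit differently and more simply than you sketch:
\begin{itemize}
\item For the $\varepsilon^2\|g_\beta\partial_n\overline{u}\|_{L^2(Q_T\cap\Gamma(\delta_0))}^2$ bound, the paper does \emph{not} test the PDE separately in the interface region or decompose $\Delta$ in curvilinear coordinates. Instead it just uses $|\partial_n\overline{u}|\le C|\nabla\overline{u}|$ and observes that $\varepsilon^2\int_\Omega|\nabla\overline{u}|^2$ is bounded by $\varepsilon^2$ times the full spectral bilinear form plus $C\|\overline{u}\|_{L^2}^2+C\varepsilon\|\tr\,\overline{u}\|_{L^2(\partial\Omega)}^2$. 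The $\varepsilon^2\times$(spectral form) is then absorbed into the main energy identity by reserving half of the spectral term; the trace correction is handled by the standard $L^2$ trace interpolation and the bootstrap. This avoids any localization or additional boundary terms.
\item For the cubic \emph{boundary} remainder $\varepsilon^{-1}\|\tr\,\overline{u}\|_{L^3(\partial\Omega)}^3$, Lemma~\ref{th_SE_ACalpha_intpol_tr1} only gives $L^2$ traces and does not suffice. The paper uses the divergence-theorem trick (choose $\vec{w}\in C^1(\overline{\Omega})$ with $\vec{w}\cdot N_{\partial\Omega}\ge1$, apply Gauss to $\vec{w}\,|\overline{u}|^3$) to get $\|\tr\,\overline{u}\|_{L^3(\partial\Omega)}^3\le C(\|\overline{u}\|_{L^3(\Omega)}^3+\|\overline{u}\|_{L^4(\Omega)}^2\|\nabla\overline{u}\|_{L^2(\Omega)})$, and then the 2D Gagliardo--Nirenberg bound $\|\overline{u}\|_{L^4}^2\le C\|\overline{u}\|_{L^2}\|\overline{u}\|_{H^1}$. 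This gives the same $O(R^3\varepsilon^{3M-3})$ scaling as the bulk $L^3$ term.
\end{itemize}
Neither point is a gap in your argument, but the paper's treatment is cleaner and avoids the localization pitfalls your phrasing (\enquote{multiplying the PDE by $\varepsilon^2\overline{u}$ in the interface region}) would introduce.
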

\begin{Remark}\upshape\phantomsection{\label{th_DC_ACalpha_rem}}
	\begin{enumerate}
		\item The parameter $M$ is related to the order of the approximate solution in Section \ref{sec_asym_ACalpha}. The $\delta_0$ is introduced because in the application of Theorem \ref{th_DC_ACalpha} later we use the spectral estimate in Theorem \ref{th_SE_ACalpha}. There $\delta_0$ was chosen small in order to have \eqref{eq_asym_ACalpha_uA_delta0}. The parameter $\beta$ is used in order to get a result for all times $T\in(0,T_0]$.
		\item Weaker requirements in the theorem also yield a result, e.g.~when the two additional terms on the right hand side of the spectral estimate absent or one only has an estimate with the full $H^1$-norm on the right hand side in \eqref{eq_DC_ACalpha_uA}. But then the obtained assertions may be weaker as well, e.g.~the critical order $3$ for $M$ could be larger and the $\varepsilon$-orders in \eqref{eq_DC_ACalpha_DE} may be weakened. The reason is that the $H^1$-norm can be estimated with the spectral term but one has to invest the error $\varepsilon^{-2}$ times the $L^2$-norm. 
		\item The parameter $3$ is critical for $M$ in our proof, see \eqref{eq_DC_ACalpha_DE2} below. One problem are estimates of cubic terms. Moreover, case $M=3$ is complicated since in \eqref{eq_DC_ACalpha_DE2} there is a term of order larger than $2$ in $R$ and a linear term in $R$, but the needed order is $2$ in $R$. The linear term in $R$ stems from \eqref{eq_DC_ACalpha_uA}. For the parameter $\beta$ there is a similar issue in the critical case. Compared to the case $\alpha=\frac{\pi}{2}$, cf.~\cite{AbelsMoser}, Theorem 5.1, the critical order for $M$ is increased by one and the estimate \eqref{eq_DC_ACalpha_DE} is slightly weaker. This is because we only have a spectral estimate with the $\varepsilon$-factor in front of the $\nabla_\tau$-term.
		\item In the proof of the difference estimate for the case $\alpha=\frac{\pi}{2}$, cf.~\cite{AbelsMoser}, proof of Theorem 5.1, we applied Gagliardo-Nirenberg inequalities for the integral on $\Gamma_t(\delta)$ subsequently in tangential and normal direction. These estimates are difficult to adapt for the case $\alpha\neq\frac{\pi}{2}$ because the relevant domain is a trapeze, not a rectangle. Even if this works, the possible increase in the $\varepsilon$-order is just $\frac{1}{4}$ and thus does not lower the critical integer order for $M$. Therefore we use a standard Gagliardo-Nirenberg Inequality on whole $\Omega$, see the computation after \eqref{eq_DC_ACalpha_proof4} below.
	\end{enumerate}
\end{Remark}
\begin{proof}[Proof of Theorem \ref{th_DC_ACalpha}]
	For the proof we can assume w.l.o.g.~$\delta_0=\delta$, otherwise one can simply shrink $\delta$. The continuity of the objects on the left hand side in \eqref{eq_DC_ACalpha_DE} implies that
	\begin{align}\label{eq_DC_ACalpha_T_epsR}
	T_{\varepsilon,\beta,R}:=\sup\,\{\tilde{T}\in(0,T_0]: \eqref{eq_DC_ACalpha_DE}\text{ holds for }\varepsilon,R\text{ and all }T\in(0,\tilde{T}]\}
	\end{align}
	is well-defined for all $\varepsilon\in(0,\varepsilon_0],\beta\geq 0$ and  $T_{\varepsilon,\beta,R}>0$. In the different cases we have to prove:
	\begin{enumerate}
		\item If $M>3$, then there exist $\beta,\varepsilon_1>0$ such that $T_{\varepsilon,\beta,R}=T_0$ for all $\varepsilon\in(0,\varepsilon_1]$.
		\item If $M=3$, then there are $\beta,\tilde{R},\varepsilon_1>0$ such that $T_{\varepsilon,\beta,\tilde{R}}=T_0$ provided that $\varepsilon\in(0,\varepsilon_1]$ and \eqref{eq_DC_ACalpha_uA} is valid for some $\tilde{M}>3$ instead of $M$ and \eqref{eq_DC_ACalpha_u0} holds with $R$ replaced by $\tilde{R}$.
		\item If $M=3$, then there exist $T_1,\varepsilon_1>0$ such that $T_{\varepsilon,0,R}\geq T_1$ for all $\varepsilon\in(0,\varepsilon_1]$.
	\end{enumerate}
	
	We do a general computation first and consider the distinct cases later. The difference of the left hand sides in \eqref{eq_ACalpha1} for $u_{\varepsilon,\alpha}$ and $u^A_{\varepsilon,\alpha}$ gives
	\begin{align}\label{eq_DC_ACalpha_proof1}
	\left[\partial_t-\Delta+\frac{1}{\varepsilon^2}f''(u^A_{\varepsilon,\alpha})\right]\overline{u}_{\varepsilon,\alpha}=-r^A_{\varepsilon,\alpha}-r_\varepsilon(u_{\varepsilon,\alpha},u^A_{\varepsilon,\alpha}),
	\end{align} 
	where $r_\varepsilon(u_{\varepsilon,\alpha},u^A_{\varepsilon,\alpha}):=\frac{1}{\varepsilon^2}\left[f'(u_{\varepsilon,\alpha})-f'(u^A_{\varepsilon,\alpha})-f''(u^A_{\varepsilon,\alpha})\overline{u}_{\varepsilon,\alpha}\right]$. Taking the product of \eqref{eq_DC_ACalpha_proof1} with $g_\beta^2\overline{u}_{\varepsilon,\alpha}$ and integrating over $Q_T$ for $T\in(0,T_{\varepsilon,\beta,R}]$ for fixed $\varepsilon\in(0,\varepsilon_0]$ and $\beta\geq 0$ yields
	\begin{align}\label{eq_DC_ACalpha_proof2}
	\int_0^T g _\beta^2\int_\Omega\overline{u}_{\varepsilon,\alpha}\left[\partial_t-\Delta+\frac{1}{\varepsilon^2}f''(u^A_{\varepsilon,\alpha})\right]\overline{u}_{\varepsilon,\alpha}
	=-\int_0^T g _\beta^2\int_\Omega [r^A_{\varepsilon,\alpha}+r_\varepsilon(u_{\varepsilon,\alpha},u^A_{\varepsilon,\alpha})]\overline{u}_{\varepsilon,\alpha}
	\end{align} 
	for all $T\in(0,T_{\varepsilon,\beta,R}]$, $\varepsilon\in(0,\varepsilon_0]$ and $\beta\geq 0$.
	We estimate all terms appropriately. Because of $\frac{1}{2}\partial_t|\overline{u}_{\varepsilon,\alpha}|^2=\overline{u}_{\varepsilon,\alpha}\partial_t\overline{u}_{\varepsilon,\alpha}$, integration by parts in time and $\partial_tg_\beta=-\beta g_\beta$ we get
	\[
	\int_0^T\int_\Omega g_\beta^2\partial_t\overline{u}_{\varepsilon,\alpha}\overline{u}_{\varepsilon,\alpha}\,dx\,dt=\frac{1}{2}g_\beta|_T^2\|\overline{u}_{\varepsilon,\alpha}|_T\|_{L^2(\Omega)}^2-\frac{1}{2}\|\overline{u}_{\varepsilon,\alpha}(0)\|_{L^2(\Omega)}^2+\beta\int_0^T g_\beta^2 \|\overline{u}_{\varepsilon,\alpha}\|_{L^2(\Omega)}^2\,dt,
	\]
	where $\|\overline{u}_{\varepsilon,\alpha}(0)\|_{L^2(\Omega)}^2\leq R^2\varepsilon^{2M}$ due to \eqref{eq_DC_ACalpha_u0} (\enquote{well-prepared initial data}). For the remaining expression on the left hand side in \eqref{eq_DC_ACalpha_proof2} we apply integration by parts in space. This implies
	\begin{align}\notag
	&\int_0^Tg_\beta^2\int_\Omega \overline{u}_{\varepsilon,\alpha}\left[-\Delta+\frac{1}{\varepsilon^2}f''(u^A_{\varepsilon,\alpha})\right]\overline{u}_{\varepsilon,\alpha}\,dx\,dt\\
	&=\int_0^Tg_\beta^2\left[\int_\Omega|\nabla\overline{u}_{\varepsilon,\alpha}|^2+\frac{1}{\varepsilon^2}f''(u^A_{\varepsilon,\alpha})\overline{u}_{\varepsilon,\alpha}^2\,dx+\int_{\partial\Omega}\frac{1}{\varepsilon}\sigma_\alpha''(u^A_{\varepsilon,\alpha})(\tr\,\overline{u}_{\varepsilon,\alpha})^2\,d\Hc^1\right]\,dt\notag\\
	&
	+\int_0^Tg_\beta^2\int_{\partial\Omega}\left[s^A_{\varepsilon,\alpha}+s_{\varepsilon,\alpha}(u_{\varepsilon,\alpha},u^A_{\varepsilon,\alpha})\right]\tr\,\overline{u}_{\varepsilon,\alpha}\,d\Hc^1\,dt,\label{eq_DC_ACalpha_proof_s_eps}
	\end{align}
	where we have set $s_{\varepsilon,\alpha}(u_{\varepsilon,\alpha},u^A_{\varepsilon,\alpha}):=\frac{1}{\varepsilon}\left[\sigma_\alpha'(u_{\varepsilon,\alpha})-\sigma_\alpha'(u^A_{\varepsilon,\alpha})-\sigma_\alpha''(u^A_{\varepsilon,\alpha})\overline{u}_{\varepsilon,\alpha}\right]\!|_{\partial\Omega}$.
	Using requirement 2.~(\enquote{spectral estimate}) in the theorem we obtain that the first integral on the right hand side in the latter display is estimated from below by
	\[
	-C\int_0^T g_\beta^2\|\overline{u}_{\varepsilon,\alpha}\|_{L^2(\Omega)}^2\,dt+\|g_\beta\nabla\overline{u}_{\varepsilon,\alpha}\|_{L^2(Q_T\setminus\Gamma(\delta))}^2+c_0\varepsilon\|g_\beta\nabla_\tau\overline{u}_{\varepsilon,\alpha}\|_{L^2(Q_T\cap\Gamma(\delta))}^2.
	\]
	For the remainder terms with $r^A_{\varepsilon,\alpha}$ and $s^A_{\varepsilon,\alpha}$ we employ \eqref{eq_DC_ACalpha_uA} (\enquote{approximate solution}). Hence
	\[
	\left|\int_0^Tg_\beta^2\left[\int_{\partial\Omega}s^A_{\varepsilon,\alpha}\tr\,\overline{u}_{\varepsilon,\alpha}(t)\,d\Hc^1+\int_\Omega r^A_{\varepsilon,\alpha}\overline{u}_{\varepsilon,\alpha}(t)\,dx\right]dt\right|
	\leq \overline{C}_1R\|g_\beta\|_{L^2(0,T)}\varepsilon^{2M}
	\]
	because of \eqref{eq_DC_ACalpha_DE} for all $T\in(0,T_{\varepsilon,\beta,R}]$ and $\varepsilon\in(0,\varepsilon_0]$, where we have used the Hölder Inequality to estimate $\|g_\beta\|_{L^1(0,T)}\leq \sqrt{T_0}\|g_\beta\|_{L^2(0,T)}$.
	
	We derive an estimate for the $r_\varepsilon$-term in \eqref{eq_DC_ACalpha_proof2} and the $s_{\varepsilon,\alpha}$-term in \eqref{eq_DC_ACalpha_proof_s_eps}. The requirement 1.~(\enquote{uniform boundedness}) in the theorem and Lemma \ref{th_DC_bdd_scal} implies
	\begin{align}\label{eq_DC_ACalpha_proof3}
	\sup_{\varepsilon\in(0,\varepsilon_0]}\left[\|u_{\varepsilon,\alpha}\|_{L^\infty(Q_{T_0})}+\|u^A_{\varepsilon,\alpha}\|_{L^\infty(Q_{T_0})}\right]<\infty.
	\end{align}
	Hence we can use the Taylor Theorem and get
	\begin{align}\begin{split}\label{eq_DC_ACalpha_proof4}
	&\left|\int_0^Tg_\beta^2\left[\int_\Omega r_\varepsilon(u_{\varepsilon,\alpha},u^A_{\varepsilon,\alpha})\overline{u}_{\varepsilon,\alpha}\,dx+\int_{\partial\Omega}s_{\varepsilon,\alpha}(u_{\varepsilon,\alpha},u^A_{\varepsilon,\alpha})\tr\,\overline{u}_{\varepsilon,\alpha}\,d\Hc^1\right]dt\right|\\
	&\leq C\int_0^Tg_\beta^2\left[\frac{1}{\varepsilon^2}\|\overline{u}_{\varepsilon,\alpha}\|_{L^3(\Omega)}^3+\frac{1}{\varepsilon}\|\tr\,\overline{u}_{\varepsilon,\alpha}\|_{L^3(\partial\Omega)}^3\right]dt.\end{split}
	\end{align}
	For the estimate of the $L^3(\Omega)$-norm we use a standard Gagliardo-Nirenberg Inequality on $\Omega$, see Lemma \ref{th_DC_GN} and Remark \ref{th_DC_GN_rem}. This yields due to \eqref{eq_DC_ACalpha_DE}
	\[
	\int_0^T\frac{g_\beta^2}{\varepsilon^2}\|\overline{u}_{\varepsilon,\alpha}\|_{L^3(\Omega)}^3\,dt\leq 
	\int_0^T\frac{g_\beta^2}{\varepsilon^2}\|\overline{u}_{\varepsilon,\alpha}\|_{L^2(\Omega)}^2\|\overline{u}_{\varepsilon,\alpha}\|_{H^1(\Omega)}\,dt\leq CR^3\varepsilon^{2M}\varepsilon^{M-3}\|g_\beta^{-1}\|_{L^2(0,T)}
	\]
	for all $T\in(0,T_{\varepsilon,\beta,R}]$ and $\varepsilon\in(0,\varepsilon_0]$. For the $L^3(\partial\Omega)$-norm in \eqref{eq_DC_ACalpha_proof4} we use the idea from Evans \cite{Evans},~5.10, problem 7 again. Let $\vec{w}\in C^1(\overline{\Omega})$ with $\vec{w}\cdot N_{\partial\Omega}\geq 1$. Then because of $|\overline{u}_{\varepsilon,\alpha}|^3(t)\in C^1(\overline
	\Omega)$ with  $\nabla(|\overline{u}_{\varepsilon,\alpha}|^3)(t)=3\textup{sign}(\overline{u}_{\varepsilon,\alpha})|\overline{u}_{\varepsilon,\alpha}|^2\nabla \overline{u}_{\varepsilon,\alpha}(t)$ we get
	\begin{align*}
	\|\tr\,\overline{u}_{\varepsilon,\alpha}|_t\|_{L^3(\partial\Omega)}^3
	\leq \int_{\partial\Omega}
	\vec{w}\cdot N_{\partial\Omega}|\overline{u}_{\varepsilon,\alpha}|^3|_t\,d\Hc^1
	\leq \int_\Omega \left[|\diverg\vec{w}||\overline{u}_{\varepsilon,\alpha}|^3+3|\overline{u}_{\varepsilon,\alpha}|^2|\nabla\overline{u}_{\varepsilon,\alpha}\cdot\vec{w}|\right]\!|_t dx\\
	\leq C[\|\overline{u}_{\varepsilon,\alpha}\|_{L^3(\Omega)}^3\!+\!\|\overline{u}_{\varepsilon,\alpha}\|_{L^4(\Omega)}^2\!\|\nabla \overline{u}_{\varepsilon,\alpha}\|_{L^2(\Omega)}]|_t
	\leq C[\|\overline{u}_{\varepsilon,\alpha}\|_{L^3(\Omega)}^3\!+\!\|\overline{u}_{\varepsilon,\alpha}\|_{L^2(\Omega)}\!\|\overline{u}_{\varepsilon,\alpha}\|_{H^1(\Omega)}^2]|_t,
	\end{align*}
	where we used the Gagliardo-Nirenberg Inequality for the $L^4(\Omega)$-norm, see Lemma \ref{th_DC_GN} and Remark \ref{th_DC_GN_rem}. With $|\nabla\overline{u}_{\varepsilon,\alpha}|\leq C(|\partial_n\overline{u}_{\varepsilon,\alpha}|+|\nabla_\tau\overline{u}_{\varepsilon,\alpha}|)$ and \eqref{eq_DC_ACalpha_DE} we obtain
	\[
	\frac{1}{\varepsilon}\int_0^T g_\beta^2 \|\tr\,\overline{u}_{\varepsilon,\alpha}\|_{L^3(\partial\Omega)}^3\,dt
	\leq CR^3\varepsilon^{2M}\varepsilon^{M-3}\|g_\beta^{-1}\|_{L^2(0,T)}
	\]
	for all $T\in(0,T_{\varepsilon,\beta,R}]$ and $\varepsilon\in(0,\varepsilon_0]$.
	
	Finally, we estimate $\partial_n\overline{u}_{\varepsilon,\alpha}$. To this end we use $|\partial_n\overline{u}_{\varepsilon,\alpha}|\leq C|\nabla\overline{u}_{\varepsilon,\alpha}|$ and
	\begin{align*}
	&\varepsilon^2\|g_\beta\partial_n\overline{u}_{\varepsilon,\alpha}\|_{L^2(Q_T\cap\Gamma(\delta))}^2
	\leq C\int_0^T g_\beta^2\left[\|\overline{u}_{\varepsilon,\alpha}\|_{L^2(\Omega)}^2+\varepsilon\|\tr\,\overline{u}_{\varepsilon,\alpha}\|_{L^2(\partial\Omega)}^2\right] dt\\
	&+C\varepsilon^2\int_0^T g_\beta^2\left[\int_\Omega|\nabla\overline{u}_{\varepsilon,\alpha}|^2+\frac{1}{\varepsilon^2}f''(u^A_{\varepsilon,\alpha})(\overline{u}_{\varepsilon,\alpha})^2\,dx+\int_{\partial\Omega}\frac{1}{\varepsilon}\sigma_\alpha''(u^A_{\varepsilon,\alpha})(\tr\,\overline{u}_{\varepsilon,\alpha})^2\,d\Hc^1\right] dt
	\end{align*}
	with a constant $C>0$ independent of $\varepsilon$, $T$ and $R$. The second line can be absorbed with $\frac{1}{2}$ of the spectral term above if $\varepsilon\in(0,\varepsilon_1]$ and $\varepsilon_1>0$ is small (independent of $T$, $R$). Moreover, for the $\|\tr\,\overline{u}_{\varepsilon,\alpha}\|_{L^2(\partial\Omega)}^2$-term we use the analogous idea that we applied for the estimate of $\|\tr\,\overline{u}_{\varepsilon,\alpha}\|_{L^3(\partial\Omega)}^3$ above. This yields 
	\[
	\varepsilon\int_0^T g_\beta^2 \|\tr\,\overline{u}_{\varepsilon,\alpha}\|_{L^2(\partial\Omega)}^2 \,dt
	\leq \varepsilon\int_0^T g_\beta^2 \|\overline{u}_{\varepsilon,\alpha}\|_{L^2(\Omega)}\|\overline{u}_{\varepsilon,\alpha}\|_{H^1(\Omega)}\,dt. 
	\]
	Here note that because of \eqref{eq_DC_ACalpha_DE} it follows that for all $T\in(0,T_{\varepsilon,\beta,R}]$ and $\varepsilon\in(0,\varepsilon_0]$
	\[
	\varepsilon^2\int_0^T g_\beta^2\|\overline{u}_{\varepsilon,\alpha}\|_{H^1(\Omega)}^2\,dt\leq CR^2\varepsilon^{2M}.
	\]
    Therefore with the Young Inequality the contribution of the $\|\tr\,\overline{u}_{\varepsilon,\alpha}\|_{L^2(\partial\Omega)}^2$-term is controlled by
    \[
    \tilde{C}\int_0^T g_\beta^2\|\overline{u}_{\varepsilon,\alpha}\|_{L^2(\Omega)}^2\,dt+ \frac{1}{8}R^2\varepsilon^{2M}
    \]
    for all $T\in(0,T_{\varepsilon,\beta,R}]$ and $\varepsilon\in(0,\varepsilon_0]$ with some $\tilde{C}>0$ large.
	
	Altogether we have
	\begin{align}\notag
	&\frac{1}{2}g_\beta(T)\|\overline{u}_{\varepsilon,\alpha}(T)\|_{L^2(\Omega)}^2
	+\frac{1}{2}\|g_\beta\nabla\overline{u}_{\varepsilon,\alpha}\|_{L^2(Q_T\setminus\Gamma(\delta))}^2\\
	&+\frac{1}{2}c_0\varepsilon\|g_\beta\nabla_\tau\overline{u}_{\varepsilon,\alpha}\|_{L^2(Q_T\cap\Gamma(\delta))}^2
	+\frac{1}{2}\varepsilon^2\|g_\beta\partial_n\overline{u}_{\varepsilon,\alpha}\|_{L^2(Q_T\cap\Gamma(\delta))}^2\notag\\
	\begin{split}&\leq
	(\frac{1}{2}+\frac{1}{8})R^2\varepsilon^{2M}+\int_0^T(-\beta+\overline{C}_0)g_\beta^2\|\overline{u}_{\varepsilon,\alpha}(t)\|_{L^2(\Omega)}^2\,dt+\overline{C}_1R\varepsilon^{2M}\|g_\beta\|_{L^2(0,T)}\\
	&+CR^3\varepsilon^{2M} \varepsilon^{M-3}\|g_\beta^{-1}\|_{L^2(0,T)}\label{eq_DC_ACalpha_DE2}\end{split}
	\end{align}
	for all $T\in(0,T_{\varepsilon,\beta,R}]$, $\varepsilon\in(0,\varepsilon_1]$ and some constants $\overline{C}_0,\overline{C}_1,C>0$ independent of $\varepsilon,T,R$.
	
	Now let us look at the different cases in the theorem.
	
	\begin{proof}[Ad 1] 
		If $M>3$, then we choose $\beta\geq \overline{C}_0$ large such that $\overline{C}_1R\|g_\beta\|_{L^2(0,T_0)}\leq\frac{R^2}{8}$.
		Therefore \eqref{eq_DC_ACalpha_DE2} is controlled by $\frac{7}{8}R^2\varepsilon^{2M}$ for all $T\in (0,T_{\varepsilon,\beta,R}]$ and $\varepsilon\in(0,\varepsilon_1]$, if $\varepsilon_1>0$ is small. By contradiction and continuity this yields $T_{\varepsilon,\beta,R}=T_0$ for all $\varepsilon\in(0,\varepsilon_1]$.\qedhere$_{1.}$
	\end{proof}
	
	\begin{proof}[Ad 2] 
		Let $M=3$ and let \eqref{eq_DC_ACalpha_uA} hold for some $\tilde{M}>M$ instead of $M$. Then the term in \eqref{eq_DC_ACalpha_DE2} where $R$ enters linearly is enhanced by the factor $\varepsilon^{\tilde{M}-M}$. We fix $\beta\geq\overline{C}_0$ and choose $R>0$ small such that the $R^3$-term in \eqref{eq_DC_ACalpha_DE2} is estimated by $\frac{1}{8}R^2\varepsilon^{2M}$. Then $\varepsilon_1>0$ can be taken small such that \eqref{eq_DC_ACalpha_DE2} is bounded by $\frac{7}{8}R^2\varepsilon^{2M}$ for all $T\in (0,T_{\varepsilon,\beta,R}]$ and $\varepsilon\in(0,\varepsilon_1]$. Via contradiction and continuity we obtain $T_{\varepsilon,\beta,R}=T_0$ for all $\varepsilon\in(0,\varepsilon_1]$.\qedhere$_{2.}$
	\end{proof}
	
	\begin{proof}[Ad 3] 
		Let $M=3$ and $\beta=0$. Then \eqref{eq_DC_ACalpha_DE2} is controlled by
		\[
		\left[(\frac{1}{2}+\frac{1}{8})R^2+CR^2T+CRT^{\frac{1}{2}}+CR^3 T^{\frac{1}{2}}\right]\varepsilon^{2M}.
		\]
		There are $\varepsilon_1,T_1>0$ such that this is estimated by $\frac{7}{8}R^2\varepsilon^{2M}$ for all $T\in(0,\min(T_{\varepsilon,\beta,R},T_1)]$ and $\varepsilon\in(0,\varepsilon_1]$. Hence $T_{\varepsilon,0,R}\geq T_1$ for all $\varepsilon\in(0,\varepsilon_1]$.
		\qedhere$_{3.}$\end{proof}
	
	The proof of Theorem \ref{th_DC_ACalpha} is completed.
\end{proof}

\subsubsection{Proof of Theorem \ref{th_ACalpha_conv}}\label{sec_DC_ACalpha_conv}
Let $f$ satisfy \eqref{eq_AC_fvor1}-\eqref{eq_AC_fvor2} and $\sigma_\alpha$ for $\alpha\in(0,\pi)$ be as in Definition \ref{th_ACalpha_sigma_def}. Then let $\alpha_0>0$ be as in Remark \ref{th_asym_ACalpha_decay_param} and $\overline{\alpha}_0\in(0,\alpha_0]$ such that Theorem \ref{th_SE_ACalpha} holds. Moreover, let $\Omega$, $Q_T$ and $\partial Q_T$ be as in Remark \ref{th_intro_coord},~1. Additionally, let $\Gamma=(\Gamma_t)_{t\in[0,T_0]}$ for some $T_0>0$ be a smooth solution to \eqref{MCF} with $\alpha$-contact angle condition parametrized as in Section \ref{sec_coord_surface_requ} for some $\alpha\in\frac{\pi}{2}+[-\overline{\alpha}_0,\overline{\alpha}_0]$ and let $\delta>0$ be such that Theorem \ref{th_coord2D} holds for $\delta$ replaced by $2\delta$. We use the notation from Section \ref{sec_coord_surface_requ} and Section \ref{sec_coord2D}. Furthermore, let $\delta_0\in(0,\delta]$ be such that \eqref{eq_asym_ACalpha_uA_delta0} holds. Moreover, let $M\in\N$ with $M\geq 3$ and let $(u^A_{\varepsilon,\alpha})_{\varepsilon>0}$ be the approximate solution on $\overline{Q_{T_0}}$ defined in Section \ref{sec_asym_ACalpha_uA} (which was constructed via asymptotic expansions in Section \ref{sec_asym_ACalpha}) and let $\varepsilon_0>0$ be such that Lemma \ref{th_asym_ACalpha_uA} (\enquote{remainder estimate}) holds for $\varepsilon\in(0,\varepsilon_0]$. The property $\lim_{\varepsilon\rightarrow 0} u^A_{\varepsilon,\alpha}=\pm 1$ uniformly on compact subsets of $Q_{T_0}^\pm$ holds due to the construction in Section \ref{sec_asym_ACalpha}. 

Theorem \ref{th_ACalpha_conv} follows directly from Theorem \ref{th_DC_ACalpha} if we prove the conditions 1.-4.~in Theorem \ref{th_DC_ACalpha}. The requirement 1.~(\enquote{uniform boundedness}) is satisfied because of Lemma \ref{th_asym_ACalpha_uA} for $u^A_{\varepsilon,\alpha}$ and for $u_{0,\varepsilon,\alpha}$ this is required in Theorem \ref{th_ACalpha_conv}. Condition 2.~(\enquote{spectral estimate}) follows from to Theorem \ref{th_SE_ACalpha}. Assumption 4.~(\enquote{well prepared initial data}) is a requirement for $u_{0,\varepsilon,\alpha}$ and holds in the situation of Theorem \ref{th_ACalpha_conv}. It is left to prove 3.~(\enquote{approximate solution}). This is similar to the case $\alpha=\frac{\pi}{2}$, cf.~the proof of Theorem 1.1 in \cite{AbelsMoser}. 

First we estimate the boundary term in \eqref{eq_DC_ACalpha_uA}. Lemma \ref{th_asym_ACalpha_uA} implies $s^A_{\varepsilon,\alpha}=0$ on $\partial\Omega\setminus\Gamma_t(2\delta)$ and $|s^A_{\varepsilon,\alpha}|\leq C\varepsilon^Me^{-c|\rho_{\varepsilon,\alpha}|}$, where $\rho_{\varepsilon,\alpha}$ is from \eqref{eq_asym_ACalpha_rho}. Hence
\[
\left|\int_{\partial\Omega}s^A_{\varepsilon,\alpha}\tr\,\overline{u}_{\varepsilon,\alpha}(t)\,d\Hc^1\right|\leq \|s^A_{\varepsilon,\alpha}\|_{L^2(\partial\Omega\cap\Gamma_t(2\delta))}
\|\tr\,\overline{u}_{\varepsilon,\alpha}(t)\|_{L^2(\partial\Omega\cap\Gamma_t(2\delta))}.
\]
Due to the substitution rule and a scaling argument with \cite{MoserACvACND}, Lemma 6.5, we obtain the estimate $\|s^A_{\varepsilon,\alpha}\|_{L^2(\partial\Omega\cap\Gamma_t(2\delta))}\leq C\varepsilon^{M+\frac{1}{2}}$. Moreover, analogously to Lemma \ref{th_SE_ACalpha_intpol_tr1} it follows that
\[
\|\tr\,\overline{u}_{\varepsilon,\alpha}(t)\|_{L^2(\partial\Omega\cap\Gamma_t(2\delta))}
\leq C
(\|\overline{u}_{\varepsilon,\alpha}(t)\|_{L^2(\Gamma_t(2\delta))}
+\|\nabla_\tau\overline{u}_{\varepsilon,\alpha}(t)\|_{L^2(\Gamma_t(2\delta))}).
\]
Because of $|\nabla_\tau\overline{u}_{\varepsilon,\alpha}|\leq C|\nabla\overline{u}_{\varepsilon,\alpha}|$, the estimate for the $s^A_{\varepsilon,\alpha}$-term in \eqref{eq_DC_ACalpha_uA} follows. 

Finally, we consider the $r^A_{\varepsilon,\alpha}$-term in \eqref{eq_DC_ACalpha_uA}. By Lemma \ref{th_asym_ACalpha_uA} we get $r^A_{\varepsilon,\alpha}=0$ in $\Omega\setminus\Gamma_t(2\delta)$ and
\begin{alignat*}{2}
|r^A_{\varepsilon,\alpha}|
\leq 
C(\varepsilon^{M-1} e^{-c(|\rho_{\varepsilon,\alpha}|+Z_{\varepsilon,\alpha}^\pm)}
+\varepsilon^M e^{-c|\rho_{\varepsilon,\alpha}|}+\varepsilon^{M+1})
\quad \text{ in }\Gamma^\pm(2\delta,1).
\end{alignat*}
An integral transformation yields
\[
\left|\int_\Omega r^A_{\varepsilon,\alpha}\overline{u}_{\varepsilon,\alpha}(t)\,dx\right|
\leq\int_{\Gamma_t(2\delta)}|r^A_{\varepsilon,\alpha}\overline{u}_{\varepsilon,\alpha}(t)|\,dx=\int_{S_{2\delta,\alpha}}|r^A_{\varepsilon,\alpha}\overline{u}_{\varepsilon,\alpha}|_{\overline{X}(r,s,t)}| J_t(r,s)\,d(r,s),
\]
where $S_{2\delta,\alpha}$ is as in \eqref{eq_coord2D_trapeze} and $J_t$ is uniformly bounded in $t\in[0,T_0]$ by Remark \ref{th_coord2D_rem2}, 3. We choose $\mu>0$ such that for $I_\mu:=(-1-\mu,1+\mu)$ it holds $S_{2\delta,\alpha}\subseteq(-2\delta,2\delta)\times I_\mu$. Moreover, we denote with $e_0(\overline{u}_{\varepsilon,\alpha}|_{\overline{X}})$ the extension of $\overline{u}_{\varepsilon,\alpha}|_{\overline{X}}$ by zero to $(-2\delta,2\delta)\times I_\mu$. With a scaling argument and $|\frac{r}{\varepsilon}|+|\frac{s^\pm}{\varepsilon}|\leq C(|\rho_{\varepsilon,\alpha}|+Z_{\varepsilon,\alpha}^\pm+1)$ because of \eqref{eq_asym_ACalpha_cp_s1} it follows that 
\[
\left|\int_\Omega r^A_{\varepsilon,\alpha}\overline{u}_{\varepsilon,\alpha}(t)\,dx\right|\leq
C\varepsilon^{M-\frac{1}{2}}\int_{-1-\mu}^{1+\mu} \|e_0(\overline{u}_{\varepsilon,\alpha}|_{\overline{X}})(.,s,t)\|_{L^2(-2\delta,2\delta)}\left[\sum_\pm e^{-c\frac{|s\mp 1|}{\varepsilon}}+\varepsilon\right]\,ds.
\]
Note that $H^1(-s,s,B)\hookrightarrow L^\infty(-s,s,B)$ for all $s\in[1,1+\mu]$ and any Banach space $B$ with uniform embedding constant. For $B$ we use $L^2$-spaces over suitable intervals $I(s)$ (possibly empty for $|s|$ large) with $\bigcup_{s\in(1,1+\mu)}I(s)\times(-s,s)=(S_{2\delta,\alpha})^\circ$. Hence the features of Sobolev spaces on product sets, cf.~\cite{MoserACvACND}, Lemma 2.10, yield 
\[
\|e_0(\overline{u}_{\varepsilon,\alpha}|_{\overline{X}})(.,t)\|_{L^\infty(I_\mu,L^2(-2\delta,2\delta))}\leq C(\|\overline{u}_{\varepsilon,\alpha}|_{\overline{X}}\|_{L^2(S_{2\delta,\alpha})}+\|\nabla_\tau\overline{u}_{\varepsilon,\alpha}|_{\overline{X}}\|_{L^2(S_{2\delta,\alpha})}).
\] 
With a scaling argument for the exponential term and an integral transformation we obtain
\[
\left|\int_\Omega r^A_{\varepsilon,\alpha}\overline{u}_{\varepsilon,\alpha}(t)\,dx\right|\leq C\varepsilon^{M+\frac{1}{2}}(\|\overline{u}_{\varepsilon,\alpha}(t)\|_{L^2(\Gamma_t(2\delta))}+\|\nabla_\tau\overline{u}_{\varepsilon,\alpha}(t)\|_{L^2(\Gamma_t(2\delta))}).
\]
Since $|\nabla_\tau\overline{u}_{\varepsilon,\alpha}|\leq C|\nabla\overline{u}_{\varepsilon,\alpha}|$, this shows \eqref{eq_DC_ACalpha_uA}. Hence Theorem \ref{th_ACalpha_conv} is proven.\hfill$\square$\\
\newline
\textit{Acknowledgments.} The second author gratefully acknowledges support through DFG, GRK 1692 \enquote{Curvature, Cycles and Cohomology} during parts of the work.

\makeatletter
\renewenvironment{thebibliography}[1]
{\section{\bibname}
	\@mkboth{\MakeUppercase\bibname}{\MakeUppercase\bibname}%
	\list{\@biblabel{\@arabic\c@enumiv}}%
	{\settowidth\labelwidth{\@biblabel{#1}}%
		\leftmargin\labelwidth
		\advance\leftmargin\labelsep
		\@openbib@code
		\usecounter{enumiv}%
		\let\p@enumiv\@empty
		\renewcommand\theenumiv{\@arabic\c@enumiv}}%
	\sloppy
	\clubpenalty4000
	\@clubpenalty \clubpenalty
	\widowpenalty4000%
	\sfcode`\.\@m}
{\def\@noitemerr
	{\@latex@warning{Empty `thebibliography' environment}}%
	\endlist}
\makeatother

\footnotesize

\bibliographystyle{siam}

\begin{thebibliography}{EvSS}
	\normalsize
	\baselineskip=17pt
	
	\bibitem[AL]{ALiu}
	\textsc{Abels,~H.,~\&~Liu,~Y.}
	Sharp interface limit for a Stokes/Allen-Cahn system.
	\emph{Arch.~Rational Mech.~Anal.} \textbf{229}(1) (2018), 417--502.
	
	\bibitem[AM]{AbelsMoser}
	\textsc{Abels,~H.,~\&~Moser,~M.} 
	Convergence of the Allen-Cahn Equation to the Mean Curvature Flow with $90$°-Contact Angle in 2D. \emph{Interfaces and Free Boundaries} \textbf{21}(3) (2019), 313--365.
	
	\bibitem[AF]{AdamsFournier}
	\textsc{Adams,~R.~A.,~\& Fournier,~J.~J.~F.}
	\emph{Sobolev Spaces.}
	Second edition, Elsevier Ltd. (2003).
	
	\bibitem[ABC]{ABC}
	\textsc{Alikakos,~N.~D.,~Bates,~P.~W.,~\&~Chen,~X.}
	Convergence of the Cahn-Hilliard equation to the Hele-Shaw model.
	\emph{Arch.~Rational Mech.~Anal.} \textbf{128} (1994), 165--205.
	
	\bibitem[AC]{AC}
	\textsc{Allen,~S.~M.,~\&~Cahn,~J.~W.}
	A microscopic theory for antiphase boundary motion and its
	application to antiphase domain coarsening.
	\emph{Acta Metallurgica} \textbf{27} (1979), 1085--1095.

	\bibitem[Al]{AltFA}
	\textsc{Alt,~H.~W.}
	\emph{Linear Functional Analysis.}
	\emph{An Application-Oriented Introduction.}
	Springer-Verlag London (2016).

	\bibitem[AE]{AmannEscherIII}
	\textsc{Amann,~H.,~\&~Escher,~J.}
	\emph{Analysis III.}
	Birkhäuser, Basel (2009).
	
	\bibitem[BR]{BronsardReitich}
	\textsc{Bronsard,~L.,~\&~Reitich,~F.}
	On three-phase boundary motion and the singular limit of a vector-valued Ginzburg-Landau equation.
	\emph{Arch.~Rational Mech.~Anal.} \textbf{124}(4) (1993), 355--379.
	
	\bibitem[CCE]{CaginalpChenEck}
	\textsc{Caginalp,~G.,~Chen,~X.,~\&~Eck,~C.}
	A rapidly converging phase field model.
	\emph{Discrete and continuous dynamical systems} \textbf{15}(4) (2006), 1017--1034.
	
	\bibitem[C1]{ChenGenPropInt}
	\textsc{Chen,~X.}
	Generation and propagation of interfaces for reaction-diffusion equations.
	\emph{Journal of Differential Equations} \textbf{96} (1992), 116--141.
	
	\bibitem[C2]{ChenSpectrums}
	\textsc{Chen,~X.}
	Spectrum for the Allen-Cahn, Cahn-Hilliard, and phase-field
	equations for generic interfaces.
	\emph{Commun.~in Partial Differential Equations} \textbf{19}(7\&8) (1994), 1371--1395.
	
	\bibitem[CHL]{CHL}
	\textsc{Chen,~X.,~Hilhorst,~D.,~\&~Logak,~E.}
	Mass conserving Allen-Cahn equation and volume preserving mean curvature flow.
	\emph{Interfaces and Free Boundaries} \textbf{12} (2010),
	527--549.
	
	\bibitem[deMS]{deMS}
	\textsc{de~Mottoni,~P.,~\&~Schatzman,~M.}
	Geometrical evolution of developed interfaces.
	\emph{Transactions of the Americal Mathematical Society} \textbf{347}(5) (1995), 207--220.
	
	\bibitem[D]{Depner}
	\textsc{Depner,~D.}
	Stability analysis of geometric evolution equations with triple lines and boundary contact.
	PhD thesis, Univ. of Regensburg (2010). urn:nbn:de:bvb:355-epub-160479
	
	\bibitem[Ev]{Evans}
	\textsc{Evans,~L.~C.}
	\emph{Partial Differential Equations.}
	Second edition. American Mathematical Society, Providence, Rhode Island (2010).
	
	\bibitem[EvSS]{ESS}
	\textsc{Evans,~L.~C.,~Soner,~H.~M.,~\&~Souganidis,~P.~E.}
	Phase transitions and generalized motion by mean curvature.
	\emph{Comm.~Pure Appl.~Math.} \textbf{45} (1992), 1097--1123.
	
	\bibitem[FiLS]{FischerLauxSimon}
	\textsc{Fischer,~J.,~Laux~,T.,~\&~Simon,~T.~M.}
	Convergence rates of the Allen-Cahn equation to mean curvature flow: A short proof based on relative entropies.
	\emph{SIAM J. Math. Anal.} \textbf{52}6 (2020), 6222--6233.

	\bibitem[I]{Ilmanen}
	\textsc{Ilmanen,~T.}
	Convergence of the Allen-Cahn equation to Brakke's motion by mean curvature.
	\emph{J.~Differential Geom.} \textbf{38} (1993), 417--461.
	
	\bibitem[Ka]{Kagaya}
	\textsc{Kagaya,~T.}
	Convergence of the Allen-Cahn equation with a zero Neumann boundary condition on non-convex domains.
	\emph{Math.~Ann.} \textbf{373} (2019), 1485--1528. 
	
	\bibitem[KaT]{KagayaTonegawa}
	\textsc{Kagaya,~T.,~\&~Tonegawa,~Y.}
	A Singular Perturbation Limit of Diffused Interface Energy with a Fixed Contact Angle Condition.
	\emph{Indiana Univ.~Math.~J.} \textbf{67}(4) (2018), 1425--1437.
	
	\bibitem[KKR]{KKR}
	\textsc{Katsoulakis,~M.,~Kossioris,~G.~T.,~\&~Reitich,~F.}
	Generalized Motion by Mean Curvature with Neumann Conditions and the Allen-Cahn Model for Phase Transitions.
	\emph{The Journal of Geometric Analysis} \textbf{5}(2) (1995), 255--279.

	\bibitem[LS]{LauxSimon}
	\textsc{Laux,~T.,~\&~Simon,~T.~M.}
	Convergence of the Allen-Cahn Equation
	to Multiphase Mean Curvature Flow.
	\emph{Comm.~Pure Appl.~Math.} \textbf{71}(8) (2018), 1597--1647.

	\bibitem[Le]{Leoni}
	\textsc{Leoni,~G.}
	\emph{A First Course in Sobolev Spaces.}
	Second edition. American Mathematical Society, Providence, Rhode Island (2017).
	
	\bibitem[Lu]{LunardiOptReg}
	\textsc{Lunardi,~A.}
	\emph{Analytic Semigroups and Optimal Regularity in Parabolic Problems.}
	Springer, Basel (1995).
	
	\bibitem[LSW]{LunardiSvW}
	\textsc{Lunardi,~A.,~Sinestrari,~E.,~\&~Von~Wahl,~W.}
	A semigroup approach to the time-dependent parabolic initial boundary value problem.
	\emph{Diff.~Int.~Eqns.} \textbf{5} (1992), 1275--1306.
	
	\bibitem[MiT]{MizunoTonegawa}
	\textsc{Mizuno,~M.,~\&~Tonegawa,~Y.}
	Convergence of the Allen-Cahn equation with Neumann boundary conditions.
	\emph{SIAM J.~Math.~Anal.} \textbf{47}(3) (2015), 1906--1932. 
	
	\bibitem[Mo1]{Modica1}
	\textsc{Modica,~L.}
	The Gradient Theory of Phase Transitions and
	the Minimal Interface Criterion.
	\emph{Arch.~Rational Mech.~Anal.} \textbf{98} (1987), 123--142.
	
	\bibitem[Mo2]{Modica2}
	\textsc{Modica,~L.}
	Gradient theory of phase transitions with boundary contact energy
	\emph{Ann.~Inst.~Henri Poincaré} \textbf{4}(5) (1987), 487--512.
	
	\bibitem[M1]{MoserDiss}
	\textsc{Moser,~M.}
	Sharp Interface Limits for Diffuse Interface Models with Contact Angle.
	PhD thesis, Univ.~of Regensburg (2020). urn:nbn:de:bvb:355-epub-443894
	
	\bibitem[M2]{MoserACvACND}
	\textsc{Moser,~M.}
	Convergence of the Scalar- and Vector-Valued Allen-Cahn Equation to
	Mean Curvature Flow with $90$°-Contact Angle in Higher Dimensions.
	\textit{Preprint} arXiv:2105.07100 (2021).

	\bibitem[OS]{OwenSternberg}
	\textsc{Owen,~N.~C.,~\&~Sternberg,~P.}
	Gradient flow and front propagation with boundary contact energy.
	\emph{Proc.~R.~Soc.~Lond.~A} \textbf{437} (1992), 715--728.
	
	\bibitem[PS]{PruessSimonett}
	\textsc{Prüss,~J.,~\&~Simonett,~G.}
	\emph{Moving Interfaces and Quasilinear Parabolic Evolution Equations.} 
	Birkhäuser, Basel (2016).

	\bibitem[Sb]{Schaubeck}
	\textsc{Schaubeck,~S.}
	Sharp interface limits for diffuse interface models.
	PhD thesis, Univ.~of Regensburg (2014). urn:nbn:de:bvb:355-epub-294622
	
	\bibitem[T1]{Triebel_Interpol_Theory}
	\textsc{Triebel,~H.}
	\emph{Interpolation Theory, Function Spaces, Differential Operators.}
	North-Holland Publishing Company, Amsterdam - New York - Oxford (1978).
	
	\bibitem[T2]{Triebel_Fct_SpacesI}
	\textsc{Triebel,~H.}
	\emph{Theory of Function Spaces.}
	Birkhäuser, Basel (1983).
	
	\bibitem[V]{Vogel}
	\textsc{Vogel,~T.}
	Sufficient conditions for capillary surfaces to be energy minima.
	\emph{Pac.~J.~Math.} \textbf{194}(2) (2000), 469--489.
\end{thebibliography}

\end{document}